\documentclass[a4paper,10pt]{article}
\usepackage[utf8]{inputenc}
\usepackage[left=4cm, right=3cm]{geometry}

\usepackage{amsmath, amssymb, amsthm}
\usepackage{mathtools}
\usepackage{mathrsfs}
\usepackage{bbm}

\usepackage{tikz-cd}
\usetikzlibrary{decorations.pathmorphing}
\tikzcdset{arrow style = tikz,diagrams={>={Classical TikZ Rightarrow}}}

\usepackage{enumitem}
\usepackage[all]{xy}
\usepackage{url}
\usepackage{xcolor}
\usepackage{mdframed}

\usepackage{stmaryrd}

\usepackage{marginnote}
\usepackage{comment}

\usepackage{letltxmacro}

\usepackage{listings}
\theoremstyle{definition}

\newtheorem{defn}{Definition}[section]
\newtheorem{aux-defn}[defn]{Auxiliary Definition}
\newtheorem{ex}[defn]{Example}
\newtheorem{non-ex}[defn]{Non-Example}
\newtheorem{sitn}[defn]{Situation}
\newtheorem{var-def}[defn]{Variant}

\newtheorem{nota}[defn]{Notation}

\newtheorem{constr}[defn]{Construction}
\newtheorem{rem}[defn]{Remark}

\newtheorem{prob-upright}[defn]{Problem}

\theoremstyle{plain}

\newtheorem{prop}[defn]{Proposition}
\newtheorem{cor}[defn]{Corollary}
\newtheorem{lem}[defn]{Lemma}
\newtheorem{univ-prop}[defn]{Universal Property}
\newtheorem{thm}[defn]{Theorem}
\newtheorem{lem-def}[defn]{Lemma and Definition}

\newtheorem{clm}[defn]{Claim}

\newenvironment{enum:arabic}{\enumerate[label=\textnormal{(\arabic*)}]}{\endenumerate}
\newenvironment{enum:alph}{\enumerate[label=\textnormal{(\alph*)}]}{\endenumerate}
\newenvironment{enum:roman}{\enumerate[label=\textnormal{(\roman*)}]}{\endenumerate}

\DeclareFontFamily{U}  {MnSymbolC}{}
\DeclareSymbolFont{MnSyC}         {U}  {MnSymbolC}{m}{n}
\SetSymbolFont{MnSyC}       {bold}{U}  {MnSymbolC}{b}{n}
\DeclareFontShape{U}{MnSymbolC}{m}{n}{
	<-6>  MnSymbolC5
	<6-7>  MnSymbolC6
	<7-8>  MnSymbolC7
	<8-9>  MnSymbolC8
	<9-10> MnSymbolC9
	<10-12> MnSymbolC10
	<12->   MnSymbolC12}{}
\DeclareFontShape{U}{MnSymbolC}{b}{n}{
	<-6>  MnSymbolC-Bold5
	<6-7>  MnSymbolC-Bold6
	<7-8>  MnSymbolC-Bold7
	<8-9>  MnSymbolC-Bold8
	<9-10> MnSymbolC-Bold9
	<10-12> MnSymbolC-Bold10
	<12->   MnSymbolC-Bold12}{}

\DeclareMathSymbol{\invneg}{\mathord}{MnSyC}{181}
\DeclareMathSymbol{\blackdiamond}{\mathord}{MnSyC}{109}
\DeclareMathSymbol{\largestar}{\mathord}{MnSyC}{131}
\DeclareMathSymbol{\largediamond}{\mathord}{MnSyC}{111}
\DeclareMathSymbol{\splus}{\mathord}{MnSyC}{20}
\DeclareMathSymbol{\stimes}{\mathord}{MnSyC}{21}
\DeclareMathSymbol{\ssqcup}{\mathord}{MnSyC}{64}

\DeclareFontFamily{U}{mathb}{\hyphenchar\font45}
\DeclareFontShape{U}{mathb}{m}{n}{
	<5> <6> <7> <8> <9> <10> gen * mathb
	<10.95> mathb10 <12> <14.4> <17.28> <20.74> <24.88> mathb12
}{}
\DeclareSymbolFont{mathb}{U}{mathb}{m}{n}
\DeclareFontSubstitution{U}{mathb}{m}{n}

\DeclareMathSymbol{\pluscirc}{\mathbin}{mathb}{'011}
\DeclareMathSymbol{\sqbullet}{\mathbin}{mathb}{'015}

\gdef\notesoff{\gdef\note##1{}}
\notesoff

\DeclareFontFamily{U}{mathx}{}
\DeclareFontShape{U}{mathx}{m}{n}{ <-> mathx10 }{}
\DeclareSymbolFont{mathx}{U}{mathx}{m}{n}
\DeclareFontSubstitution{U}{mathx}{m}{n}
\DeclareMathAccent{\widecheck}{0}{mathx}{"71}

\newcommand{\kk}{\Bbbk}
\newcommand{\kq}{\mathbbm{q}}

\newcommand{\itUpsilon}{\mathit{\Upsilon}}

\newcommand{\IA}{\mathbb{A}}

\newcommand{\CC}{\mathbb{C}}

\newcommand{\GG}{\mathbb{G}}

\newcommand{\NN}{\mathbb{N}}

\newcommand{\PP}{\mathbb{P}}
\newcommand{\QQ}{\mathbb{Q}}
\newcommand{\RR}{\mathbb{R}}

\newcommand{\ZZ}{\mathbb{Z}}

\newcommand{\cA}{\mathcal{A}}
\newcommand{\cB}{\mathcal{B}}
\newcommand{\cC}{\mathcal{C}}

\newcommand{\cE}{\mathcal{E}}
\newcommand{\cF}{\mathcal{F}}

\newcommand{\cH}{\mathcal{H}}
\newcommand{\cI}{\mathcal{I}}

\newcommand{\cK}{\mathcal{K}}
\newcommand{\cL}{\mathcal{L}}
\newcommand{\cM}{\mathcal{M}}
\newcommand{\cN}{\mathcal{N}}
\newcommand{\cO}{\mathcal{O}}
\newcommand{\cP}{\mathcal{P}}

\newcommand{\cS}{\mathcal{S}}

\newcommand{\cU}{\mathcal{U}}

\newcommand{\cW}{\mathcal{W}}

\newcommand{\scA}{\mathscr{A}}
\newcommand{\scB}{\mathscr{B}}
\newcommand{\scC}{\mathscr{C}}
\newcommand{\scD}{\mathscr{D}}
\newcommand{\scE}{\mathscr{E}}

\newcommand{\scK}{\mathscr{K}}

\newcommand{\scS}{\mathscr{S}}
\newcommand{\scT}{\mathscr{T}}

\newcommand{\scV}{\mathscr{V}}
\newcommand{\scW}{\mathscr{W}}
\newcommand{\scX}{\mathscr{X}}

\newcommand{\scZ}{\mathscr{Z}}

\newcommand{\fa}{\mathfrak{a}}

\newcommand{\fm}{\mathfrak{m}}
\newcommand{\fn}{\mathfrak{n}}

\newcommand{\fp}{\mathfrak{p}}
\newcommand{\fq}{\mathfrak{q}}
\newcommand{\fr}{\mathfrak{r}}

\DeclareMathOperator{\Spec}{Spec}

\DeclareMathOperator{\Proj}{Proj}

\mathchardef\hy"2D 

\newcommand{\eps}{\varepsilon}
\newcommand{\cp}{\varphi}

\newcommand{\lookup}{\colorbox{yellow}{Look it up!}}

\newcommand{\m}[1]{\mathrm{#1}}

\newcommand{\ms}[1]{\mathsf{#1}}

\newcommand*{\qedloz}{%
	\leavevmode\unskip\penalty9999 \hbox{}\nobreak\hfill
	\quad\hbox{$\lozenge$}%
}

\newcommand*{\qeddef}{%
	\leavevmode\unskip\penalty9999 \hbox{}\nobreak\hfill
	\quad\hbox{$\blacklozenge$}%
}

\newcommand*{\qedex}{%
	\leavevmode\unskip\penalty9999 \hbox{}\nobreak\hfill
	\quad\hbox{$\lozenge$}%
}

\DeclareFontFamily{U}{mathx}{\hyphenchar\font45}
\DeclareFontShape{U}{mathx}{m}{n}{
	<-6> mathx5 <6-7> mathx6 <7-8> mathx7
	<8-9> mathx8 <9-10> mathx9
	<10-12> mathx10 <12-> mathx12
}{}
\DeclareSymbolFont{mathx}{U}{mathx}{m}{n}
\DeclareFontSubstitution{U}{mathx}{m}{n}
\DeclareMathAccent{\widebar}{0}{mathx}{"73}

\title{Generically log smooth families \\ via generators and relations}
\author{Simon Felten}

\begin{document}

\maketitle

\begin{abstract}
	Let $f\colon X \to \IA^1_t$ be an affine flat morphism of finite type, and let $V = f^{-1}(0)$. Then, we obtain a morphism of log schemes $f\colon (X|V) \to (\IA^1_t|0)$. In this article, we develop algorithmic tools to study the log-geometric properties of $f$ by means of a presentation \[\Gamma(X,\cO_X) = \kk[t,x_1,\ldots,x_n]/(f_1,\ldots,f_r).\]
	We obtain similar tools for projective flat morphisms when the homogeneous coordinate ring is given by generators and relations. We provide an implementation of our algorithms in Macaulay2. In a slightly different direction, we give some results on the sheaf $\cL\cS_V$ of log smooth structures on a toroidal crossing scheme $(V,\cP,\bar\rho)$.
\end{abstract}

\newpage

\tableofcontents

\section{Introduction}

In this introduction, we motivate our results and give an overview. We discuss them in detail in Part I (which can and should be read as an extended introduction), and the proofs are provided in Part II.

\subsection{An algorithmic criterion for generic log smoothness}

We discuss the main result of this article.

\bigskip

\noindent\textbf{Degenerating families of algebraic varieties.} We fix an algebraically closed field $\kk$ of characteristic $0$. Let $B/\kk$ be a smooth curve, and let $f\colon X \to B$ be a degenerating family of algebraic varieties. Let $0 \in B$ be a point, and suppose that $V \coloneq f^{-1}(0)$ is the only singular fiber. 

\begin{ex}\label{ex:intro-quartic}\note{ex:intro-quartic}
	We take $\overline B = \Spec \kk[t]$ and $\overline X = \Proj \kk[t,x,y,z,w]/(f_1)$ with $\m{deg}(t) = 0$ and $\m{deg}(x) = \m{deg}(y) = \m{deg}(z) = \m{deg}(w) = 1$, and with 
	\[f_1 = xyzw - t(x^4 + y^4 + z^4 + w^4).\]
	This yields a degeneration $\bar f\colon \overline X \to \overline B$ of a smooth quartic surface in $\PP^3$ to $V = \bar f^{-1}(0) = \{xyzw = 0\}$. After restricting to a suitable open subset $0 \in B \subseteq \overline B$, we obtain a family $f\colon X \to B$ such that $V = f^{-1}(0)$ is the only singular fiber. For an illustration of the central fiber, see \cite[Fig.~1.2, Fig.~1.6]{FeltenGLDT}. \qedex
\end{ex}
\begin{ex}\label{ex:intro-Fano}\note{ex:intro-Fano}
	We take $\overline B = \Spec \kk[t]$ and $\overline X = \Proj \kk[t,x,y,z,u,v,w]/(f_1,f_2)$ with $\m{deg}(t) = 0$ and the other variables of degree $1$, and with
	\[f_1 = xy - z^2 + t^2(u^2 + v^2 + w^2) \quad \text{and} \quad f_2 = zw - t(uv - x^2 - y^2).\]
	This yields a degeneration $\bar f\colon \overline X \to \overline B$ of a smooth Fano threefold in $\PP^5$. After restricting to a suitable open subset $0 \in B \subseteq \overline B$, we obtain a family $f\colon X \to B$ whose only singular fiber is $V = f^{-1}(0)$. We will come back to these equations in Example~\ref{ex:projective-112}, where the reader can also find an illustration. \qedex
\end{ex}

We turn $f$ into a morphism of logarithmic schemes $f\colon (X|V) \to (B|0)$ by endowing source and target with the divisorial log structure, defined in the \'etale topology. For a description of this construction, see \cite[\S III.1.6]{LoAG2018}. In some cases, for example when $f\colon X \to B$ is semistable, this morphism is log smooth. Then, its formal properties are similar to those of smooth morphisms, and we can use log-geometric tools to analyze the degeneration. For example, when $f\colon X \to B$ is proper, the log Hodge--de Rham spectral sequence degenerates at $E_1$, and the log Hodge sheaves are vector bundles \cite{kkatoFI}. We also have a logarithmic Gromov--Witten theory \cite{Gross2013,ACGS2020,ACGS2025} that relates curve counts on $V = f^{-1}(0)$ to curve counts on $X_t = f^{-1}(t)$ for $t \not= 0$.

In practice, $f\colon (X|V) \to (B|0)$ is not log smooth for most degenerating families of algebraic varieties. For instance, neither the family in Example~\ref{ex:intro-quartic} nor the family in Example~\ref{ex:intro-Fano} is log smooth. However, similar to normal varieties, there is often a closed subset $Z_X \subset X$ of relative codimension $\geq 2$ such that $f\colon \big(X \setminus Z_X\big|V \setminus Z_X\big) \to (B|0)$ is log smooth. We say that $f\colon (X|V) \to (B|0)$ is \emph{generically log smooth}, and that $Z_X \subset X$ is the \emph{log singular locus}. We recall details of this notion in Definition~\ref{defn:gls}. 

Our broader goal is to develop tools that do not only apply to log smooth families but to generically log smooth families. For example, in \cite{FFR2021}, we showed that the log Hodge--de Rham spectral sequence still degenerates at $E_1$ when the log singularities have a certain simple form. The log singularities in Example~\ref{ex:intro-quartic} have this form.

In contrast, the log singularities in Example~\ref{ex:intro-Fano} do not have this specific form. For more general log singularities like these, very little is known. For instance, we do not currently have a definition of the log de Rham complex for this family with the property that the log Hodge sheaves are vector bundles. The first step toward a broader understanding is to analyze concrete examples. In this article, we provide algorithmic tools to establish that a family $f\colon (X|V) \to (B|0)$ given by generators and relations is indeed generically log smooth.

\bigskip

\noindent\textbf{Computing the log singular locus.} Suppose that we want to compute the log singular locus $Z_X \subset X$ from the equations. In Example~\ref{ex:intro-quartic}, the central fiber $V = \{xyzw = 0\}$ is a normal crossing scheme. By \cite[Lemma~1.44]{FeltenGLDT}, the morphism $f\colon X \to B$ is semistable in a point $p \in V$ if and only if $X$ is smooth in $p$. A direct computation yields 
\[\m{Sing}(X) \cap V = \m{Sing}(V) \cap \{x^4 + y^4 + z^4 + w^4 = 0\},\]
a collection of $24$ points, $4$ on each of the six lines of $D = \m{Sing}(V)$. Then, $f\colon X \to B$ is semistable outside these $24$ points, and hence generically log smooth with log singular locus $Z_X = \m{Sing}(X) \cap V$ since $f\colon X \setminus V \to B \setminus \{0\}$ is smooth.

Let us try this approach in Example~\ref{ex:intro-Fano}. The central fiber $V$ has three irreducible components 
\begin{align*}
	V(x,y,z) &= \{t = w = xy - z^2 = 0\} & &\cong \Proj \kk[x,y,z,u,v]/(xy - z^2), \\
	V(x,w) &= \{t = y = z = 0\} & &\cong \Proj \kk[x,w,u,v], \\
	V(y,w) &= \{t = x = z = 0\} &  &\cong \Proj \kk[y,w,u,v].
\end{align*}
Any two irreducible components intersect in a copy of $\PP^2$. Concretely, let us write 
\[D(x) = V(x,w) \cap V(x,y,z),\ D(y) = V(y,w) \cap V(x,y,z),\ D(w) = V(x,w) \cap V(y,w).\]
Then, $D \coloneq D(x) \cup D(y) \cup D(w)$ is both the singular and the non-normal locus of $V$. Furthermore, all three irreducible components of $V$ intersect in $T \cong \PP^1$. 

Now, $V$ is not a normal crossing scheme along $T$. This is no problem as we only want to show log smoothness in codimension $1$. Thus, we record $T$ as part of the locus $Z_X \subset X$ where $f\colon (X|V) \to (B|0)$ is not (necessarily) log smooth. 

We consider $D(x)$. Among points in $D(x) \setminus T$, the total space $X$ is singular in $Z(x) \coloneq D(x) \cap \{x^2 - uv = 0\}$. This is a curve, so we can add it to $Z_X$. At points in $D(x) \setminus (T \cup Z(x))$, the family is semistable. Similarly, it is semistable at points in $D(y) \setminus (T \cup Z(y))$ with $Z(y) = D(y) \cap \{y^2 - uv = 0\}$.

The situation is different for $D(w)$; namely, $X$ is \emph{nowhere} smooth along $D(w)$. Thus, $D(w)$ is a closed subset of relative codimension $1$ where $f\colon X \to B$ is not semistable, and we cannot yet conclude that $f\colon (X|V) \to (B|0)$ is generically log smooth.

Being semistable is not the only way in which $f\colon (X|V) \to (B|0)$ can be log smooth at a point $p \in V$. Let us consider the morphism 
\[\mu_{\ell;d}\colon \enspace M_{\ell;d} = \Spec \kk[t,x,y,z_1,\ldots,z_{d - 1}]/(xy - t^\ell) \to \Spec \kk[t] = B.\]
We say that $\ell \geq 1$ is the \emph{kink}.\footnote{This name is motivated by the shape of the monoid defining the affine toric variety $M_{\ell;d}$, or rather of a piecewise linear function which gives rise to that monoid.} When we endow source and target with the divisorial log structure defined by $t = 0$, then this morphism becomes log smooth. The case $\ell = 1$ is semistable, but for $\ell \geq 2$, this is a different way to be log smooth. Here, the total space is indeed singular along $\{t = x = y = 0\}$.

To show that the family $f\colon (X|V) \to (B|0)$ in Example~\ref{ex:intro-Fano} is generically log smooth, we wish to compute a locus $Z(w) \subset D(w)$ such that, for each closed point $p \in D(w) \setminus (T \cup Z(w))$, the ring homomorphism $f^\sharp\colon \cO_{B,0}^\m{h} \to \cO_{X,p}^\m{h}$ between Henselian stalks is isomorphic to the ring homomorphism $\mu_{\ell;d}^\sharp\colon \cO_{B,0}^\m{h} \to \cO_{M_{\ell;d},0}^\m{h}$ at $0 \in M_{\ell;d}$. We call such a point a \emph{double toroidal crossing (dtc) degeneration point} (of kink $\ell \geq 1$).

\bigskip

\noindent\textbf{Double normal crossing schemes.} Before coming back to dtc degeneration points, we say more about the central fiber. In Example~\ref{ex:intro-Fano}, it is easy to compute directly that $V \setminus T$ is a normal crossing scheme. For more complicated equations, this might not be as easy. In this article, we establish a new computational criterion to determine the \emph{double normal crossing locus} of the central fiber $V$, i.e., the locus where $V$ is \'etale locally equivalent to $\Spec \kk[x,y,z_1,\ldots,z_{d - 1}]/(xy)$. For details, we refer the reader to Section~\ref{sect:dnc-points}.

\begin{prop}\label{prop:intro-dnc}\note{prop:intro-dnc}
	Let $\kk$ be an algebraically closed field, and let $V/\kk$ be reduced, separated, of finite type, and assume that every irreducible component has the same dimension $d \geq 1$. Let $\nu\colon \widetilde V \to V$ be the normalization map, and let $C \subset V$ be the conductor locus. Let $\mu\colon \widetilde C \to C$ be the restriction of the normalization map to $C \subset V$. Then, the double normal crossing locus of $V$ is the largest open subset $W \subseteq V$ with the following properties:
	\begin{enum:alph}
		\item $\widetilde W \coloneq \nu^{-1}(W) \subseteq \widetilde V$ is smooth over $\Spec \kk$;
		\item $C \cap W$ is smooth over $\Spec \kk$, and every irreducible component has dimension $d - 1$;
		\item $\nu\colon \widetilde W \to W$ is unramified;
		\item $\mu_*\cO_{\widetilde C \cap \widetilde W}$ is a locally free $\cO_{C \cap W}$-module of rank $2$.
	\end{enum:alph}
\end{prop}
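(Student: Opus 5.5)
We show two inclusions. Each of properties (a)--(d) is étale local on $V$: normalization and the conductor commute with étale base change, smoothness and unramifiedness are étale local, and local freeness of a given rank descends along faithfully flat maps. The family of opens satisfying (a)--(d) is closed under unions, because all four properties are local on $W$. Hence a largest such open $W_{\max}$ exists. It then suffices to show: (i) the double normal crossing locus $W_{\mathrm{dnc}}$ satisfies (a)--(d), and (ii) every closed point $p \in W_{\max}$ is a dnc point.

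For (i) we reduce by étale localness to the model $V_0 = \Spec \kk[x,y,z_1,\ldots,z_{d-1}]/(xy)$. Here the normalization is $\widetilde V_0 = \IA^d \sqcup \IA^d$, given by $\{x=0\} \sqcup \{y=0\}$, which is smooth. The map $\nu$ is a closed immersion on each component, so it is unramified. The conductor ideal is $(x,y)$, so $C_0 = \{x=y=0\} \cong \IA^{d-1}$, which is smooth of pure dimension $d-1$. Its preimage $\widetilde C_0$ consists of two disjoint copies of $C_0$, each mapping isomorphically to $C_0$. Thus $\mu_*\cO_{\widetilde C_0} \cong \cO_{C_0}^{2}$. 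At points of $V_0 \setminus C_0$ the scheme is smooth and (b) and (d) are vacuous. There is one subtlety: the conductor locus of the model must agree with the étale pullback of the conductor locus of $V$. This holds because the formation of the conductor commutes with flat base change, since normalization does.

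For (ii), take a closed point $p \in W_{\max}$. If $p \notin C$, then $\nu$ is an isomorphism near $p$, so $V$ is smooth at $p$ by (a); this is a dnc point in the degenerate sense (no crossing), assuming the paper's definition includes smooth points. If $p \in C$, pass to Henselian (or complete) local rings. By (d) the fiber $\mu^{-1}(p)$ has length $2$. By (c) this fiber is reduced, so it consists of two points $q_1, q_2$ with residue field $\kk$. Hence $\widehat{\cO}_{V,p} \hookrightarrow \widehat{\cO}_{\widetilde V,q_1} \times \widehat{\cO}_{\widetilde V,q_2} \cong \kk[[u_1,\ldots,u_d]] \times \kk[[v_1,\ldots,v_d]]$ by (a). Unramifiedness at $q_i$ means the map on cotangent spaces $\fm_p/\fm_p^2 \to \fm_{q_i}/\fm_{q_i}^2$ is surjective, so each branch map $\Spec \widehat{\cO}_{\widetilde V,q_i} \to \Spec \widehat{\cO}_{V,p}$ is a closed immersion onto a smooth $d$-dimensional branch $B_i$. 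Next we must identify $B_1 \cap B_2$ with $C$ scheme-theoretically. The preimage $\widetilde C$ in each branch is a divisor (the conductor on a regular ring is invertible), and since $\widetilde C \to C$ is locally free of rank $2$ with reduced smooth base, $\widetilde C \cap \Spec \widehat{\cO}_{\widetilde V,q_i} \to C$ is an isomorphism for each $i$. So $C$ embeds in $B_i$ as a smooth hypersurface, giving $\widehat{\cO}_{C,p} = \kk[[z_1,\ldots,z_{d-1}]]$, with $B_1 = \{x = 0\}$-type coordinates. We then choose coordinates $z_j$ on $C$, lift them to $\widehat{\cO}_{V,p}$, and pick equations $x$, $y$ of $C$ in $B_1$ and $B_2$. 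The aim is to show $\widehat{\cO}_{V,p} = \{(f,g) : f|_C = g|_C\}$, i.e.\ that $V$ is the seminormal gluing. Given that, $\widehat{\cO}_{V,p} \cong \kk[[x,y,z]]/(xy)$ follows immediately, and Artin approximation converts the formal isomorphism into an étale-local one.

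The main obstacle is exactly this last identification: showing that $\cO_V$ equals the fiber product $\nu_*\cO_{\widetilde V} \times_{\mu_*\cO_{\widetilde C}} \cO_C$, which rules out cusp-like non-seminormal gluings. My approach is to use the conductor square, which is a pullback by the definition of the conductor. The condition that $C$ itself is reduced and smooth (by (b)), with $\cO_C \to \mu_*\cO_{\widetilde C}$ being the diagonal $\cO_C \to \cO_C^2$ by the previous step, then forces the pullback to be precisely the ring of pairs agreeing on $C$. This step requires care to use the scheme structure of the conductor rather than just its reduced support.
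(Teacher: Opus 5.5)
Your proposal is correct and follows the same skeleton as the paper. For one inclusion you use the model computation. For the other you use three facts: $p \notin C$ forces smoothness; (c) and (d) make $\mu$ finite \'etale, so $\nu^{-1}(p)$ consists of two $\kk$-points, over each of which $\widetilde C$ is locally identified with $C$; and the conductor square is co-Cartesian, i.e.\ $A \cong \widetilde A \times_{\widetilde A/I} A/I$ for the conductor ideal $I$. This last fact is Claim~\ref{clm:conductor-square-co-Cartesian}, and it is immediate: $I$ is an ideal of $\widetilde A$ contained in $A$, so a pair $(\tilde a,[a])$ is the image of $a + (\tilde a - a)$ with $\tilde a - a \in I$.

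The real difference is where the local analysis happens.
\begin{itemize}
\item You complete at $p$. The fiber product survives because completion is exact on finite $\cO_{V,p}$-modules, and the completion of $(\nu_*\cO_{\widetilde V})_p$ splits as $\widehat\cO_{\widetilde V,q_1} \times \widehat\cO_{\widetilde V,q_2}$ for free. You then need Artin approximation to upgrade $\widehat\cO_{V,p} \cong \kk[[x,y,z_1,\ldots,z_{d-1}]]/(xy)$ to the isomorphism of Henselian local rings that the definition of a dnc point demands.
\item The paper stays with Henselian stalks. It uses that \'etale stalks commute with the finite limit. It proves the splitting $S \otimes_R R_\fp^{\m{h}} \cong \prod_i S_{\fq_i}^{\m{h}}$ for finite $R \to S$ by hand (Lemma~\ref{lem:Henselian-finite-splitting}). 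It uses Lemma~\ref{lem:complete-Henselian} to see that a regular system of parameters of $(A/I)_{\fp}^{\m{h}}$ already gives a Henselian, not just formal, isomorphism with $\kk[z_1,\ldots,z_{d-1}]^{\m{h}}$. This yields $\cO_{V,p}^{\m{h}} \cong R_{\times,d}^{\m{h}}$ directly.
\end{itemize}
So your route is shorter but imports Artin's theorem, while the paper's is self-contained at the cost of two elementary lemmas. A minor difference: the paper only computes $\kk[x,y]/(xy)$ and transports along the smooth projection via Claim~\ref{clm:intrinsic-dnc-smooth-local}, whereas you compute the conductor $(x,y)$ directly in $d$ variables. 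Both work.

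Two justifications need repair, though neither affects the conclusion.
\begin{itemize}
\item The parenthetical ``the conductor on a regular ring is invertible'' is false in general. For two planes in $\IA^4$ meeting in a point, the conductor is the maximal ideal of that point. The correct reason $\widetilde C$ is a divisor at $q_i$ is that $\mu$ is \'etale there. Hence $\widehat\cO_{\widetilde C,q_i} \cong \widehat\cO_{C,p}$ is regular of dimension $d-1$ by (b). A regular quotient of dimension $d-1$ of a regular local ring of dimension $d$ is cut out by one element of a regular system of parameters.
\item The map $\cO_C \to \mu_*\cO_{\widetilde C}$ is the diagonal $\cO_C \to \cO_C^{2}$ only after completing (or Henselizing) at $p$, not Zariski locally. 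The cover $\widetilde C \to C$ can be a nontrivial \'etale double cover.
\end{itemize}
Finally, the closed-immersion statement about the branches $B_i$ and the identification of $B_1 \cap B_2$ with $C$ are unnecessary once you have the fiber-product description.
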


\bigskip

\noindent\textbf{Double toroidal crossing degeneration points.} After determining the double normal crossing locus of $V$, we wish to determine its dtc degeneration points and hence the locus $Z(w) \subset D(w)$ we hoped for above. For dtc degeneration points, we have the following criterion, which we give in Corollary~\ref{cor:dtc-computable-condition} below. For the notion of a \emph{pre--generically log smooth family} used in the statement, see Definition~\ref{defn:gls}. Briefly, this means that $f$ is a flat morphism of schemes satisfying some extra hypotheses. Note that the criterion assumes that we already know that $p \in V$ is a double normal crossing point, i.e., a point with $\cO_{V,p}^\m{h} \cong \cO_{\{xy = 0\},0}^\m{h}$ as $\kk$-algebras.

\begin{prop}\label{prop:intro-dtc}\note{prop:intro-dtc}
	Let $0 \in B \subseteq \Spec \kk[t]$, and let $f\colon X \to B$ be a pre--gls family with central fiber $V = f^{-1}(0)$. Consider the short exact sequence
	\[0 \to \cO_X \xrightarrow{\iota} \Omega^1_X \xrightarrow{\pi} \Omega^1_{X/B} \to 0\]
	with $\iota(1) = f^*dt$. Let $p \in V$ be a closed double normal crossing point. Then, $p$ is a dtc degeneration point of kink $\ell = 1$ if and only if $\cE xt^1(\Omega^1_X|_{V},\cO_{V})_p = 0$ for the stalk at $p$. It is a dtc degeneration point of kink $\ell \geq 2$ if and only if the following conditions hold:
	\begin{enum:alph}
		\item the map 
		\[\xi_{\ell - 2}\colon \enspace \cE xt^1(\Omega^1_{X_{\ell - 2}/B_{\ell - 2}},\cO_{X_{\ell - 2}}) \to \cE xt^1(\Omega^1_X|_{X_{\ell - 2}},\cO_{X_{\ell - 2}})\]
		is injective at $p$;
		\item the multiplication map 
		\[\sigma_{\ell - 1}\colon \enspace \cE xt^1(\Omega^1_X|_{X_{\ell - 1}},\cO_{X_{\ell - 1}}) \xrightarrow{(-) \cdot f^\sharp(s)^{\ell - 1}} \cE xt^1(\Omega^1_X|_{X_{\ell - 1}},\cO_{X_{\ell - 1}})\]
		is the zero homomorphism at $p$.
	\end{enum:alph}
	Here, $B_k = \Spec \kk[t]/(t^{k + 1})$, and $f_k\colon X_k \to B_k$ is the base change of $f\colon X \to B$ along $B_k \to B$.
\end{prop}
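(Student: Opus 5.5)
The plan is to reduce everything to a local computation in Henselian local rings, where both sides of the equivalence can be made explicit. Since $p$ is a closed double normal crossing point of $V$, we have $\cO_{V,p}^\m{h} \cong \cO_{\{xy = 0\},0}^\m{h}$. Since $f$ is flat, $\cO_{X,p}^\m{h}$ is a one-parameter deformation of this singularity over $\cO_{B,0}^\m{h}$. The deformation theory of the hypersurface singularity $xy = 0$ (times a smooth factor) gives its versal deformation as $xy = s$. I will use this, together with a Henselian lifting argument, to find a presentation
\[\cO_{X,p}^\m{h} \cong \big(\kk[t,x,y,z_1,\ldots,z_{d-1}]/(xy - g(t,z))\big)^\m{h}, \qquad g \in (t).\]
I expect the extra hypotheses in a pre--gls family to be needed at exactly this step. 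Their role would be to guarantee that $X$ is locally a hypersurface of this form with $g \not\equiv 0$, so that the generic fiber is not also singular along the double locus. In this normal form, $p$ is a dtc degeneration point of kink $\ell$ if and only if, after a change of coordinates, $g = t^\ell u$ with $u$ a unit. The unit can then be absorbed into $x$.

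Next I compute all the $\cE xt$-sheaves in the normal form. The key input is the two-term resolution of $\Omega^1_X$ for a hypersurface. It gives the stalk
\[\cE xt^1(\Omega^1_X|_{X_k},\cO_{X_k})_p \cong \cO_{X_k,p}/(x,y,\partial_t g,\partial_{z_1} g,\ldots,\partial_{z_{d-1}}g),\]
because restriction to $X_k$ commutes with this computation via base change of the presentation. By the same argument, the relative version is
\[\cE xt^1(\Omega^1_{X_k/B_k},\cO_{X_k})_p \cong \cO_{X_k,p}/(x,y,\partial_z g).\]
Under these identifications, $\xi_k$ should be the natural quotient map, i.e.\ the connecting map coming from dualizing the given short exact sequence $0 \to \cO_X \to \Omega^1_X \to \Omega^1_{X/B} \to 0$ along $\iota(1) = f^*dt$. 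Hence:
\begin{enum:alph}
\item injectivity of $\xi_{\ell-2}$ means $\partial_t g \in (x,y,\partial_z g, t^{\ell-1})$;
\item vanishing of $\sigma_{\ell-1}$ means $t^{\ell-1} \in (x,y,\partial_t g,\partial_z g,t^\ell)$.
\end{enum:alph}

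For kink $1$, the condition becomes that $\cO_{V,p}/(\partial_t g|_{t=0})$ vanishes. Here $\partial_z g \in (t)$ drops out after setting $t = 0$. So the condition says $\partial_t g(0,0) \neq 0$, i.e.\ $g = t \cdot (\text{unit})$, which is exactly semistability.

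For $\ell \geq 2$, the plan is to expand $g = \sum_{k \geq 1} t^k a_k(t,z)$, working modulo $(x,y)$, where the local ring is $\kk[t,z]^\m{h}$. The argument then proceeds by induction in three steps.
\begin{enum:arabic}
\item Using (a), show inductively that $a_1, \ldots, a_{\ell-1}$ can be killed. Each low-order term would contribute a $\partial_t g$ that is not in $(\partial_z g, t^{\ell-1})$, unless it is itself generated by $z$-derivatives. In that case a coordinate change of the form $z \mapsto z + t\,\phi$ or $x \mapsto x + (\ldots)$ removes it.
\item Using (b), show that $\ell\, a_\ell(0,0) \neq 0$, so that $g \equiv t^\ell \cdot (\text{unit})$. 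Characteristic $0$ is used here.
\item Conversely, verify (a) and (b) directly for $g = t^\ell$. This is immediate: the relevant quotients are $\kk[t,z]/(t^{\ell-1})$ on both sides of $\xi_{\ell-2}$, and $\kk[t,z]/(t^{\ell-1},t^\ell)$ is killed by $t^{\ell-1}$.
\end{enum:arabic}

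I expect the main obstacle to be this inductive normalization of $g$: showing that the two ideal-membership conditions force the lower coefficients to be removable by a Henselian coordinate change, rather than merely small. I would first try to handle it by a Tougeron/Artin-type argument: solve for the coordinate change order by order in $t$ and then lift to the Henselization. The other delicate point is justifying that $\xi_k$ and $\sigma_k$ really correspond to the quotient and multiplication maps in the explicit presentation, including compatibility with base change to $X_k$.
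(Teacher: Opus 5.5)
Your route is correct in outline but genuinely different from the paper's. The paper never brings $X$ into a normal form $xy - g(t,z)$. Instead it works with the extension classes $\eta_k(f)$ of $0 \to \cO_X \to \Omega^1_X \to \Omega^1_{X/B} \to 0$ restricted to $X_k$, in four steps:
\begin{enum:arabic}
\item Lemma~\ref{lem:trivial-defo-characterization}, a pushout argument on deformation functors using $\m{char}(\kk) = 0$, shows that $\eta_{\ell-2}(f)|_p = 0$ iff $f_{\ell-1}$ is locally trivial.
\item This gives $\cO_{X,p}^\m{h} \cong P/(xy - t^\ell u)$ with $u$ arbitrary (Lemma~\ref{lem:dtd-intermediate}).
\item The formula $\eta_{\ell-1}(f) = \ell u t^{\ell-1}e_0$ detects whether $u$ is a unit (Proposition~\ref{prop:effective-dtd-criterion}).
\item The long exact $\cE xt$-sequence and Lemma~\ref{lem:Omega-projdim-1} convert this into the conditions on $\xi_{\ell-2}$ and $\sigma_{\ell-1}$.
\end{enum:arabic}
You put all the work into the normal form and then read off the stalks directly. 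For instance, $\xi_{\ell-2}$ becomes the quotient map $\kk[t,z]^\m{h}/(g,\partial_z g,t^{\ell-1}) \to \kk[t,z]^\m{h}/(g,\partial_t g,\partial_z g,t^{\ell-1})$.

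Once the normal form is available, the step you call the main obstacle is immediate and needs no coordinate change. If $g = t^m h$ with $1 \le m \le \ell-1$, then $g$, $\partial_z g$ and $t^{\ell-1}$ all lie in $(t^m)$, while $\partial_t g \equiv m t^{m-1}h \bmod t^m$. So (a) forces $h \in (t)$, and inductively $g = t^\ell h$. Then (b) becomes $1 \in (\ell h, t)$, i.e.\ $h$ is a unit.

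Your route is more transparent and uses only flatness of $f$ near $p$. Contrary to your guess, pre--gls does not force $g \not\equiv 0$: $X = V \times B$ is pre--gls with $g = 0$, and then every condition simply fails. The paper's route stays intrinsic and needs no normal-form theorem. Its intermediate results ($\eta_k$, the triviality criterion, Lemma~\ref{lem:ses-at-lt-point}) are reused in Claims~\ref{clm:injectivity-descends} and~\ref{clm:injectivity-stops} and in Construction~\ref{constr:eta-ell}.

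What you still owe is the normal form itself. It does not come from a versal deformation of $\{xy = 0\} \times \IA^{d-1}$, whose $T^1$ is infinite-dimensional. Instead, regard $\cO_{X,p}^\m{h}$ as a family of nodes over $\kk[t,z]^\m{h}$ via lifts $\tilde z_i$; this is flat by miracle flatness, since $\cO_{X,p}$ is Cohen--Macaulay. Then either use versality of $xy = s$ plus Artin approximation, or argue elementarily:
\begin{enum:arabic}
\item Start from $\cO_{X,p}^\m{h} \cong P/(xy - tu)$, which is Lemma~\ref{lem:dtd-intermediate} with $\ell = 1$.
\item Solve $\partial_x F = \partial_y F = 0$ by the Henselian implicit function theorem, and let $-g(t,z)$ be the critical value.
\item Factor the remaining quadratic part using Hadamard's and Hensel's lemmas.
\end{enum:arabic}
Also check that $1 \mapsto dF$ stays injective on $\cO_{X_k}$; for example, $x + y$ is a nonzerodivisor there. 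This is what makes the pulled-back presentation compute $\cE xt^1(\Omega^1_X|_{X_k},\cO_{X_k})$.
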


In Example~\ref{ex:intro-Fano}, every point in $D(w) \setminus T$ is a double normal crossing point. The map $\xi_0$ is injective at every $p \in D(w) \setminus T$, and among these points, the image of $\sigma_1$ is non-zero precisely in 
\[Z(w) \coloneq D(w) \cap \{u^2v^2 - u^2w^2 - v^2w^2 - w^4 = 0\}.\]
This is a curve as well, and $f\colon (X|V) \to (B|0)$ is log smooth at every point in $D(w) \setminus (T \cup Z(w))$. Since $V$ and hence $f$ is smooth at points in $V \setminus D$, we find that $f\colon (X|V) \to (B|0)$ is log smooth in every point of $V \setminus Z_V$ for the curve $Z_V \coloneq T \cup Z(x) \cup Z(y) \cup Z(w)$. Now, the restricted map $f\colon X \setminus V \to B \setminus \{0\}$ is smooth, so $f\colon (X|V) \to (B|0)$ is generically log smooth with log singular locus $Z_X = Z_V$.\footnote{The computation shows that the log singular locus (inside $V$) is \emph{at most} $Z_V$. In fact, $T$ is superfluous in the definition of $Z_V$, but we cannot show this with the general computational methods developed in this article. Cf.~Remark~\ref{rem:precise-log-smooth-locus-II}.}

\bigskip 

\noindent\textbf{The algorithmic criterion.} Based on Proposition~\ref{prop:intro-dnc} and Proposition~\ref{prop:intro-dtc}, we develop an algorithmically verifiable sufficient criterion for generic log smoothness. The precise formulation is quite lengthy. We refer the reader to Section~\ref{sect:criterion-explicit-affine} for details in the affine setup. An analogous result for projective families (such as Example~\ref{ex:intro-Fano}) is discussed in Section~\ref{sect:criterion-explicit-projective}. In Examples~\ref{ex:112} and \ref{ex:projective-112}, we illustrate how to check the conditions step by step, in the affine respectively projective case. For both the affine and the projective case, we provide an implementation in Macaulay2 which automatizes the analysis.

\subsection{Toroidal crossing schemes}

We discuss another result that we also prove in this article. Although it is not a necessary step toward our main theorem, we decided to include it because the methods we use in the proof are closely related to the methods we use for our main result.

\bigskip

\noindent\textbf{Background and motivation.} A \emph{toroidal crossing scheme} $(V,\cP,\bar\rho)$ is a generalization of a normal crossing scheme. Here, $V/\kk$ is a reduced scheme of finite type, $\cP$ is a sheaf of sharp monoids in the \'etale topology of $V$, and $\bar\rho \in \Gamma(V,\cP)$ is a global section. We impose certain local models analogous to but more general than the local models $\{z_1 \cdot \ldots \cdot z_r = 0\}$ for a normal crossing scheme. These local models come from the full toric boundary of certain Gorenstein affine toric varieties. 

A toroidal crossing scheme $(V,\cP,\bar\rho)$ is intermediate between a mere scheme $V$ and a morphism of log schemes $f_0\colon (V,\cM) \to S_0$ to the standard log point $S_0 = \Spec(\NN \to \kk)$. In fact, we have a sheaf $\cL\cS_V$ of \emph{log smooth structures}, i.e., log structures $\alpha\colon \cM \to \cO_V$ with ghost sheaf $\cP$ and a log smooth morphism to the standard log point $S_0 = \Spec(\NN \to \kk)$ such that $\bar\tau_0 \mapsto \bar\rho$. Here, $\bar\tau_0$ is the generator of $\overline\cM_{S_0} \cong \NN$.

Toroidal crossing schemes arise for instance in the Mumford degeneration of a toric variety and in the toric Gross--Siebert mirror construction \cite{GrossSiebertI,GrossSiebertII}. In these and similar situations, the goal is usually to find a well-behaved section of $\cL\cS_V$ which is defined on an open subset $U \subseteq V$ as large as possible ($U = V$ is often not possible), and then to use logarithmic deformation theory to construct a deformation of $V$ to a smooth or mildly singular scheme. The frequent impossibility of $U = V$ makes it necessary to study log singularities in this context, ideally phrased in an explicit description of $\cL\cS_V$. For a survey on toroidal crossing schemes, see \cite[\S1, \S9]{FeltenGLDT}.

\begin{ex}\label{ex:intro-tc}\note{ex:intro-tc}
	Let $X = \Spec \kk[t,x,y,z,w,u]/(xy - z^2, zw - t)$ and $B = \Spec \kk[t]$, and consider the map $f\colon X \to B$. Let $V = f^{-1}(0)$. Then, $f\colon (X|V) \to (B|0)$ is log smooth, and $\cP \coloneq \overline\cM_X|_V$ together with $\bar\rho \coloneq f^\splus(\bar\tau)|_V$ turn $V$ into a toroidal crossing scheme. 
	
	Note that $V$ has three irreducible components $V(x,y,z)$, $V(x,w)$, and $V(y,w)$ analogous to Example~\ref{ex:intro-Fano}. We denote their intersections by $D(x) = V(x,w) \cap V(x,y,z) \cong \Spec \kk[x,u]$, $D(y) = V(y,w) \cap V(x,y,z) \cong \Spec \kk[y,u]$, and $D(w) = V(x,w) \cap V(y,w) \cong \Spec \kk[w,u]$. All three irreducible components of $V$ intersect in $T \cong \Spec \kk[u]$. For an illustration of $V$, see the figure in Example~\ref{ex:112}. \qedex
\end{ex}

In certain cases like Example~\ref{ex:intro-tc}, Gross and Siebert computed $\cL\cS_V$ as a sheaf of sets in \cite{GrossSiebertI}. In Example~\ref{ex:intro-tc}, the result is as follows: For an open subset $W \subseteq V$, a section in $\Gamma(W,\cL\cS_V)$ is given by three invertible functions $f_x \in \Gamma(W,\cO_{D(x)}^\times)$, $f_y \in \Gamma(W,\cO_{D(y)}^\times)$, and $f_w \in \Gamma(W,\cO_{D(w)}^\times)$ such that $f_x|_T = f_y|_T$ and $(f_x|_T)^2 = f_w|_T$ in $\Gamma(W,\cO_T^\times)$.

When we remove the condition that $f_x$, $f_y$, $f_w$ should be invertible, we obtain a larger sheaf $\cL\cS_V^+$. Then, if $f_x,f_y,f_w \not= 0$, we can set $Z(x) = D(x) \cap \{f_x = 0\}$, $Z(y) = D(y) \cap \{f_y = 0\}$, and $Z(w) = D(w) \cap \{f_w = 0\}$, and $\lambda^+ = (f_x,f_y,f_w)$ yields a section $\lambda \in \Gamma(V \setminus Z_V,\cL\cS_V)$, where $Z_V = Z(x) \cup Z(y) \cup Z(w)$. This yields a generically log smooth morphism $f_0\colon (V,\lambda^+) \to S_0$, where now the log structure is defined only on $V \setminus Z_V$.

In view of our broader goal to construct (generic) log structures which allow us to find a smoothing in the case $V$ proper, we say that $\lambda^+ = (f_x,f_y,f_w)$ is \emph{deformable} if it arises from a degenerating family, i.e., if there is a generically log smooth family $f\colon X \to B$ with central fiber $V \cong f^{-1}(0)$ and $Z_V = Z_X \cap V$ such that we can find an isomorphism $\overline\cM_{X \setminus Z_X}|_{V \setminus Z_V} \cong \cP|_{V \setminus Z_V}$ compatible with $f^\splus(\bar\tau)|_V$ and $\bar\rho$ which exhibits the log structure on the central fiber of $f\colon (X|V) \to (B|0)$ as the section $\lambda = \lambda^+|_{V \setminus Z_V} \in \Gamma(V \setminus Z_V,\cL\cS_V)$.

Not all $\lambda^+ = (f_x,f_y,f_w)$ are deformable. In current work in progress with Andr\'es David Gomez Villegas \cite{FeltenVillegas2026}, we address the question which ones are deformable for Example~\ref{ex:intro-tc} and some closely related specific examples.

Suppose that a generically log smooth family $f\colon X \to B$ with central fiber $V = f^{-1}(0)$ as in Example~\ref{ex:intro-tc} is given via generators and relations, and that we already have an isomorphism of the ghost sheaf on the central fiber with $\cP$. Then, it is not straightforward to compute $(f_x,f_y,f_w)$. In this article, we provide a technique that we will develop into a tool to compute $(f_x,f_y,f_w)$ in \cite{FeltenVillegas2026}.

\bigskip

\noindent \textbf{The result.} To explain the technique, let us go back to a normal crossing scheme $V/\kk$. This is a special case of a toroidal crossing scheme, where we set $\cP = \nu_*\underline\NN_{\widetilde V}$ for the normalization $\nu\colon \widetilde V \to V$. In particular, we have a sheaf of log smooth structures $\cL\cS_V$. In \cite{FFR2021,FeltenGLDT}, we studied a map of sheaves of sets 
\begin{equation}\label{eqn:eta-intro}
	\eta_V\colon \enspace \cL\cS_V \to \cE xt^1(\Omega^1_V,\cO_V)
\end{equation}
which turned out to be injective. This solves our above problem in the case of a normal crossing scheme. Namely, on the one hand, when $\lambda \in \Gamma(V \setminus Z_V,\cL\cS_V)$ is represented by the analog of $(f_x,f_y,f_w)$, the image $\eta_V(\lambda)$ is readily computable, and on the other hand, when $\lambda$ is represented as the central fiber of an explicit morphism $f\colon X \to B$, the very definition of $\eta_V$ allows us to compute $\eta_V(\lambda)$ directly.\footnote{Somewhat more importantly than the application presented here, the map $\eta_V$ is useful to construct sections of $\cL\cS_V$, as we did in \cite{FFR2021}. Moreover, the equivalence between the existence of a global log smooth structure and $d$-semistability, i.e., $\cE xt^1(\Omega^1_V,\cO_V) \cong \cO_D$ for $D = \m{Sing}(V)$, originally due to Friedman \cite{Friedman1983}, is best explained by this map.}

For a toroidal crossing scheme $(V,\cP,\bar\rho)$, we have a map $\eta_V$ as well. However, it is in general no longer injective, in fact often constant. In this article, we introduce for every $\ell \geq 2$ an injective map of sheaves of sets 
\[\eta_V^{(\ell)}\colon \enspace \cL\cS_V|_{\cU_1^{(\ell)}V} \to \cE xt^1(\Omega^1_V,\cO_V)|_{\cU_1^{(\ell)}V}\]
which is defined only on a suitable open subset $\cU_1^{(\ell)}V$ of $V$. Nonetheless, together with an open subset $\cU_1^{(1)}V \subseteq V$ where the original $\eta_V$ is injective, they jointly cover the double normal crossing locus $\cU_1V \subseteq V$. For more details, see Section~\ref{sect:classifying-map}. 

\subsection{Differential log smoothness}

Our above algorithm only determines the exact log smooth locus inside the double normal crossing locus of $V$; it does not determine log smoothness where $V$ has a more complicated local structure (such as $T$ in Example~\ref{ex:intro-Fano}). The notion of \emph{differential log smoothness} allows us to study log smoothness inside this more complicated locus heuristically by studying local freeness of log differential forms. For details, we refer the reader to Section~\ref{sect:differential-log-smoothness}.

\subsection{Applications}

The results of this article and in particular the implementation will allow us to study the log singularities of explicit degenerations efficiently. Concretely, we plan to use the present results in joint work in progress with Matthias Zach \cite{FeltenZach2026} and Andr\'es David Gomez Villegas \cite{FeltenVillegas2026}. Moreover, it closes a gap in our description of some of the examples in \cite{FeltenGLDT}, where we claimed but did not prove a specific log singular locus, for instance \cite[Ex.~1.158]{FeltenGLDT}.

\subsection{Acknowledgments}

The author thanks his collaborators on two related projects Andr\'es David Gomez Villegas and Matthias Zach for suggestions that are also relevant to this article. The author thanks Alessio Corti for the equations of Example~\ref{ex:projective-112}. The author thanks Helge Ruddat and Matthias Zach for suggestions on a draft of this article.

This work has been financially supported by the Royal Society via Lukas Brantner's grant URF\textbackslash R1\textbackslash 211075.

\newpage

\part{Results and examples}

\section{Generic log smoothness: main results}

We explain our effective criterion to determine whether a family $f\colon X \to B$ of schemes over a curve $B$ defines a generically log smooth family. In the case $0 \in B \subseteq \IA^1_t$, and $f$ either affine or projective, we discuss the computational approach to our criterion. The criterion also has an infinitesimal version, which requires knowledge only of some infinitesimal thickenings $f_k\colon X_k \to B_k$ of $f_0\colon X_0 \to B_0$ rather than of the whole family $f\colon X \to B$.

\subsection{Generically log smooth families}

We use the following notations and conventions for log schemes.

\begin{nota}
	We consider log structures $\cM_X$ on a scheme $X$ as sheaves in the \'etale topology. For a morphism of log schemes $f\colon (X,\cM_X) \to (Y,\cM_Y)$, we write $f^\sharp\colon \cO_Y \to f_*\cO_X$ and $f^\flat\colon \cM_Y \to f_*\cM_X$. We write $f^\splus\colon \overline\cM_Y \to f_*\overline\cM_X$ for the induced homomorphism on ghost sheaves.\footnote{We believe that this is the first place where this notation is used. We use it instead of the common notation $\bar f^\flat$ to avoid confusion with a possible other morphism $\bar f$ and to harmonize the notation with $f^\sharp$ and $f^\flat$. In our upcoming article \cite{Felten2026-extending}, $\overline\cM_X$ will play a more fundamental role than $\cM_X$, so from this perspective, it deserves its own notation in any case.} \qeddef
\end{nota}

In this article, we study log structures which are not defined everywhere. Ample for motivation for doing so can be found in the introduction of \cite{FeltenGLDT}. A \emph{partial log scheme} $(X,U,\cM_U)$ consists of a scheme $X$, an open subset $U \subseteq X$, and a log structure $\alpha_U\colon \cM_U \to \cO_U$ defined on $U$. A morphism of partial log schemes $f\colon (X,U,\cM_U) \to (Y,V,\cM_V)$ consists of a morphism of schemes $f\colon X \to Y$ such that $U \subseteq f^{-1}(V)$ and a morphism of log schemes $f\colon (U,\cM_U) \to (V,\cM_V)$. We consider every log scheme $(X,\cM_X)$ as a partial log scheme by means of $U = X$. The logarithmic analog of a family of normal varieties is as follows.

\begin{defn}\label{defn:gls}\note{defn:gls}
	A morphism of schemes $f\colon X \to S$ is \emph{pre--generically log smooth (pre--gls)} if the following conditions hold:
	\begin{enum:alph}
		\item $S$ is locally Noetherian;
		\item $f\colon X \to S$ is flat, separated, and of finite type;
		\item for every $s \in S$, the fiber $X_s = f^{-1}(s)$ is pure of some fixed dimension $d$, geometrically reduced, and satisfies Serre's condition $(S_2)$, i.e., $\m{depth}(\cO_{X_s,x}) \geq \m{min}\{2,\m{dim}(\cO_{X_s,x})\}$ for every $x \in X_s$.
	\end{enum:alph}
	A \emph{generically log smooth (gls) family} is a morphism of partial log schemes $f\colon (X,U) \to S$ such that:
	\begin{enum:alph}
		\item the underlying morphism of schemes is pre--gls;
		\item for $Z = X \setminus U$, we have $\m{codim}(Z_s,X_s) \geq 2$ for every $s \in S$;
		\item the log structure on $S$ is fine and saturated and defined everywhere;
		\item the log structure on $U$ is fine and saturated; the morphism of log schemes $f\colon U \to S$ is log smooth and saturated. \qeddef
	\end{enum:alph}
\end{defn}
\begin{rem}
	Being pre--gls is stable under base change. Namely, purity of dimension $d$ is stable under field extensions by \cite[Cor.~4.2.8]{EGAIV-2}, and Serre's condition $(S_2)$ is stable under field extensions by the discussion in \cite[\S8.1]{FeltenGLDT}. Similarly, being gls is stable under base change. \qedex
\end{rem}

\subsection{The criterion for generic log smoothness}

We present our criterion to show that a family $f\colon X \to B$ over a smooth curve $B$ is generically log smooth. The hard part in this criterion is log smoothness on an appropriate open subset. Below, we will discuss how to check the easier scheme-theoretic conditions, which we assume for now.

\begin{sitn}\label{sitn:explicit-gls-candidate-kk-intro}\note{sitn:explicit-gls-candidate-kk-intro}
	Let $\kk$ be an algebraically closed field of characteristic $0$. Let $B$ be a smooth affine curve over $\kk$, and let $0 \in B$ be a $\kk$-valued point. After shrinking $B$ in the \'etale topology, we can assume that there is a function $s \in \Gamma(B,\cO_B)$ with $\{0\} = \{s = 0\}$ and such that the induced map $\mathsf{b}_\minuso\colon B \to \IA^1_t$ given by $\mathsf{b}_\minuso^\sharp(t) = s$ is \'etale. 
	
	We endow $B$ with the log structure pulled back from $(\IA^1_t|0) = \Spec(\NN \to \kk[t])$. This coincides with the divisorial log structure defined by $0 \in B$ in the \'etale topology, so we denote the log scheme by $(B|0)$. The element $1 \in \NN$ induces an element $\tau = \mathsf{b}_\minuso^\flat(1) \in \Gamma(B,\cM_{(B|0)})$.
	
	For $k \geq 0$, we write $B_k \subset B$ for the strict closed subscheme cut out by $s^{k + 1} = 0$. Then, we obtain a map $\mathsf{b}_k\colon B_k \to \Spec(\NN \to \kk[t])$ which identifies $B_k$ with $\Spec(\NN \to \kk[t]/(t^{k + 1}))$. We write $\tau_k = \mathsf{b}_k^\flat(1) \in \Gamma(B_k,\cM_{B_k})$.
	
	Let $f\colon X \to B$ be a flat and separated morphism of finite type. For $b \in B$, we write $X_b = f^{-1}(b)$ for the fiber. For $k \geq 0$, we write $f_k\colon X_k \to B_k$ for the base change along $B_k \to B$.\footnote{Note that our two definitions of $X_0$ agree.} Furthermore, we write $B^\star = B \setminus \{0\}$ and $X^\star = X \setminus X_0$.
	
	We assume that the special fiber $V \coloneq X_0$ is reduced and satisfies Serre's condition $(S_2)$. We fix a number $d \geq 1$, which we consider as the \emph{intended relative dimension} of $f\colon X \to B$. We assume that each irreducible component of $V$ has dimension $d$. At this point, we do not impose any similar conditions on the fibers $X_b$ for $b \in B^\star$.
	
	We endow $X$ with the divisorial log structure defined by $V \subset X$ in the \'etale topology. Then, we obtain a morphism of log schemes $f\colon (X|V) \to (B|0)$.
	\qeddef
\end{sitn}

Next, we introduce some notation concerning the central fiber $V$.

\begin{nota}\label{nota:times-stratification}\note{nota:times-stratification}
	Let $\kk$ be an algebraically closed field of characteristic $0$, and let $V/\kk$ be separated, of finite type, and reduced. Let $d \geq 1$, and assume that every irreducible component of $V$ has dimension $d$. First, we set $\cU_0^\times V = \m{Reg}(V) \subseteq V$, the regular locus, which coincides with the smooth locus of $V \to \Spec \kk$. We denote its complement with the reduced induced scheme structure by 
	\[\cC_1^\times V = V \setminus \cU_0^\times V.\]
	We will also use the notation $D \coloneq \cC_1^\times V$ for brevity and refer to $D$ as the \emph{double locus}. Namely, $D$ often coincides with the locus where $V$ has at least two local irreducible components. By \cite[Tag~0C3K]{stacks}, the closed subset $D \subset V$ is cut out by the radical of the $d$-th Fitting ideal of $\Omega^1_{V/\kk}$.
	
	We will see in Definition~\ref{defn:dnc-locus} a maximal open subset $\cU_1^\times V \subseteq V$ such that every $\kk$-valued point in $\cU_1^\times V$ is either regular or a double normal crossing point, i.e., $\cU_1^\times V$ is \'etale locally given by $\Spec\kk[x,y,z_1,\ldots,z_{d - 1}]/(xy)$. Then, we set 
	\[\cC_2^\times V \coloneq V \setminus \cU_1^\times V,\] 
	and we have $\cC_2^\times V \subset \cC_1^\times V$. We endow $\cC_2^\times V$ with the reduced induced scheme structure.
	
	The difference
	\[\cS_1^\times V \coloneq \cC_1^\times V \setminus \cC_2^\times V = \cU_1^\times V \setminus \cU_1^\times V\]
	is a smooth scheme of dimension $d - 1$. We denote the set of irreducible (= connected) components of $\cS_1^\times V$ by $[\cS_1^\times V]$. 
	
	We write $T \subset D = \cC_1^\times V$ for the closed subset cut out by the radical of the $(d - 1)$-th Fitting ideal of $\Omega^1_{D/\kk}$. Since $\cS_1^\times V = D \setminus \cC_2^\times V$ is smooth of dimension $d - 1$, we have $T \subset \cC_2^\times V$. In many examples, we have $T = \cC_2^\times V$. We refer to $T$ as the \emph{triple locus} because it often coincides with the locus where $V$ has at least three local irreducible components.\footnote{The notation introduce here should be compared with our notation for toroidal crossing schemes in Section~\ref{sect:tc-main}.}
	\qeddef
\end{nota}

Finally, to state our theorem, we need the following notation.

\begin{nota}\label{nota:gls-criterion-preparation}\note{nota:gls-criterion-preparation}
	In Situation~\ref{sitn:explicit-gls-candidate-kk-intro}, we write $\varrho\colon \Omega^1_X \to \Omega^1_{X/B}$ for the canonical map. For $k \geq 0$, we denote the (set-theoretic) support of the kernel of 
	\[\xi_k \coloneq \cE xt^1(\varrho|_{X_k},\cO_{X_k})\colon \qquad \cE xt^1(\Omega^1_{X_{k}/B_{k}},\cO_{X_{k}}) \to \cE xt^1(\Omega^1_X|_{X_{k}},\cO_{X_{k}}),\]
	by $K^{(k)} \subset V$, and for $\ell \geq 1$, we denote the (set-theoretic) support of the image of the multiplication map 
	\[\sigma_{\ell - 1}\colon \quad \cE xt^1(\Omega^1_X|_{X_{\ell - 1}},\cO_{X_{\ell - 1}}) \xrightarrow{(-)\cdot f^\sharp(s)^{\ell - 1}}\cE xt^1(\Omega^1_X|_{X_{\ell - 1}},\cO_{X_{\ell - 1}})\]
	by $Z^{(\ell)} \subset V$. \qeddef
\end{nota}

Then, our criterion is as follows.

\begin{thm}\label{thm:explicit-gls-criterion-kk}\note{thm:explicit-gls-criterion-kk}
	Assume that we are in Situation~\ref{sitn:explicit-gls-candidate-kk-intro}, and suppose that $f\colon X \to B$ is pre--gls of relative dimension $d$. We consider the following conditions:
	\begin{enum:alph}
		\item for every $b \in B^\star$, the fiber $X_b$ is normal;
		\item\label{item:T0-codimension} we have $\m{dim}(\cC_2^\times V) \leq d - 2$;
		\item\label{item:ell-exists} for every $E \in [\cS_1^\times V]$, there is some $\ell = \ell(E) \geq 1$ such that $E \cap K^{(k)} = \varnothing$ for all $0 \leq k \leq \ell - 2$ (this condition is empty if $\ell = 1$), and such that $\m{dim}(E \cap Z^{(\ell)}) \leq d - 2$.
	\end{enum:alph}
	Then, the following statements hold:
	\begin{enum:arabic}
		\item The number $\ell(E)$ in Condition~\ref{item:ell-exists} is unique if it exists. We call it the \emph{kink} of $E \in [\cS_1^\times V]$.
		\item If there is a closed subset $Z_X \subset X$ such that $f\colon (X,X \setminus Z_X|V) \to (B|0)$ is generically log smooth, then all three conditions hold.
		\item If the conditions are satisfied, let $\ell = \ell(E)$ and $Z_E \coloneq E \cap Z^{(\ell)}$ (set-theoretically) for $E \in [\cS_1^\times V]$. Then,
		\begin{equation}\label{eqn:clubsuit}\note{eqn:clubsuit}
			Z_X = \m{Sing}(X^\star/B^\star) \cup \cC_2^\times V \cup Z_V^\circ \quad \text{with} \quad Z_V^\circ = \bigsqcup_{E \in [\cS_1^\times V]} Z_E \tag{$\clubsuit$}
		\end{equation} 
		is a choice for $Z_X$ which makes $f\colon (X,X \setminus Z_X|V) \to (B|0)$ generically log smooth. Here, $\m{Sing}(X^\star/B^\star) \subset X^\star$ is the closed subset where $f\colon X^\star \to B^\star$ is not smooth.
		\item If the conditions are satisfied and $f\colon (X|V) \to (B|0)$ is log smooth and saturated around a point $p \in \cU_1^\times V$, then $p \notin Z_V^\circ$. In other words, on $\cU_1^\times V$, the log singular locus is precisely $Z_V^\circ$.
	\end{enum:arabic}
\end{thm}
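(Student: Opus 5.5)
The plan is to reduce everything to the pointwise criterion of Proposition~\ref{prop:intro-dtc}, applied at closed points of $\cS_1^\times V = \cU_1^\times V \setminus \cU_0^\times V$. By Proposition~\ref{prop:intro-dnc}, these are exactly the double normal crossing points of $V$.

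\textbf{Step 1: the local dichotomy at a dnc point.} Let $p \in E$ be a closed point. Then $\cO_{V,p}^\m{h} \cong \cO_{\{xy=0\},0}^\m{h}$. By a standard deformation argument, the Henselian family $\cO^\m{h}_{B,0} \to \cO^\m{h}_{X,p}$ is a flat deformation of the $A_\infty$-type singularity $xy = 0$. Hence it is isomorphic to $xy = g(s,z)$ with $g \in (s)$, so $g = s^m u$ with $u$ a unit or $u(0,z)$ vanishing somewhere. Two things I will use:
\begin{itemize}
\item $p$ is a dtc point of kink $\ell$ if and only if $g = s^\ell\cdot(\text{unit})$;
\item $p \in K^{(k)}$ if and only if $m \leq k+1$ fails to hold with $u$ a unit along the order-$k$ thickening.
\end{itemize}
I would compute $\cE xt^1(\Omega^1_X|_{X_k},\cO_{X_k})$ and $\cE xt^1(\Omega^1_{X_k/B_k},\cO_{X_k})$ explicitly for the hypersurface $xy - g$. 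These are cyclic modules: $\cO_{X_k}/(x,y,\partial g/\partial s, \partial g/\partial z_i)$, respectively $\cO_{X_k}/(x,y,\partial_z g)$. The maps $\xi_k$ and $\sigma_{\ell-1}$ then become explicit maps between them.

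From this computation I would deduce the key lemma: for a dnc point $p$ there is a well-defined $\ell_p \in \{1,2,\ldots\}\cup\{\infty\}$ with $p \notin K^{(k)}$ for $k \leq \ell_p - 2$, and $p \in K^{(\ell_p-1)}$ whenever $\ell_p < \infty$. Moreover, $p \notin Z^{(\ell)}$ for $\ell = \ell_p$ exactly when $p$ is dtc of kink $\ell_p$, which is Proposition~\ref{prop:intro-dtc}. The order $m$ of $g$ in $s$ is upper semicontinuous on the connected smooth $E$. At the generic point of $E$ it equals $\ell_p$ for $p$ in a dense open subset.

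\textbf{Step 2: statement (1), uniqueness of the kink.} Suppose $\ell$ and $\ell'$ with $\ell < \ell'$ both satisfy Condition~\ref{item:ell-exists}. Then $E \cap K^{(\ell-1)} = \varnothing$, yet generic points of $E$ lie outside $Z^{(\ell)}$ and are therefore dtc of kink $\ell$. By the local computation such points lie in $K^{(\ell-1)}$, which is a contradiction.

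\textbf{Step 3: statements (3) and (4).} Off $V$, log smoothness is smoothness, which gives the component $\m{Sing}(X^\star/B^\star)$. On $\cU_0^\times V$ the map $f$ is smooth. On $E \setminus Z_E$, the condition $E\cap K^{(k)} = \varnothing$ for $k \leq \ell-2$ together with $p\notin Z^{(\ell)}$ gives, by Proposition~\ref{prop:intro-dtc}, that $p$ is dtc and hence log smooth and saturated. The codimension conditions follow from:
\begin{itemize}
\item (b), for $\cC_2^\times V$;
\item $\m{dim}(Z_E) \leq d-2$, for $Z_V^\circ$;
\item (a) together with the pre--gls hypothesis, for $\m{Sing}(X_b)$ when $b \neq 0$, since normal fibers are regular in codimension one.
\end{itemize}
For (4), I would show that log smooth and saturated at a dnc point $p$ forces the local model $\mu_{\ell;d}$: the saturated log smooth charts over $\NN$ with central fiber having two branches are exactly $xy = t^\ell$. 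The kink is locally constant on $E$, so it equals $\ell(E)$, and Proposition~\ref{prop:intro-dtc} gives $p \notin Z^{(\ell)}$.

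\textbf{Step 4: statement (2), necessity.} Assume $f$ is gls with some $Z_X$.
\begin{itemize}
\item Condition (a) holds because each $X_b$ with $b \neq 0$ is $(S_2)$ and smooth outside codimension $2$, hence normal by Serre's criterion.
\item Condition (b) holds because log smooth saturated points of $V$ are toroidal. In codimension one the only such singularities are dnc points, so $\cC_2^\times V \subset Z_X$ off a codimension-one set.
\item Condition (c): a generic point of $E$ lies outside $Z_X$ and so has some kink $\ell$. The same argument as in (4) yields the required vanishing of the $K^{(k)}$ and the bound on $Z^{(\ell)}$.
\end{itemize}

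\textbf{Main obstacle.} I expect the hardest step to be showing $E \cap K^{(k)} = \varnothing$ on \emph{all} of $E$, not merely on a generic subset. I would attack this first through the local normal form. There $\partial_s g$ has order $m-1 \geq \ell - 1$ at every point of $E$, by upper semicontinuity of $m$ and the fact that the generic order is $\ell$. This should make $\xi_k$ injective for all $k \leq \ell-2$ everywhere on $E$.
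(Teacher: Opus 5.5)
There is a genuine gap, and it sits exactly at the step you flag as the main obstacle: the necessity of Condition~(c) in statement~(2). There you must show $E \cap K^{(k)} = \varnothing$ for $k \le \ell - 2$ at \emph{every} point of $E$. This includes the points of $E \cap Z_X$, where no log smoothness is available, so the argument of your step~(4) does not reach them. You settle this by appeal to ``upper semicontinuity of $m$'', but that is not the semicontinuity that comes for free; the free one goes the other way. Each $K^{(k)}$ is the support of a coherent sheaf, so the set $\{q \in E : m_q \ge c\} = E \setminus \bigcup_{k \le c-2} K^{(k)}$ is \emph{open}. In your normal form, the identity $g = s^{m_p}u$ near $p$ likewise only gives $m_q \ge m_p$ for nearby $q$. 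What you need is that this set is also \emph{closed}, so that the generic value $\ell$ persists at special points, and that is the real content of the step. Your wording ``the generic order is $\ell$ on a dense open subset'' suggests you expect $m$ might differ at special points; in fact it cannot.

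The missing idea is an integrality argument showing that $m$ is locally constant, hence constant on the connected smooth $E$. In your language, work on an \'etale neighbourhood of $p$ where $xy = s^{m_p}u(s,z)$. Then $m_q = m_p$ if and only if the germ of $u(0,z)|_E$ at $q$ is nonzero. A regular function on a connected smooth (hence integral) piece of $E$ whose germ at $p$ is nonzero has nonzero germ at every point of that piece.

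The paper does this intrinsically in Claim~\ref{clm:injectivity-descends}:
\begin{itemize}
\item $\xi_i$ is injective at $p$ if and only if $\eta_i(f)|_p = 0$.
\item $\eta_0(f)$ is a section of the line bundle $\cE xt^1(\Omega^1_{V},\cO_{V})$ on $E$.
\item Once $\eta_{i-1}(f)$ vanishes near $E$, Lemma~\ref{lem:ses-at-lt-point} places $\eta_i(f)$ in the line bundle $\cE xt^1(\Omega^1_{X_0/B_0},\cO_{X_0}) \otimes (s^i)$ on $E$.
\item A section of a line bundle on an integral scheme that vanishes in one stalk vanishes identically.
\end{itemize}
With this, a single point $p \in E \setminus Z_X$ of kink $\ell(p)$ gives $E \cap K^{(k)} = \varnothing$ for all $k \le \ell(p) - 2$ and $p \notin Z^{(\ell(p))}$, which is exactly~(c).

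Two smaller repairs are needed. First, your Step~1 characterization of $K^{(k)}$ is garbled. The correct statement is $p \in K^{(k)}$ if and only if $m_p \le k+1$, independently of whether $u$ is a unit. Second, in~(3) you must still check that $Z_X$ is closed. That is, the closure of $\m{Sing}(X^\star/B^\star)$ must meet $V$ only inside $\cC_2^\times V \cup Z_V^\circ$, which follows from the dtc local model.

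Your uniqueness argument (dtc of kink $\ell$ forces $p \in K^{(\ell-1)}$) is fine. So is your proof of~(4) via local constancy of the log-geometric kink near $p$; that is a valid local alternative to the paper's global comparison along $E$.
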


\begin{rem}\label{rem:precise-log-smooth-locus-I}\note{rem:precise-log-smooth-locus-I}
	We do not currently have an effective computational means to determine the precise log smooth locus inside $T = \cC_2^\times V$. In \cite{FeltenGLDT}, we imprudently claimed a precise log smooth locus in several examples (given by explicit equations) without proof, for instance \cite[Ex.~1.158]{FeltenGLDT}. To see that these log smooth loci are indeed correct, one has to use ad-hoc arguments as in Remark~\ref{rem:precise-log-smooth-locus-II} below. \qedloz
\end{rem}

\subsection{The criterion in explicit affine families}\label{sect:criterion-explicit-affine}\note{sect:criterion-explicit-affine}

We explain how to use Theorem~\ref{thm:explicit-gls-criterion-kk} to show that an affine family $f\colon X \to \IA^1_t$ given by generators and relations is gls. We arrive at an algorithm which we have implemented in Macaulay2. The implementation together with further comments on the code can be found in the file \texttt{methods-gls-families.m2}. In the file \texttt{test-equations.m2}, we provide equations to test the implementation.

\begin{sitn}\label{sitn:affine-gls-candidate}\note{sitn:affine-gls-candidate}
	Let $\kq$ be a field of characteristic $0$. We choose $\kq$ such that commutative-algebraic quantities can be computed algorithmically over $\kq$. In practice, we usually have $\kq = \QQ$. Let $\kq \to \kk$ be a field extension to an algebraically closed field.

	Let $R_{\widebar\scB} = \kq[t]$ and $\widebar\scB = \Spec R_{\widebar\scB}$. Let 
	\[R_{\widebar\scX} = \kq[t,x_1,\ldots,x_n]/(f_1,\ldots,f_r)\]
	be a finitely generated $\kq[t]$-algebra. We write $f^\sharp\colon R_{\widebar\scB} \to R_{\widebar\scX}$ for the ring homomorphism, and $f\colon \widebar\scX \to \widebar\scB$ for the morphism of schemes. We write $\scV = f^{-1}(0)$ for the central fiber.
	
	We fix a number $d \geq 1$ which serves as the \emph{intended relative dimension} of $f\colon \widebar\scX \to \widebar\scB$. 
	
	We write $\m{Sing}(\widebar \scX/\widebar \scB;d) \subset \widebar \scX$ for the closed subset cut out by the $d$-th Fitting ideal $\m{Fitt}_d(\Omega^1_{\widebar \scX/\widebar \scB})$. If $f\colon \widebar \scX \to \widebar\scB$ is flat and has relative dimension $d$, then its complement $\m{Reg}(\widebar \scX/\widebar \scB;d)$ is the open subset of points where $f\colon \widebar \scX \to \widebar \scB$ is smooth, by \cite[Tag~0C3K]{stacks}.
	
	We consider an affine open subset $0 \in \scB \subseteq \widebar\scB$ which we may shrink if necessary. Furthermore, we write $B = \scB \otimes_\kq \kk$, the base change along $\Spec \kk \to \Spec \kq$. We write $\scX = f^{-1}(\scB)$ and retain the symbol $f$ to denote the induced morphism $f\colon \scX \to \scB$. We write $X = \scX \otimes_\kq \kk$ and $f\colon X \to B$ for the base extensions along $\Spec \kk \to \Spec \kq$. If $f\colon X \to B$ is pre--gls, then it is an instance of Situation~\ref{sitn:explicit-gls-candidate-kk-intro}. \qeddef
\end{sitn}

In this situation, we have the following sufficient criterion for $f\colon X \to B$ being pre--gls.

\begin{lem}\label{lem:pre-gls-criterion}\note{lem:pre-gls-criterion}
	Assume that we are in Situation~\ref{sitn:affine-gls-candidate}, and assume the following conditions:
	\begin{enumerate}[label=\textnormal{(\alph*)}]
		\item the ring $R_{\widebar\scX}$ is integral, Cohen--Macaulay, and of dimension $d + 1$;
		\item the ring homomorphism $f^\sharp\colon \kq[t] \to R_{\widebar\scX}$ is injective;
		\item we have $\m{dim}(\m{Sing}(\widebar\scX/\widebar\scB;d)) \leq d - 1$.
	\end{enumerate}
	Then, $f\colon \widebar\scX \to \widebar\scB$ is flat, and its fibers are Cohen--Macaulay, pure of dimension $d$, and geometrically reduced. When taking $0 \in \scB \subseteq \widebar\scB$ small enough, then $\scX_b$ is normal for all $0 \not= b \in \scB$. In particular, $f\colon X \to B$ is pre--gls.
\end{lem}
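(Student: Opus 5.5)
Flatness comes first. Since $R_{\widebar\scX}$ is a domain and $f^\sharp$ is injective, $t$ maps to a nonzero element, hence a nonzerodivisor. More generally, every nonzero $g(t) \in \kq[t]$ maps to a nonzero element and hence to a nonzerodivisor. So $R_{\widebar\scX}$ is a torsion-free $\kq[t]$-module, and torsion-free modules over a PID are flat.

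Next, the fibers. For a closed point $b \in \widebar\scB$ with maximal ideal $(g)$, the fiber ring is $R_{\widebar\scX}/(g)$. A quotient of a Cohen--Macaulay ring by a nonzerodivisor is Cohen--Macaulay, so each closed fiber is Cohen--Macaulay. It has pure dimension $d$: in a CM ring every minimal prime of a principal ideal generated by a nonzerodivisor has height one, and $R_{\widebar\scX}$ is a catenary equidimensional domain of dimension $d+1$, finitely generated over a field. The generic fiber $R_{\widebar\scX} \otimes_{\kq[t]} \kq(t)$ is a localization of a CM ring, hence CM. It is a domain of dimension $d$ by the dimension formula for the dominant morphism from an integral scheme of dimension $d+1$ to a curve. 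Being CM, every fiber satisfies $(S_1)$ and $(S_2)$.

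Geometric reducedness and normality come from the smoothness assumption. Fiber dimension is $d$ everywhere, so by \cite[Tag~0C3K]{stacks} the smooth locus of the flat morphism $f$ is the complement of $\m{Sing}(\widebar\scX/\widebar\scB;d)$. For a fiber $\widebar\scX_b$ of pure dimension $d$, the set $\m{Sing}(\widebar\scX/\widebar\scB;d) \cap \widebar\scX_b$ has dimension at most $d-1$, so it is nowhere dense. Hence every fiber is geometrically reduced by the standard criterion: after base change to $\bar\kappa(b)$ the fiber is CM, so $(S_1)$, and generically smooth, so $(R_0)$. The same applies to the generic fiber.

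The main obstacle is normality of nearby fibers, which needs $(R_1)$. The closed set $\m{Sing}(\widebar\scX/\widebar\scB;d)$ has dimension $\le d-1$. I would decompose it into irreducible components $W_i$.
\begin{itemize}
\item If $W_i$ dominates $\widebar\scB$, its general fibers have dimension $\dim W_i - 1 \le d-2$, so they have codimension $\ge 2$ in $\widebar\scX_b$.
\item If $W_i$ does not dominate $\widebar\scB$, it lies over finitely many closed points.
\end{itemize}
I would use generic fiber dimension (upper semicontinuity, or Chevalley) to remove finitely many closed points from $\widebar\scB$, while keeping $0$, and choose $\scB$ accordingly. Then for $b \ne 0$ in $\scB$ the singular locus of $\widebar\scX_b$ has codimension $\ge 2$, so $(R_1)$ holds; with $(S_2)$ from the CM property, Serre's criterion gives normality. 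This persists after base change to $\kk$.

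Finally, all properties are stable under the field extension $\kq \to \kk$: flatness, fibers CM of pure dimension $d$, geometric reducedness, normality of $X_b$ via smoothness outside codimension two, and $(S_2)$ as in the earlier remark. Separatedness and finite type hold because the morphism is affine and finitely presented. Hence $f\colon X \to B$ is pre--gls.
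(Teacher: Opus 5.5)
Your proof is correct and takes essentially the same route as the paper: flatness from torsion-freeness over $\kq[t]$, Cohen--Macaulay and pure-dimensional fibers by dimension theory, geometric reducedness from $(R_0)+(S_1)$ via the Fitting-ideal description of the smooth locus, and normality by splitting $\m{Sing}(\widebar\scX/\widebar\scB;d)$ into vertical and dominant components and discarding finitely many fibers. The one notable deviation is the generic fiber: the paper gets its normality from normality of the total space $\widebar\scX$ (Cohen--Macaulay plus $(R_1)$) followed by localization, whereas your fiber-dimension bound $\m{dim}(W_i) - 1 \leq d - 2$ for dominant components covers it uniformly; say this explicitly, since the generic point is also a point $b \neq 0$ of $\scB$.
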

\begin{defn}
	In Situation~\ref{sitn:affine-gls-candidate}, we say that $f\colon \widebar\scX \to \widebar\scB$ is \emph{algorithmically pre--gls} if the conditions of the lemma are satisfied. \qeddef
\end{defn}

To check the conditions in Theorem~\ref{thm:explicit-gls-criterion-kk} algorithmically, we have to check them over $\kq$ rather than $\kk$. We have an analog $\cU_1^\times\scV \subseteq \scV$ of $\cU_1^\times V$, and in Lemma~\ref{lem:U1x-base-change}, we will see that we have $\cU_1^\times V = (\cU_1^\times\scV) \otimes_\kq \kk$. Then, we also have $\cS_1^\times V = (\cS_1^\times\scV) \otimes_\kq \kk$ because reducedness is preserved under field extensions in characteristic $0$. Here is how we determine $\cC_2^\times\scV$ in practice.

\begin{lem}\label{lem:compute-C2}\note{lem:compute-C2}
	Let $\scV = \Spec \kq[x_1,\ldots,x_n]/(f_1,\ldots,f_r)$. Assume that $\scV$ is reduced, and that all irreducible components have the same dimension $d \geq 1$. Let $\scD = \m{Sing}(\scV;d) \subset \scV$ be the reduced closed subset cut out by the radical of the $d$-th Fitting ideal of $\Omega^1_{\scV/\kq}$, and assume that each irreducible component of $\scD$ has the same dimension $d - 1$. Let $\scT \subset \scD$ be the closed subset cut out by the radical of the $(d - 1)$-th Fitting ideal of $\Omega^1_{\scD/\kq}$. Then, we have $\scT \subset \cC_2^\times\scV$. 
	
	Let $\nu\colon \widetilde\scV \to \scV$ be the normalization, let $\scC \subset \scV$ be the conductor locus, i.e., the closed subset cut out by the annihilator of $\m{coker}(\cO_{\scV} \to \nu_*\cO_{\widetilde\scV})$, and let $\mu\colon \widetilde \scC \to \scC$ be the base change of $\nu$ along $\scC \to \scV$. Assume the following conditions:
	\begin{enum:alph}
		\item $\widetilde\scV \setminus \nu^{-1}(\scT)$ is smooth;
		\item $\scC \setminus \scT$ is smooth and pure of dimension $d - 1$;
		\item $\nu\colon \widetilde\scV \setminus \nu^{-1}(\scT) \to \scV \setminus \scT$ is unramified;
		\item $\mu_*\cO_{\widetilde\scC}$ is a locally free $\cO_{\scC}$-module of rank $2$ on $\scC \setminus \scT$.
	\end{enum:alph}
	Then, we have $\scT = \cC_2^\times\scV$.
\end{lem}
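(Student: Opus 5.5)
We prove the two assertions separately: first $\scT \subset \cC_2^\times\scV$ unconditionally, then the reverse inclusion under (a)--(d), by feeding (a)--(d) into Proposition~\ref{prop:intro-dnc}. Throughout we may pass to $\kk$. The Fitting ideals, the normalization, the conductor (as an annihilator of a coherent cokernel), and the properties in (a)--(d) are all compatible with the flat field extension $\kq \to \kk$ in characteristic $0$. Normalization commutes with base change to $\kk$ because $\kk/\kq$ is separable. By Lemma~\ref{lem:U1x-base-change}, $\cC_2^\times$ commutes with base change as well, and radicals stay radical since reducedness is preserved. So it suffices to argue for $V = \scV\otimes_\kq\kk$, $D$, $T$, $C$ over $\kk$.

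\textbf{The inclusion $\scT \subset \cC_2^\times\scV$.} By Notation~\ref{nota:times-stratification}, the radical of $\m{Fitt}_d(\Omega^1_{V})$ cuts out $D = \cC_1^\times V$, the non-smooth locus of $V$. Hence $\scD\otimes\kk = D$ with its reduced structure. Next, $\cS_1^\times V = D\setminus \cC_2^\times V$ is smooth of dimension $d-1$ and open in $D$. On this open set, $\Omega^1_D$ is locally free of rank $d-1$, so $\m{Fitt}_{d-1}(\Omega^1_D)$ is the unit ideal there. Therefore $T \cap \cS_1^\times V = \varnothing$, i.e.\ $T \subset \cC_2^\times V$. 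This is exactly the observation already made in Notation~\ref{nota:times-stratification}. The hypothesis that $\scD$ is pure of dimension $d-1$ is only needed so that the $(d-1)$-th Fitting ideal detects smoothness via \cite[Tag~0C3K]{stacks}.

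\textbf{The reverse inclusion.} Set $W = V \setminus T$. We check conditions (a)--(d) of Proposition~\ref{prop:intro-dnc} for $W$.
\begin{enum:roman}
\item Condition (a) of the proposition is hypothesis (a), since $\nu^{-1}(W) = \widetilde V\setminus\nu^{-1}(T)$.
\item Condition (c) is hypothesis (c).
\item Condition (b) is hypothesis (b), since $C\cap W = C\setminus T$.
\item For condition (d), note that $\widetilde C\cap\widetilde W$ is the base change of $\widetilde C$ over the open set $C\setminus T$, and $\mu$ is affine. Hence $\mu_*\cO_{\widetilde C\cap \widetilde W} = (\mu_*\cO_{\widetilde C})|_{C\setminus T}$, which is locally free of rank $2$ by hypothesis (d).
\end{enum:roman}
The proposition describes the double normal crossing locus as the largest open set satisfying (a)--(d). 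So $W$ is contained in the double normal crossing locus, i.e.\ every point of $W$ is regular or dnc, and hence $W \subseteq \cU_1^\times V$ by Definition~\ref{defn:dnc-locus}. Taking complements gives $\cC_2^\times V \subseteq T$, and combined with the first part, $T = \cC_2^\times V$.

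\textbf{Main obstacle.} The delicate step is the identification of the objects over $\kq$ with those over $\kk$. This covers the conductor, the base change $\widetilde\scC$, and the rank of the pushforward; it also requires that ``smooth'' over $\kq$ means geometrically smooth. The hypotheses on $\scV$ must likewise match those of Proposition~\ref{prop:intro-dnc}: reduced, separated (automatic here since $\scV$ is affine), and pure of dimension $d$. If the conductor's formation fails to commute with field extension on the nose, it still does so up to radical, since the cokernel $\nu_*\cO/\cO$ is coherent and flat base change preserves annihilators of finitely generated modules. Only the underlying set of $C$ enters conditions (b) and (d) on $C\setminus T$, but (d) depends on the scheme structure of $C$. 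There I would use that flat base change commutes with $\mu_*$ and with annihilators of coherent modules, so the scheme structures agree exactly.
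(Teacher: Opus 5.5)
Your proposal is correct and follows the paper's proof. You reduce to $\kk$ via Lemma~\ref{lem:U1x-base-change}. You obtain $T \subset \cC_2^\times V$ from the \'etale-local double normal crossing model on $\cU_1^\times V$; you phrase this through smoothness of $\cS_1^\times V$ and the Fitting ideal of a locally free module, while the paper uses \'etale compatibility of $T$ and the emptiness of $T$ for $\{xy=0\}$. You get the reverse inclusion because (a)--(d) say precisely that $V \setminus T$ is intrinsically dnc; your detour through Proposition~\ref{prop:intro-dnc} is harmless but unnecessary, since Definition~\ref{defn:dnc-locus} already gives $V \setminus T \subseteq \cU_1^\times V$ directly.
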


\begin{defn}
	In Situation~\ref{sitn:affine-gls-candidate}, we say that $\scV$ is \emph{algorithmically generically dnc} if the conditions of the lemma are satisfied, and if moreover $\m{dim}(\scT) \leq d - 2$. \qeddef
\end{defn}

We use the notations $\varrho$, $\xi_k$, $\sigma_{\ell - 1}$, $\scK^{(k)} \subset \scV$, and $\scZ^{(\ell)} \subset \scV$ analogous to the ones introduced in Notation~\ref{nota:gls-criterion-preparation}. We have $K^{(k)} = \scK^{(k)} \otimes_\kq \kk$ and $Z^{(\ell)} = \scZ^{(\ell)} \otimes_\kq \kk$. 

\begin{lem}\label{lem:gls-criterion-bar}\note{lem:gls-criterion-bar}
	In Situation~\ref{sitn:affine-gls-candidate}, assume that the conditions in Lemma~\ref{lem:pre-gls-criterion} are satisfied, and let $\scB \subseteq \widebar\scB$ be as in that lemma. Assume that the conditions in Lemma~\ref{lem:compute-C2} are satisfied so that $\scT = \cC_2^\times\scV$. Assume furthermore:
	\begin{enum:alph}
		\item we have $\m{dim}(\scT) \leq d - 2$;
		\item\label{item:ell-exists-bar} for every $\scE \in [\cS_1^\times \scV]$, there is some $\ell = \ell(\scE) \geq 1$ such that $\scE \cap \scK^{(k)} = \varnothing$ for all $0 \leq k \leq \ell - 2$, and such that $\m{dim}(\scE \cap \scZ^{(\ell)}) \leq d - 2$.
	\end{enum:alph}
	Then, the conditions in Theorem~\ref{thm:explicit-gls-criterion-kk} are satisfied for $f\colon X \to B$.
\end{lem}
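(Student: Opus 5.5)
The plan is to transfer each hypothesis from the $\kq$-level to the $\kk$-level by flat base change along $\Spec\kk \to \Spec\kq$, and then check the three conditions of Theorem~\ref{thm:explicit-gls-criterion-kk} one at a time.

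First, Lemma~\ref{lem:pre-gls-criterion} shows that $f\colon X \to B$ is pre--gls of relative dimension $d$ once $\scB$ is small enough. The same lemma gives that $\scX_b$ is normal for all $0 \neq b \in \scB$, so we must pass this to the fibers $X_b$ for closed points $b \in B^\star$. A closed point $b \in B^\star$ lies over some point $b' \neq 0$ of $\scB$, and $X_b$ is a base change of $\scX_{b'}$ along the field extension $\kappa(b') \to \kk$. Since $\m{char}(\kq) = 0$, every such extension is separable, so normality is preserved. Non-closed points of $B^\star$ are handled the same way. This proves condition (a).

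For condition (b), Lemma~\ref{lem:compute-C2} gives $\scT = \cC_2^\times\scV$. By Lemma~\ref{lem:U1x-base-change} we have $\cU_1^\times V = (\cU_1^\times\scV)\otimes_\kq\kk$, and hence $\cC_2^\times V = (\cC_2^\times\scV)\otimes_\kq\kk$ as closed subsets. The dimension of a finite type scheme is invariant under field extension, so $\m{dim}(\cC_2^\times V) = \m{dim}(\scT) \leq d-2$.

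Condition (c) is the main obstacle. The components $E \in [\cS_1^\times V]$ need not be base changes of components $\scE \in [\cS_1^\times\scV]$, since an irreducible $\scE$ can split geometrically. We use $\cS_1^\times V = (\cS_1^\times\scV)\otimes_\kq\kk$ and the fact that $\cS_1^\times\scV$ is a disjoint union of its connected components. It follows that each $E$ is a connected component of $\scE\otimes_\kq\kk$ for a unique $\scE$, and we set $\ell(E) \coloneq \ell(\scE)$.

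With this choice, the identities $K^{(k)} = \scK^{(k)}\otimes_\kq\kk$ and $Z^{(\ell)} = \scZ^{(\ell)}\otimes_\kq\kk$, which rest on flat base change for $\cE xt$ and exactness of flat pullback on kernels and images, give two facts. First, $E \cap K^{(k)} \subseteq (\scE\cap\scK^{(k)})\otimes_\kq\kk = \varnothing$ for $0 \leq k \leq \ell-2$. Second, $E \cap Z^{(\ell)} \subseteq (\scE\cap\scZ^{(\ell)})\otimes_\kq\kk$, and the right-hand side has dimension $\m{dim}(\scE\cap\scZ^{(\ell)}) \leq d-2$. Supports and set-theoretic intersections commute with base change here because $\Spec\kk \to \Spec\kq$ is surjective and the preimage of a closed set is the support of the base-changed sheaf. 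This proves condition (c), and with it the lemma.
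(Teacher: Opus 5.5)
Your proposal is correct and follows the same route as the paper: you transfer each condition along the flat base change $\Spec\kk \to \Spec\kq$, use invariance of dimension under field extension (the paper cites \cite[Cor.~4.2.8]{EGAIV-2}) for $\cC_2^\times V$ and for $E \cap Z^{(\ell)}$, and observe that each $\scE \otimes_\kq \kk$ decomposes into components of $\cS_1^\times V$, so that one may set $\ell(E) = \ell(\scE)$. Your explicit treatment of the normality of the fibers $X_b$ via separability in characteristic $0$ spells out a step that the paper leaves implicit in Lemma~\ref{lem:pre-gls-criterion}.
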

\begin{proof}
	For the first condition, use \cite[Cor.~4.2.8]{EGAIV-2} to deduce the dimension of $\cC_2^\times V$ from the dimension of $\cC_2^\times\scV$. Apply similar reasoning to the third condition. Note that $\scE \otimes_\kq \kk$ is not necessarily irreducible or connected, but it decomposes into components of $\cS_1^\times V$.
\end{proof}

\begin{defn}
	In Situation~\ref{sitn:affine-gls-candidate}, we say that $f\colon \widebar\scX \to \widebar\scB$ is \emph{algorithmically gls} if it is algorithmically pre--gls, if $\scV$ is algorithmically generically dnc, and if Condition~\ref{item:ell-exists-bar} in Lemma~\ref{lem:gls-criterion-bar} is satisfied. \qeddef
\end{defn}

\begin{figure}\label{fig:112}
	\begin{mdframed}
		\begin{center}
			\includegraphics[scale=.88]{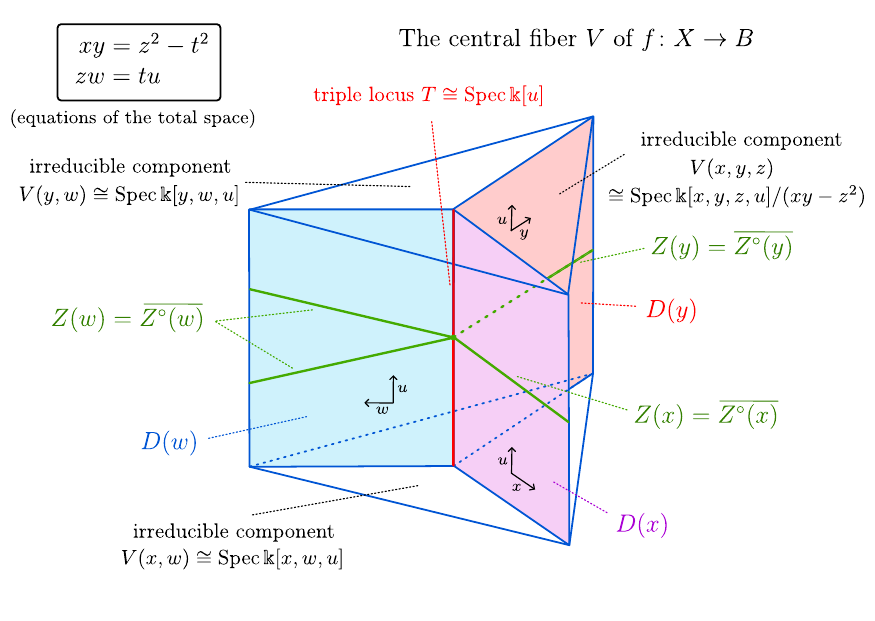}
		\end{center}
		\caption{The central fiber in Example~\ref{ex:112}}
	\end{mdframed}
\end{figure}

\begin{ex}\label{ex:112}\note{ex:112}
	We illustrate step by step how to check the conditions. The code for the computations in Macaulay2 is contained in the file \texttt{example-affine-gls-family-112.m2}. The numbers in the file name refer to the kinks as defined in Theorem~\ref{thm:explicit-gls-criterion-kk}.
	
	We set $\kq = \QQ$ and $\kk = \CC$. Let $R_{\widebar\scB} = \QQ[t]$ and $s = t$. We consider 
	\[f^\sharp\colon \enspace R_{\widebar\scB} = \QQ[t] \to \QQ[t,x,y,z,w,u]/(xy - z^2 + t^2, \: zw - tu) = R_{\widebar\scX}.\]
	The computation in M2 shows that $R_{\widebar\scX}$ is an integral domain of dimension $4$ and Cohen--Macaulay. The map $f^\sharp\colon R_{\widebar\scB} \to R_{\widebar\scX}$ is injective. We have  $\m{dim}(\m{Sing}(\widebar \scX/\widebar \scB;d)) = 2$, so Lemma~\ref{lem:pre-gls-criterion} shows that $f\colon X \to B$ is pre--gls with normal fibers outside $b = 0$ for a suitable $\scB \subseteq \widebar\scB$. Moreover, we have $\m{Sing}(\widebar\scX/\widebar\scB;d) \subset \scV$ so that we can take $\scB = \widebar\scB = \IA^1_t$. From now on, we write $f\colon \scX \to \scB$ for the family.
	
	The central fiber $\scV$ has three irreducible components given by
	\begin{align*}
		\scV(x,y,z) &= \{t = w = xy - z^2 = 0\} & &\cong \Spec \QQ[x,y,z,u]/(xy - z^2), \\
		\scV(x,w) &= \{t = y = z = 0\} & &\cong \Spec \QQ[x,w,u], \\
		\scV(y,w) &= \{t = x = z = 0\} &  &\cong \Spec \QQ[y,w,u].
	\end{align*}
	They all have dimension $3$ and are normal. The latter two are smooth whereas the (reduced) singular locus of $\scV(x,y,z)$ is $\{t = x = y = z = w = 0\}$. 
	
	The reduced singular locus of $\scV$ is given by $\scD = \{t = z = xy = xw = yw = 0\}$ and has three irreducible components
	\begin{align*}
		\scD(w) &= \{z = y = x = t = 0\} \cong \Spec \QQ[w,u], \\
		\scD(y) &= \{w = z = x = t = 0\} \cong \Spec \QQ[y,u], \\
		\scD(x) &= \{w = z = y = t = 0\} \cong \Spec \QQ[x,u].
	\end{align*}
	In particular, $\scD$ is pure of dimension $2$ as required in Lemma~\ref{lem:compute-C2}. Geometrically, we have $\scD(w) = \scV(x,w) \cap \scV(y,w)$, $\scD(y) = \scV(y,w) \cap \scV(x,y,z)$, and $\scD(x) = \scV(x,w) \cap \scV(x,y,z)$.
	
	The reduced singular locus of $\scD$ is given by $\scT = \{t = x = y = z = w = 0\} \cong \Spec \QQ[u]$. This coincides with the locus where all three irreducible components of $\scV$ meet. Moreover, this coincides with the singular locus of $\scV(x,y,z)$.
	
	We verify that $\scT = \cC_2^\times\scV$ by applying Lemma~\ref{lem:compute-C2}. The normalization $\widetilde\scV$ of $\scV$ is the disjoint union of the three irreducible components of $\scV$ because these components are normal. In particular, $\widetilde\scV$ is smooth outside $\nu^{-1}(\scT)$.
	
	The homomorphism $\nu^\sharp\colon \cO_{\scV} \to \nu_*\cO_{\widetilde\scV}$ is given as the diagonal 
	\[\cO_{\scV} \to \cO_{\scV(x,y,z)} \oplus \cO_{\scV(x,w)} \oplus \cO_{\scV(y,w)}.\]
	We check in the script that the annihilator of $\m{coker}(\nu^\sharp)$ cuts out $\scD$ so that $\scC = \scD$. In particular, $\scC \setminus \scT$ is smooth of dimension $d - 1 = 2$.
	
	The normalization map $\nu\colon \widetilde\scV \to \scV$ is clearly unramified. Finally, we have 
	\[\cM \coloneq \mu_*\cO_{\widetilde\scC} = \cO_{\scD \cap \scV(x,y,z)} \oplus \cO_{\scD \cap \scV(x,w)} \oplus \cO_{\scD \cap \scV(y,w)}.\]
	We check in the script that this is locally free of rank $2$ on $\scC \setminus \scT$. Concretely, we show that $\m{Fitt}_1(\cM)|_{\scC \setminus \scT} = \cO_{\scC \setminus \scT}$ and $\m{Fitt}_2(\cM)|_{\scC \setminus \scT} = 0$. This shows $\scT = \cC_2^\times\scV$. In particular, $\m{dim}(\cC_2^\times\scV) = 1 \leq d - 2$ in the conditions of Lemma~\ref{lem:gls-criterion-bar}.

	The three irreducible components of $\cS_1^\times\scV = \scD \setminus \scT$ are $\scD^\circ(v) \coloneq \scD(v) \setminus \scT$ for $v \in \{x,y,w\}$.
	
	In the script, we compute that 
	\[\scZ^{(1)} = \{t = x = y = z = 0\} \cup \{t = x = z = w = u = 0\} \cup \{t = y = z = w = u = 0\}.\]
	This shows that $\m{dim}(\scD^\circ(x) \cap \scZ^{(1)}) = \m{dim}(\scD^\circ(y) \cap \scZ^{(1)}) = 1$, and that $\scD^\circ(w)$ is contained in $\scZ^{(1)}$. Thus, Condition~\ref{item:ell-exists-bar} in Lemma~\ref{lem:gls-criterion-bar} is satisfied for both $\scD^\circ(x)$ and $\scD^\circ(y)$ with $\ell = 1$.
	
	Next, we compute that $\scK^{(0)} = \{t = z = w = xy = 0\}$. Thus, $\scK^{(0)} \cap \scD^\circ(w) = \varnothing$. Finally, we have 
	\[\scZ^{(2)} = \{t = x = y = z = w^2 - u^2 = 0\}\]
	so that $\m{dim}(\scD^\circ(w) \cap \scZ^{(2)}) = 1$. Therefore, Condition~\ref{item:ell-exists-bar} in Lemma~\ref{lem:gls-criterion-bar} is satisfied with $\ell = 2$.
	
	After base change to $\kk = \CC$, we compute the log singular locus $Z_X$ given by Formula~\eqref{eqn:clubsuit}. From the computations, we see that 
	\begin{align*}
		Z^\circ(x) \coloneq Z_{D^\circ(x)} &= \{t = y = z = w = 0, x \not= 0, u = 0\}, \\
		Z^\circ(y) \coloneq Z_{D^\circ(y)} &= \{t = x = z = w = 0, y \not= 0, u = 0\}, \\
		Z^\circ(w) \coloneq Z_{D^\circ(w)} &= \{t = x = y = z = 0, w \not= 0, u^2 = w^2\}.
	\end{align*}
	Furthermore, we have to add $\cC_2^\times V = T \coloneq \scT \otimes_\QQ \CC$ to the log singular locus. While Theorem~\ref{thm:explicit-gls-criterion-kk} guarantees that $f\colon (X|V) \to (B|0)$ is really not both log smooth and saturated at any point 
	\[p \in Z_V^\circ = Z^\circ(x) \cup Z^\circ(y) \cup Z^\circ(w),\]
	it makes no statement about whether the points $p \in T$ are log smooth and saturated or not. \qedloz
\end{ex}

\begin{rem}\label{rem:precise-log-smooth-locus-II}\note{rem:precise-log-smooth-locus-II}
	In the preceding Example~\ref{ex:112}, we did not yet determine which points inside $T$ are log smooth and saturated. Namely, as pointed out in Remark~\ref{rem:precise-log-smooth-locus-I}, Theorem~\ref{thm:explicit-gls-criterion-kk} does not provide any information about this.
	
	Let $\{0\} = \{t = x = y = z = w = u = 0\} \subset T$. We can see that $f\colon (X|V) \to (B|0)$ is log smooth and saturated as follows. We have an isomorphism
	\begin{multline*}
		\CC[t,x,y,z,w,u]_{u(u^2 - w^2)}/(xy - z^2 + t^2,zw - tu) \\ \to \enspace \CC[t,\tilde x,\tilde y,\tilde z,\tilde w,\tilde u]_{\tilde u(1 - \tilde w^2)}/(\tilde x\tilde y - \tilde z^2,\tilde z \tilde w - t) \\
		\psi(t) = t, \quad \psi(x) = \tilde x, \quad \psi(y) = \tilde y(1 - \tilde w^2), \quad \psi(z) = \tilde z, \quad \psi(w) = \tilde w\tilde u, \quad \psi(u) = \tilde u.
	\end{multline*}
	Since $T \setminus \{0\}$ is contained in $\{u \not= 0, u^2 \not= w^2\} \subset V$, it would be sufficient to have that the family given by $\tilde x\tilde y= \tilde z^2$ and $\tilde z \tilde w = t$ is log smooth and saturated. This is indeed the case, see \cite[Ex.~1.66]{FeltenGLDT}.
	
	The point $0 \in T$ cannot be log smooth and saturated because it is in the closure $\overline{Z_V^\circ}$ of $Z_V^\circ$. Thus, $\overline{Z_V^\circ}$ is the precise log smooth locus in Example~\ref{ex:112}. \qedloz
\end{rem}

\subsection{The criterion in explicit projective families}\label{sect:criterion-explicit-projective}\note{sect:criterion-explicit-projective}

We explain how to use Theorem~\ref{thm:explicit-gls-criterion-kk} to show that a projective family $f\colon X \to \IA^1_t$ given by generators and relations is gls. We incorporate the conditions of Lemma~\ref{lem:pre-gls-criterion} directly into our situation.

\begin{sitn}\label{sitn:projective-setup}\note{sitn:projective-setup}
	Let $\kq$ be a field of characteristic $0$ which is suitable for algorithmic computations, and let $\kq \to \kk$ be a field extension such that $\kk$ is algebraically closed.
	
	We set $R_{\widebar\scB} = \kq[t]$ and $\widebar\scB = \Spec \kq[t]$. Fix $n \geq 1$, and let $R^\bullet_\PP = \kq[t,x_0,\ldots,x_n]$ be the graded ring with $\m{deg}(x_i) = 1$ and $\m{deg}(t) = 0$.
	
	Let $r \geq 0$, and let $f_1,\ldots,f_r \in R^\bullet_\PP$
	be homogeneous polynomials. We have a graded quotient ring
	\[R_{\widebar\scX}^\bullet = \kq[t,x_0,\ldots,x_n]/(f_1,\ldots,f_r)\]
	and a closed subscheme $\widebar\scX = \Proj R_{\widebar\scX}^\bullet \subset \PP^n \times \widebar \scB$. Let $f\colon \widebar\scX \to\widebar\scB$ be the composition of this embedding with the projection to $\widebar\scB$; it is proper. We denote the central fiber by $\scV = f^{-1}(0)$.
	
	Let $\widebar\scX_\vartriangle = \Spec R_{\widebar\scX}^\bullet \subset \IA^{n + 1} \times \widebar\scB$ be the affine cone over $\widebar\scX$, and let $f_\vartriangle\colon \widebar\scX_\vartriangle \to \widebar\scB$ be the composition with the second projection.
	
	Fix an intended relative dimension $d \geq 1$. We assume the following conditions:
	\begin{enum:alph}
		\item\label{item:R-bullet-prop} the ring $R^\bullet_{\widebar\scX}$ is integral, Cohen--Macaulay, and of dimension $d + 2$;
		\item\label{item:bullet-injective} the ring homomorphism $R_{\widebar\scB} \to R^\bullet_{\widebar\scX}$ is injective;
		\item\label{item:Delta-singular} we have $\m{dim}(\m{Sing}(\widebar\scX_\vartriangle/\widebar\scB;d + 1)) \leq d$.
	\end{enum:alph}
	Then, Lemma~\ref{lem:pre-gls-criterion} shows that $f_\vartriangle\colon \widebar\scX_\vartriangle \to \widebar\scB$ is flat, and that its fibers are Cohen--Macaulay, pure of dimension $d + 1$, and geometrically reduced. For $0 \in \scB \subseteq \widebar\scB$ small enough, the fiber $\widebar\scX_{\vartriangle,b} = f_\vartriangle^{-1}(b)$ is normal for every $0 \not= b \in \scB$.
	
	Assumption~\ref{item:bullet-injective} implies that $R_{\widebar\scB} \to R_{\widebar\scX}^0$ is injective so that it is an isomorphism because $R_\PP^0 = R_{\widebar\scB}$. Therefore, the augmentation homomorphism $R_{\widebar\scX}^\bullet \to R_{\widebar\scX}^0 = R_{\widebar\scB}$ defines a closed immersion $\eps\colon \widebar\scB \to \widebar\scX_\vartriangle$. We set 
	\[\widebar\scX_\pluscirc = \widebar\scX_\vartriangle \setminus \eps(\widebar\scB) = \widebar\scX_\vartriangle \setminus (\{0\} \times \widebar\scB),\]
	the pointed affine cone, and we write $\iota\colon \widebar\scX_\pluscirc \to \widebar\scX_\vartriangle$ for the inclusion. By \cite[Cor.~8.3.6]{EGAII}, we have an affine surjective morphism $\pi\colon \widebar\scX_\pluscirc \to \widebar\scX$. For $h \in R_{\widebar\scX}^1$, the morphism $\pi^{-1}(D^+(h)) \to D^+(h)$ is given by $(R_{\widebar\scX}^\bullet)_{(h)} \to (R_{\widebar\scX}^\bullet)_{(h)} \otimes_\kq \kq[z^\pm]$ so that $\pi\colon \widebar\scX_\pluscirc \to \widebar \scX$ is smooth with fibers $\GG_m = \Spec \kq[z^\pm]$ because $R_{\widebar\scX}^\bullet$ is generated by $R_{\widebar\scX}^1$ as an $R_{\widebar\scX}^0$-algebra.
	
	Since $f \circ \pi\colon \widebar\scX_\pluscirc \to \widebar\scB$ is flat, also $f\colon \widebar\scX \to \widebar\scB$ is flat because $\pi$ is smooth and surjective. Then, $f\colon \widebar\scX \to \widebar\scB$ is dominant, and since $f$ is closed, dominance implies that $f\colon \widebar\scX \to \widebar\scB$ is surjective. 
	
	Since $\pi\colon \widebar\scX_\pluscirc \to \widebar\scX$ is smooth and surjective, \cite[Tag~0C0W]{stacks} shows that the fibers of $f\colon\widebar\scX \to \widebar\scB$ are Cohen--Macaulay. By similar results, they are geometrically reduced and pure of dimension $d$ (since the relative dimension of $\pi$ is one). Finally, for $0 \not= b \in \scB \subseteq \widebar\scB$, the fiber $\widebar\scX_b$ is normal.
	
	We write $B = \scB \otimes_\kq \kk$ as well as $f\colon X \to B$, $f_\vartriangle\colon X_\vartriangle \to B$, and $\pi\colon X_\pluscirc \to X$ for the base change along $B \to \scB \to \widebar\scB$. \qeddef
\end{sitn}

The key to determining whether $f\colon (X|V) \to (B|0)$ can be made gls is now the following observation.

\begin{prop}\label{prop:projective-gls-criterion}\note{prop:projective-gls-criterion}
	Assume that we are in Situation~\ref{sitn:projective-setup}. Then, $f\colon X \to B$ satisfies the conditions in Theorem~\ref{thm:explicit-gls-criterion-kk} if and only if $f_\vartriangle\colon X_\vartriangle \to B$ satisfies them. In this case, if $Z_X \subset X$ and $Z_{X,\vartriangle} \subset X_\vartriangle$ are defined by Formula~\eqref{eqn:clubsuit}, we have $Z_{X,\vartriangle} \cap X_\pluscirc = \pi^{-1}(Z_X)$.
\end{prop}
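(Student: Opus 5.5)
The strategy is to compare the affine cone with the projective family through the smooth surjective $\GG_m$-torsor $\pi\colon X_\pluscirc \to X$. The cone vertex $\eps(B) \subset X_\vartriangle$ is removed by an argument of codimension. Every quantity entering Theorem~\ref{thm:explicit-gls-criterion-kk} is compatible with smooth base change and is local on the source. These quantities are the fibers $X_b$, the strata $\cC_2^\times$, $\cS_1^\times$, and the supports $K^{(k)}$, $Z^{(\ell)}$ of kernels and images of maps between $\cE xt^1$-sheaves.

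First I relate $X_\pluscirc$ to $X$. The map $\pi$ is smooth, surjective, of relative dimension one, and compatible with the maps to $B$. Hence $\Omega^1_{X_\pluscirc} \cong \pi^*\Omega^1_X \oplus \Omega^1_{X_\pluscirc/X}$, and $\Omega^1_{X_\pluscirc/X}$ is locally free, in fact trivialized by $dz/z$ on the charts $\pi^{-1}(D^+(h))$. The same decomposition holds relative to $B$ and after restriction to each $X_k$. The summand $\Omega^1_{X_\pluscirc/X}$ is locally free, so its $\cE xt^1$ vanishes. Flat base change for $\cE xt$ of coherent sheaves then gives
\[\cE xt^1(\Omega^1_{X_\pluscirc}|_{X_{\pluscirc,k}},\cO) \cong \pi^*\cE xt^1(\Omega^1_X|_{X_k},\cO),\]
and likewise for the relative version. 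These isomorphisms are compatible with $\xi_k$ and with multiplication by $f^\sharp(s)^{\ell-1}$. Since $\pi$ is faithfully flat, it follows that $K^{(k)}_\vartriangle \cap X_\pluscirc = \pi^{-1}(K^{(k)})$ and $Z^{(\ell)}_\vartriangle \cap X_\pluscirc = \pi^{-1}(Z^{(\ell)})$.

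Next I treat the strata. Regularity, the dnc condition and normality are étale- and smooth-local; a dnc point times $\GG_m$ is again dnc, and conversely. Hence $\cC_2^\times V_\vartriangle \cap X_\pluscirc = \pi^{-1}(\cC_2^\times V)$. Similarly $\m{Sing}(X_\vartriangle^\star/B^\star)\cap X_\pluscirc = \pi^{-1}(\m{Sing}(X^\star/B^\star))$, and the fibers $X_{\vartriangle,b}\setminus\{\text{vertex}\}$ are normal if and only if the $X_b$ are normal. Condition (a) also needs the vertex: normality of $X_{\vartriangle,b}$ at the vertex follows from normality away from it together with $(S_2)$ of the cone fiber, which is Cohen--Macaulay of dimension $d+1 \geq 2$.

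For condition (b), the cone vertex of $V_\vartriangle$ is a single point, so $\cC_2^\times V_\vartriangle = \pi^{-1}(\cC_2^\times V)\cup\{\mathrm{vertex}\}$ or without the vertex. Its dimension is therefore $\dim\cC_2^\times V+1$, and (b) for the cone reads $\le (d+1)-2$, which is exactly (b) for $X$. Care is needed when $\cC_2^\times V=\varnothing$: the vertex is a point of dimension $0 \le d-1$, which is acceptable. For condition (c), the map $E\mapsto \pi^{-1}(E)$ is a bijection $[\cS_1^\times V]\to[\cS_1^\times V_\vartriangle]$ because $\GG_m$ has connected fibers. The vertex lies in $\cC_2^\times$ or in the regular locus, never in $\cS_1^\times$, since $\cS_1^\times$ is $\GG_m$-stable and every orbit closure contains the vertex; I still have to verify this point. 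The dimension count then shows $\dim(\pi^{-1}E\cap Z_\vartriangle^{(\ell)}) = \dim(E\cap Z^{(\ell)})+1$, and emptiness of $E\cap K^{(k)}$ transfers. So (c) holds for $X$ with kink $\ell(E)$ if and only if it holds for the cone with the same kink. Intersecting \eqref{eqn:clubsuit} for the cone with $X_\pluscirc$ and applying the identities above gives $Z_{X,\vartriangle}\cap X_\pluscirc=\pi^{-1}(Z_X)$.

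I expect the main obstacle to be the vertex. Everything on $X_\pluscirc$ is formal, but one must check that the vertex never disturbs the dimension conditions and the normality of the nonzero cone fibers. This requires controlling the singular locus and the $(S_2)$/Cohen--Macaulay property of $X_{\vartriangle,b}$ at the vertex, which Situation~\ref{sitn:projective-setup} provides through Lemma~\ref{lem:pre-gls-criterion}.
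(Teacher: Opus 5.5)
Your route is the paper's. You compare everything along the smooth $\GG_m$-torsor $\pi\colon X_\pluscirc \to X$, use the locally split sequence with locally free cokernel $\Omega^1_{X_\pluscirc/X}$ to identify the $\cE xt^1$-sheaves (hence $K^{(k)}$ and $Z^{(\ell)}$) over $X_\pluscirc$, and treat the cone vertex separately. The gap is in that last step.

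Your assertion that the vertex never lies in $\cS_1^\times V_\vartriangle$ is false. Take $f_1 = x_0x_1 - t^2 g$ with $g$ a general quadric and $n \geq 2$, which is an instance of Situation~\ref{sitn:projective-setup}. The central fiber of the cone is $\{x_0x_1 = 0\} \subset \IA^{n+1}$, and its vertex is a dnc point. The orbit-closure observation you cite actually shows the opposite. $\cS_1^\times V_\vartriangle$ is closed in the open set $\cU_1^\times V_\vartriangle$, and the vertex lies in the closure of every non-empty cone. So as soon as the vertex lies in $\cU_1^\times V_\vartriangle$ and $D \neq \varnothing$, it lies in $\cS_1^\times V_\vartriangle$.

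In that case two steps of your argument are unsupported.
\begin{enumerate}
\item The bijection of components. You need that $\cS_1^\times V_\vartriangle$ is smooth, hence locally irreducible, at the vertex, while every component passes through the vertex. Then $\cS_1^\times V_\vartriangle$ is irreducible, and so are $\cS_1^\times V_\pluscirc$ and $\cS_1^\times V$, with $E_\vartriangle = \pi^{-1}(E) \cup \{0\}$.
\item Condition~(c) for the cone, which is the more serious problem. It requires $E_\vartriangle \cap K^{(k)}_\vartriangle = \varnothing$ \emph{at the vertex} as well. Faithful flatness of $\pi$ only controls $X_\pluscirc$, and closedness or $\GG_m$-stability of $K^{(k)}_\vartriangle$ cannot rule out that isolated point.
\end{enumerate}

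The missing ingredient is Claim~\ref{clm:injectivity-descends}, applied to the pre--gls family $f_\vartriangle$. Along $E_\vartriangle$, the kernel of $\xi_k$ is a subsheaf of a line bundle on the integral scheme $E_\vartriangle$; for $k > 0$ this uses Lemma~\ref{lem:ses-at-lt-point} inductively. Hence injectivity at one point of $\pi^{-1}(E)$ propagates to all of $E_\vartriangle$, including the vertex. Your dimension count for $Z^{(\ell)}$ survives, since the vertex alone has dimension $0 \leq d - 1$.

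A smaller point concerns your ``and conversely'', i.e.\ that the dnc property descends along $\pi$. With the Henselian definition this is a cancellation statement and is not automatic. The paper obtains $\pi^{-1}(\cU_1^\times V) = \cU_1^\times V_\pluscirc$ from the intrinsic normalization/conductor characterization in Claim~\ref{clm:intrinsic-dnc-smooth-local}, whose conditions visibly commute with smooth base change.
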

\begin{cor}
	Assume that we are in Situation~\ref{sitn:projective-setup}. If $f_\vartriangle\colon \widebar\scX_\vartriangle \to \widebar\scB$ is algorithmically gls, then $f: X \to B$ is generically log smooth.
\end{cor}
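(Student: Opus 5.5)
The corollary is an assembly of Lemma~\ref{lem:gls-criterion-bar}, Proposition~\ref{prop:projective-gls-criterion} and Theorem~\ref{thm:explicit-gls-criterion-kk}; the plan is to chain them in that order.

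First, I apply Lemma~\ref{lem:gls-criterion-bar} to the affine family $f_\vartriangle\colon \widebar\scX_\vartriangle \to \widebar\scB$. This is an instance of Situation~\ref{sitn:affine-gls-candidate}: $R^\bullet_{\widebar\scX}$ is a finitely generated $\kq[t]$-algebra, and the intended relative dimension is $d+1$. Being algorithmically gls means three things:
\begin{enum:roman}
	\item the conditions of Lemma~\ref{lem:pre-gls-criterion} hold, which is exactly \ref{item:R-bullet-prop}--\ref{item:Delta-singular} of Situation~\ref{sitn:projective-setup};
	\item the central fiber $\scV_\vartriangle$ is algorithmically generically dnc, so Lemma~\ref{lem:compute-C2} applies and $\m{dim}(\scT_\vartriangle) \leq (d+1)-2$;
	\item Condition~\ref{item:ell-exists-bar} of Lemma~\ref{lem:gls-criterion-bar} holds.
\end{enum:roman}
Hence, after shrinking $0 \in \scB$ as in Lemma~\ref{lem:pre-gls-criterion} (compatibly with the shrinking in Situation~\ref{sitn:projective-setup}, taking the intersection), Lemma~\ref{lem:gls-criterion-bar} shows that $f_\vartriangle\colon X_\vartriangle \to B$ satisfies the conditions of Theorem~\ref{thm:explicit-gls-criterion-kk}.

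Second, Proposition~\ref{prop:projective-gls-criterion} transfers these conditions from $f_\vartriangle$ to $f\colon X \to B$. For $f$ to be in the setting of Theorem~\ref{thm:explicit-gls-criterion-kk}, it must be pre--gls of relative dimension $d$. Situation~\ref{sitn:projective-setup} already establishes that $f$ is flat, proper (hence separated and of finite type), with Cohen--Macaulay (hence $(S_2)$), geometrically reduced fibers pure of dimension $d$. The base $B$ is a smooth affine curve, and after the étale shrinking of Situation~\ref{sitn:explicit-gls-candidate-kk-intro} we may take $s = t$.

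Third, part (3) of Theorem~\ref{thm:explicit-gls-criterion-kk} provides the closed subset $Z_X$ of Formula~\eqref{eqn:clubsuit} for which $f\colon (X, X\setminus Z_X|V) \to (B|0)$ is generically log smooth.

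The only point needing care is bookkeeping of dimension and base-field conventions. The cone has relative dimension $d+1$ while $X$ has relative dimension $d$, and the algorithmic conditions live over $\kq$ while the theorem lives over $\kk$. I expect this matching to be handled already inside Proposition~\ref{prop:projective-gls-criterion} and Lemma~\ref{lem:gls-criterion-bar}, so no real obstacle should remain.
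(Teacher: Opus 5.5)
Your proposal is correct and follows the same route as the paper's (implicit) argument. Lemma~\ref{lem:gls-criterion-bar} puts the cone family $f_\vartriangle\colon X_\vartriangle \to B$, of relative dimension $d+1$, into the hypotheses of Theorem~\ref{thm:explicit-gls-criterion-kk}; Proposition~\ref{prop:projective-gls-criterion} transfers them to $f\colon X \to B$; and part (3) of the theorem then supplies $Z_X$ via Formula~\eqref{eqn:clubsuit}.
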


The proof of Proposition~\ref{prop:projective-gls-criterion} can be found on Page~\pageref{pf:projective-gls-criterion}. Now, we may use Lemma~\ref{lem:gls-criterion-bar} to check these conditions in explicit examples.

\begin{figure}[t]\label{fig:projective-112}
	\begin{mdframed}
		\begin{center}
			\includegraphics[scale=.88]{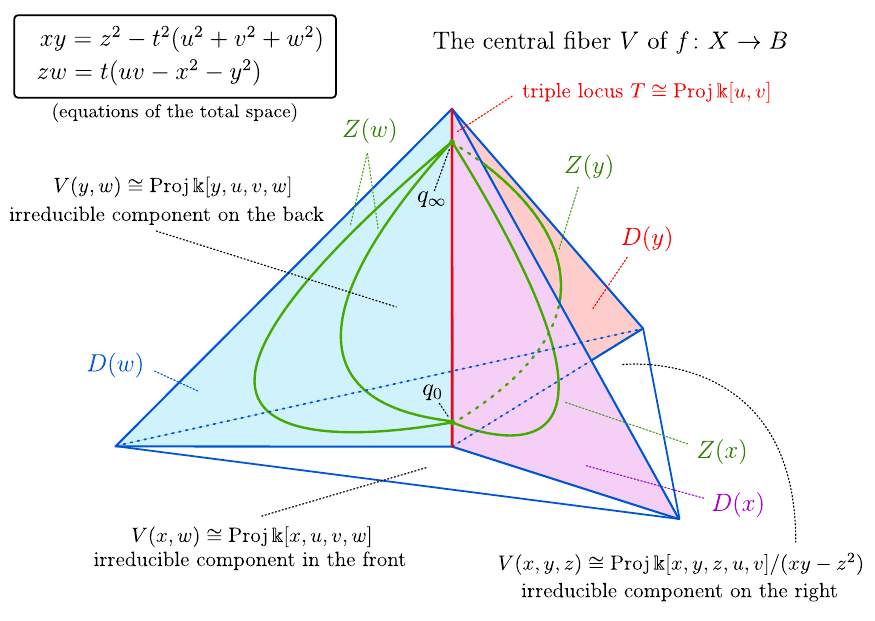}
		\end{center}
		\caption{The central fiber in Example~\ref{ex:projective-112}}
	\end{mdframed}
\end{figure}

\begin{ex}\label{ex:projective-112}\note{ex:projective-112}
We discuss the following example of Situation~\ref{sitn:projective-setup} and Proposition~\ref{prop:projective-gls-criterion}. The code can be found in the file \texttt{example-projective-gls-family-112.m2}. Let $\kq = \QQ$ and $\kk = \CC$. We set $R_{\widebar\scB} = \QQ[t]$ and consider the map 
\[R_{\widebar\scB} = \QQ[t] \to \QQ[t,x,y,z,u,v,w]/(f_1,f_2) = R^\bullet_{\widebar\scX}\]
with 
\[f_1 = xy - z^2 + t^2(u^2 + v^2 + w^2) \quad \text{and} \quad f_2 = zw - t(uv - x^2 - y^2).\]
The intended relative dimension is $d = 3$. Up to a minor modification by the author, these equations have been suggested by Alessio Corti as a projective variant of Example~\ref{ex:112}.\footnote{Note however that we do not prove here that Example~\ref{ex:112} and the present example are \'etale locally isomorphic.}

The computation yields that $\widebar\scX_\vartriangle$ is integral, of dimension $5$, and Cohen--Macaulay. The homomorphism $f^\sharp\colon R_{\widebar\scB} \to R^\bullet_{\widebar\scX}$ is injective, and the dimension of $\m{Sing}(\widebar\scX_\vartriangle/\widebar\scB; d + 1)$ is $3$. Thus, we are in Situation~\ref{sitn:projective-setup}.

When $\scA \subset \widebar\scX_\vartriangle$ is a closed subset defined by homogeneous equations, then we have $\scA \cap f_\vartriangle^{-1}(b) \not= \varnothing$ if and only if $\eps(b) \in \scA$. We apply this observation to the irreducible components of $\m{Sing}(\widebar\scX_\vartriangle/\widebar\scB;d + 1)$ and find that $\m{Sing}(\widebar\scX_\vartriangle/\widebar\scB;d + 1) \subset \eps(\widebar\scB) \cup f_\vartriangle^{-1}(\m{Disc}(f))$ for 
\[\m{Disc}(f) \coloneq \{t(2t + 1)(2t - 1)(4t^2 + 1)(16t^4 + 1) = 0\} \subset \widebar\scB.\]
Thus, the fibers of $f\colon \widebar\scX \to \widebar\scB$ are smooth over $\scB^\star = \widebar\scB \setminus \m{Disc}(f)$. We set $\scB = \scB^\star \cup \{0\} \subseteq \widebar\scB$ and write $f\colon \scX \to \scB$ for the restricted map.

The central fiber $\scV_{\vartriangle}$ is reduced and has three irreducible components
\begin{align*}
	\scV_{\vartriangle}(x,y,z) &= \{t = w = xy - z^2 = 0\} & &\cong \Spec \QQ[x,y,z,u,v]/(xy - z^2), \\
	\scV_{\vartriangle}(x,w) &= \{t = y = z = 0\} & &\cong \Spec \QQ[x,w,u,v], \\
	\scV_{\vartriangle}(y,w) &= \{t = x = z = 0\} &  &\cong \Spec \QQ[y,w,u,v],
\end{align*}
all of dimension $4$. Two of them are smooth while $\scV_{\vartriangle}(x,y,z)$ has a two-dimensional singular locus given by $\scT'_{\vartriangle} \coloneq \{x = y = z = w = 0\} \cong \IA^2_{u,v}$.

The reduced singular locus $\scD_{\vartriangle} = \cC_1^\times \scV_{\vartriangle}$ of $\scV_{\vartriangle}$ has three irreducible components
\begin{gather*}
	\scD_{\vartriangle}(x) = \{t = y = z = w = 0\}, \quad \scD_{\vartriangle}(y) = \{t = x = z = w = 0\}, \\ \scD_{\vartriangle}(w) = \{t = x = y = z = 0\}.
\end{gather*}
Each of them is isomorphic to $\IA^3$, and each of them is the intersection of two irreducible components of $\scV_{\vartriangle}$.

The locus $\scT_{\vartriangle} \subset \scD_{\vartriangle}$ cut out by the radical of the $(d - 1)$-th Fitting ideal of $\Omega^1_{\scD_{\vartriangle}/\kq}$, as considered in Notation~\ref{nota:times-stratification} and Lemma~\ref{lem:compute-C2}, is given by $\scT_{\vartriangle} = \{t = x = y = z =  w = 0\}$. We see that $\scT_{\vartriangle} = \scT'_{\vartriangle}$. This is also the intersection of all three irreducible components of $\scV_{\vartriangle}$.

With the same procedure as in Example~\ref{ex:112}, we use Lemma~\ref{lem:compute-C2} to check that we have indeed $\scT_{\vartriangle} = \cC_2^\times \scV_{\vartriangle}$. Now, we have $\m{dim}(\cC_2^\times\scV_{\vartriangle}) = \m{dim}(\scT_{\vartriangle}) = 2 \leq (d + 1) - 2$, as required in Lemma~\ref{lem:gls-criterion-bar}.

For $v \in \{x,y,w\}$, we set $\scD_{\vartriangle}^\circ(v) = \scD_{\vartriangle}(v) \setminus \scT_{\vartriangle}$. These are the three irreducible components of $\cS_1^\times\scV_{\vartriangle}$.

In the script, we compute that $\scZ_{\vartriangle}^{(1)}$ is the union of $\{t = x = y = z = 0\}$, $\{t = y = z = w = x^2 - uv = 0\}$, and $\{t = x = z = w = y^2 - uv = 0\}$. Thus, $\scD_{\vartriangle}^\circ(w)$ is contained in $\scZ_{\vartriangle}^{(1)}$ while 
\[\m{dim}(\scD_{\vartriangle}^\circ(x) \cap \scZ_{\vartriangle}^{(1)}) = \m{dim}(\scD_{\vartriangle}^\circ(y) \cap \scZ_{\vartriangle}^{(1)}) = 2 \leq (d + 1) - 2.\]
In particular, the kink of the latter two components of $\scD_{\vartriangle} \setminus \scT_{\vartriangle}$ is $\ell = 1$.

We compute that $\scK_{\vartriangle}^{(0)} = \{t = z = w = xy = 0\}$. Thus, $\scK_{\vartriangle}^{(0)} \cap \scD_{\vartriangle}^\circ(w) = \varnothing$. Finally, we compute that 
\[\scZ_{\vartriangle}^{(2)} = \scD_{\vartriangle}(w) \cap \{u^2v^2 - u^2w^2 - v^2w^2 - w^4 = 0\}.\]
In particular, the dimension drops, and the kink is $\ell = 2$. All conditions in Lemma~\ref{lem:gls-criterion-bar} are satisfied, and $f_\vartriangle\colon X_\vartriangle \to B$ becomes gls with
\[Z_{X,\vartriangle} =  T_{\vartriangle} \cup Z_{\vartriangle}(x) \cup Z_{\vartriangle}(y) \cup Z_{\vartriangle}(w),\]
where we set
\begin{align*}
	Z_{\vartriangle}(x) &= D_{\vartriangle}(x) \cap \{x^2 - uv = 0\}, \qquad \qquad Z_{\vartriangle}(y) = D_{\vartriangle}(y) \cap \{ y^2 - uv = 0\}, \\
	Z_{\vartriangle}(w) &= D_{\vartriangle}(w) \cap \{u^2v^2 - u^2w^2 - v^2w^2 - w^4 = 0\}.
\end{align*}
Note that $\m{Sing}(X_\vartriangle^\star/B^\star;d + 1) = \varnothing$.·

By Proposition~\ref{prop:projective-gls-criterion}, also $f\colon X \to B$ is gls, and we have $\pi^{-1}(Z_X) = Z_{X,\vartriangle} \cap X_\pluscirc$. On $V$, the branches $Z(x)$ and $Z(y)$ of $Z_X$ are copies of $\PP^1$ which intersect $T \cong \PP^1_{u:v}$ in two points $q_0 = [0:0:0:0:1:0]$ and $q_\infty = [0:0:0:1:0:0]$. The branch $Z(w)$ is a curve of degree $4$ in $D(w) \cong \PP^2_{u:v:w}$; it meets $T$ precisely in $q_0$ and $q_\infty$, and these are furthermore the singular points of $Z(w)$. While $Z(w)$ looks similar to two copies of $\PP^1$ in a picture over $\RR$,\footnote{The reader might want to plot this with a tool like \texttt{Surfer}.} it is an integral curve with nodal singularities in $q_0$ and $q_\infty$ (this is discussed in the script). \qedex
\end{ex}

\subsection{The infinitesimal criterion for generic log smoothness}

In Situation~\ref{sitn:explicit-gls-candidate-kk-intro}, suppose that we have obtained $f\colon X \to B$ by approximating an explicitly known formal deformation of $V$. Then, we do not have access to $f\colon X^\star \to B^\star$. In particular, we cannot directly check that its fibers are reduced or pure-dimensional (which is required to be pre--gls) or normal (as required in Theorem~\ref{thm:explicit-gls-criterion-kk}). In fact, in this situation, we can only ask for $f\colon (X,X \setminus Z_X|V) \to (B|0)$ to be gls in a neighborhood of $V$ in $X$ because what happens outside a neighborhood of $V$ is not controlled by the formal neighborhood of $V$ in $X$. 

Also $\Omega^1_X$ and the homomorphism $\varrho\colon \Omega^1_X \to \Omega^1_{X/B}$ are not accessible in this situation. However, for every $n \geq 0$, we can form $\Omega^1_{X_n}$ and $\varrho_n\colon \Omega^1_{X_n} \to \Omega^1_{X_n/B_n}$, and in Lemma~\ref{lem:eta-f-alternative}, we will see that $\Omega^1_X|_{X_k}$ can be identified with $\Omega^1_{X_n}|_{X_k}$, and that $\varrho|_{X_k}$ can be identified with $\varrho_n|_{X_k}$ for $k \leq n - 1$.

\begin{prop}\label{prop:infinitesimal-gls-criterion}\note{prop:infinitesimal-gls-criterion}
	Assume that we are in Situation~\ref{sitn:explicit-gls-candidate-kk-intro}, and that the following conditions hold:
	\begin{enum:alph}
		\item $V = f^{-1}(0)$ is Cohen--Macaulay;
		\item $\m{dim}(\cC_2^\times V) \leq d - 2$;
		\item Condition~\ref{item:ell-exists} in Theorem~\ref{thm:explicit-gls-criterion-kk} holds after replacing $\Omega^1_X|_{X_k}$ with $\Omega^1_{X_{k + 1}}|_{X_k}$ and $\varrho|_{X_k}$ with $\varrho_{k + 1}|_{X_k}$.
	\end{enum:alph}
	Then, there is an open subset $X' \subseteq X$ with $V \subset X'$ such that $f'\colon X' \to B$ is pre--gls and Cohen--Macaulay, and such that the conditions in Theorem~\ref{thm:explicit-gls-criterion-kk} are satisfied. In particular, when we choose $Z_{X'} \subset X'$ according to Formula~\eqref{eqn:clubsuit} in Theorem~\ref{thm:explicit-gls-criterion-kk}, then $f'\colon (X',X' \setminus Z_{X'}|V) \to (B|0)$ is generically log smooth.
\end{prop}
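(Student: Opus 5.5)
The plan is to show three things. First, being Cohen--Macaulay and pre--gls is an open condition on $X$ that holds along $V$. Second, normality of the non-central fibers (Condition (a) of Theorem~\ref{thm:explicit-gls-criterion-kk}) can be arranged after shrinking to a neighbourhood of $V$. Third, Conditions (b) and (c) of that theorem depend only on the infinitesimal data $f_{k+1}\colon X_{k+1} \to B_{k+1}$. Once these hold, the final sentence follows directly from Theorem~\ref{thm:explicit-gls-criterion-kk}(3) applied to $f'\colon X' \to B$.

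\emph{Step 1: Cohen--Macaulay and pre--gls near $V$.} Since $f$ is flat and $V$ is Cohen--Macaulay, the total space $X$ is Cohen--Macaulay at every point of $V$: $\cO_{X,p}/s\cO_{X,p} = \cO_{V,p}$ with $s$ a nonzerodivisor. The Cohen--Macaulay locus of $X$ is open, as is the locus where the fiber of the flat finite-type morphism $f$ is Cohen--Macaulay. We choose $X_1' \supseteq V$ open on which $f$ is flat with Cohen--Macaulay fibers. Reducedness of fibers is an open condition for flat finite-type morphisms with Cohen--Macaulay fibers, because $(S_1)$ is automatic and $(R_0)$ is open. Geometric reducedness is automatic in characteristic $0$. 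Purity of relative dimension $d$ near $V$ follows from upper semicontinuity of fiber dimension, flatness, and $V$ being pure of dimension $d$, using openness of the locus where $f$ has relative dimension $d$ (EGA~IV). Since $(S_2)$ follows from Cohen--Macaulay, we get an open $X_2' \supseteq V$ on which $f$ is pre--gls and Cohen--Macaulay.

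\emph{Step 2: normality away from $V$.} We need $(R_1)$ for the fibers $X_b$ with $b \neq 0$. The key input is the third hypothesis: by the argument of Theorem~\ref{thm:explicit-gls-criterion-kk}(3), on $\cU_1^\times V \setminus Z_V^\circ$ the morphism $f$ is log smooth, hence smooth off $V$ in a neighbourhood. More precisely, at each point $p$ of $\cU_1^\times V \setminus Z_V^\circ$ the Henselian stalk is that of $\mu_{\ell;d}$, whose generic fibers are smooth. So $\m{Sing}(X^\star/B^\star)$ has closure meeting $V$ only inside $\cC_2^\times V \cup Z_V^\circ$, a set of dimension $\le d-2$. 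The closure $\overline{S}$ of $\m{Sing}(X^\star/B^\star)$ is flat over $B$ near $0$ on each component dominating $B$. Hence, after discarding the components of $\overline{S}$ not meeting $V$ (removing them from $X$ keeps $V$ inside), each remaining component of $\overline{S}$ has fibers over $b \neq 0$ of dimension $\le d-2$ by semicontinuity. Then the fibers $X_b$ are smooth in codimension $1$ and $(S_2)$, hence normal. Call the resulting open set $X'$.

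\emph{Step 3: the infinitesimal conditions.} Condition (b) concerns only $V$. For Condition (c), Lemma~\ref{lem:eta-f-alternative} identifies $\Omega^1_X|_{X_k}$ with $\Omega^1_{X_{k+1}}|_{X_k}$ and $\varrho|_{X_k}$ with $\varrho_{k+1}|_{X_k}$. The sheaves $\cE xt^1$, the maps $\xi_k$ and $\sigma_{\ell-1}$, and hence the sets $K^{(k)}$ and $Z^{(\ell)}$ therefore coincide for $f$ and $f'$, since $X'$ contains $V$ and so contains every $X_k$.

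The main obstacle is Step 2: we must justify that the local-model identification at dtc points, used in the proof of Theorem~\ref{thm:explicit-gls-criterion-kk}, needs only Conditions (b) and (c) and not the normality hypothesis (a). The first attempt will be to check that the proof of Proposition~\ref{prop:intro-dtc} is purely local along $V$, so that it yields smoothness of $f$ on a punctured neighbourhood of $\cU_1^\times V \setminus Z_V^\circ$.
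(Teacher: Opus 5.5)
Your proposal is correct and follows essentially the paper's route. You shrink to a Cohen--Macaulay pre--gls neighbourhood of $V$; where you invoke openness of the fibrewise reduced locus, the paper removes the components of $\m{Sing}(X/B;d)$ that miss $V$. You then get normality of the fibres over $b \neq 0$ from the dtc local models along the components $E \in [\cS_1^\times V]$. The paper argues by contradiction that a $d$-dimensional component of $\m{Sing}(X'/B;d)$ would contain some $E$, hence a dtc point whose model $M_{\ell;d}$ has only a $(d-1)$-dimensional relative singular locus, whereas you bound $\overline{\m{Sing}(X^\star/B^\star)} \cap V$ directly. Finally you transfer Condition~(c) via Lemma~\ref{lem:eta-f-alternative}, exactly as the paper does.

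The obstacle you flag is not a real one. Corollary~\ref{cor:dtc-computable-condition} assumes only that $f$ is pre--gls, which your Step~1 supplies. Its proof is local at $p$ and never uses normality of $X_b$ for $b \neq 0$, and the paper relies on exactly this.
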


\section{Toroidal crossing schemes: main results}\label{sect:tc-main}\note{sect:tc-main}

We study the situation where the central fiber $V = X_0$ of $f\colon X \to B$ carries the structure of a toroidal crossing scheme $(V,\cP,\bar\rho)$. We give a criterion to show that $f\colon X \to B$ induces a section $\lambda$ of $\cL\cS_V$ and a means to compare the induced sections for two families $f\colon X \to B$ and $g\colon Y \to B$.

\subsection{Toroidal crossing schemes}

Let $\kk$ be an algebraically closed field of characteristic $0$. A \emph{toroidal crossing scheme} is a triple $(V,\cP,\bar\rho)$, where:
\begin{enum:alph}
	\item $V/\kk$ is a separated scheme of finite type, pure of some dimension $d$;
	\item $\cP$ is a sheaf of monoids on $V$, and $\bar\rho \in \Gamma(V,\cP)$ is a global section. 
\end{enum:alph}
These data are subject to the condition that $(\cP,\bar\rho)$ arises locally as the ghost sheaf of a log smooth, saturated, and vertical morphism of log schemes $f_0\colon (V,\cM_0) \to S_0$. Here, $S_0 = \Spec(\NN \to \kk)$ is the standard log point. That $(\cP,\bar\rho)$ is the ghost sheaf of the morphism means that $\cP$ is the ghost sheaf of $\cM_0$, and that $\bar\rho$ is the ``ghost'' of $f_0^\flat(\tau_0)$ for $\tau_0 = (1,1) \in \NN \oplus \kk^\times = \Gamma(S_0,\cM_{S_0})$. These conditions imply that $V$ is reduced and Cohen--Macaulay. For details about toroidal crossing schemes, we refer the reader to \cite{SchroerSiebert2006,GrossSiebertI,FFR2021} and in particular to \cite[\S9]{FeltenGLDT}, which contains the material in the form in which we use it.

A toroidal crossing scheme is intermediate between a scheme and a morphism of log schemes to the standard log point. For an \'etale open subset $W \to V$, we can consider quadruples $(\cM_0,\alpha,q,\rho)$, where $\alpha\colon \cM_0 \to \cO_W$ is a log structure, $q\colon \cM_0 \to \cP$ identifies $\cP$ with the ghost sheaf of $\cM_0$, and $\rho \in \Gamma(W,\cM_0)$ is a section with $q(\rho) = \bar\rho$ and $\alpha(\rho) = 0$. Such a datum gives rise to a morphism of log schemes $f_0\colon (W,\cM_0) \to S_0$ by means of $f_0^\flat(\tau_0) = \rho$. 

Among these data, we restrict our attention to those for which $f_0\colon (W,\cM_0) \to S_0$ is log smooth; it is then automatically saturated and vertical. Then, $f_0$ has no non-trivial automorphisms, and isomorphism classes of such quadruples form a sheaf $\cL\cS_V$ on the nose, without sheafifying.

A toroidal crossing scheme is stratified according to the rank of $\cP_{\bar v}$:
\[\cS_kV \coloneq \{v \in V \ | \ \m{rk}(\cP_{\bar v}) = k + 1\}.\]
Then, $\cU_kV = \bigsqcup_{i \leq k}\cS_iV \subseteq V$ is open, and $\cC_kV = \bigsqcup_{i \geq k}\cS_iV \subset V$ is closed. Consequently, $\cS_kV \subset V$ is locally closed. Furthermore, it is a smooth $\kk$-scheme and pure of dimension $d - k$. We write $[\cS_kV]$ for the set of irreducible components of $\cS_kV$.

One may show that $\cU_0V = \cS_0V$ is smooth. Every closed point in $\cS_1V$ is a double normal crossing point. The map $\NN \to \cP_{\bar v}$ given by $1 \mapsto \bar\rho$ is necessarily of the form $\theta_\ell\colon \NN \to P_\ell$ in the notation of Definition~\ref{defn:dtd-point}, and this $\ell \geq 1$ is constant on each irreducible component of $\cS_1V$. We say that $\ell$ is the \emph{kink} of that irreducible component. For every $\ell \geq 1$, we write $\cS_1^{(\ell)}V$ for the union of the irreducible components whose kink is $\ell$, and $[\cS_1^{(\ell)}V]$ for the set of those irreducible components.

\subsection{The classifying map $\eta_V^{(\ell)}\colon \cL\cS_V \to \cE xt^1(\Omega^1_V,\cO_V)$}\label{sect:classifying-map}\note{sect:classifying-map}

Let $(V,\cP,\bar\rho)$ be a toroidal crossing scheme. In \cite{FFR2021,FeltenGLDT}, we considered a map of sheaves of sets
\[\eta_V\colon \enspace \cL\cS_V \to \cE xt^1(\Omega^1_{V},\cO_{V})\]
given as follows: Let $W \to V$ be an affine \'etale open subset, and let $\lambda \in \Gamma(W,\cL\cS_V)$. Then, there is, up to non-unique isomorphism, a unique log smooth deformation $f_1\colon (W_1,\cM_1) \to S_1 = \Spec(\NN \to \kk[t]/(t^2))$ of the log smooth morphism $f_0\colon (W,\cM_0) \to S_0$ corresponding to $\lambda$. In particular, we have an underlying flat deformation, and therefore an extension 
\[0 \to \cO_W \to \Omega^1_{W_1}|_W \to \Omega^1_{W} \to 0.\]
Then, $\eta_V(\lambda)$ is the class in $\Gamma(W,\cE xt^1(\Omega^1_V,\cO_V))$ of this extension. This class is independent of the chosen log smooth deformation $f_1\colon (W_1,\cM_1) \to S_1$.

Let $\lambda \in \Gamma(W,\cL\cS_V)$ be represented by $(\cM_0,\alpha,q,\rho)$, and let $c \in \Gamma(W,\cO_V^\times)$. Then, we define $c \cdot \lambda$ to be represented by $(\cM_0,\alpha,q,c^{-1}\cdot \rho)$. This induces an $\cO_V^\times$-action on $\cL\cS_V$. In \cite[Prop.~5.2]{FFR2021}, we showed that $\eta_V$ is $\cO_V^\times$-equivariant this $\cO_V^\times$-action on $\cL\cS_V$ and the $\cO_V$-module structure on $\cE xt^1(\Omega^1_V,\cO_V)$. 

In \cite[Lemma~5.3]{FFR2021}, we showed that $\eta_V$ is injective if $(V,\cP,\bar\rho)$ is a normal crossing scheme (with its canonical $(\cP,\bar\rho)$, see \cite[Ex.~9.43]{FeltenGLDT}). However, when we take for example as $V$ the central fiber of the log smooth degeneration given by $xy = t^2$, then $\eta_V$ is the zero map. Thus, $\eta_V$ is not useful to compute sections of $\cL\cS_V$ in this situation.

In this article, we provide a remedy of this defect on the open subset
\[\cU_1^{(\ell)}V \coloneq \cU_0V \sqcup \cS_1^{(\ell)}V \subseteq V,\]
hence, by taking the union over all $\ell \geq 1$, a remedy on $\cU_1V$. In Construction~\ref{constr:eta-ell}, we provide a map of sheaves of sets 
\[\eta_V^{(\ell)}\colon \enspace \cL\cS_V \to \cE xt^1(\Omega^1_V,\cO_V).\]
It is a variant of $\eta_V$ but defined only on $\cU_1^{(\ell)}V \subseteq V$, not on the whole of $V$. We say that $\eta_V^{(\ell)}$ is the \emph{$\ell$-th classifying map}. Note that, on $\cU_1^{(\ell)}V$, the target $\cE xt^1(\Omega^1_V,\cO_V)$ is a line bundle on the reduced closed subset $\cS_1^{(\ell)}V \subset \cU_1^{(\ell)}V$ of codimension one because $\cU_1^{(\ell)}V$ is a normal crossing scheme (when considered as a scheme rather than a toroidal crossing scheme).

\begin{thm}\label{thm:ell-th-classifying-map}\note{thm:ell-th-classifying-map}
	Let $(V,\cP,\bar\rho)$ be a toroidal crossing scheme, and let $\eta_V^{(\ell)}$ be its $\ell$-th classifying map, defined on $\cU_1^{(\ell)}V \subseteq V$.
	\begin{enum:arabic}
		\item $\eta_V^{(\ell)}$ is injective.
		\item For $c \in \cO_V^\times$, we have $\eta_V^{(\ell)}(c \cdot \lambda) = c^\ell \cdot \eta_V^{(\ell)}(\lambda)$.
		\item The image of $\eta_V^{(\ell)}$ consists of the local generators of $\cE xt^1(\Omega^1_V,\cO_V)$.
	\end{enum:arabic}
\end{thm}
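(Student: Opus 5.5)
The plan is to reduce everything to a local computation on the standard model, using the $\cO_V^\times$-torsor structure of $\cL\cS_V$ near points of $\cS_1^{(\ell)}V$. On $\cU_0V$ both sheaves are trivial: $\cL\cS_V$ is a singleton and $\cE xt^1(\Omega^1_V,\cO_V)=0$. So all three statements are local along $\cS_1^{(\ell)}V$, and it suffices to prove them on an étale neighborhood $W$ of a closed point $p\in\cS_1^{(\ell)}V$. There we may assume $W$ is étale over $\Spec \kk[x,y,z_1,\ldots,z_{d-1}]/(xy)$ with $\cP$ and $\bar\rho$ given by $\theta_\ell\colon \NN\to P_\ell$.

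First I would record that, on such a $W$, $\cL\cS_V$ is a torsor under $\cO_{D}^\times$, where $D=\{x=y=0\}$. Concretely, every log smooth structure is étale locally isomorphic to the central fiber of $xy=t^\ell$. Two structures differ by the unit $c|_D$ obtained by comparing the products $\alpha(m_x)\alpha(m_y)$ against $\rho^\ell$. In the same local terms the $\cO_V^\times$-action $\rho\mapsto c^{-1}\rho$ acts on $D$ through $c\mapsto c^{\ell}$ (up to sign convention), because the relation in $P_\ell$ is $\bar m_x+\bar m_y=\ell\bar\rho$. Since $\kk$ has characteristic $0$, $\ell$-th roots exist étale locally. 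Hence every section of $\cL\cS_V$ has the form $c\cdot\lambda_0$, where $\lambda_0$ comes from $xy=t^\ell$; this description holds up to adjusting by units on $D$ extended to $W$.

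Second, I would use Construction~\ref{constr:eta-ell}. Presumably it takes the unique log smooth deformation to order $\ell$, $f_{\ell}\colon W_\ell\to S_\ell$ or a suitable quotient, and extracts an extension of $\Omega^1_V$ by $\cO_V$. A natural guess is that this comes from $\Omega^1_{W_{\ell}}|_W$ modulo the appropriate $t$-powers, or from the reparametrization $t^\ell=s$. I would compute it explicitly for the model $xy=t^\ell\cdot u$ with $u$ a unit. The resulting extension should be the one attached to the first-order deformation $xy=\epsilon u$, whose class in $\cE xt^1(\Omega^1_V,\cO_V)\cong\cO_D$ is $u|_D$, or its inverse depending on conventions. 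Independence of the choices of deformation would then follow exactly as for $\eta_V$, via uniqueness of log smooth deformations up to isomorphism.

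With this formula the three statements become immediate. For (2), replacing $\rho$ by $c^{-1}\rho$ replaces $t^\ell$ by $c^{-\ell}t^\ell$, so $u$ changes by $c^{\ell}$ and the class is multiplied by $c^\ell$. For (1), the class $u|_D$ determines the torsor coordinate, which gives injectivity. For (3), the classes $u|_D$ with $u$ a unit are precisely the local generators of the line bundle $\cO_D$, and every generator occurs.

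The main obstacle will be the explicit identification of $\eta_V^{(\ell)}(\lambda)$ with the class $u|_D$. This requires carefully tracking the extension produced by Construction~\ref{constr:eta-ell} through the local isomorphism, checking that the torsor coordinate on $\cL\cS_V$ matches $u|_D$, and checking compatibility with étale base change so that the local computations glue. I would first do this for the toric model $\kk[x,y,z]/(xy)$ with $t^\ell$ replaced by $ut^\ell$, and then transport it along étale maps.
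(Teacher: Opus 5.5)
Your plan is correct in outline but organized differently from the paper. The paper never fixes a torsor coordinate. For two sections $\lambda,\lambda'$ it builds (Proposition~\ref{prop:second-section-local-model}) two strict \'etale charts $\phi,\phi'$ in which \emph{both} sections are the standard structure $xy=t^\ell$, related by $\phi'^\sharp(x)=u_x\tilde x$, $\phi'^\sharp(y)=u_y\tilde y$ with $u_xu_y=u_t^\ell$. A deformation-free lemma shows that pulling back one extension class along $\phi$ and along $\phi'$ differs by $u_xu_y$, so $\eta_V^{(\ell)}(\lambda)=u_xu_y\,\eta_V^{(\ell)}(\lambda')$ (Corollary~\ref{cor:eta-lambda-difference}). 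Lemma~\ref{lem:lambda-difference} then shows $\lambda=\lambda'$ iff $u_xu_y|_{\widehat D}=1$.

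You instead keep one chart and vary the equation $xy=t^\ell u$. The unit you would attach to $\lambda'$ in the chart of $\lambda$ is exactly $(u_xu_y)^{-1}$, so the two arguments correspond term by term. What your packaging buys: (2) becomes a one-line reparametrization $\rho\mapsto c^{-1}\rho$. (3) follows directly because every unit on $D$ is a torsor coordinate, without the $\ell$-th roots and \'etale descent the paper uses. What it costs: you must construct log smooth deformations with a non-constant $u$ in a fixed chart and check that their central fibers have coordinate $u|_D$. The paper only ever deforms the standard model.

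Your torsor claim is also where the real work hides, in three places.
\begin{enum:roman}
\item The coordinate must be defined by $m_xm_y=h\rho^\ell$ inside $\cM_0$; after applying $\alpha$ both sides vanish.
\item Normalizing $\alpha(m_x)=\tilde x$, $\alpha(m_y)=\tilde y$ for an arbitrary second structure requires that all structures of the given type induce the same $\bar\alpha\colon\cP\to\cO_V/\cO_V^\times$ (\cite[Lemma~9.23]{FeltenGLDT}).
\item ``Equal coordinates imply equal sections'' needs the gluing over $\cO_{\widehat W}\cong\cO_{\widehat C(x)}\times_{\cO_{\widehat D}}\cO_{\widehat C(y)}$ from Lemma~\ref{lem:lambda-difference}.
\end{enum:roman}

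Finally, your guess about Construction~\ref{constr:eta-ell} is right up to a harmless constant. There, $\eta_V^{(\ell)}(\lambda)$ is the image of $\eta_{\ell-1}(f_\ell)$ in $\ker(\rho_{\ell-1;\ell-2})\cong\cE xt^1(\Omega^1_V,\cO_V)\otimes(s^{\ell-1})$. The computation in the proof of Proposition~\ref{prop:effective-dtd-criterion} gives $\eta_{\ell-1}(f)=\ell\,u\,t^{\ell-1}e_0$, i.e.\ $\ell$ times the first-order class of $xy=\eps u$. This matches the fact that $f_\ell$ is pulled back from that first-order deformation along $\eps\mapsto t^\ell$ (cf.\ the proof of Lemma~\ref{lem:trivial-defo-characterization}). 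Since $\ell\in\kk^\times$, nothing in (1)--(3) changes. It also settles your ``or its inverse'' in favor of $u|_D$, as the exponent $c^\ell$ in (2) requires.
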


\subsection{The central fiber as a toroidal crossing scheme}

Suppose that we are in Situation~\ref{sitn:explicit-gls-candidate-kk-intro}, and that the conditions of Theorem~\ref{thm:explicit-gls-criterion-kk} hold so that $f\colon (X,X \setminus Z_X|V) \to (B|0)$ is a gls family. Here, we choose $Z_X$ as specified by Formula~\eqref{eqn:clubsuit} in Theorem~\ref{thm:explicit-gls-criterion-kk}. Furthermore, assume that $V$ carries a toroidal crossing structure $(\cP,\bar\rho)$. Then, we have $\cU_0V = \m{Reg}(V)$, $\cC_1^\times V = \m{Sing}(V;d) = \cC_1V$, $\cC_2^\times V = \cC_2V$, and $\cS_1^\times V = \cS_1V$ for the rank stratification of the toroidal crossing scheme $(V,\cP,\bar\rho)$.\footnote{This equality is our motivation for the notation $\cS_1^\times X_0$ etc.}

We would like to interpret the logarithmic central fiber $f_0\colon X_0 \to B_0$ as a section of $\cL\cS_{V}$, and indeed, this is often possible in a canonical way.

\begin{lem}\label{lem:explicit-gls-as-section-of-LS}\note{lem:explicit-gls-as-section-of-LS}
	Assume that we are in Situation~\ref{sitn:explicit-gls-candidate-kk-intro}, and that the conditions of Theorem~\ref{thm:explicit-gls-criterion-kk} are satisfied. Let $Z_X \subset X$ be the closed subset specified by Formula~\eqref{eqn:clubsuit}. Assume that $V$ is endowed with the structure of a toroidal crossing scheme $(\cP,\bar\rho)$. Suppose that:
	\begin{itemize}
		\item[] For every irreducible component $E \in [\cS_1^\times V]$, the kink $\ell(E)$ in the sense of Condition~\ref{item:ell-exists} in Theorem~\ref{thm:explicit-gls-criterion-kk} coincides with the kink as a stratum of the toroidal crossing scheme $(V,\cP,\bar\rho)$.
	\end{itemize}
	Then, on $V \setminus (\cC_2^\times V \cup Z_V^\circ)$, there is a unique isomorphism $(\cP,\bar\rho) \cong (\overline\cM_{X_0}, f^{\splus}(\bar\tau_0))$ of sheaves of monoids with a distinguished section. This isomorphism exhibits $f_0\colon X_0 \setminus (\cC_2^\times V \cup Z_V^\circ) \to B_0$ as an element of $\Gamma(V \setminus (\cC_2^\times V \cup Z_V^\circ),\cL\cS_{V})$.
\end{lem}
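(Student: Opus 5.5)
Set $W \coloneq V \setminus (\cC_2^\times V \cup Z_V^\circ)$. By Theorem~\ref{thm:explicit-gls-criterion-kk}(3), $f\colon (X \setminus Z_X|V \setminus Z_X) \to (B|0)$ is log smooth and saturated on a neighborhood of $W$. Hence the base change $f_0\colon (X_0,\cM_{X_0}) \to B_0 = S_0$ is log smooth, saturated and vertical on $W$. Its ghost sheaf with distinguished section $(\overline\cM_{X_0}, f^\splus(\bar\tau_0))$ is therefore itself a toroidal crossing structure on $W$. The task is to compare two toroidal crossing structures $(\cP,\bar\rho)$ and $(\cP',\bar\rho') \coloneq (\overline\cM_{X_0},f^\splus(\bar\tau_0))$ on the same underlying scheme $W \subseteq \cU_1^\times V = \cU_1 V$. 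We must show that a unique isomorphism exists between them; the second assertion is then immediate from the definition of $\cL\cS_V$.

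\textbf{Stalks.} Both structures agree with the trivial one on $\cU_0 V = \m{Reg}(V)$: there $\cP = \cP' = \underline{\NN}$ (rank $1$), with $\bar\rho,\bar\rho'$ equal to the generator, so a unique isomorphism exists there. At a geometric point $\bar v$ of $E \cap W$ with $E \in [\cS_1^\times V]$, both stalks are of the form $\theta_\ell\colon \NN \to P_\ell$ of Definition~\ref{defn:dtd-point}. For $\cP$ the index $\ell$ is the stratum kink. For $\cP'$ it is the kink of $f$ at $\bar v$, i.e.\ $\ell(E)$, by the characterization of dtc degeneration points in Proposition~\ref{prop:intro-dtc} underlying Theorem~\ref{thm:explicit-gls-criterion-kk}. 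The hypothesis makes these equal.

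\textbf{Uniqueness of local isomorphisms.} $P_\ell$ is generated by the two elements $e_1,e_2$ corresponding to the two local branches (e.g.\ $x,y$ with $e_1+e_2 = \ell\cdot\bar\rho$). An automorphism of $P_\ell$ fixing $\theta_\ell(1)$ can at most swap $e_1,e_2$. Compatibility with cospecialization maps to the two adjacent $\cU_0$ branches forces identity: each $e_i$ maps to $0$ on one branch and not the other, so its labeling is determined by the local irreducible components of $V$. I would make this precise by working étale locally on a double normal crossing neighborhood $U$ of $\bar v$. There both sheaves are canonically $\cP|_U \cong \NN^{\{x\}}\oplus_{\NN}\ldots$, i.e.\ the subsheaf of $\nu_*\underline{\NN}$-type data determined by the branches and $\ell$. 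The matching $e_i \mapsto e_i'$ is forced branchwise and is compatible with generization to $\cU_0$, where the map is the forced identity of $\NN$. Thus locally a unique isomorphism exists, and by uniqueness these glue to a global isomorphism on $W$ in the étale topology.

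\textbf{Main obstacle.} The crux is the claim that the ghost structure of $f_0$ at $E$ has kink $\ell(E)$, together with the rigidity of $\theta_\ell\colon \NN\to P_\ell$ relative to the branches. For the first point, I would use that log smoothness and saturation at a double normal crossing point forces the Henselian local model $xy = t^{\ell'}$ (dtc degeneration point). By uniqueness of the kink in Theorem~\ref{thm:explicit-gls-criterion-kk}(1) and the equivalence in Proposition~\ref{prop:intro-dtc}, we get $\ell' = \ell(E)$. The rigidity is the elementary monoid computation sketched above.
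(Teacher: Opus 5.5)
Your proof is correct in outline, but it reaches the local isomorphism by a different route than the paper.

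\textbf{The paper's route.} It builds the isomorphism concretely. At $p \in E \setminus Z_V^\circ$ it takes two étale charts to $L = M_{\ell;d}\times_{\IA^1_t}B_0$:
\begin{itemize}
\item one coming from $p$ being a dtc degeneration point of kink $\ell(E)$;
\item one coming from the toroidal crossing structure, which has the same $\ell$ by hypothesis.
\end{itemize}
It then passes to the common refinement $Y$, which carries two \emph{different} maps $\pi,\rho\colon Y \rightrightarrows L$. If necessary it composes with the swap $x\leftrightarrow y$, so that both maps pull back the same branch of $L$ to the same component of $Y$. Finally it descends the identity of $\underline{P_\ell}$, using that at each point the relevant face quotient is determined by which branches pass through it. Uniqueness is simply quoted from \cite[\S9.10]{FeltenGLDT}.

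\textbf{Your route.} You argue intrinsically. Both stalks are $(P_\ell,\bar t)$, the only automorphisms of $(P_\ell,\bar t)$ are the identity and $\bar x\leftrightarrow\bar y$, and the swap is excluded by compatibility with generization to the two branches. This gives a self-contained uniqueness proof and avoids the bookkeeping with two non-equal charts.

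\textbf{What you still need to prove.} Your existence step is only asserted (``both sheaves are canonically the subsheaf of $\nu_*\underline{\NN}$-type data determined by the branches and $\ell$''). It is true, and can be made precise as follows.
\begin{itemize}
\item For either structure, the sheaf is $\underline{\NN}$ with $\bar\rho = 1$ on $\cU_0$, uniquely. The adjunction $\nu^{-1}\cP\to j_*j^{-1}\nu^{-1}\cP\cong\underline{\NN}_{\widetilde W}$ on the smooth normalization $\nu\colon\widetilde W\to W$ then yields a canonical map $\cP\to\nu_*\underline{\NN}_{\widetilde W}$ sending $\bar\rho\mapsto 1$.
\item At a double point of kink $\ell$, order the branches as $\{y=0\}$, $\{x=0\}$. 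The stalk map is then $P_\ell\to\NN^2$ with $\bar x\mapsto(0,\ell)$, $\bar y\mapsto(\ell,0)$, $\bar t\mapsto(1,1)$.
\item This map is injective with image $\{(m,n)\in\NN^2 : m\equiv n \bmod \ell\}$.
\item Hence $\cP$ and $\overline\cM_{X_0}$ are the \emph{same} subsheaf of $\nu_*\underline{\NN}$. This gives existence and uniqueness at once, with no gluing.
\end{itemize}
The stalk computation uses that the cospecialization maps of both sheaves are the face quotients $P_\ell\to P_\ell/F\cong\NN$. That fact comes from the étale local charts to $L$, which is exactly the input the paper's diagram supplies, so you must cite it rather than assume it.

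A minor slip: for $\ell\ge 2$, $P_\ell$ is not generated by $e_1,e_2$ alone. You also need $\bar\rho$, since $e_1+e_2=\ell\bar\rho$ does not produce $\bar\rho$. This is harmless for your rigidity argument, because an automorphism fixing $\bar\rho$ is still determined by where it sends $e_1,e_2$.
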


\subsection{Finite determinacy of the log structure on $X_0$}

Let $V/\kk$ be a scheme. Suppose that we have two instances of Situation~\ref{sitn:explicit-gls-candidate-kk-intro} $f\colon X \to B$ and $g\colon Y \to B$ (say over the same $B$ for simplicity), as well as chosen isomorphisms $V \cong X_0 \coloneq f^{-1}(0)$ and $V \cong Y_0 \coloneq g^{-1}(0)$ of $\kk$-schemes. Assume that the conditions of Theorem~\ref{thm:explicit-gls-criterion-kk} are satisfied so that both families become gls after choosing $Z_X \subset X$ and $Z_Y \subset Y$ according to Formula~\eqref{eqn:clubsuit}. Let $Z_V = Z_{X,0} \cup Z_{Y,0}$, and let $f_0\colon (V,V \setminus Z_V,\cM_0) \to S_0$ and $g_0\colon (V,V \setminus Z_V,\cN_0) \to S_0$ be the two induced structures of a morphism of partial log schemes on the central fiber. Then, we would like to determine if we have $f_0 \cong g_0$ as morphisms of partial log schemes.

The log smooth, saturated, and vertical morphism $f_0\colon X_0 \setminus Z_V \to S_0$ induces a structure of a toroidal crossing scheme $(\cP_f,\bar\rho_f)$ on $V \setminus Z_V$. Then, for every irreducible component $E \in [D \setminus Z_V]$, the kink $\ell_f(E)$ in the sense of Condition~\ref{item:ell-exists} in Theorem~\ref{thm:explicit-gls-criterion-kk} coincides with the kink of $E$ as a stratum of the toroidal crossing scheme $(V \setminus Z_V,\cP_f,\bar\rho_f)$. 

Similarly, we have the structure of a toroidal crossing scheme $(V \setminus Z_V,\cP_g,\bar\rho_g)$ induced by $g_0\colon Y_0 \setminus Z_V \to S_0$. Thus, if $f_0 \cong g_0$, then we have $(\cP_f,\bar\rho_f) \cong (\cP_g,\bar\rho_g)$, and for every $E \in [D \setminus Z_V]$, we have $\ell_f(E) = \ell_g(E)$ for the kinks in the sense of Condition~\ref{item:ell-exists} in Theorem~\ref{thm:explicit-gls-criterion-kk}.

Conversely, if we have $\ell_f(E) = \ell_g(E)$ for all $E \in [D \setminus Z_V]$, then we can apply Lemma~\ref{lem:explicit-gls-as-section-of-LS} to interpret $f_0\colon X_0 \setminus Z_V \to S_0$ and $g_0\colon Y_0 \setminus Z_V \to S_0$ as two sections of $\cL\cS_{V \setminus Z_V}$ for the same structure of a toroidal crossing scheme $(V \setminus Z_V,\cP_f,\bar\rho_f)$. Then, we can apply Theorem~\ref{thm:ell-th-classifying-map} to determine if the two sections are the same.

\begin{prop}\label{prop:comparing-sections-of-LS}\note{prop:comparing-sections-of-LS}
	Let $f\colon X \to B$ and $g\colon Y \to B$ be two instances of Situation~\ref{sitn:explicit-gls-candidate-kk-intro} over the same $B$. Assume that we have a fixed isomorphism $X_0 \cong Y_0 \eqcolon V$ of underlying $\kk$-schemes. Assume that the conditions of Theorem~\ref{thm:explicit-gls-criterion-kk} are satisfied for both $f\colon X \to B$ and $g\colon Y \to B$. Let $Z_X \subset X$ and $Z_Y \subset Y$ be the closed subsets defined by Formula~\eqref{eqn:clubsuit} in Theorem~\ref{thm:explicit-gls-criterion-kk}. Let $n \geq 0$ be such that $\ell_f(E) \leq n$ and $\ell_g(E) \leq n$ for all $E \in [\cS_1^\times V]$, and assume that $f_n \cong g_n$ as flat deformations of $V$. Then:
	\begin{enum:arabic}
		\item We have $Z_{X,0} = Z_{Y,0}$ under the given identification $X_0 \cong Y_0 = V$. 
		\item Let $Z_V \coloneq Z_{Y,0} \cong Z_{X,0}$. Then, there is a canonical isomorphism of log structures $\cM_{X_0 \setminus Z_V} \cong \cM_{Y_0 \setminus Z_V}$ on $V \setminus Z_V$ such that the two morphisms of partial log schemes $f_0\colon (V,V \setminus Z_V,\cM_{X_0 \setminus Z_V}) \to S_0$ and $g_0\colon (V, V \setminus Z_V,\cM_{Y_0 \setminus Z_V}) \to S_0$ are identified with each other.
	\end{enum:arabic}
\end{prop}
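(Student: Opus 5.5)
The plan is to reduce everything to data on the $n$-th order thickening $X_n \cong Y_n$, and then use the classifying maps $\eta_V^{(\ell)}$ of Theorem~\ref{thm:ell-th-classifying-map} to compare the two log structures.

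\emph{Step 1: the loci $Z_{X,0}$ and $Z_{Y,0}$ only depend on $f_n$.} By Formula~\eqref{eqn:clubsuit}, $Z_{X,0} = \cC_2^\times V \cup Z_V^\circ$, since $\m{Sing}(X^\star/B^\star)$ does not meet $V$. The set $\cC_2^\times V$ depends only on $V$. The sets $K^{(k)}$ for $k \leq \ell - 2$ and $Z^{(\ell)}$ for $\ell \leq n$ are defined using $\Omega^1_X|_{X_k}$ and $\varrho|_{X_k}$ with $k \leq n - 1$. By Lemma~\ref{lem:eta-f-alternative}, these can be identified with $\Omega^1_{X_n}|_{X_k}$ and $\varrho_n|_{X_k}$. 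So an isomorphism $f_n \cong g_n$ over $B_n$ identifies the kernels of $\xi_k$ and the images of $\sigma_{\ell-1}$ for all relevant indices.

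The kink $\ell(E) \leq n$ is unique by Theorem~\ref{thm:explicit-gls-criterion-kk}(1) and is characterized by these sets. Hence $\ell_f(E) = \ell_g(E)$ and $Z_E$ agree for every $E \in [\cS_1^\times V]$. This proves (1).

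\emph{Step 2: ghost sheaves and interpretation as sections.} Put $W = V \setminus Z_V$. By Lemma~\ref{lem:explicit-gls-as-section-of-LS}, applied with the toroidal crossing structure $(\cP_f,\bar\rho_f)$ induced by $f_0$, the equality of kinks from Step 1 lets us view both $f_0$ and $g_0$ as sections $\lambda_f, \lambda_g \in \Gamma(W,\cL\cS_V)$. The uniqueness in that lemma gives a canonical identification of the ghost sheaves $(\cP_f,\bar\rho_f) \cong (\cP_g,\bar\rho_g)$.

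It remains to show $\lambda_f = \lambda_g$. Because $\cL\cS_V$ is a sheaf on the nose with no automorphisms, the resulting isomorphism of log structures is canonical and glues. On $\cU_0V$, both log structures are the trivial one of the log point pulled back, so they agree there. Since $\cU_0 V \sqcup \cS_1^\times V \setminus Z_V^\circ = W$, we may work locally on each $\cU_1^{(\ell)}W$.

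\emph{Step 3: injectivity of $\eta_V^{(\ell)}$.} On $\cU_1^{(\ell)}W$, Theorem~\ref{thm:ell-th-classifying-map}(1) says $\eta^{(\ell)}_V$ is injective, so it suffices to show $\eta^{(\ell)}_V(\lambda_f) = \eta^{(\ell)}_V(\lambda_g)$. This requires unwinding Construction~\ref{constr:eta-ell}. I expect $\eta^{(\ell)}_V(\lambda)$ to be computed from the unique log smooth deformation of $\lambda$ to order about $\ell$. For the section coming from $f$, this deformation is the restriction of $f_{\ell}$ (or $f_{\ell-1}$) of the family itself, which is log smooth over $B_\ell$ near $W$.

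The class is then the extension class involving $\Omega^1_{X_\ell}|_V$, or equivalently the element obtained from $\sigma_{\ell-1}$ and $\xi$. This is exactly the datum controlled by $f_n$ for $\ell \leq n$. Since $f_n \cong g_n$, these extension classes agree, which gives $\lambda_f = \lambda_g$. This proves (2).

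\emph{Main obstacle.} The hard step is Step 3: verifying that $\eta_V^{(\ell)}(\lambda_f)$ is computed from the order-$\ell$ (at most $n$) thickening of the actual family. Concretely, one must check that $f_\ell$ restricted to $W$ is the log smooth deformation appearing in Construction~\ref{constr:eta-ell}, and that the divisorial log structure of $X$ restricts to the log structure of this deformation. I would first try to prove this compatibility directly at dtc degeneration points. There, the local model $\mu_{\ell;d}$ is explicit, so the comparison can be checked in that model and then transported using the Henselian-local isomorphism.
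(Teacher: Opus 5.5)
Your proposal is correct and follows the paper's proof step by step. First, the kinks and the sets $Z_E$ depend only on $f_n$, by Lemma~\ref{lem:eta-f-alternative} and uniqueness of the kink; next, Lemma~\ref{lem:explicit-gls-as-section-of-LS} turns $f_0$ and $g_0$ into sections $\lambda_f,\lambda_g$ of $\cL\cS_{V \setminus Z_V}$; finally, injectivity of $\eta_V^{(\ell)}$ near each $E$ gives $\lambda_f = \lambda_g$. The step you flag as the main obstacle is immediate from Construction~\ref{constr:eta-ell}: $\eta_V^{(\ell)}(\lambda)$ may be computed from any log smooth deformation over $S_\ell$, and the base change of $f\colon X \setminus Z_X \to B$ to $B_\ell$ is one, so $\eta_V^{(\ell)}(\lambda_f)$ depends only on the flat deformation $f_\ell \cong g_\ell$.
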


\begin{ex}
	We review \cite[Ex.~1.83]{FeltenGLDT}. Let $V = \Spec \kk[x,y,z,w]/(xy)$, and let $(\cP,\bar\rho)$ be its canonical structure of a toroidal crossing scheme, see \cite[Ex.~9.43]{FeltenGLDT}. Let
	\begin{align*}
		f\colon \enspace X &= \Spec \kk[t,x,y,z,w]/(xy - tzw) \to \IA^1_t, \\
		g\colon \enspace Y &= \Spec \kk[t,x,y,z,w]/(xy - tzw - t^2) \to \IA^1_t.
	\end{align*}
	Let $\eta_0(f), \eta_0(g) \in \Gamma(V,\cE xt^1(\Omega^1_V,\cO_V))$ be the extension classes defined by the first order deformations $f_1$ and $g_1$. Since evidently $f_1 \cong g_1$, we have $\eta_0(f) = \eta_0(g)$ in the notation of Section~\ref{sect:ext-criterion-dtd-points}. The zero locus of these sections is $Z_V = \{x = y = zw = 0\}$ so that $f$ and $g$ are semistable in a neighborhood of $V \setminus Z_V$ by the discussion in \cite[p.~29]{FeltenGLDT}. This implies that $f$ and $g$ induce sections $\lambda_f,\lambda_g \in \Gamma(V \setminus Z_V,\cL\cS_V)$. Furthermore, we have $\eta_V(\lambda_f) = \eta_0(f) = \eta_0(g) = \eta_V(\lambda_g)$. Since $(V,\cP,\bar\rho)$ is normal crossing, $\eta_V$ is injective, and we have $\lambda_f = \lambda_g$. Therefore, $f$ and $g$ can be interpreted as two gls deformations of the same gls family over $S_0$.
	
	Now, let us endow $V$ instead with the structure of a toroidal crossing scheme which comes from the log smooth deformation $\Spec \kk[t,x,y,z,w]/(xy - t^2) \to \IA^1_t$, i.e., the kink of $D = \{x = y = 0\}$ is $\ell = 2$. Then, we can consider the two families 
	\begin{align*}
		f\colon \enspace X &= \Spec \kk[t,x,y,z,w]/(xy - t^2zw) \to \IA^1_t, \\
		g\colon \enspace Y &= \Spec \kk[t,x,y,z,w]/(xy - t^2zw - t^3) \to \IA^1_t.
	\end{align*}
	Now, $\eta_V\colon \cL\cS_V \to \cE xt^1(\Omega^1_V,\cO_V)$ is constant, and we cannot use the preceding argument to show that the induced log structures on the central fiber can be identified with each other. However, $\eta_V^{(2)}$ can be used for this purpose since it is injective on its locus of definition by Theorem~\ref{thm:ell-th-classifying-map}. Proposition~\ref{prop:comparing-sections-of-LS} summarizes how to do this.
	\qedloz
\end{ex}

\section{Differential log smoothness}\label{sect:differential-log-smoothness}\note{sect:differential-log-smoothness}

Assume that we are in Situation~\ref{sitn:explicit-gls-candidate-kk-intro}. Theorem~\ref{thm:explicit-gls-criterion-kk} gives us a means to determine if $f\colon (X|V) \to (B|0)$ can be made gls, and if yes, gives us a suitable closed subset $Z_X \subset X$ by Formula~\eqref{eqn:clubsuit}. Furthermore, on $Z_V \cap \cS_1^\times V$, the morphism of log schemes $f\colon (X|V) \to (B|0)$ is actually not both log smooth and saturated. However, in a point $p \in T$, the map $f\colon (X|V) \to (B|0)$ could still be log smooth and saturated. We have seen an instance of this in Example~\ref{ex:112} and Remark~\ref{rem:precise-log-smooth-locus-II}.

As pointed out in Remark~\ref{rem:precise-log-smooth-locus-I}, we currently do not have a simple exact criterion for deciding whether $f\colon (X|V) \to (B|0)$ is log smooth and saturated in a point $p \in T$. However, if it is log smooth and saturated, then the relative log differential forms $\Omega^{1,\m{log}}_{X/B}$ are locally free of rank $d$. Furthermore, the sheaf $\Omega^{1,\m{log}}_{X/B}$ can be computed as $j_*\Omega^{1,\m{log}}_{U/B}$ in this case, where $U = X \setminus Z_X$, and where $j\colon U \to X$ is the inclusion. This gives a necessary criterion.

For a gls family $f\colon (X,U) \to S$, we write 
\[\cW^1_{X/S} \coloneq j_*\Omega^{1,\m{log}}_{U/S},\]
where $j\colon U \to X$ is the inclusion.

\begin{defn}
	A generically log smooth family $f\colon (X,U) \to S$ of relative dimension $d$ is  \emph{differentially log smooth} at a point $x \in X$ if $\cW^1_{X/S,x}$ is a free $\cO_{X,x}$-module of rank $d$. We write $\Delta\m{LogReg}(X/S)$ for the set of points at which $f\colon (X,U) \to S$ is differentially log smooth. \qeddef
\end{defn}

\begin{lem}
	Let $f\colon (X,U) \to S$ be a gls family of relative dimension $d \geq 1$.
	\begin{enum:arabic}
		\item $\Delta\m{LogReg}(X/S) \subseteq X$ is an open subset which contains $U$.
		\item The complement of $\Delta\m{LogReg}(X/S)$ is cut out by the $d$-th Fitting ideal of $\cW^1_{X/S}$.
		\item\label{item:diff-log-reg-base-change} Let $b\colon T \to S$ be a morphism between locally Noetherian fs log schemes, and let 
		\[
		\xymatrix{
			(Y,U_Y) \ar[r]^c \ar[d]^g & (X,U) \ar[d]^f \\
			T \ar[r]^b & S \\
		}
		\]
		be the fiber product in the category of partial log schemes. Then, we have 
		\[c^{-1}(\Delta\m{LogReg}(X/S)) \subseteq \Delta\m{LogReg}(Y/T).\]
	\end{enum:arabic}
\end{lem}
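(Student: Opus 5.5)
The plan is to treat the three statements in order, using only that $\cW^1_{X/S}$ is coherent and that log differentials are compatible with base change on the log smooth locus.

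\emph{Coherence and openness.} First I show that $\cW^1_{X/S} = j_*\Omega^{1,\m{log}}_{U/S}$ is a coherent $\cO_X$-module. On $U$, the map $f$ is log smooth, so $\Omega^{1,\m{log}}_{U/S}$ is locally free of rank $d$; in particular it is reflexive and satisfies $(S_2)$ on $U$. Since $f$ is flat with $(S_2)$ fibers and $\m{codim}(Z_s,X_s) \geq 2$, the closed subset $Z = X \setminus U$ has codimension $\geq 2$ in $X$ and depth $\geq 2$ along $Z$ (fiberwise depth plus flatness). The standard finiteness theorem for $j_*$ of coherent sheaves then shows that $j_*$ of a coherent reflexive sheaf on $U$ is coherent. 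This uses that $\Omega^{1,\m{log}}_{U/S}$ extends to a coherent sheaf whose pushforward is its reflexive hull; this is the approach of \cite{FeltenGLDT}, which I would cite for the coherence of $\cW^1$.

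Once coherence is known, the locus where a coherent sheaf is locally free of rank $d$ is open. It contains $U$, since $j^*\cW^1_{X/S} = \Omega^{1,\m{log}}_{U/S}$ is locally free of rank $d$ there. This gives (1).

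\emph{Fitting ideal.} For (2), I would use the standard fact for a finitely presented module $M$ over a local ring: $M$ is free of rank $d$ if and only if $\m{Fitt}_d(M)=R$ and $\m{Fitt}_{d-1}(M)=0$. The locus where $\m{Fitt}_d(\cW^1)_x \neq \cO_{X,x}$ is exactly where $\cW^1$ needs more than $d$ generators. It remains to see that at a point where $\cW^1_x$ is $d$-generated it is automatically free of rank $d$. Choose a surjection $\cO^d_{X,x}\to\cW^1_x$ with kernel $N$. The map is an isomorphism on $U$, so $N$ is supported on $Z$. On the other hand, $N \subseteq \cO^d$ and $\cO_X$ is reduced (its fibers are reduced and $f$ is flat), so $N$ is torsion-free on an $(S_1)$ scheme. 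These two facts force $N=0$, so the complement of $\Delta\m{LogReg}$ is $V(\m{Fitt}_d(\cW^1))$.

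\emph{Base change.} For (3), there is a natural base change map $c^*\cW^1_{X/S}\to\cW^1_{Y/T}$. It comes from the adjunction $c^*j_* \to j'_*c^*$ combined with the isomorphism $c^*\Omega^{1,\m{log}}_{U/S}\cong\Omega^{1,\m{log}}_{U_Y/T}$, which holds because log differentials are compatible with fs base change and $U_Y = c^{-1}(U)$. At a point $y$ with $c(y)\in\Delta\m{LogReg}(X/S)$, the source of this map is free of rank $d$ near $y$. So $\cW^1_{Y/T}$ contains a free rank-$d$ module $F$, with equality on $U_Y$. Now $F$ is reflexive ($Y$ is $(S_2)$, being pre--gls by base change) and $\m{codim}(Y\setminus U_Y)\geq 2$ fiberwise, so $F = j'_*(F|_{U_Y}) = \cW^1_{Y/T}$ near $y$.

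The main obstacle is the coherence and reflexive-extension statements, where one must control depth along $Z$ relative to $S$. I would handle these via the relative $(S_2)$ hypothesis and flatness, applying the extension result from \cite{FeltenGLDT}.
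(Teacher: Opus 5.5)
Your route is the paper's. Coherence of $\cW^1_{X/S}$ is taken from \cite{FeltenGLDT}. Openness is the openness of the free-of-rank-$d$ locus of a coherent sheaf. Part (2) shows that the kernel of a local surjection $\cO_X^{\oplus d}\to\cW^1_{X/S}$ vanishes on $U$ and hence everywhere. Part (3) observes that $c^*\cW^1_{X/S}\to\cW^1_{Y/T}$ is an isomorphism wherever its source is locally free.

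However, two of your justifications are false in the generality of the lemma, which allows arbitrary locally Noetherian bases.
\begin{enumerate}
\item In (2) you claim that $\cO_X$ is reduced because $f$ is flat with reduced fibers. This fails as soon as $S$ is non-reduced. For example, a gls family over $S_k$, or the restriction $X_k\to B_k$ of a family over $B$, has a non-reduced total space for $k\geq 1$.
\item In (3) you claim that $Y$ is $(S_2)$ because it is pre--gls. Pre--gls only constrains the fibers, and $Y$ is not $(S_2)$ when $T$ is not.
\end{enumerate}

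Both steps are nevertheless true, and a single fact repairs them. Let $x\in Z=X\setminus U$ lie over $s\in S$. The depth formula for the flat local homomorphism $\cO_{S,s}\to\cO_{X,x}$ gives $\m{depth}\,\cO_{X,x}=\m{depth}\,\cO_{S,s}+\m{depth}\,\cO_{X_s,x}\geq 2$. This holds because $\dim\cO_{X_s,x}\geq 2$ (as $\m{codim}(Z_s,X_s)\geq 2$) and $X_s$ is $(S_2)$. The same holds on $Y$, since $(Y,U_Y)\to T$ is again gls.

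Depth $\geq 1$ along $Z$ already says that a subsheaf of $\cO_X^{\oplus d}$ vanishing on $U$ is zero, which is all that (2) needs. This is also what the paper's brief ``since $\cK$ is a subsheaf of $\cO_X^{\oplus d}$'' relies on; compare Lemma~\ref{lem:reduced-fibers-purity}.

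Depth $\geq 2$ along $Y\setminus U_Y$ gives that $F\to j'_*(F|_{U_Y})$ is an isomorphism for every locally free $F$ on $Y$. Applied to $F=c^*\cW^1_{X/S}$ near $y$, it yields $c^*\cW^1_{X/S}\cong j'_*\big((c^*\cW^1_{X/S})|_{U_Y}\big)\cong\cW^1_{Y/T}$. This is (3) directly, without first having to argue that $\cW^1_{Y/T}$ ``contains'' $F$. With this replacement your argument is complete and coincides with the paper's.
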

\begin{proof}
	Note that $\cW^1_{X/S,x}$ is coherent. Let $x \in X$ be such that $\cW^1_{X/S,x}$ is a free $\cO_{X,x}$-module of rank $d$. By Nakayama's lemma, we can find a surjection $\cO_X^{\oplus d} \to \cW^1_{X/S}$ in a neighborhood of $x$. When $\cK$ is the kernel, we have $\cK_x = 0$, and therefore $\cK = 0$ after shrinking the neighborhood. Therefore, $\Delta\m{LogReg}(X/S) \subseteq X$ is open. If $x \in U$, then $x \in \Delta\m{LogReg}(X/S)$ by \cite[Cor.~8.36]{FeltenGLDT}.
	
	Let $Z \subset X$ be the locus cut out by the $d$-th Fitting ideal $\m{Fitt}_d(\cW^1_{X/S})$. Then, we have $x \notin Z$ if and only if $\cW^1_{X/S,x}$ can be generated by $d$ elements. In particular, we have $\Delta\m{LogReg}(X/S) \subseteq X \setminus Z$. Conversely, if $x \notin Z$, then we have a surjection $\cO_X^{\oplus d} \to \cW^1_{X/S,x}$ in a neighborhood $x \in V \subseteq X$. Since $\cW^1_{X/S}$ is locally free of rank $d$ on $U \cap V$, we have $\cK|_{U \cap V} = 0$ for the kernel $\cK$ of the surjection. Since $\cK$ is a subsheaf of $\cO_X^{\oplus d}$, we find $\cK = 0$. But then, $x \in \Delta\m{LogReg}(X/S)$.
	
	Let $y \in Y$ be such that $c(y) \in \Delta\m{LogReg}(X/S)$. Then, $\cW^1_{X/S}$ is locally free of rank $d$ in a neighborhood $c(y) \in W \subseteq X$. Thus, $c^*\cW^1_{X/S}$ is locally free of rank $d$ on $c^{-1}(W)$, and therefore, $c^*\cW^1_{X/S} \to \cW^1_{Y/T}$ is an isomorphism on $c^{-1}(W)$. But then, we have $y \in \Delta\m{LogReg}(Y/T)$.
\end{proof}
\begin{rem}
	When $\gamma\colon c^*\cW^1_{X/S} \to \cW^1_{Y/T}$ is an isomorphism, then the inclusion in \ref{item:diff-log-reg-base-change} is an equality. This is, for example, always the case when $b\colon T \to S$ is flat. 
	
	We know many examples where $\gamma$ is not an isomorphism, for instance Example~\ref{ex:xyz-t-xyz}, but this circumstance alone does not prevent the inclusion from being an equality, as is the case, for instance, in Example~\ref{ex:xyz-t-xyz}. Nonetheless, it appears plausible that there are examples where the inclusion is not an equality. \qedloz
\end{rem}
\begin{rem}
	We currently do not know if, or under which conditions, in Situation~\ref{sitn:explicit-gls-candidate-kk-intro}, differential log smoothness implies log smoothness and saturatedness. \qedloz
\end{rem}

When we want to use differential log smoothness as a necessary criterion for log smoothness in Situation~\ref{sitn:explicit-gls-candidate-kk-intro}, then we have to compute $\cW^1_{X/S}$. This can be done effectively as follows.

\begin{lem}
	Assume that we are in the (explicit affine) situation of Lemma~\ref{lem:gls-criterion-bar}. Let $Z_X$ be given by Formula~\eqref{eqn:clubsuit} in Theorem~\ref{thm:explicit-gls-criterion-kk}, and let $U = X \setminus Z_X$. Then:
	\begin{enum:arabic}
		\item The total space $X$ is normal.
		\item The canonical map $\Omega^1_{U/B} \to \Omega^{1,\m{log}}_{U/B}$ becomes an isomorphism after passing to the bidual. 
		\item\label{item:compute-W} We have isomorphisms
		\[(\Omega^1_{X/B})^{\vee\vee} \xrightarrow{\cong} j_*((\Omega^1_{U/B})^{\vee\vee}) \xrightarrow{\cong} j_*((\Omega^{1,\m{log}}_{U/B})^{\vee\vee}) \xleftarrow{\cong} j_*\Omega^{1,\m{log}}_{U/B} = \cW^1_{X/B}.\]
		\item\label{item:compute-Delta-log-reg} $(\Omega^1_{X/B})^{\vee\vee}$ can be computed as $(\Omega^1_{\widebar\scX/\widebar\scB})^{\vee\vee}|_\scX \otimes_\kq \kk$, and we have 
		\[\Delta\m{LogReg}(X/B) = V(\m{Fitt}_d((\Omega^1_{\widebar\scX/\widebar\scB})^{\vee\vee})|_\scX \otimes_\kq \kk.\]
	\end{enum:arabic}
\end{lem}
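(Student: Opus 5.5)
We establish the four assertions in order; the main input throughout is that $Z_X$ has codimension $\geq 2$ in every fiber.

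\textbf{Normality of $X$.} We verify Serre's criterion $(R_1)+(S_2)$. By Lemma~\ref{lem:pre-gls-criterion}, $f$ is flat with Cohen--Macaulay fibers, so $X$ is Cohen--Macaulay (flat over a regular curve with CM fibers) and in particular $(S_2)$. For $(R_1)$, consider a codimension-one point $x\in X$.
\begin{itemize}
\item If $x\in X^\star$, then $x$ is a codimension-$0$ point of its fiber, or a codimension-$1$ point of a generic fiber. Since the fibers $X_b$ with $b\neq0$ are normal, hence $(R_1)$, and $f$ is flat over a smooth curve, $X$ is regular at $x$.
\item If $x\in V$, then $x$ is a generic point of a component of $V$. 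There $V$ is smooth, since $V$ is reduced, so $X$ is regular at $x$.
\end{itemize}
Hence $X$ is $(R_1)$. Alternatively, one could argue on $U$: $f|_U$ is log smooth and saturated over $(B|0)$, so $U$ is log regular and hence normal by Kato's theorem. Then $X$ is $(S_2)$ and normal off a codimension-$\geq 2$ set.

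\textbf{Assertions (2) and (3).} On $U$, $\Omega^{1,\m{log}}_{U/B}$ is locally free of rank $d$. The map $\Omega^1_{U/B}\to\Omega^{1,\m{log}}_{U/B}$ is an isomorphism on $U\setminus V$, where the log structure is trivial. It is also an isomorphism at the generic points of components of $V$: there $X$ and $V$ are smooth, so $f$ is smooth, and the pullback of $\tau$ generates the log structure, so the log differentials relative to $B$ agree with the ordinary ones. The locus where the map fails to be an isomorphism therefore has codimension $\geq2$ in $U$.

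On a normal scheme, a homomorphism of coherent sheaves that is an isomorphism in codimension $1$ induces an isomorphism of biduals, because reflexive sheaves are determined in codimension $1$. This proves (2).

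For (3):
\begin{itemize}
\item The first and last arrows hold because reflexive sheaves on the normal scheme $X$ satisfy $\cF\cong j_*(\cF|_U)$ when $\m{codim}(Z_X,X)\geq2$.
\item The required codimension follows from $\m{codim}(Z_{X,s},X_s)\geq 2$ together with flatness.
\item For the last arrow, $\Omega^{1,\m{log}}_{U/B}$ is already locally free, hence reflexive.
\end{itemize}

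\textbf{Assertion (4).} Biduals commute with flat base change along $\kq\to\kk$ and with restriction to the open subset $\scX\subseteq\widebar\scX$, since duals of coherent sheaves are compatible with flat base change. Hence $(\Omega^1_{X/B})^{\vee\vee}=(\Omega^1_{\widebar\scX/\widebar\scB})^{\vee\vee}|_\scX\otimes_\kq\kk$. Fitting ideals commute with arbitrary base change. Applying the preceding lemma, whose complement of $\Delta\m{LogReg}$ is cut out by $\m{Fitt}_d(\cW^1_{X/B})$, then gives the formula.

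\textbf{Main obstacle.} The key delicate step is the codimension-$1$ analysis at the generic points of $V$, which needs reducedness of $V$. The other point requiring care is that the codimension of $Z_X$ in $X$ (not just in the fibers) is at least $2$, which follows from flatness and fiberwise dimension counting.
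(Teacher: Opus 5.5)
Your proof is correct. The paper gives no argument for this lemma (it leaves ``the elementary proof to the reader''), and your treatment of (2)--(4) is the expected one:
\begin{enumerate}
\item $\Omega^1_{U/B} \to \Omega^{1,\m{log}}_{U/B}$ is an isomorphism off $D \cap U$, which has codimension $\geq 2$ in $U$.
\item Reflexive sheaves on a normal scheme are recovered from the complement of a closed subset of codimension $\geq 2$.
\item $\m{codim}(Z_X,X) \geq 2$ follows from the fiberwise bounds and the dimension formula for flat morphisms.
\item Duals and Fitting ideals commute with the flat base change $\kq \to \kk$.
\end{enumerate}

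For (1), the paper already contains a shorter route. In the last claim of Section~\ref{sect:affine-pregls-families} it shows that $\widebar\scX$ itself is normal: it is Cohen--Macaulay, and its singular locus lies in $\m{Sing}(\widebar\scX/\widebar\scB;d)$, which has dimension $\leq d - 1$, i.e.\ codimension $\geq 2$. Normality of $X$ then follows by restricting to $\scX$ and base changing along the separable extension $\kq \to \kk$. Your direct verification of $(R_1)$ on $X$ also works. It checks regularity at generic points of closed fibers, at codimension-one points of the generic fiber, and at generic points of components of $V$, using flatness over the DVR. Its advantage is that it runs directly on $X$ from the fiberwise properties.

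Finally, as printed, the formula in (4) describes the complement of $\Delta\m{LogReg}(X/B)$, i.e.\ the differentially log singular locus. This is consistent with the preceding lemma, and it is how you correctly read it.
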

\begin{proof}
	We leave the elementary proof to the reader.
\end{proof}

\begin{ex}
	We continue Example~\ref{ex:112}. Using the Fitting-ideal description, we compute in the script that
	\begin{align*}
		\Delta\m{LogSing}(X/B) &= \{t = y = z = w = u = 0\} \cup \{t = x = z = w = u = 0\} \\
		&\quad \cup \{t = x = y = z = u - w = 0\} \cup \{t = x = y = z = u + w = 0\}.
	\end{align*}
	Thus, the differentially log singular locus is the closure $\overline{Z_V^\circ}$ of $Z_V^\circ = \bigsqcup_{E \in [\cS_1^\times V]} Z_E$. This coincides with the precise log singular locus which we identified in Remark~\ref{rem:precise-log-smooth-locus-II}. \qedloz
\end{ex}

\begin{ex}\label{ex:xyz-t-xyz}\note{ex:xyz-t-xyz}
	We illustrate how to use differential log smoothness to verify a log singular point inside $T$ which is not in $\overline{Z_V^\circ}$. To this end, let $\kq = \QQ$ and $\kk = \CC$, and consider 
	\[f^\sharp\colon \enspace R_{\widebar\scB} = \QQ[t] \to \QQ[t,x,y,z]/(xyz - t(x + y + z)) = R_{\widebar\scX},\]
	which should be compared with \cite[Ex.~1.152]{FeltenGLDT}. Let $ f\colon \widebar\scX \to \widebar\scB$ be the associated morphism of schemes. The code for the following computations is contained in the script \texttt{example-log-singular-point-in-T.m2}.
	
	The ring $\widebar\scX$ is integral, of dimension $3$, and Cohen--Macaulay. The ring homomorphism $f^\sharp\colon R_{\widebar\scB} \to R_{\widebar\scX}$ is injective, and we have $\m{dim}(\m{Sing}(\widebar\scX/\widebar\scB;d)) = 1$. Thus, we are in Situation~\ref{sitn:affine-gls-candidate}, and the conditions of Lemma~\ref{lem:pre-gls-criterion} are satisfied. Moreover, $\m{Sing}(\widebar\scX/\widebar\scB;d) \subset \scV$ so that $f\colon \widebar\scX \to \widebar\scB$ is smooth over $\widebar\scB \setminus \{0\}$. We set $\scB = \widebar\scB$ and write $f\colon \scX \to \scB$ from now on.
	
	The central fiber is $\scV = \{xyz = 0\} \subset \IA^3$, a normal crossing scheme. The locus $\scD = \cC_1^\times \scV$ consists of three lines, and $\scT = \cC_2^\times\scV = \{0\}$ consists of a single point. In particular, $\m{dim}(\scT) = 0 \leq d - 2$.
	
	In the script, we compute that $\scZ^{(1)} = \{t = x = y = z = 0\} \subset \scV$ so that we have $\ell(E) = 1$ for every $E \in [\cS_1^\times\scV]$. Thus, the conditions in Lemma~\ref{lem:gls-criterion-bar} are satisfied. After base change to $\kk = \CC$, we have $Z_V^\circ = \varnothing$ and $Z_X = T = \{0\}$ (defined by Formula~\eqref{eqn:clubsuit}).
	
	By our computation, the radical of the second Fitting ideal of $\cW^1_{\scX/\scB} \cong (\Omega^1_{\scX/\scB})^{\vee\vee}$ is $(t,x,y,z)$. Thus, $0 \notin \Delta\m{LogReg}(X/B)$, and $f\colon (X|V) \to (B|0)$ cannot be log smooth and saturated in $0 \in X$. \qedloz
\end{ex}

\begin{rem}
	As explained in \cite[Ex.~1.152]{FeltenGLDT}, in the preceding example, there is a second method to show that $0 \in V$ is log singular. Namely, $\cW^1_{X/B} \otimes_{\cO_X} \cO_{V}$ is not reflexive in $0 \in V$, thus a fortiori not locally free of rank $d$. But then, also $\cW^1_{X/B}$ is not locally free of rank $d$ in $0 \in X$. This method is slightly stronger in that it even shows that $f\colon (X|V) \to (B|0)$ is not \emph{log toroidal} in $0 \in X$. For details, see \cite{FeltenGLDT}. \qedloz
\end{rem}
\begin{rem}
	In the file \texttt{methods-gls-families.m2}, the method to compute the differentially log smooth locus uses $\Theta^{1}_{X/B} = (\Omega^{1,\m{log}}_{X/B})^\vee$ rather than $\cW^1_{X/B}$ because this is far faster to compute in more complicated explicit examples such as Example~\ref{ex:projective-112}. The reader may test this for themselves. Since also $\Theta^1_{X/B} = (\cW^1_{X/B})^\vee$, local freeness of $\Theta^1_{X/B}$ is equivalent to local freeness of $\cW^1_{X/B}$. \qedex
\end{rem}

\newpage

\part{Proofs}

\section{Affine pre--gls families}\label{sect:affine-pregls-families}\note{sect:affine-pregls-families}

In this section, we prove Lemma~\ref{lem:pre-gls-criterion}. Thus, let $f^\sharp\colon \kq[t] \to R_{\widebar\scX}$ be a ring homomorphism as in Situation~\ref{sitn:affine-gls-candidate}. Let $f\colon \widebar\scX \to \widebar\scB$ be the associated morphism of affine schemes.

\begin{clm}\label{clm:flatness-criterion}\note{clm:flatness-criterion}
	Assume that $R_{\widebar\scX}$ is integral. Then, $f\colon \widebar\scX \to \widebar\scB$ is flat if and only if $f^\sharp\colon \kq[t] \to R_{\widebar\scX}$ is injective.
\end{clm}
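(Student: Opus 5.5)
The plan is to use the standard fact that over a principal ideal domain, a module is flat if and only if it is torsion-free. The base $\kq[t]$ is a PID, so $f$ is flat precisely when $R_{\widebar\scX}$, viewed as a $\kq[t]$-module via $f^\sharp$, is torsion-free. The proof then reduces to comparing torsion-freeness with injectivity of $f^\sharp$, using that $R_{\widebar\scX}$ is a domain.

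Suppose first that $f^\sharp$ is injective. Take $0 \neq a \in \kq[t]$ and $r \in R_{\widebar\scX}$ with $f^\sharp(a)\cdot r = 0$. Injectivity gives $f^\sharp(a) \neq 0$. Since $R_{\widebar\scX}$ is integral, it follows that $r = 0$. So $R_{\widebar\scX}$ is torsion-free, hence flat over $\kq[t]$. Flatness of $f$ then follows, because flatness of a morphism of affine schemes is flatness of the ring map.

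Conversely, suppose $f$ is flat and $0 \neq a \in \ker f^\sharp$. Then $a \cdot 1 = f^\sharp(a) = 0$ in $R_{\widebar\scX}$, so $1$ is a torsion element. Flat modules over a domain are torsion-free: tensor the injection $a\cdot\colon \kq[t] \to \kq[t]$ with $R_{\widebar\scX}$. Hence $1 = 0$ in $R_{\widebar\scX}$. This contradicts $R_{\widebar\scX}$ being integral, since integral domains are nonzero rings.

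There is no real obstacle here. The only point needing care is citing the criterion "flat over a PID (or Dedekind domain) $\iff$ torsion-free", for instance from \cite{stacks}. Only the easy direction, flat $\Rightarrow$ torsion-free, is used in the converse, and that direction holds over any domain.
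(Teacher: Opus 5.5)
Your proof is correct: since $\kq[t]$ is a PID, flatness of $R_{\widebar\scX}$ over it is equivalent to torsion-freeness, and for the domain $R_{\widebar\scX}$ torsion-freeness is exactly injectivity of $f^\sharp$. The paper instead cites \cite[Prop.~III.9.7]{Hartshorne1977}: over a regular one-dimensional integral base, $f$ is flat iff every associated point of $\widebar\scX$ maps to the generic point, which for integral $\widebar\scX$ means $f$ is dominant, i.e.\ $f^\sharp$ is injective; since that proposition is itself proved via the same ``flat over a DVR iff torsion-free'' criterion, your argument essentially unwinds the cited result directly.
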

\begin{proof}
	This follows from \cite[Prop.~III.9.7]{Hartshorne1977}.
\end{proof}

While the fibers of a pre--gls family are only required to satisfy Serre's condition $(S_2)$, it is in practice usually easier to check that they are even Cohen--Macaulay.\footnote{See for example the method \texttt{isCohenMacaulay} in the package \texttt{TestIdeals} for Macaulay2.}

\begin{clm}
	Assume that $f\colon \widebar\scX \to \widebar\scB$ is flat.\footnote{Here, we do \emph{not} assume that $\widebar\scX$ is integral.} Then, the fibers $\widebar\scX_b$ of $f$ are Cohen--Macaulay if and only if $R_{\widebar\scX}$ is Cohen--Macaulay.
\end{clm}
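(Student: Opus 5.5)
The plan is to reduce the claim to the standard local criterion for flat local homomorphisms: if $A \to B$ is a flat local homomorphism of Noetherian local rings with closed fiber $B/\fm_A B$, then $B$ is Cohen--Macaulay if and only if $A$ and $B/\fm_A B$ are both Cohen--Macaulay (e.g.\ \cite[Tag~045J]{stacks}, Matsumura Cor.~23.3). We apply it at a point $x \in \widebar\scX$ with image $b = f(x) \in \widebar\scB$, taking $A = \cO_{\widebar\scB,b}$ and $B = \cO_{\widebar\scX,x}$. Since $\widebar\scB = \Spec \kq[t]$ is regular of dimension one, every local ring $\cO_{\widebar\scB,b}$ is regular, hence Cohen--Macaulay. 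The closed fiber $B/\fm_A B$ is exactly $\cO_{\widebar\scX_b,x}$, the local ring of the fiber $\widebar\scX_b = \widebar\scX \times_{\widebar\scB} \Spec \kappa(b)$ at $x$.

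For the forward implication, assume $R_{\widebar\scX}$ is Cohen--Macaulay. Then every $\cO_{\widebar\scX,x}$ is Cohen--Macaulay, and the criterion gives that $\cO_{\widebar\scX_b,x}$ is Cohen--Macaulay for every $x \in \widebar\scX_b$. Hence every fiber $\widebar\scX_b$ is Cohen--Macaulay. This covers all points $b$, including the generic point of $\widebar\scB$.

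For the converse, assume all fibers are Cohen--Macaulay. Each $x \in \widebar\scX$ lies in the fiber over $b = f(x)$, so $\cO_{\widebar\scX_b,x}$ is Cohen--Macaulay. Together with $\cO_{\widebar\scB,b}$ being Cohen--Macaulay, flatness and the criterion show that $\cO_{\widebar\scX,x}$ is Cohen--Macaulay. Since this holds at all points, and Cohen--Macaulayness of a Noetherian ring is checked at its primes (or just its maximal ideals), $R_{\widebar\scX}$ is Cohen--Macaulay.

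I do not expect a genuine obstacle; the only care needed is bookkeeping. The fibers here are the scheme-theoretic fibers over all points of $\widebar\scB$, including the generic point and non-rational closed points, and "fibers Cohen--Macaulay" means all their local rings are Cohen--Macaulay. If the statement is meant only for closed points $b$, the converse still goes through. Every closed point of $\widebar\scX$ maps to a closed point of $\widebar\scB$, because $\widebar\scX$ is of finite type over $\kq[t]$ and residue fields of closed points are finite over $\kq$. So Cohen--Macaulayness at all maximal ideals of $R_{\widebar\scX}$ already follows from the closed fibers, and that suffices.
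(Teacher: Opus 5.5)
Your proof is correct and takes the same route as the paper, which simply cites \cite[Tag~0C0W]{stacks}. You have unpacked that reference into the pointwise flat local criterion: $\cO_{\widebar\scX,x}$ is Cohen--Macaulay if and only if $\cO_{\widebar\scB,f(x)}$ and $\cO_{\widebar\scX_{f(x)},x}$ are, combined with regularity of $\kq[t]$. Your remark that closed points of $\widebar\scX$ lie over closed points of $\widebar\scB$ correctly covers the reading in which only closed fibers are meant.
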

\begin{proof}
	This is \cite[Tag~0C0W]{stacks}.
\end{proof}

\begin{clm}
	Assume that $f\colon \widebar\scX \to \widebar\scB$ is flat. Then, every fiber $\widebar\scX_b$ is pure of dimension $d$ if and only if $\widebar \scX$ is pure of dimension $d + 1$.
\end{clm}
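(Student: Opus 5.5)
We argue through the dimension theory of flat morphisms of finite type over the Dedekind base $\widebar\scB = \Spec \kq[t]$. The key input is that for a flat morphism of finite type between Noetherian schemes and a point $x \in \widebar\scX$ with $b = f(x)$, we have
\[\m{dim}(\cO_{\widebar\scX,x}) = \m{dim}(\cO_{\widebar\scB,b}) + \m{dim}(\cO_{\widebar\scX_b,x}).\]
We combine this with the fact that a flat morphism of finite type is open and maps generic points of $\widebar\scX$ to the generic point $\eta$ of $\widebar\scB$. The second fact holds because flatness implies that associated points of $\widebar\scX$ map to associated points of $\widebar\scB$, and $\kq[t]$ is a domain.

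\textbf{Irreducible components of $\widebar\scX$ versus the fibers.} Let $W$ be an irreducible component of $\widebar\scX$ with generic point $\xi$. Then $f(\xi) = \eta$, and $\xi$ is a generic point of the generic fiber $\widebar\scX_\eta$.

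For a closed fiber $\widebar\scX_b$ ($b$ a closed point), we use a minimal prime $\fq$ of $\cO_{\widebar\scX,x}/\fm_b$ for closed points $x$. Every irreducible component of $\widebar\scX_b$ is contained in some component $W$ of $\widebar\scX$ and does not contain $\xi$. By going-down for flat maps, every irreducible component $Y$ of $\widebar\scX_b$ has a generic point $y$ that is a generization-image: there is $\xi \rightsquigarrow y$ with $\xi$ generic in $\widebar\scX$. Moreover, $\m{dim}(\cO_{\widebar\scX,y}) = 1 + 0 = 1$, so $y$ has codimension exactly one in $W$.

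\textbf{Dimensions of finite-type schemes over a field.} Now $W$ is an integral scheme of finite type over $\kq$, with $f|_W$ dominant to the curve $\widebar\scB$. We use the standard formulas:
\begin{itemize}
\item For the generic fiber, $\m{dim}(W_\eta) = \m{dim}(W) - 1$ (transcendence degree of $\kappa(W)$ over $\kq(t)$).
\item For a component $Y \ni y$ of a closed fiber, $\m{dim}(Y) = \m{dim}(W) - \m{codim}(Y,W) = \m{dim}(W) - 1$, since $W$ is catenary of finite type over a field.
\end{itemize}
Hence every irreducible component of every fiber coming from $W$ has dimension $\m{dim}(W) - 1$. Conversely, every component of every fiber arises this way: for the generic fiber this is clear, and for closed fibers it is the going-down argument above.

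\textbf{Both directions.} If $\widebar\scX$ is pure of dimension $d+1$, the previous step shows that every fiber is pure of dimension $d$. Conversely, suppose all fibers are pure of dimension $d$, and let $W$ be a component. Its generic point lies in $\widebar\scX_\eta$ and is a generic point of a component of $\widebar\scX_\eta$ of dimension $\m{dim}(W) - 1$. This forces $\m{dim}(W) = d+1$. One caveat concerns the empty fibers and the case where $\widebar\scX_\eta$ is empty. If $\widebar\scX \neq \varnothing$, flatness makes $f$ open, so $\widebar\scX_\eta \neq \varnothing$. Empty closed fibers are then vacuously pure, or are excluded by convention.

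\textbf{Main obstacle.} The main obstacle is the dimension bookkeeping for closed fibers: showing that a component of $\widebar\scX_b$ has codimension exactly one in some component $W$ of $\widebar\scX$. I would handle it via the flat dimension formula, giving $\m{dim}(\cO_{\widebar\scX,y}) = 1$, together with catenarity and the dimension formula for integral schemes of finite type over a field.
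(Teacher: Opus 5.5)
Your proof is correct, but it is organized differently from the paper's.

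\textbf{The paper's route.} The paper works at closed points. For a closed point $x$, the flat dimension formula gives $\m{dim}(\cO_{\widebar\scX,x}) = \m{dim}(\cO_{\widebar\scX_{f(x)},x}) + 1$. This directly shows that purity of the closed fibers implies purity of $\widebar\scX$, and conversely that purity of $\widebar\scX$ implies purity of the closed fibers. The generic fiber is then handled only by a brief pointer to upper semicontinuity of fiber dimension (\cite[Tag~02FZ]{stacks}).

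\textbf{Your route.} You work at generic points instead. Flatness sends generic points of $\widebar\scX$ to $\eta$, so the irreducible components $W$ of $\widebar\scX$ correspond bijectively to those of $\widebar\scX_\eta$. Transcendence degree then gives $\m{dim}(W_\eta) = \m{dim}(W) - 1$. For a closed fiber, you apply the flat dimension formula at the generic point $y$ of a fiber component to get $\m{dim}(\cO_{\widebar\scX,y}) = 1$. Hence $\overline{\{y\}}$ has codimension exactly one in every component $W$ through $y$; the lower bound holds because $f(\xi_W) = \eta \neq b$. You conclude with the dimension formula for integral schemes of finite type over $\kq$.

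\textbf{What each buys.} Your approach makes the generic-fiber step fully explicit and yields the exact value $\m{dim}(W) - 1$ directly, with no appeal to semicontinuity. It also shows that purity of the generic fiber alone already forces purity of $\widebar\scX$. The paper's approach is shorter, and its first direction shows that the closed fibers alone suffice.

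\textbf{Two cosmetic points.} The sentence about a minimal prime of $\cO_{\widebar\scX,x}/\fm_b$ is never used. Going-down is not needed to produce a component $W$ containing $y$, since every point lies on some irreducible component.
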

\begin{proof}
	First, suppose that every fiber $\widebar\scX_b$ is pure of dimension $d$. Then, for every closed point $x \in \widebar\scX$, we have $\m{dim}(\cO_{\widebar\scX,x}) = \m{dim}(\cO_{\widebar\scX_{f(x)},x}) + 1 = d + 1$. But then, $\widebar\scX$ is pure of dimension $d + 1$. 
	
	Conversely, suppose that $\widebar\scX$ is pure of dimension $d + 1$. Then, we have $\m{dim}(\cO_{\widebar\scX_{f(x)},x}) = \m{dim}(\cO_{\widebar\scX,x}) - 1 = d$ for every closed point $x \in \widebar\scX$, and every fiber $\widebar\scX_b$ over a closed point $b \in \widebar\scB$ is pure of dimension $d$. Now, \cite[Tag~02FZ]{stacks} can be used to show that also the generic fiber $\widebar\scX_\eta$ is pure of dimension $d$.
\end{proof}

Assume that we have already established that $f$ is flat and that its fibers are pure of dimension $d$. Let $\m{Sing}(\widebar\scX/\widebar\scB;d) \subset\widebar\scX$ be the closed subset cut out by the $d$-th Fitting ideal $\m{Fitt}_d(\Omega^1_{\widebar\scX/\widebar\scB})$. By \cite[Tag~0C3K]{stacks}, its complement $\m{Reg}(\widebar\scX/\widebar\scB;d)$ is the smooth locus of $f\colon \widebar\scX \to \widebar\scB$. 

Since the formation of $\m{Fitt}_d(\Omega^1_{\widebar\scX/\widebar\scB})$ commutes with base change along $\Spec \kappa(b) \to \widebar\scB$, we have $\m{Sing}(\widebar\scX/\widebar\scB;d) \times_{\widebar\scB} \Spec \kappa(b) = \m{Sing}(\widebar\scX_b;d)$ for any point $b \in \widebar\scB$. Furthermore, its complement coincides with the regular locus $\m{Reg}(\widebar\scX_b)$ of $\widebar\scX_b$ by \cite[Tag~0B8X]{stacks} since $\kq$ is perfect.

\begin{clm}\label{clm:reduced-fibers}\note{clm:reduced-fibers}
	Assume that $f\colon \widebar\scX \to \widebar\scB$ is flat, and that each fiber $\widebar\scX_b$ is pure of dimension $d$ and satisfies Serre's condition $(S_1)$. Then, all fibers $\widebar\scX_b$ are geometrically reduced if and only if 
	\[\m{dim}(\m{Sing}(\widebar\scX/\widebar\scB;d) \cap \widebar\scX_b) \leq d - 1\]
	for every closed point $b \in \widebar\scB$. 
\end{clm}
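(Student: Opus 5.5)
We use Serre's criterion for reducedness, $(R_0) + (S_1)$, applied to each fiber after base change to an algebraic closure. Fix a point $b \in \widebar\scB$ with residue field $\kappa(b)$. Since $\kappa(b)$ has characteristic $0$, it is perfect. Hence $\widebar\scX_b$ is geometrically reduced if and only if it is reduced. Moreover, $(S_1)$ is stable under field extension, and so are pure dimensionality and the Fitting-ideal description of the singular locus. It therefore suffices to show, for each $b$, that $\widebar\scX_b$ is reduced if and only if $\m{dim}(\m{Sing}(\widebar\scX_b;d)) \leq d - 1$. By the discussion preceding the claim, we have $\m{Sing}(\widebar\scX/\widebar\scB;d) \cap \widebar\scX_b = \m{Sing}(\widebar\scX_b;d)$, and its complement is $\m{Reg}(\widebar\scX_b)$.

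\emph{Fiberwise equivalence.} Since $\widebar\scX_b$ is pure of dimension $d$ and of finite type over a field, its generic points are exactly the points of dimension $d$. The condition $\m{dim}(\m{Sing}(\widebar\scX_b;d)) \leq d - 1$ says precisely that no irreducible component of $\widebar\scX_b$ lies inside the singular locus. Because the singular locus is closed, this is equivalent to every generic point being a regular point, which is condition $(R_0)$. Together with the assumed $(S_1)$, Serre's criterion (reduced $\iff$ $(R_0)+(S_1)$) gives reducedness. Conversely, if $\widebar\scX_b$ is reduced, then each local ring at a generic point is a reduced Artinian local ring, hence a field, hence regular. So $(R_0)$ holds and the dimension bound follows.

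\emph{Passing from closed points to all points.} The claim only imposes the dimension condition at closed points $b$, so we must also handle the generic point $\eta$ of $\widebar\scB$. Suppose the bound holds for all closed $b$. Let $S = \m{Sing}(\widebar\scX/\widebar\scB;d)$ and suppose $S \cap \widebar\scX_\eta$ had dimension $d$, i.e.\ contained an irreducible component $Y_\eta$ of the generic fiber. Let $Y$ be its closure in $\widebar\scX$. Then $Y \subseteq S$, $Y$ dominates $\widebar\scB$, and $\m{dim} Y = d + 1$. By upper semicontinuity / generic flatness of the fiber dimension for the dominant map $Y \to \widebar\scB$ (\cite[Tag~05F7]{stacks} or Chevalley), the fibers of $Y$ over a dense open set of closed points have dimension $d$. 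This contradicts the bound at those closed points. Hence the generic fiber also satisfies the bound, and the fiberwise equivalence gives reducedness for all fibers. The converse direction (reduced fibers $\Rightarrow$ the bound at closed $b$) is immediate from the fiberwise equivalence.

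\emph{Main obstacle.} I expect the delicate step to be this passage to the generic point, together with checking that geometric reducedness really reduces to reducedness in this setting. The latter should be routine: the residue fields are perfect, so \cite[Tag~020I]{stacks} applies. For the former, an alternative route is to note that the bound at the generic point follows because the locus in $\widebar\scB$ where $\m{dim}(S_b) \geq d$ is constructible. Either argument should suffice.
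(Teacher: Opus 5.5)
Your proposal is correct and follows the paper's proof: Serre's criterion $(R_0)+(S_1)$ applied fiberwise, perfectness of the characteristic-$0$ residue fields to pass from reduced to geometrically reduced, and a fiber-dimension argument to carry the bound from closed points to the generic fiber. The only variation is in that last step: the paper applies upper semicontinuity of the local fiber dimension on $\m{Sing}(\widebar\scX/\widebar\scB;d)$ (Tag~02FZ) together with density of closed points, whereas you argue by contradiction via the closure of a $d$-dimensional component of the generic singular locus and the fiber dimension of a dominant morphism. Both arguments are valid.
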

\begin{proof}
	If each fiber is geometrically reduced, then each fiber $\widebar\scX_b$ satisfies Serre's condition $(R_0)$, so $\m{Reg}(\widebar\scX_b) \subseteq \widebar\scX_b$ is dense, and $\m{dim}(\m{Sing}(\widebar\scX/\widebar\scB;d) \cap \widebar\scX_b) \leq d - 1$.
	
	Conversely, let us consider $\m{Sing}(\widebar\scX/\widebar\scB;d) \to \widebar\scB$. Over every closed point $b \in \widebar\scB$, the dimension of the fiber is $\leq d - 1$. By \cite[Tag~02FZ]{stacks}, the locus where the fiber dimension is $\leq d - 1$ is open in $\m{Sing}(\widebar\scX/\widebar\scB;d)$, so the density of the closed points yields that $\m{dim}(\m{Sing}(\widebar\scX/\widebar\scB;d) \times_{\widebar\scB} \Spec \kappa(\eta)) \leq d - 1$ also for the generic point $\eta \in \widebar\scB$.
	
	Now, $\m{Reg}(\widebar\scX_b) \subseteq \widebar\scX_b$ is dense, and $(R_0)$ is satisfied. Since we assume that $\widebar\scX_b$ satisfies $(S_1)$, we find that $\widebar\scX_b$ is reduced. Since $\m{char}(\kq) = 0$, the fibers are even geometrically reduced as soon as they are reduced (see \cite[Tag~020I]{stacks}).
\end{proof}

The condition from the claim is clearly satisfied in the setup of Lemma~\ref{lem:pre-gls-criterion}.

\begin{clm}
	In the situation of Lemma~\ref{lem:pre-gls-criterion}, there is an open subset $0 \in \scB \subseteq \widebar\scB$ such that $\widebar\scX_b$ is normal for every $0 \not= b \in \scB$.
\end{clm}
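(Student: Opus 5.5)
The plan is to use Serre's criterion: a Noetherian scheme is normal if and only if it satisfies $(R_1)$ and $(S_2)$. The fibers are already Cohen--Macaulay by the earlier claims, so $(S_2)$ holds for every fiber. It therefore suffices to find $\scB$ such that $\widebar\scX_b$ satisfies $(R_1)$ for all $0 \not= b \in \scB$. By the discussion before Claim~\ref{clm:reduced-fibers}, $\m{Sing}(\widebar\scX/\widebar\scB;d) \times_{\widebar\scB} \Spec\kappa(b) = \m{Sing}(\widebar\scX_b;d)$ is exactly the non-regular locus of $\widebar\scX_b$. Since $\widebar\scX_b$ is pure of dimension $d$, property $(R_1)$ is equivalent to $\m{dim}(\m{Sing}(\widebar\scX/\widebar\scB;d) \cap \widebar\scX_b) \leq d - 2$.

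Write $S \coloneq \m{Sing}(\widebar\scX/\widebar\scB;d)$, which has $\m{dim}(S) \leq d - 1$ by hypothesis~(c). Consider the restricted morphism $f|_S\colon S \to \widebar\scB$, with $S$ given its reduced structure; this does not change fiber dimensions. Decompose $S$ into finitely many irreducible components $S_1, \ldots, S_m$. For a component $S_i$ that dominates $\widebar\scB$, the generic fiber has dimension $\m{dim}(S_i) - 1 \leq d - 2$, using the dimension formula for a dominant morphism of finite type from an integral scheme to the Dedekind scheme $\widebar\scB$. By upper semicontinuity of fiber dimension (\cite[Tag~02FZ]{stacks}) together with generic flatness, all fibers of $S_i$ over a dense open subset of $\widebar\scB$ have dimension $\leq d - 2$. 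A component $S_i$ that does not dominate $\widebar\scB$ maps into a closed point of $\widebar\scB$, since closed proper subsets of $\widebar\scB$ are finite sets of closed points.

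Let $\scB$ be the intersection of the finitely many dense opens from the dominant components, with the finitely many image points of the non-dominant components removed. If any of those points happens to be $0$, we do not remove it, and instead add $0$ back into $\scB$. The result is still open, since we only add back a point already lying in a cofinite, hence open, set; so we take $\scB \coloneq (\text{cofinite open}) \cup \{0\}$, which is open. For $0 \not= b \in \scB$, every component of $S$ then has fiber of dimension $\leq d - 2$ over $b$, so $(R_1)$ holds for $\widebar\scX_b$, and hence $\widebar\scX_b$ is normal.

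The main obstacle is the dimension count for the generic fiber of a dominant component. This requires the equality $\m{dim}(S_i) = \m{dim}(\text{generic fiber}) + 1$ for integral schemes of finite type over $\kq[t]$ dominating $\widebar\scB$, which holds because $\kq[t]$ is a Jacobson ring of dimension one, giving the standard dimension formula. Non-closed points $b$ need no separate treatment beyond the generic point of $\widebar\scB$: in the affine line these are the only points, and the generic point was handled by the dominant-component argument.
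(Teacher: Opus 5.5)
Your argument is correct, but it takes a different route from the paper's in how it handles the dominant components of $\m{Sing}(\widebar\scX/\widebar\scB;d)$ and the generic fiber.

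For vertical components, you and the paper do the same thing: discard the finitely many closed points lying under components contained in a single closed fiber.

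For a dominant component $\scA$, the paper notes that $\scA \to \widebar\scB$ is flat everywhere, since it is integral and dominates a regular one-dimensional base \cite[Prop.~III.9.7]{Hartshorne1977}. Hence for every closed point $x$ of every closed fiber, $\m{dim}(\cO_{\scA_b,x}) = \m{dim}(\cO_{\scA,x}) - 1 \leq d - 2$, and no shrinking is needed for these components. Your semicontinuity step removes possibly extra points instead. This is harmless, since only existence of some $\scB$ is claimed. Generic flatness is not needed there, but you should say why the locus of fiber dimension $> d-2$ maps to finitely many closed points: it is closed and misses the generic fiber.

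For the generic fiber, the paper does not count dimensions. It shows that the total space $\widebar\scX$ is normal: $(S_2)$ comes from Cohen--Macaulayness, and $(R_1)$ holds because $\m{Reg}(\widebar\scX/\widebar\scB;d)$ is regular (smooth over the smooth curve $\widebar\scB$) while its complement has dimension $\leq d-1$ in the $(d+1)$-dimensional $\widebar\scX$. Then $\widebar\scX_\eta$ is normal as a localization of $\widebar\scX$. Your transcendence-degree count $\m{dim}(S_{i,\eta}) = \m{dim}(S_i) - 1$ instead treats closed and generic fibers uniformly through the single criterion $\m{dim}(\m{Sing}(\widebar\scX_b;d)) \leq d-2$.

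The paper's route gives normality of the total space as a by-product. Yours is more uniform, and your explicit handling of $0 \in \scB$ fills in a point the paper leaves implicit.
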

\begin{proof}
	Let $\m{Sing}(\widebar\scX/\widebar\scB;d) = \scS^\parallel \cup \scS^\perp$ be the decomposition in irreducible components which are contained in a single closed fiber, and irreducible components which are not contained in any single closed fiber. Since $\m{Sing}(\widebar\scX/\widebar\scB;d)$ has only finitely many irreducible components, there is an open subset $\scB^\star \subseteq \widebar\scB$ such that $\scS^\parallel \cap \scX^\star = \varnothing$ for $\scX^\star = f^{-1}(\scB^\star)$.
	
	Let $\scA \subset \scS^\perp$ be an irreducible component with its reduced induced scheme structure. Since $\scA$ is not contained in any single closed fiber, $\scA \to \widebar\scB$ is dominant and hence flat by \cite[Prop.~III.9.7]{Hartshorne1977}. Thus, for a closed point $b \in \widebar\scB$, we have $\m{dim}(\scA_b) \leq d - 2$ since $\m{dim}(\scA) \leq d - 1$. Now, if $b \in \scB^\star$ is closed, we find that $\m{dim}(\m{Sing}(\widebar\scX_b;d)) \leq d - 2$ so that $\widebar\scX_b$ satisfies Serre's condition $(R_1)$ because $\widebar\scX_b$ is pure of dimension $d$. Since $\widebar\scX_b$ is Cohen--Macaulay, it is normal.
	
	Since $\widebar\scB/\kq$ is smooth, $\m{Reg}(\widebar\scX/\widebar\scB;d)$ is regular. Because $\m{dim}(\m{Sing}(\widebar\scX/\widebar\scB;d)) \leq d - 1$, $\widebar\scX$ satisfies Serre's condition $(R_1)$, and then, it is normal. Since the generic fiber $\widebar\scX_\eta$ is a localization of $\widebar\scX$, it is normal as well.
\end{proof}

We conclude this section by recalling the following result.

\begin{lem}\label{lem:reduced-fibers-purity}\note{lem:reduced-fibers-purity}
	Let $f\colon X \to S$ be pre--gls. Let $b\colon T \to S$ be a morphism of locally Noetherian schemes, and let $g\colon Y \to T$ be the base change of $f$ along $b$. Then, $\cO_Y \to j_*\cO_{\m{Reg}(Y/T)}$ is injective, where $j\colon \m{Reg}(Y/T) \to Y$ is the inclusion.
\end{lem}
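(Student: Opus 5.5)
The plan is to reduce the statement to reflexivity-type properties of the fibers, namely reducedness plus density of the smooth locus in each fiber. The question is local on $Y$, so we may assume $Y$ affine and check injectivity on stalks. Let $y \in Y$ with image $t = g(y) \in T$, and let $s = b(t)$. By the Remark following Definition~\ref{defn:gls}, $g\colon Y \to T$ is again pre--gls. We may therefore rename $Y \to T$ as $X \to S$ and prove the claim directly for a pre--gls $f\colon X \to S$. The claim is then that no nonzero local section of $\cO_X$ vanishes on $\m{Reg}(X/S)$.

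First step: identify the associated points of $X$. Since $f$ is flat and $S$ is locally Noetherian, every associated point $x$ of $X$ lies over an associated point $s$ of $S$, and $x$ is an associated point of the fiber $X_s$; this is the standard description of associated points of flat morphisms (\cite[Tag~05DB]{stacks} type statement). The fiber $X_s$ is reduced, so its associated points are exactly its generic points. Hence every associated point of $X$ is a generic point of some fiber $X_s$.

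Second step: show that $\m{Reg}(X/S)$ contains all these points. Let $x$ be a generic point of $X_s$. Since $X_s$ is geometrically reduced, $X_s$ is smooth over $\kappa(s)$ on a dense open subset. For geometrically reduced schemes of finite type over a field, the smooth locus is dense, and in particular contains all generic points. Since $f$ is flat and of finite type, $f$ is smooth at $x$ by the fiberwise criterion for smoothness. Therefore all associated points of $X$ lie in the open set $U = \m{Reg}(X/S)$.

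Conclusion: a section $h$ of $\cO_X$ over an open $W$ with $h|_{W \cap U} = 0$ has support disjoint from all associated points of $X$ in $W$. A section whose support avoids every associated point is zero, since the annihilator of $h$ in a stalk would otherwise be contained in an associated prime. Hence $\cO_X \to j_*\cO_U$ is injective.

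The main obstacle is the first step: citing precisely that associated points of a flat $X/S$ lie over associated points of $S$ and are associated in their fibers. This requires Noetherianity of $X$, which holds because $f$ is of finite type over a locally Noetherian base. Note that the $(S_2)$ and purity hypotheses are not needed; only flatness and geometric reducedness of the fibers are used. If the citation is awkward, one can instead argue directly on local rings via \cite[Tag~0312]{stacks}-style results on flat local homomorphisms, $\m{Ass}(B) = \bigcup_{\fp \in \m{Ass}(A)} \m{Ass}(B/\fp B)$.
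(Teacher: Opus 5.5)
Your proof is correct. The paper does not argue this lemma itself; it only refers to \cite[Rem.~8.37]{FeltenGLDT}. Your associated-points argument is therefore a self-contained substitute for that citation. It runs in three steps:
\begin{enumerate}
	\item Flatness over a locally Noetherian base places every associated point of $Y$ at a generic point of a fibre $Y_t$ with $t \in \m{Ass}(T)$.
	\item Geometric reducedness of $Y_t$ makes $g$ smooth at such a point. This uses the density of the smooth locus of a geometrically reduced scheme of finite type over a field, together with the fibrewise criterion for smoothness of a flat morphism of finite presentation.
	\item A section of $\cO_Y$ that vanishes at every associated point is zero.
\end{enumerate}
What your route buys is transparency about the hypotheses. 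Only flatness, finite type, and geometric reducedness of the fibres of $g$ are used; purity, $(S_2)$, and separatedness play no role.

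Two small points are worth making explicit. First, the point $t$ may be non-closed. This is harmless, because the pre--gls condition imposes geometric reducedness on every fibre and this survives base change. Second, in your fallback via the local formula $\m{Ass}(B) = \bigcup_{\fp \in \m{Ass}(A)} \m{Ass}(B/\fp B)$, you should add one observation. Since $B/\fp B$ is flat, hence torsion-free, over the domain $A/\fp$, its associated primes avoid $A \setminus \fp$. They therefore coincide with the associated primes of the fibre ring $B \otimes_A \kappa(\fp)$, which is what you actually need.
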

\begin{proof}
	See \cite[Rem.~8.37]{FeltenGLDT}.
\end{proof}

\section{Establishing generic log smoothness}

In this section, we prove Theorem~\ref{thm:explicit-gls-criterion-kk}. To this end, we review the \'etale local form of log smooth, saturated, and vertical morphisms to $(B|0)$ in codimension $1$, and then we give effective criteria to determine if $f\colon X \to B$ has this form \'etale locally.

\subsection{Dtc degeneration points}

The hardest part in showing that $f\colon (X|V) \to (B|0)$ can be made gls is to establish that $f$ is log smooth and saturated in codimension $1$ in $V$. The key to doing so is the toroidal characterization of log smoothness. In our setting, it takes the following form.

\begin{thm}\label{thm:toroidal-characterization}\note{thm:toroidal-characterization}
	In Situation~\ref{sitn:explicit-gls-candidate-kk-intro}, let $U \subseteq X$ be an open subset such that $f\colon (X|V) \to (B|0)$ is log smooth and saturated on $U$, and let $\itUpsilon = U \cap V$. Then, we have a stratification 
	\[\itUpsilon = \bigsqcup_{k \geq 0} \cS_k\itUpsilon\]
	by locally closed smooth subschemes $\cS_k\itUpsilon \subset \itUpsilon$ of codimension $k$ such that $p \in \cS_k\itUpsilon$ if and only if $\m{rk}(\overline\cM_{(X|V),\bar p}) = k + 1$. Here, $\bar p$ is a geometric point lying over $p \in \itUpsilon$. When $p \in \cS_k\itUpsilon$ is a closed point, we can find a commutative diagram of log schemes
	\[
	\xymatrix{
		(X|V) \ar[d]_f & (W,\cM_W) \ar[l]_-h \ar[r]_-g \ar@/^1.5em/[rr]^{g_\minuso} & (A_P \times \IA^r) \times_{A_\NN} (B|0) \ar[r] \ar[d] & A_P \times \IA^r \ar[d]^{\mu_{\theta;r}} \\
		(B|0) \ar@{=}[rr] & & (B|0) \ar[r]^{\ms{b}_\minuso} & A_\NN = (\IA^1_t|0) \\
	}
	\]
	and a closed point $w \in W$ where:
	\begin{enum:alph}
		\item $r = d - k$, and $P$ is a sharp toric monoid of rank $k + 1$;
		\item $\theta\colon \NN \to P$ is an injective and saturated homomorphism; it induces a log smooth and saturated morphism of log schemes
		\[\mu_{\theta;r}\colon \enspace A_P \times \IA^r = \Spec(P \to \kk[P \oplus \NN^r]) \to \Spec(\NN \to \kk[t]) = A_\NN;\]
		the log structure on $A_\NN$ coincides with the divisorial log structure defined by $\{0\} \subset \IA^1_t$, and the log structure on $A_P \times \IA^r$ coincides with the divisorial log structure defined by $\mu_{\theta;r}^{-1}(0) \subset A_P \times \IA^r$; when $\m{Int}(P) \subseteq P$ is the set of those elements of $P$ which are not contained in any proper face $F \subsetneq P$, then we have $\m{Int}(P) = \theta(1) + P$; the homomorphism $\theta\colon \NN \to P$ is vertical (see \cite[Defn.~I.4.3.1]{LoAG2018});
		\item $h\colon (W,\cM_W) \to (X|V)$ is strict and \'etale, and $h(w) = p$; 
		\item $g_\minuso\colon (W,\cM_W) \to A_P \times \IA^r$ is strict and \'etale, and $g_\minuso(w) = 0$;
		\item we have charts $\NN \to \cM_{A_\NN} \xrightarrow{\ms{b}_\minuso^\flat} \cM_{(B|0)}$ and $P \to \cM_{A_P \times \IA^r} \xrightarrow{g_\minuso^\flat} \cM_W$ which induce an identification of $f^\splus\colon \overline\cM_{(B|0),\bar 0} \to \overline\cM_{W,\bar w}$ with $\theta\colon \NN \to P$.
	\end{enum:alph}
\end{thm}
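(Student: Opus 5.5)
This is essentially the toroidal characterization of log smooth morphisms (K.~Kato), specialized to our situation. I plan to derive it from the general chart criterion in \cite[\S IV.3]{LoAG2018} and \cite{FeltenGLDT}, with some bookkeeping.

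\textbf{Step 1: local charts.} Let $p \in \itUpsilon$ be a closed point. Since $f$ is log smooth and saturated on $U$, and the log structures on $(X|V)$ and $(B|0)$ are fs, Kato's chart criterion gives the following data. Set $P = \overline\cM_{(X|V),\bar p}$, which is sharp and toric. The map $\theta\colon \NN = \overline\cM_{(B|0),\bar 0} \to P$ is the induced homomorphism on ghost stalks. There is also a neighbored chart $\NN \to P^{\m{gp}}\oplus\ldots$. More precisely, after choosing a neat chart at $p$, we obtain a strict étale neighborhood $h\colon W \to X$ together with a morphism $W \to A_P \times_{A_\NN} (B|0)$ that is smooth. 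I will then factor this smooth morphism étale locally as an étale map to $A_P\times\IA^r$ followed by the projection; this uses smoothness of the underlying map and the choice of $r$ coordinate functions whose differentials span the relative cotangent space at $w$. Since $\mathsf b_\minuso$ is étale, the composite $g_\minuso$ is strict étale, and we can translate so that $g_\minuso(w)=0$.

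\textbf{Step 2: properties of $\theta$.} Injectivity holds because $\tau$ maps to a nonzero element, namely the equation of $V$. Saturatedness of $\theta$ is equivalent to saturatedness of $f$ \cite[I.4.8]{LoAG2018}. Verticality holds because the log structure on $X$ is divisorial along $V = f^{-1}(0)$, so every facet of $P$ corresponds to a component of $V$ and hence $\theta(1)$ lies in no proper face. The identity $\m{Int}(P) = \theta(1)+P$ should then follow from the divisoriality of $\cM_X$, i.e.\ from the fact that $\mu_{\theta;r}^{-1}(0)$ is reduced. Concretely, $V$ is reduced by Situation~\ref{sitn:explicit-gls-candidate-kk-intro}, so the ideal of $\mu^{-1}(0)$, generated by $z^{\theta(1)}$, is the radical ideal generated by $\m{Int}(P)$. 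The same divisoriality shows that the log structures on $A_P\times\IA^r$ and $A_\NN$ are the divisorial ones, for example via \cite[Thm.~III.1.11.?]{LoAG2018} on toric varieties. In particular $r = d-k$ by dimension count, since $\m{rk} P = k+1$ and $\m{dim} W = d+1$.

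\textbf{Step 3: stratification.} Using the local model, the locus in $A_P\times\IA^r$ where the ghost rank equals $k+1$ near $0$ is the torus orbit $\{0\}\times\IA^r$ of codimension $k$ in the fiber. This locus is smooth. Since ghost sheaves pull back along strict étale maps, the sets $\cS_k\itUpsilon$ are étale locally these orbits. Hence they are locally closed, smooth, and of codimension $k$ in $\itUpsilon$.

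I expect the main obstacle to be Step 2, specifically the identity $\m{Int}(P)=\theta(1)+P$. My plan there is to argue on faces: reducedness of $\Spec \kk[P]/(\theta(1))$ forces $\theta(1)$ to have valuation $1$ on each facet. Any interior element then has valuation $\geq 1$ on every facet, and by saturation of $\theta$ it lies in $\theta(1)+P$.
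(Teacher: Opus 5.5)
Your proposal is correct. Most of it follows the paper's proof:
\begin{itemize}
\item the diagram comes from the neat-chart version of Kato's criterion \cite[Thm.~IV.3.3.3]{LoAG2018};
\item $\theta(1)$ lies in no proper face, which gives verticality and $\theta(1)+P \subseteq \m{Int}(P)$, because the log structure is trivial exactly off $V$;
\item the stratification is read off from the orbit structure of the local model.
\end{itemize}
The one genuinely different step is $\m{Int}(P) \subseteq \theta(1)+P$. The paper argues purely with monoid ideals. Reducedness of $V$ makes $\theta(1)+P$ a radical ideal, so by \cite[Cor.~I.1.4.3]{LoAG2018} it is the intersection of the primes $P \setminus F$ containing it. Since $\theta(1)$ lies in no proper face, these are exactly the $P \setminus F$ with $F \subsetneq P$, and their intersection is $\m{Int}(P)$. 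You instead use the facet valuations $v_F$ of the normal toric variety $A_P$. Reducedness of $\mu_{\theta;r}^{-1}(0)$ at the generic point of each boundary divisor forces $v_F(\theta(1)) = 1$, so $v_F(p - \theta(1)) \geq 0$ for every interior $p$ and every facet $F$. This gives $p - \theta(1) \in P$ because $P$ is saturated, i.e.\ $P = \{q \in P^{\m{gp}} : v_F(q) \geq 0 \text{ for all facets } F\}$. Saturatedness of $\theta$ plays no role here, contrary to what you wrote. Your route needs $V$ to be reduced only at its generic points, transported through the \'etale chart, whose open image contains the generic points of all boundary divisors through $0$. It also gives more explicit information: $\theta(1)$ has order exactly one along every boundary divisor, so $\mu_{\theta;r}^{-1}(0)$ is the reduced toric boundary. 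The paper's route is shorter and stays inside monoid theory, but it uses reducedness of $\kk[P]/(\m{z}^{\theta(1)})$ in full.
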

\begin{proof}
	First, one establishes the commutative diagram by \cite[Thm.~IV.3.3.3]{LoAG2018}. To see that $\theta(1) + P \subseteq \m{Int}(P)$, analyze the log trivial loci. Since $V$ is reduced, $\theta(1) + P$ is a radical monoid ideal. By \cite[Cor.~I.1.4.3]{LoAG2018}, $\theta(1) + P$ is then the intersection of the prime monoid ideals containing it. Now, every non-empty prime monoid ideal is the complement of a proper face, so this intersection is $\m{Int}(P)$. By \cite[Rem.~I.4.3.2]{LoAG2018}, $\theta\colon \NN \to P$ is vertical because $\theta(1)$ is not contained in any proper face. For the stratification, study the local model $A_P \times \IA^r \to A_\NN$ in some detail. See also \cite[\S\S8 - 9]{FeltenGLDT}.
\end{proof}

Conversely, if a closed point $p \in V$ admits a commutative diagram as in the theorem but in the category of schemes, then $f\colon (X|V) \to (B|0)$ is log smooth and saturated in a neighborhood of $p$ precisely because it is \'etale locally of the form $A_P \times \IA^r \to A_\NN$.

Due to the stratification, for understanding $f\colon (X|V) \to (B|0)$ in codimension $1$, we may assume that $1 \leq \m{rk}(P) \leq 2$. For such points, a complete classification of possible local models $\theta\colon \NN \to P$ is available.

\begin{prop}\label{prop:P-ell-classification}\note{prop:P-ell-classification}
	In Situation~\ref{sitn:explicit-gls-candidate-kk-intro}, let $U \subseteq X$ be an open subset such that $f\colon (X|V) \to (B|0)$ is log smooth and saturated on $U$.
	\begin{enum:arabic}
		\item Let $p \in \cS_0\itUpsilon$. Then, the homomorphism $\theta\colon \NN \to P$ in Theorem~\ref{thm:toroidal-characterization} is an isomorphism.
		\item Let $p \in \cS_1\itUpsilon$. Then, the homomorphism $\theta\colon \NN \to P$ in Theorem~\ref{thm:toroidal-characterization} is of the form
		\begin{equation*}
			\theta_\ell\colon \enspace \NN \to P_\ell \coloneq \langle (-1,\ell),(0,1),(1,0)\rangle \subset \ZZ^2, \quad 1 \mapsto (0,1);
		\end{equation*}
		$\ell$ is called the \emph{kink}; here, $P_\ell \subset \ZZ^2$ is the submonoid generated by the three indicated vectors.
	\end{enum:arabic}
\end{prop}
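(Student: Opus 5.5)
The plan is to use the conditions in Theorem~\ref{thm:toroidal-characterization}: $P$ is a sharp toric monoid of rank $k+1$, $\theta\colon \NN \to P$ is injective and saturated, and $\m{Int}(P) = \theta(1) + P$. I will classify all pairs $(P,\theta)$ satisfying these conditions when $k \in \{0,1\}$.

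\textbf{Rank one.} For $p \in \cS_0\itUpsilon$, the monoid $P$ is sharp, toric, and of rank $1$, so $P \cong \NN$. Write $\theta(1) = m$. The only proper face of $\NN$ is $\{0\}$, so $\m{Int}(\NN) = \NN \setminus \{0\} = 1 + \NN$. Comparing with $\m{Int}(P) = m + \NN$ forces $m = 1$, so $\theta$ is an isomorphism. (Saturatedness of $\theta$ would give the same conclusion.)

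\textbf{Rank two.} For $p \in \cS_1\itUpsilon$, $P$ is a sharp toric monoid of rank $2$, i.e. $P = \sigma^\vee \cap \ZZ^2$ for a strictly convex two-dimensional cone with two extremal rays. The steps are as follows.

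\begin{enum:roman}
\item Let $\rho = \theta(1)$, a nonzero element of $P$. Since $\m{Int}(P) = \rho + P$ and $\m{Int}(P)$ is nonempty, $\rho$ lies in the interior of the cone.
\item Saturatedness of $\theta$ (in particular, integrality of $\NN \to P$ with saturated cokernel behaviour) implies that $\rho$ is primitive in $\ZZ^2$. I will choose a lattice basis with $\rho = (0,1)$.
\item The condition $\m{Int}(P) = \rho + P$ means that every interior lattice point $q$ satisfies $q - \rho \in P$. In particular, the cone is Gorenstein with $\rho$ as its unique "interior generator": the two facets are at lattice height $1$ relative to $\rho$.
\item Let $n_1, n_2$ be the primitive inner normals of the two facets. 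The condition says $\langle n_i, \rho \rangle = 1$ for $i = 1,2$: if $\langle n_i,\rho\rangle \geq 2$, one can find an interior point $q$ with $\langle n_i, q\rangle = 1$, and then $q - \rho \notin P$. Hence $n_1 = (a,1)$ and $n_2 = (b,1)$ with $a > b$.
\item Shearing by $(x,y) \mapsto (x, y + cx)$ fixes $\rho$ and shifts $a,b$ by the same constant, so I may normalize $b = 0$, giving $n_2 = (0,1)$ and $n_1 = (\ell,1)$ with $\ell = a - b \geq 1$.
\end{enum:roman}

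Dualizing, $P = \{(x,y) : y \geq 0,\ \ell x + y \geq 0\} \cap \ZZ^2$ after swapping or reflecting coordinates appropriately. Its Hilbert basis is $(1,0)$, $(0,1)$, $(-1,\ell)$, which is exactly $P_\ell$ with $\theta = \theta_\ell$. I will also check directly that $\m{Int}(P_\ell) = (0,1) + P_\ell$, confirming consistency.

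The main obstacle is step (iv): turning $\m{Int}(P) = \rho + P$ into the height-one condition on both facets. This requires a careful lattice argument, choosing interior points adjacent to each facet, together with the convention that $\ell$ is defined up to the shear normalization. An alternative route is to use that the central fibre is reduced, i.e. $\rho + P$ is radical, as in the proof of Theorem~\ref{thm:toroidal-characterization}, together with the classification of two-dimensional cones as $\langle (1,0),(-m,\ell')\rangle$ with $\gcd$ conditions. I would fall back on this if the normal-vector argument becomes messy.
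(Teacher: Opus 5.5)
Your proposal is correct, but it takes a different route from the paper. The paper's proof is purely by citation: the rank-one case is referred to \cite[Cor.~I.4.8.12]{LoAG2018}, and the rank-two case to the normal form of two-dimensional cones \cite[Prop.~10.1.6]{Cox2011} together with \cite[p.~126]{Oda1988}. The reader is left to combine that normal form with the Gorenstein condition and to recognize $\theta(1)$ as the relevant element.

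You instead derive everything from the single condition $\m{Int}(P) = \theta(1) + P$ of Theorem~\ref{thm:toroidal-characterization}, which the paper obtains from reducedness of $V$. In rank one this forces $\theta(1) = 1$. In rank two it forces $\langle n_1,\rho\rangle = \langle n_2,\rho\rangle = 1$, and then an integral shear gives $P = P_\ell$ with $\ell = a - b = \det(n_1,n_2)$. Your argument is self-contained and shows that the interior condition alone pins down $\theta$, without invoking saturatedness of $\theta$ or the cone classification. It also makes visible where the kink comes from. The paper's version is shorter but less transparent.

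Two small points:
\begin{enum:arabic}
\item Your step (ii) appeals to saturatedness of $\theta$ with an imprecise justification, and it is not needed. Run step (iv) in arbitrary coordinates first. Then $\langle n_1,\rho\rangle = 1$ already shows that $\rho$ is primitive, and only afterwards choose a basis with $\rho = (0,1)$.
\item Step (iv) is not a real obstacle. Let $v_i$ be the primitive generator of the ray of the cone on which $n_i$ vanishes. The lattice points on $\{\langle n_i,\cdot\rangle = 1\}$ form a coset of $\ZZ v_i$, and $\langle n_j, v_i\rangle > 0$ for $j \neq i$. Moving far enough along $v_i$ therefore gives a lattice point $q$ with $\langle n_i,q\rangle = 1$ and $\langle n_j,q\rangle \geq 1$. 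Since $P$ is saturated, such a $q$ lies in $\m{Int}(P)$, while $q - \rho \notin P$ whenever $\langle n_i,\rho\rangle \geq 2$.
\end{enum:arabic}

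Finally, no swap or reflection of coordinates is needed at the end. The monoid $\{(x,y) : y \geq 0,\ \ell x + y \geq 0\} \cap \ZZ^2$ is generated by $(1,0)$, $(0,1)$, $(-1,\ell)$ exactly as written.
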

\begin{proof}
	For the first statement, use \cite[Cor.~I.4.8.12]{LoAG2018}. For the second statement, use \cite[p.~126]{Oda1988} and \cite[Prop.~10.1.6]{Cox2011}.
\end{proof}

Thus, to establish generic log smoothness, we have to show that enough among the non-smooth points in $X_0$ are \'etale locally of the form $A_{P_\ell} \times \IA^r \to A_\NN$. We say that $p$ is a \emph{double toroidal crossing degeneration point}. 

\begin{defn}\label{defn:dtd-point}\note{defn:dtd-point}
	Assume that we are in Situation~\ref{sitn:explicit-gls-candidate-kk-intro}. In particular, we have $d \geq 1$ fixed.
	\begin{enum:arabic}
		\item For $\ell \geq 1$, let $P_\ell = \langle(-1,\ell),(0,1),(1,0)\rangle \subset \ZZ^2$ be the submonoid generated by the three vectors. Let $\bar x = (1,0)$, $\bar y = (-1,\ell)$, and $\bar t = (0,1)$. Let $\theta_\ell\colon \NN \to P_\ell$ be given by $\theta_\ell(1) = (0,1) = \bar t$. For the monoid ring $\kk[P_\ell] = \bigoplus_{p \in P} \kk \cdot \m{z}^p$, we have an isomorphism $\kk[P_\ell] \cong \kk[t,x,y]/(xy - t^\ell)$ given by $\m{z}^{\bar x} \mapsto x$, $\m{z}^{\bar y} \mapsto y$, and $\m{z}^{\bar t} \mapsto t$.
		
		\item For $\ell \geq 1$, let
		\[\mu_{\ell;d}^\sharp\colon \enspace \kk[t] \to \kk[t,x,y,z_1,\ldots,z_{d - 1}]/(xy - t^\ell) \eqcolon R_{\ell;d},\]
		and let $\mu_{\ell;d}\colon M_{\ell;d} \coloneq \Spec R_{\ell;d} \to \IA^1_t$ be the associated morphism of schemes. Let $R_{\ell;d}^\m{h}$ be the Henselization of the localization of $R_{\ell;d}$ in $(t,x,y,z_1,\ldots,z_{d - 1})$.
		
		\item Let $\ell \geq 1$. A closed point $p \in D$ is a \emph{double toroidal crossing (dtc) degeneration point of kink $\ell$} if there is a commutative diagram
		\[
		\xymatrix{
			\cO_{X,p}^\m{h} & R_{\ell;d}^\m{h} \ar[l]^-\cong \\
			\cO_{B,0}^\m{h} \ar[u]^{f^\sharp} & \kk[t]_{(t)}^\m{h} \ar[l]_-{t \mapsto s}^-\cong \ar[u]_{\mu_{\ell;d}^\sharp}\\
		}
		\]
		of Henselian local rings and local homomorphisms where the horizontal maps are isomorphisms. \qeddef
	\end{enum:arabic}
\end{defn}

\subsection{Double normal crossing points}\label{sect:dnc-points}\note{sect:dnc-points}

When $p \in V$ is a dtc degeneration point, then $V$ is a \emph{double normal crossing scheme} at $p$. Conversely, when we determine below if $p$ is a dtc degeneration point, we will first determine if $V$ is a double normal crossing scheme at $p$.

\begin{sitn}\label{sitn:dnc-points}\note{sitn:dnc-points}
	Let $\kk$ be an algebraically closed field of characteristic $0$, and let $V/\kk$ be separated, of finite type, reduced, and pure of some dimension $d \geq 0$. \qeddef
\end{sitn}

\begin{defn}\label{defn:dnc-point}\note{defn:dnc-point}
	For $d \geq 1$, we write $R_{\times,d} = \kk[x,y,z_1,\ldots,z_{d - 1}]/(xy)$ and
	\[R_{\times,d}^\m{h} \coloneq \big(\kk[x,y,z_1,\ldots,z_{d - 1}]_{(x,y,z_1,\ldots,z_{d - 1})}/(xy)\big)^\m{h}.\]
	Furthermore, we write $R_{\times,0} = R_{\times,0}^\m{h} = \kk$. 
	
	In Situation~\ref{sitn:dnc-points}, a closed point $p \in V$ is a \emph{double normal crossing (dnc) point} if the (strictly) Henselian local ring $\cO_{V,p}^\m{h}$ is isomorphic to $R_{\times,d}^\m{h}$ as a $\kk$-algebra. We say that $V$ is a \emph{double normal crossing (dnc) scheme} if every closed point $p \in V$ is either regular or a dnc point. \qeddef
\end{defn}

We prove an intrinsic characterization of dnc points which is inspired by \cite[Tag~0CBR]{stacks}.\footnote{The criterion is relatively straightforward, and it appears likely that it is known to some experts. For instance, double normal crossing schemes are explored in the context of demi-normal schemes \cite{KSB1988,Berquist2014,Kollar2023,Posva2023,FantechiFranciosiPardini2023}. However, we did not find our desired characterization (or a similar one) in the literature.} Our aim is to introduce the \emph{double normal crossing locus} $\cU_1^\times V \subseteq V$ and to show that it is compatible with smooth morphisms $\pi\colon Y \to X$, i.e., $\cU_1^\times Y = \pi^{-1}(\cU_1^\times V)$. This is used in the proof of Proposition~\ref{prop:projective-gls-criterion}. We define the locus $\cU_1^\times V$ by means of the intrinsic characterization and then show that it is indeed the locus where $V$ is a dnc scheme.

\begin{defn}\label{defn:dnc-locus}\note{defn:dnc-locus}
	In Situation~\ref{sitn:dnc-points}, let $\nu\colon \widetilde V \to V$ be the normalization. Recall that the \emph{conductor} is the closed subscheme $i\colon C \to V$ defined by the annihilator of the $\cO_V$-module $(\nu_*\cO_{\widetilde V})/\cO_{V}$, and consider the Cartesian square 
	\[
	\xymatrix{
		\widetilde C \ar[r]^\mu \ar[d]^k & C \ar[d]^i \\
		\widetilde V \ar[r]^\nu & V, \\
	}
	\]
	which is called the \emph{conductor square}. We say that $V$ is \emph{intrinsically dnc} if the following conditions are satisfied:
	\begin{enum:alph}
		\item $\widetilde V$ is smooth;
		\item $C$ is smooth of dimension $d - 1$;
		\item $\nu\colon \widetilde V \to V$ is unramified;
		\item $\mu_*\cO_{\widetilde C}$ is a locally free $\cO_{C}$-module of rank $2$.
	\end{enum:alph}
	The \emph{dnc locus} $\cU_1^\times V \subseteq V$ is the union of all open subsets $W \subseteq V$ which are intrinsically dnc. Furthermore, we write $\cU_0^\times V$ as well as $\cS_0^\times V$ for the smooth locus of $V$; it is contained in $\cU_1^\times V$. We write 
	\[\cS_1^\times V = \cU_1^\times V \setminus \cU_0^\times V\]
	for its complement and endow $\cS_1^\times V$ with its reduced scheme structure as a closed subset of $\cU_1^\times V$. \qeddef
\end{defn}

Our notation and in particular the choice of the numbers in the subscript are motivated by the notation for the rank stratification of a toroidal crossing scheme. We will see a certain comparison in Proposition~\ref{prop:strata-dnc-rank}.

The compatibility of $\cU_1^\times V$ with smooth morphisms has the following form.

\begin{clm}\label{clm:intrinsic-dnc-smooth-local}\note{clm:intrinsic-dnc-smooth-local}
	Assume that we are in Situation~\ref{sitn:dnc-points}, and let $\pi\colon Y \to V$ be smooth of relative dimension $e \geq 0$. Then, we have $\cU_1^\times Y = \pi^{-1}(\cU_1^\times V)$. In particular, if $\cU_1^\times Y = Y$, and $\pi$ is surjective, then $\cU_1^\times V = V$.
\end{clm}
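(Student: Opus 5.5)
The plan is to show that every ingredient of Definition~\ref{defn:dnc-locus} commutes with the smooth base change $\pi\colon Y \to V$. Then show that each of the four conditions (a)--(d) is both preserved and reflected by smooth surjective morphisms. Throughout, $Y$ is reduced and pure of dimension $d + e$, because $\pi$ is smooth of relative dimension $e$ and $V$ is reduced and pure of dimension $d$. So $Y$ is again in Situation~\ref{sitn:dnc-points}, and the notions make sense.

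\textbf{Step 1: compatibility of the conductor square.} Normalization commutes with smooth base change: $\widetilde V \times_V Y$ is smooth over the normal scheme $\widetilde V$, hence normal, and it is finite and birational over $Y$. Therefore $\widetilde Y = \widetilde V \times_V Y$ and $\nu_Y$ is the base change of $\nu$. Since $\pi$ is flat, $\pi^*\big((\nu_*\cO_{\widetilde V})/\cO_V\big) = (\nu_{Y*}\cO_{\widetilde Y})/\cO_Y$. Annihilators of coherent modules commute with flat pullback, so the conductor satisfies $C_Y = C \times_V Y$. It follows that the whole conductor square of $Y$ is the base change of the one of $V$, and $\mu_{Y*}\cO_{\widetilde C_Y} = \pi|_{C_Y}^*\mu_*\cO_{\widetilde C}$ by flat base change.

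\textbf{Step 2: the four conditions.} Let $W \subseteq V$ be open.
\begin{enum:alph}
\item Smoothness of $\widetilde V$ over $\nu^{-1}(W)$ implies smoothness of $\widetilde Y$ over the preimage. Conversely, smoothness descends along smooth surjections.
\item The same argument applies to $C$. For the dimensions, $\dim$ goes up by exactly $e$ fiberwise, so $C \cap W$ is pure of dimension $d - 1$ if and only if $C_Y \cap \pi^{-1}(W)$ is pure of dimension $d - 1 + e$, at least on the image of $\pi$.
\item Unramifiedness is stable under base change and fpqc local on the target, because $\Omega^1_{\widetilde Y/Y} = \Omega^1_{\widetilde V/V}$ pulled back.
\item Local freeness of rank $2$ is preserved by pullback and reflected by faithfully flat pullback.
\end{enum:alph}
From this, $\pi^{-1}(W)$ is intrinsically dnc whenever $W$ is, which gives $\pi^{-1}(\cU_1^\times V) \subseteq \cU_1^\times Y$.

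\textbf{Step 3: the reverse inclusion.} This is the main obstacle, because $\cU_1^\times Y$ need not a priori be a preimage. I would first note that $\pi$ is open, and set $W = \pi(\cU_1^\times Y)$, which is open. The aim is to show $W$ is intrinsically dnc using the smooth surjection $\pi\colon \cU_1^\times Y \cap \pi^{-1}(W) \to W$. The issue is that descent requires the conditions on all of the source of a surjection onto $W$. Here $\pi|_{\cU_1^\times Y}\colon \cU_1^\times Y \to W$ is itself smooth and surjective. Each of (a)--(d) is a local property that can be checked after a smooth surjective base change, provided the base change of the conductor square along this restricted map is the conductor square of $\cU_1^\times Y$. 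Step 1 guarantees this, since the conductor square is compatible with the open immersion and with $\pi$. Hence $W \subseteq \cU_1^\times V$, and so $\cU_1^\times Y \subseteq \pi^{-1}(\pi(\cU_1^\times Y)) \subseteq \pi^{-1}(\cU_1^\times V)$.

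The ``in particular'' statement is then immediate: if $\cU_1^\times Y = Y$ and $\pi$ is surjective, then $V = \pi(Y) \subseteq \cU_1^\times V$. The delicate points I expect are the descent of smoothness and of pure dimension $d - 1$ in (b), and the normalization base change. The latter uses that $\widetilde V \times_V Y$ is normal, which follows from \cite[Tag~033C]{stacks}-type results on smooth morphisms to normal schemes.
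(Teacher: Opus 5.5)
Your proposal is correct and follows the paper's proof. In both, normalization and the conductor commute with smooth base change (the paper cites \cite[Tag~07TD]{stacks} for normalization), so the conductor square of $Y$ is the base change of that of $V$, and conditions (a)--(d) are then preserved and reflected by smooth surjective base change via standard descent lemmas. Your Step~3, which uses openness of $\pi$ and descends along the smooth surjection $\cU_1^\times Y \to \pi(\cU_1^\times Y)$, spells out the reverse inclusion that the paper leaves implicit in its citations.
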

\begin{proof}
	By \cite[Tag~07TD]{stacks}, the normalization of $Y$ is given by $\nu'\colon \widetilde Y \coloneq Y \times_{V} \widetilde V \to Y$. Then, we have $(\nu'_*\cO_{\widetilde Y})/\cO_{Y} \cong \pi^*((\nu_*\cO_{\widetilde V})/\cO_{V})$ so that the conductor of $\nu'\colon \widetilde Y \to Y$ is given by $C' = Y \times_{V} C \to Y$. The claim now follows from \cite[Tags~036D, 02VM, 02VO]{stacks}.
\end{proof}

With this preparation, we show that $\cU_1^\times V = V$ for every dnc scheme $V$.

\begin{clm}
	Assume that we are in Situation~\ref{sitn:dnc-points}, and that $V$ is a dnc scheme in the sense of Definition~\ref{defn:dnc-point}. Then, $\cU_1^\times V = V$.
\end{clm}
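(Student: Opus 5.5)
We have to show that every closed point $p \in V$ has an open neighborhood which is intrinsically dnc. If $p$ is regular, we take $W = \cU_0^\times V$. On this open set the normalization is the identity, the conductor is empty, and the four conditions hold trivially. So let $p$ be a dnc point.

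The plan is to pass from the Henselian isomorphism $\cO_{V,p}^\m{h} \cong R_{\times,d}^\m{h}$ to an étale neighborhood. Since both $V$ and $\Spec R_{\times,d}$ are of finite type over $\kk$, Artin approximation (or the standard fact that an isomorphism of Henselizations of finite type $\kk$-algebras is induced by a common étale neighborhood) gives a diagram
\[V \xleftarrow{a} U \xrightarrow{b} \Spec R_{\times,d}\]
of étale morphisms and a closed point $u \in U$ with $a(u) = p$ and $b(u) = 0$. Shrinking $U$, we may assume $a$ and $b$ are étale onto open images.

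Next we use Claim~\ref{clm:intrinsic-dnc-smooth-local} with relative dimension $e = 0$ twice. First we check directly that $M = \Spec R_{\times,d}$ is intrinsically dnc. Its normalization is $\widetilde M = \IA^d \sqcup \IA^d$, which is smooth, and the normalization map is a pair of closed immersions, hence unramified. The conductor is $(x,y)$, so $C = \IA^{d-1}$, which is smooth of dimension $d - 1$. Finally, $\widetilde C = \IA^{d-1} \sqcup \IA^{d-1}$, so $\mu_*\cO_{\widetilde C} = \cO_C^{\oplus 2}$ is free of rank $2$. Hence $\cU_1^\times M = M$. The claim then gives $\cU_1^\times U = b^{-1}(M) = U$. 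Applying the claim to $a\colon U \to a(U)$, which is étale and surjective onto the open subset $a(U) \subseteq V$, gives $\cU_1^\times a(U) = a(U)$. So $a(U)$ is an intrinsically dnc open neighborhood of $p$.

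Two compatibilities are needed along the way. The first is that $\cU_1^\times$ is compatible with restriction to open subsets, so that $\cU_1^\times a(U) = \cU_1^\times V \cap a(U)$. This holds because normalization and the conductor localize, so the defining conditions are local. The second is that Situation~\ref{sitn:dnc-points} holds for $U$ and $M$, which is true since both are reduced and pure of dimension $d$. Combining everything, $\cU_1^\times V \supseteq \bigcup_p a(U) \cup \cU_0^\times V$. Every closed point lies in this union, and an open subset of a finite type $\kk$-scheme containing all closed points is everything, so $\cU_1^\times V = V$.

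The main obstacle is the first step: producing a common étale neighborhood from the abstract $\kk$-algebra isomorphism of Henselizations. If a reference is wanted, I would cite \cite[Tag~0CAR]{stacks} or Artin approximation \cite[Tag~07QY]{stacks}, or argue via the characterization of étale neighborhoods as filtered limits together with the fact that both rings are Henselizations of essentially finite type local rings.
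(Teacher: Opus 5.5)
Your proof is correct and follows essentially the same route as the paper. You treat regular points via $\cU_0^\times V$, verify the model case directly, and transport intrinsic dnc-ness along an \'etale roof using Claim~\ref{clm:intrinsic-dnc-smooth-local}. The differences are minor: the paper checks $\Spec \kk[x,y]/(xy)$ and reaches $\Spec R_{\times,d}$ via the smooth projection, whereas you check $\Spec R_{\times,d}$ directly, and you make explicit the \'etale neighborhood that the paper takes for granted (for this, the limit description of the Henselization suffices, so full Artin approximation is not needed).
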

\begin{proof}
	First, assume that $V$ is smooth. Then, $\nu\colon \widetilde V \to V$ is an isomorphism, $\widetilde V$ is smooth, and $C = \varnothing$. Thus, $\cU_1^\times V = V$.
	
	Next, assume that $V = \Spec \kk[x,y]/(xy)$. Then, $\widetilde V = \Spec \kk[x] \sqcup \Spec \kk[y]$. In particular, $\widetilde V$ is smooth, and $\nu$ is unramified since it is locally on $\widetilde V$ a closed immersion. The conductor ideal is $(x,y)$ so that $C \cong \Spec \kk$ is smooth of dimension $0$. Furthermore, $\mu\colon \widetilde C \to C$ is the map from two $\kk$-valued points to one $\kk$-valued point. Thus, $\mu_*\cO_{\widetilde C}$ is a free $\cO_{C}$-module of rank $2$.
	
	For $V = \Spec \kk[x,y,z_1,\ldots,z_{d - 1}]/(xy)$, we have $\cU_1^\times V = V$ by Claim~\ref{clm:intrinsic-dnc-smooth-local} since it maps smoothly and surjectively onto $\Spec \kk[x,y]/(xy)$.
	
	When $V$ is a dnc scheme, and $p \in V$ a non-smooth closed point, then $V$ is \'etale locally around $p$ isomorphic to $\Spec \kk[x,y,z_1,\ldots,z_{d - 1}]/(xy)$. Thus, Claim~\ref{clm:intrinsic-dnc-smooth-local} implies that $\cU_1^\times W = W$ for a Zariski neighborhood $p \in W \subseteq V$.
\end{proof}

The converse direction is more intricate. Let us first show that the conductor square is co-Cartesian. This appears to be a well-known fact.

\begin{clm}\label{clm:conductor-square-co-Cartesian}\note{clm:conductor-square-co-Cartesian}
	The conductor square is co-Cartesian.
\end{clm}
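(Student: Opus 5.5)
The claim is local on $V$, so I will assume $V = \Spec A$ is affine. Write $\widetilde V = \Spec \widetilde A$ with $\widetilde A$ the integral closure of $A$ in its total ring of fractions; this is finite over $A$ since $V$ is reduced and of finite type over a field. Let $I = \m{Ann}_A(\widetilde A/A)$ be the conductor. The key fact is that $I$ is also an ideal of $\widetilde A$: for $a \in I$ and $b \in \widetilde A$, we have $ab\,\widetilde A \subseteq a\widetilde A \subseteq A$, so $ab \in I$. Hence $\widetilde C = \Spec \widetilde A/I\widetilde A = \Spec \widetilde A/I$, and $C = \Spec A/I$. In the affine setting, co-Cartesianness of the square in the category of schemes is the statement that
\[A \to \widetilde A \times_{\widetilde A/I} A/I\]
is an isomorphism, together with the remark that this pushout of schemes along a closed immersion and a finite (affine) morphism is computed by the fiber product of rings.

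The ring-theoretic part is a direct check. For injectivity: if $a \mapsto 0$, then $a = 0$ in $\widetilde A$, and $A \to \widetilde A$ is injective because $A$ is reduced. For surjectivity: take $(b,\bar a)$ with $b \in \widetilde A$ and $a \in A$ such that $b \equiv a \pmod I$. Then $b - a \in I \subseteq A$, so $b \in A$ and $b \mapsto (b,\bar a)$. So $A$ is the fiber product, and $\Spec$ carries this to a co-Cartesian square in affine schemes.

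To get the statement in all schemes rather than only affine ones, I would cite the standard pushout result, \cite[Tag~0ET0]{stacks} (pushouts along a closed immersion and an affine morphism exist and are given by the fiber product of structure sheaves). Alternatively, one checks the universal property directly: given $T$ with compatible maps from $\widetilde V$ and $C$, the topological map $V \to T$ is determined because $\nu \sqcup i$ is surjective and $V$ carries the quotient topology (as $\nu$ is finite, hence closed). The sheaf map is then determined by the isomorphism $\cO_V \cong \nu_*\cO_{\widetilde V} \times_{\nu_*\cO_{\widetilde C}} i_*\cO_C$, which is the sheafified version of the ring computation above and glues because the conductor is compatible with localization, as $\widetilde A$ is finite over $A$. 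The only delicate point is this globalization and topological step; the algebra itself is immediate.
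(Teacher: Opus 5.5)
Your proof is correct and is essentially the paper's argument: reduce to $V = \Spec A$, observe that $I\widetilde A = I$, and check directly that $A \to \widetilde A \times_{\widetilde A/I} A/I$ is injective (as $A$ is reduced) and surjective (since $b - a \in I \subseteq A$). The paper stops at this ring-level identity, which is the only form in which the claim is used later. Your extra globalization to a pushout in all schemes is therefore optional; if you keep it, cite the pushout result for a closed immersion along an integral morphism (here $\widetilde C \to C$ is finite) rather than along an arbitrary affine one.
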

\begin{proof}
	It is sufficient to show this when $V$ is affine, so let $V = \Spec A$ and $\widetilde V = \Spec \widetilde A$. Let $I \subset A$ be the conductor ideal. We have to show that $\phi\colon A \to \widetilde A \times_{\widetilde A/I \cdot \widetilde A} A/I$ is an isomorphism.
	
	Since $A$ is reduced and $\widetilde A$ is normal, $A \to \widetilde A$ is injective so that also $\phi$ is injective. Furthermore, we have $I \cdot \widetilde A = I$ under the composite injection $I \to A \to \widetilde A$. Thus, if $(\widetilde a,[a]) \in \widetilde A \times_{\widetilde A/I} A/I$, there is some $i \in I$ with $\widetilde a = a + i$. But then, $\phi(a + i) = (\widetilde a,[a])$ so that $\phi$ is surjective.
\end{proof}

For lack of reference, we provide a proof of the following comparison result between the complete stalks and the Henselian stalks. We use this lemma also in Section~\ref{sect:dtd-point-criteria}.

\begin{lem}\label{lem:complete-Henselian}\note{lem:complete-Henselian}
	Let $\kk$ be an algebraically closed field, let $X$, $Y$ be locally of finite type over $\Spec \kk$, and let $f\colon X \to Y$ be a morphism. Let $x \in X$ be a closed point. Let $\hat f^\sharp\colon \widehat\cO_{Y,f(x)} \to \widehat\cO_{X,x}$ be the homomorphism between complete local rings, and let $f^\sharp\colon \cO_{Y,f(x)}^\m{h} \to \cO_{X,x}^\m{h}$ be the homomorphism between (strictly) Henselian local rings.
	\begin{enum:arabic}
		\item If $\hat f^\sharp\colon \widehat\cO_{Y,f(x)} \to \widehat\cO_{X,x}$ is surjective, then $f^\sharp\colon \cO_{Y,f(x)}^\m{h} \to \cO_{X,x}^\m{h}$ is surjective.
		\item If $\hat f^\sharp\colon \widehat\cO_{Y,f(x)} \to \widehat\cO_{X,x}$ is an isomorphism, then $f^\sharp\colon \cO_{Y,f(x)}^\m{h} \to \cO_{X,x}^\m{h}$ is an isomorphism.
	\end{enum:arabic}
\end{lem}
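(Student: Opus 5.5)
The idea is to compare the Henselization with the completion via Artin approximation. Write $A = \cO_{Y,f(x)}^\m{h}$ and $A' = \cO_{X,x}^\m{h}$. Both are Henselizations of local rings essentially of finite type over $\kk$, so they are excellent Henselian local rings, and Artin approximation holds for them (Popescu, or Artin's original theorem in the finite-type case). Their completions are $\widehat A = \widehat\cO_{Y,f(x)}$ and $\widehat{A'} = \widehat\cO_{X,x}$, and $\hat f^\sharp$ is the completion of $f^\sharp\colon A \to A'$.

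\emph{Surjectivity.} We first reduce to the case where $f^\sharp$ is finite. Write $\fm_A$ for the maximal ideal of $A$. Surjectivity of $\hat f^\sharp$ gives $\fm_A \widehat{A'} = \fm_{\widehat{A'}}$, by taking images of maximal ideals. Since $A'/\fm_A A'$ injects into $\widehat{A'}/\fm_A\widehat{A'}$ by faithful flatness of completion, $\fm_A A' = \fm_{A'}$. Thus $f^\sharp$ is unramified at the closed point, with trivial residue field extension because $\kk$ is algebraically closed. Locally, $f$ is then unramified at $x$. After an étale base change it becomes a closed immersion near $x$, by the local structure of unramified morphisms \cite[Tag~04HJ]{stacks}. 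Hence $A'$ is a quotient of the Henselization of the étale neighborhood, which equals $A$ since the residue fields agree. This gives surjectivity of $f^\sharp$.

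If I want to avoid the unramified structure theorem, there is an alternative: use that $A'$ is quasi-finite over $A$, and that a quasi-finite local homomorphism of Henselian rings which is finite after Henselization satisfies Nakayama. That argument gives surjectivity from $A'/\fm_AA' = \kk$.

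\emph{Isomorphism.} By part (1), $f^\sharp$ is surjective, so $A' = A/J$. Completion is exact on finite modules, so $\widehat{A'} = \widehat A/J\widehat A$. Injectivity of $\hat f^\sharp$ then forces $J\widehat A = 0$, and faithful flatness of $A \to \widehat A$ gives $J = 0$.

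The main obstacle is the first step, reducing from completions to the algebraic Henselian setting. The cleanest route is the unramified local structure: an unramified morphism is étale locally a closed immersion. I would invoke that directly and handle the residue-field identification using algebraic closedness of $\kk$.
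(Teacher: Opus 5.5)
Your proof is correct, but it follows a different route from the paper. Both arguments begin by deducing from surjectivity of $\hat f^\sharp$ that $f$ is unramified at $x$. The paper does this via surjectivity on cotangent spaces and $\Omega^1_{X/Y}\otimes\kappa(x)=0$; you do it via $\fm_A A'=\fm_{A'}$ and faithful flatness of completion.

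From there the paper uses only quasi-finiteness to get that $f^\sharp$ is finite \cite[Tag~05WP]{stacks}, and passes to the image $\cO_{Y,f(x)}^\m{h}/\fa^\m{h}$. It then shows by a base-change computation that the induced map on completions is an isomorphism, concludes flatness \cite[Tag~0C4G]{stacks}, hence \'etaleness, hence an isomorphism because the residue field is $\kk$.

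Your main route replaces all of this with the \'etale local structure of unramified morphisms, which exhibits $\cO_{X,x}^\m{h}$ directly as a quotient of $\cO_{Y,f(x)}^\m{h}$. This is shorter, at the price of a heavier structure theorem.

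Your Nakayama alternative is the most economical of all, but state it precisely; the phrase ``finite after Henselization'' is muddled. Once $f^\sharp$ is known to be finite, by the same Tag~05WP the paper uses, the equality $A'/\fm_A A'=\kk$ gives $A'=A\cdot 1+\fm_A A'$. Nakayama then yields surjectivity at once. This shows the paper's detour through $\widetilde\fa$, flatness and \'etaleness is not needed for (1).

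For (2), your argument via $\widehat{A'}=\widehat A/J\widehat A$ and faithful flatness is fine. The paper's is a line shorter: $A\to\widehat A$ is injective by Krull's intersection theorem, so injectivity of $\hat f^\sharp$ forces injectivity of $f^\sharp$.

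Finally, your opening appeal to excellence and Artin approximation is never used in your argument and can be dropped.
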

\begin{proof}
	Suppose that $\hat f^\sharp\colon \widehat\cO_{Y,f(x)} \to \widehat\cO_{X,x}$ is surjective. We have an exact sequence 
	\[(f^*\Omega^1_Y) \otimes_{\cO_{X,x}} \kappa(x) \to \Omega^1_X \otimes_{\cO_{X,x}} \kappa(x) \to \Omega^1_{X/Y} \otimes_{\cO_{X,x}} \kappa(x) \to 0\]
	of $\kk$-vector spaces. The map on the left can be identified with $\fm_{Y,f(x)}/\fm_{Y,f(x)}^2 \to \fm_{X,x}/\fm_{X,x}^2$, and hence it is surjective. Thus, the right-hand term is zero. Since $\Omega^1_{X/Y}$ is coherent, this implies $\Omega^1_{X/Y,x} = 0$, and $f$ is unramified at $x$. Therefore, $f$ is also quasi-finite at $x$, and \cite[Tag~05WP]{stacks} implies that $f^\sharp\colon \cO_{Y,f(x)}^\m{h} \to \cO_{X,x}^\m{h}$ is finite.
	
	Let $\fa^\m{h} \subset \cO_{Y,f(x)}^\m{h}$ be the kernel of $f^\sharp$. The quotient ring $\cO_{Y,f(x)}^\m{h}$ is Henselian, and we have an injective and finite map $\cO_{Y,f(x)}^\m{h}/\fa^\m{h} \to \cO_{X,x}^\m{h}$.
	
	Set $\widetilde\fa = \fa^\m{h} \cdot \widehat\cO_{Y,f(x)}$. Then,  $\widehat\cO_{Y,f(x)}/\widetilde\fa$ is the completion of $\cO_{Y,f(x)}^\m{h}/\fa^\m{h}$, and in particular, the map between them is flat. Then, also 
	\[\widehat\cO_{Y,f(x)}/\widetilde\fa \to \widehat\cO_{Y,f(x)}/\widetilde\fa \otimes_{\cO_{Y,f(x)}^\m{h}/\fa^\m{h}} \cO_{X,x}^\m{h}\]
	is injective. The right-hand side can be identified with the completion of $\cO_{X,x}^\m{h}$ as an $\cO_{Y,f(x)}^\m{h}/\fa^\m{h}$-module, and by \cite[Tag~0394]{stacks}, we can further identify it with $\widehat\cO_{X,x}$, the completion of $\cO_{X,x}^\m{h}$ in its maximal ideal. Thus, the map $\widehat\cO_{Y,f(x)}/\widetilde\fa \to \widehat\cO_{X,x}$ is injective. Furthermore, it is surjective by assumption, so it is an isomorphism. But then, $\cO_{Y,f(x)}^\m{h}/\fa^\m{h} \to \cO_{X,x}^\m{h}$ induces an isomorphism on completions, and hence is flat by \cite[Tag~0C4G]{stacks}.
	
	We can find an \'etale morphism $\pi\colon V \to Y$, a point $v \in V$ with $\pi(v) = f(x)$, and a closed subset $Z \subset V$ containing $v$ such that $\cO_{Z,v}^\m{h} \cong \cO_{Y,f(x)}^\m{h}/\fa^\m{h}$. Then, the morphism $\cO_{Y,f(x)}^\m{h}/\fa^\m{h} \to \cO_{X,x}^\m{h}$ is induced by the morphism $X \times_Y V \to Z$ of finite type. This morphism is flat and unramified in a neighborhood of $x$, so it is \'etale. But then, $\cO_{Y,f(x)}^\m{h}/\fa^\m{h} \to \cO_{X,x}^\m{h}$ is an isomorphism because the residue field is algebraically closed.
	
	Now, suppose that $\hat f^\sharp\colon \widehat\cO_{Y,f(x)} \to \widehat\cO_{X,x}$ is actually an isomorphism. By the Krull intersection theorem, $\cO_{Y,f(x)}^\m{h} \to \widehat\cO_{Y,f(x)}$ is injective, but then also $f^\sharp\colon \cO_{Y,f(x)}^\m{h} \to \cO_{X,x}^\m{h}$ is injective.
\end{proof}

Another elementary result for which we could not find a reference (in our precise way of stating it) is the following.

\begin{lem}\label{lem:Henselian-finite-splitting}\note{lem:Henselian-finite-splitting}
	Let $\kk$ be an algebraically closed field, let $R$ be a $\kk$-algebra of finite type, and let $\phi\colon R \to S$ be a finite ring homomorphism. Let $\fp \subset R$ be a maximal ideal, and let $\fq_1,\ldots,\fq_r \subset S$ be the maximal ideals of $S$ which lie over $\fp$. Then, the canonical ring homomorphism 
	\[S \otimes_R R_\fp^\m{h} \to S_{\fq_1}^\m{h} \times \ldots \times S_{\fq_r}^\m{h}\]
	is an isomorphism.
\end{lem}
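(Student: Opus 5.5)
The plan is to reduce to the structure theory of Henselian local rings via the finite algebra $S \otimes_R R_\fp^\m{h}$ over the Henselian local ring $R_\fp^\m{h}$.

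First, since $R \to R_\fp^\m{h}$ is a filtered colimit of étale $R$-algebras and $\phi$ is finite, $S' \coloneq S \otimes_R R_\fp^\m{h}$ is a finite $R_\fp^\m{h}$-algebra. By the standard characterization of Henselian local rings (\cite[Tag~04GG]{stacks}), $S'$ decomposes as a finite product $S' \cong \prod_j S'_{\fn_j}$ of local rings indexed by its maximal ideals $\fn_j$, and each factor is Henselian, being finite over a Henselian local ring. The maximal ideals of $S'$ lie over the maximal ideal of $R_\fp^\m{h}$, so they correspond to primes of
\[S' \otimes_{R_\fp^\m{h}} \kappa(\fp) = S \otimes_R \kappa(\fp) = S/\fp S.\]
Here we use that the residue field of $R_\fp^\m{h}$ equals $\kappa(\fp) = \kk$, because $\kk$ is algebraically closed. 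The maximal ideals of the finite $\kk$-algebra $S/\fp S$ are exactly $\fq_1,\ldots,\fq_r$. Hence $S' \cong \prod_{i=1}^r S'_{\fn_i}$, with $\fn_i$ the unique maximal ideal over $\fq_i$ and with residue field $\kk$.

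Next we identify $S'_{\fn_i}$ with $S_{\fq_i}^\m{h}$. The ring $S'_{\fn_i}$ is a localization of a filtered colimit of étale $S$-algebras $S \otimes_R R'$, where $(R',\fp')$ runs over étale neighborhoods of $(R,\fp)$ with trivial residue field extension. The point $\fn_i$ corresponds to a compatible system of primes of $S \otimes_R R'$ over $\fq_i$ with residue field $\kk$. Thus $S'_{\fn_i}$ is a filtered colimit of local rings of étale neighborhoods of $(S,\fq_i)$. It is also Henselian with residue field $\kappa(\fq_i)$. By the universal property of Henselization (\cite[Tag~0BSK]{stacks}), or equivalently by Tag~04GV, which says that a Henselian filtered colimit of local rings at étale neighborhoods is the Henselization, the induced map $S_{\fq_i}^\m{h} \to S'_{\fn_i}$ is an isomorphism.

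Finally, we check that the canonical map in the statement is the product of these identifications. Each factor map $S \otimes_R R_\fp^\m{h} \to S_{\fq_i}^\m{h}$ comes from the universal property of $R_\fp^\m{h}$ applied to the local map $R_\fp \to S_{\fq_i} \to S_{\fq_i}^\m{h}$ into a Henselian ring. It therefore factors through the projection $S' \to S'_{\fn_i}$, and composing with the inverse of the isomorphism above shows compatibility.

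The main obstacle is the second step. We must argue cleanly that $S'_{\fn_i}$ is a colimit over étale neighborhoods of $(S,\fq_i)$ that is cofinal among all such neighborhoods, or else avoid cofinality by using the universal property in both directions. I would try the two-universal-properties argument first: construct maps $S_{\fq_i}^\m{h} \rightleftarrows S'_{\fn_i}$ and check that the composites are identities by uniqueness of local homomorphisms out of Henselizations inducing the identity on residue fields. Injectivity in the reverse direction then uses that $S'_{\fn_i}$ is ind-étale over $S_{\fq_i}$ with the same residue field.
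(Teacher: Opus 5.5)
Your proposal is correct and follows the paper's proof. Both decompose the finite $R_\fp^\m{h}$-algebra $S\otimes_R R_\fp^\m{h}$ into Henselian local factors indexed by the $r$ maximal ideals over $\fq_1,\ldots,\fq_r$, and then identify each factor with $S_{\fq_i}^\m{h}$. The only difference is in that last step: the paper cites \cite[Tag~08HU]{stacks} ($S_{\fq_i}^\m{h}$ is the Henselization of $(S\otimes_R R_\fp^\m{h})_{\fq_i'}$, which is already Henselian), whereas your colimit and two-universal-properties argument reproves that fact directly.
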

\begin{proof}
	Note that $\fq_1,\ldots,\fq_r$ are precisely the $\kk$-valued points lying over $\fp$. Thus, there are precisely $r$ $\kk$-valued points in $S \otimes_R R_\fp^\m{h}$ lying over the maximal ideal $\fm_\fp^\m{h} \subset R_\fp^\m{h}$. Let $\fq_1',\ldots, \fq_r' \subset S \otimes_R R_\fp^\m{h}$ be their maximal ideals.
	
	Let $\fr \subset S \otimes_R R_\fp^\m{h}$ be any maximal ideal, and assume that $\psi^{-1}(\fr) \not= \fm_\fp^\m{h}$ under $\psi\colon R_\fp^\m{h} \to S \otimes_R R_\fp^\m{h}$. Since $\psi$ is finite, it satisfies going up, and $\fr$ was not maximal. Thus, every maximal ideal of $S \otimes_R R_\fp^\m{h}$ lies over $\fm_\fp^\m{h}$, and in particular, has residue field $\kk$.
	
	Because $\psi$ is finite, \cite[Tag~04GH]{stacks} shows the existence of a factorization $S \otimes_R R_\fp^\m{h} \cong S_1 \times \ldots \times S_m$ over $R_\fp^\m{h}$, where each $S_i$ is a Henselian local ring. Now, if $\fn_1,\ldots,\fn_m$ are the maximal ideals of $S_1$, \ldots, $S_m$, then the maximal ideals of $S_1 \times \ldots \times S_m$ are $\fn_i' \coloneq (1) \times \ldots \times \fn_i \times \ldots \times (1)$. In particular, we have $S_i \cong (S_1 \times \ldots \times S_m)_{\fn_i'}$, compatibly with the projection respectively the localization map from $S_1 \times \ldots \times S_m$. But then, we have 
	\[S \otimes_R R_\fp^\m{h} \cong (S \otimes_R R_\fp^\m{h})_{\fq_1'} \times \ldots \times (S \otimes_R R_\fp^\m{h})_{\fq_r'}.\]
	By \cite[Tag~08HU]{stacks}, $S_{\fq_i}^\m{h}$ is the Henselization of $(S \otimes_R R_\fp^\m{h})_{\fq_i'}$. Since we already know that the latter ring is Henselian, the claim follows.
\end{proof}

We are finally ready to show that closed points in $\cS_1^\times V$ are dnc points.

\begin{clm}
	In Situation~\ref{sitn:dnc-points}, let $p \in \cS_1^\times V$ be a closed point. Then, $p$ is a dnc point. In particular, $\cU_1^\times V$ is a dnc scheme in the sense of Definition~\ref{defn:dnc-point}.
\end{clm}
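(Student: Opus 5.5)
We reduce to the case where $V$ itself is intrinsically dnc, by replacing $V$ with an open neighborhood $W \ni p$ witnessing $p \in \cU_1^\times V$. Since $p$ is not smooth, we have $p \in C$; otherwise $\nu$ would be an isomorphism near $p$ and $V \cong \widetilde V$ would be smooth there. The goal is to compute the Henselian local ring $\cO_{V,p}^\m{h}$ from the conductor square. By Claim~\ref{clm:conductor-square-co-Cartesian}, we have $A = \widetilde A \times_{\widetilde A/I} A/I$ on an affine neighborhood. This fibre product is compatible with the flat base change $A \to A_\fp^\m{h}$, since fibre products of rings are finite limits and are preserved by flat base change. Hence
\[\cO_{V,p}^\m{h} \cong (\widetilde A \otimes_A A^\m{h}_\fp) \times_{(\widetilde A/I)\otimes_A A^\m{h}_\fp} \cO_{C,p}^\m{h}.\]

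Next we identify each term. The map $\nu$ is finite, and condition (d) says $\mu_*\cO_{\widetilde C}$ is locally free of rank $2$. So the fibre of $\widetilde C$ over $p$ is $\Spec$ of a $2$-dimensional $\kk$-algebra. Because $\nu$, and hence $\mu$, is unramified, this algebra is reduced, so it is $\kk \times \kk$, and there are exactly two points $q_1, q_2 \in \widetilde V$ over $p$. By Lemma~\ref{lem:Henselian-finite-splitting}, we get $\widetilde A \otimes_A A^\m{h}_\fp \cong \cO^\m{h}_{\widetilde V,q_1} \times \cO^\m{h}_{\widetilde V,q_2}$, and likewise $(\widetilde A/I)\otimes_A A^\m{h}_\fp \cong \cO^\m{h}_{\widetilde C,q_1}\times\cO^\m{h}_{\widetilde C,q_2}$. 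Now $\widetilde C \to C$ is finite, flat of rank $2$, and unramified, so it is étale. Since $\kk$ is algebraically closed, each $\cO^\m{h}_{C,p} \to \cO^\m{h}_{\widetilde C,q_i}$ is an isomorphism. As $C$ is smooth of dimension $d-1$, this ring is $\kk[z_1,\ldots,z_{d-1}]^\m{h}$.

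For the local geometry upstairs, $\widetilde V$ is smooth and pure of dimension $d$ (normalization preserves dimension). The map $\widetilde C \to \widetilde V$ is a closed immersion of a smooth $(d-1)$-dimensional subscheme, hence locally a smooth divisor. So $\cO^\m{h}_{\widetilde V,q_i} \cong \kk[x,z_1,\ldots,z_{d-1}]^\m{h}$ with $\widetilde C$ cut out by $x$, and similarly with $y$ at $q_2$. We choose these coordinates so that the $z_j$ restrict to the fixed coordinates on $\cO^\m{h}_{C,p}$. This is possible by lifting through the surjection $\cO^\m{h}_{\widetilde V,q_i} \to \cO^\m{h}_{\widetilde C,q_i} \cong \cO^\m{h}_{C,p}$ and using that lifts of a regular system of parameters, together with a generator of the divisor, form a regular system of parameters.

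The fibre product then becomes $\{(a,b) \in \kk[x,z]^\m{h}\times\kk[y,z]^\m{h} : a|_{x=0} = b|_{y=0}\}$. We compare it with $R^\m{h}_{\times,d}$: the diagonal map from $\kk[x,y,z]/(xy)$ to $\kk[x,z]\times\kk[y,z]$ identifies the former with the corresponding fibre product, which is the conductor square of $R_{\times,d}$. Henselizing as above gives $\cO_{V,p}^\m{h} \cong R^\m{h}_{\times,d}$.

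The main obstacle is the bookkeeping with Henselizations. One must check that the fibre product of Henselian rings is the Henselization of the polynomial-level fibre product, and that the coordinate choices glue compatibly through $\cO^\m{h}_{C,p}$. A cleaner route to try first is to pass to completions: the same argument works verbatim with power series, giving $\widehat\cO_{V,p} \cong \kk[[x,y,z]]/(xy)$. One then applies Artin approximation, or Lemma~\ref{lem:complete-Henselian} via an explicit map $\kk[x,y,z]/(xy) \to A$ obtained by approximating the coordinates, to descend the isomorphism to Henselizations.
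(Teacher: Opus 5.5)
Your proposal is correct and follows essentially the same route as the paper: shrink to an affine intrinsically dnc neighborhood, show $p \in C$, base change the co-Cartesian conductor square to $A_\fp^\m{h}$, split it via Lemma~\ref{lem:Henselian-finite-splitting} using that $\mu$ is finite \'etale of degree $2$, and choose compatible coordinates on the two branches. The concern in your last paragraph is already handled by your own step of running the same Henselized conductor-square computation on $R_{\times,d}$, so the detour through completions and Artin approximation is unnecessary.
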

\begin{proof}
	After shrinking $V$, we can assume that $\cU_1^\times V = V$, and that $V = \Spec A$ is affine. Let $\nu^\sharp\colon A \to \widetilde A$ be the normalization, and let $I \subset A$ be the conductor ideal. Let $\fp \subset A$ be the maximal ideal defining $p \in V$.
	
	First, we show that $p \in C$. Assume the contrary. Then, we have $(\nu_*\cO_{\widetilde V}/\cO_{V})_p = 0$, and hence $\nu\colon \widetilde V \to V$ is an isomorphism in a neighborhood of $p$ because it is affine. Since $\widetilde V$ is smooth in a neighborhood of $\nu^{-1}(p)$, we find that $\cO_{V,p}$ is regular. This contradicts $p \in \cS_1^\times V$.
	
	Let $\ell = i \circ \mu = \nu \circ k\colon \widetilde C \to V$. Then, we have $\cO_{V} \cong (\nu_*\cO_{\widetilde V}) \times_{\ell_*\cO_{\widetilde C}} (i_*\cO_{C})$ by Claim~\ref{clm:conductor-square-co-Cartesian}. Since filtered colimits commute with finite limits (in the category of rings), we find that 
	\[\cO_{V,p}^\m{h} \cong \cO_{V,\bar p} \cong (\nu_*\cO_{\widetilde V})_{\bar p} \times_{(\ell_*\cO_{\widetilde C})_{\bar p}} (i_*\cO_{C})_{\bar p}\]
	for the stalks in the \'etale topology. This can be rewritten as 
	\[\cO_{V,p}^\m{h} \cong (\widetilde A \otimes_A A_\fp^\m{h}) \times_{(\widetilde A/I \otimes_A A_\fp^\m{h})} (A/I \otimes_A A_\fp^\m{h}).\]
	
	As a finite locally free morphism, $\mu\colon \widetilde C \to C$ is flat, and it is unramified as the base change of $\nu\colon \widetilde V \to V$. Thus, $\mu$ is \'etale. In particular, $\mu^{-1}(p) \to \{p\}$ is \'etale and finite locally free, so $\mu^{-1}(p) = \{q_1,q_2\}$ consists of two isolated $\kk$-valued points. Let $\fq_1,\fq_2 \subset \widetilde A$ be the two maximal ideals defining $q_1,q_2$. These are precisely the maximal ideals of $\widetilde A$ which lie over $\fp \subset A$ under $\nu^\sharp\colon A \to \widetilde A$. Furthermore, $\fq_1/I$ and $\fq_2/I$ are the maximal ideals of $\widetilde A/I$ which lie over $\fp$, and $\fp/I$ is the only maximal ideal of $A/I$ which lies over $\fp$.
	
	When we apply Lemma~\ref{lem:Henselian-finite-splitting} to our description of $\cO_{V,p}^\m{h}$, we obtain
	\[\cO_{V,p}^\m{h} \cong (\widetilde A_{\fq_1}^\m{h} \times \widetilde A_{\fq_2}^\m{h}) \times_{(\widetilde A/I)_{\fq_1/I}^\m{h} \times (\widetilde A/I)_{\fq_2/I}^\m{h}} (A/I)_{\fp/I}^\m{h}.\]
	
	By assumption, $(A/I)_{\fp/I}^\m{h}$ is a regular local ring of dimension $d - 1$. Let $\tilde z_1,\ldots,\tilde z_{d - 1}$ be a system of parameters. This defines a ring homomorphism 
	\[\kk[z_1,\ldots,z_{d - 1}]_{(z_1,\ldots,z_{d - 1})}^\m{h} \to (A/I)_{\fp/I}^\m{h},\]
	which is surjective on Zariski cotangent spaces, and hence surjective after completion. By Lemma~\ref{lem:complete-Henselian}, the map is already surjective before completion. Since both source and target are integral local rings of the same dimension, it is in fact an isomorphism.
	
	Since $\mu\colon \widetilde C \to C$ is \'etale, the two ring homomorphisms $(A/I)_{\fp/I}^\m{h} \to (\widetilde A/I)_{\fq_1/I}^\m{h}$ and $(A/I)_{\fp/I}^\m{h} \to (\widetilde A/I)_{\fq_2/I}^\m{h}$ are isomorphisms. Thus, we obtain systems of parameters $\bar z_1,\ldots,\bar z_{d - 1}$ and $\hat z_1,\ldots,\hat z_{d - 1}$, and isomorphisms with $\kk[z_1,\ldots,z_{d - 1}]_{(z_1,\ldots,z_{d - 1})}^\m{h}$.
	
	The local ring $\widetilde A_{\fq_1}^\m{h}$ is regular of dimension $d$. Thus, we can find a system of parameters $\bar x, \bar z_1,\ldots,\bar z_{d - 1}$ such that $\bar x$ cuts out $(\widetilde A/I)_{\fq_1}^\m{h}$, and such that $\bar z_1,\ldots,\bar z_{d - 1}$ induces the already given system of parameters on $(\widetilde A/I)_{\fq_1}^\m{h}$. We can find an analogous system of parameters $\hat y,\hat z_1,\ldots, \hat z_{d - 1}$ on $\widetilde A_{\fq_2}^\m{h}$.
	
	Now, we have 
	\[\cO_{V,p}^\m{h} \cong \left(\kk[x,z_1,\ldots]_{(\ldots)}^\m{h} \times \kk[y,z_1,\ldots]_{(\ldots)}^\m{h}\right) \times_{(\kk[z_1,\ldots]_{(\ldots)}^\m{h} \times \kk[z_1,\ldots]_{(\ldots)}^\m{h})} \kk[z_1,\ldots]_{(\ldots)}^\m{h},\]
	and hence $\cO_{V,p}^\m{h} \cong R_{\times,d}^\m{h}$, i.e., $p$ is a dnc point.
\end{proof}

We compare $\cS_0^\times V$ and $\cS_1^\times V$ with the stratification in Theorem~\ref{thm:toroidal-characterization}.

\begin{prop}\label{prop:strata-dnc-rank}\note{prop:strata-dnc-rank}
	In Situation~\ref{sitn:explicit-gls-candidate-kk-intro}, let $U \subseteq X$ be an open subset where $f\colon (X|V) \to (B|0)$ is log smooth and saturated. Then, we have $\itUpsilon \cap \cS_0^\times V = \cS_0\itUpsilon$ and $\itUpsilon \cap \cS_1^\times V = \cS_1\itUpsilon$ for the stratification $\cS_\bullet \itUpsilon$ from Theorem~\ref{thm:toroidal-characterization}.
\end{prop}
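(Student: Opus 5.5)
Everything is local around closed points of $\itUpsilon = U \cap V$, so we fix a closed point $p \in \itUpsilon$ and use the local model of Theorem~\ref{thm:toroidal-characterization}. Since both sides of each claimed equality are constructible, and in fact locally closed, subsets of a finite type $\kk$-scheme, it suffices to compare closed points. The diagram of that theorem gives strict étale maps $h\colon W \to X$ and $g_\minuso\colon W \to A_P \times \IA^r$. Pulling back along $B \to A_\NN$ and restricting to central fibers, we get that $V$ near $p$ is étale locally isomorphic to the central fiber $\mu_{\theta;r}^{-1}(0) \subset A_P \times \IA^r$ near $0$. Both the smooth locus $\cS_0^\times$ and the dnc locus $\cU_1^\times$ are compatible with étale (indeed smooth) morphisms, by Claim~\ref{clm:intrinsic-dnc-smooth-local} and the obvious analogue for regularity. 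Moreover, $\m{rk}(\overline\cM_{\bar p})$ is read off from $P$ because $h$ and $g_\minuso$ are strict. So it is enough to check the statement on the model $\mu_{\theta;r}^{-1}(0)$ at the torus-fixed point $0$.

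\emph{Rank $1$.} If $p \in \cS_0\itUpsilon$, Proposition~\ref{prop:P-ell-classification}(1) gives $\theta\colon \NN \cong P$. The model is then $\IA^1_t \times \IA^{d} \to \IA^1_t$, with central fiber $\IA^d$, which is smooth. Hence $p \in \cS_0^\times V$.

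\emph{Rank $2$.} If $p \in \cS_1\itUpsilon$, Proposition~\ref{prop:P-ell-classification}(2) gives $\theta = \theta_\ell$. The central fiber is then $\Spec \kk[x,y,z_1,\ldots,z_{d-1}]/(xy)$, which is not regular at $0$ but is a dnc point. By the claim showing that dnc schemes satisfy $\cU_1^\times = $ everything, $p \in \cU_1^\times V \setminus \cU_0^\times V = \cS_1^\times V$.

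\emph{Rank $k+1 \geq 3$.} This case gives the reverse inclusions. Here I must show that $p \notin \cU_1^\times V$. The central fiber of $A_P \to A_\NN$ is the toric boundary $\Spec \kk[P]/(\theta(1)+P) = \Spec \kk[P]/\m{Int}(P)$, where I use $\m{Int}(P) = \theta(1) + P$ from Theorem~\ref{thm:toroidal-characterization}. This is the union of the toric divisors of $A_P$, one for each facet of the cone of $P$. Since $\m{rk}(P) = k+1 \geq 3$, the cone has at least $3$ facets, all passing through the vertex. So at $0$ the fiber has at least three local irreducible components, counting analytic branches via the normalization (the facet toric varieties are normal). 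A dnc point has exactly two Henselian branches, because $R_{\times,d}^\m{h}$ has two minimal primes. Hence $0$, and therefore $p$, is not a dnc point. By the claim that closed points of $\cS_1^\times V$ are dnc points, $p \notin \cS_1^\times V$; and since $p$ is not regular, also $p \notin \cS_0^\times V$.

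Combining the three cases: $\itUpsilon$ is the disjoint union of the $\cS_k\itUpsilon$, and closed points are dense in these locally closed sets. Hence $\itUpsilon \cap \cS_0^\times V = \cS_0\itUpsilon$ and $\itUpsilon \cap \cS_1^\times V = \cS_1\itUpsilon$.

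The main obstacle is the rank $\geq 3$ case. It needs two things. First, the number of Henselian branches is an invariant: the minimal primes of $\cO^\m{h}$ are preserved under étale maps and strictly Henselian isomorphisms. Second, the toric boundary of a rank $\geq 3$ pointed cone has at least three branches at the vertex. I would make both explicit, using that each facet ring $\kk[F]$ is a normal domain, so it stays a domain after Henselization.
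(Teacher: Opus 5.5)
Your proposal is correct and follows essentially the same route as the paper: reduce to closed points, pass through the \'etale roof of Theorem~\ref{thm:toroidal-characterization} to the model $\mu_{\theta;r}^{-1}(0)$, use Proposition~\ref{prop:P-ell-classification} for the inclusions of the rank strata into $\cS_0^\times V$ and $\cS_1^\times V$, and count facets of $P$ (equivalently, irreducible components of the central fiber through $0$) for the converse. The only difference is organizational. The paper argues directly that one facet forces $P \cong \NN$ and exactly two facets force $\m{rk}(P) = 2$ via a maximal chain of faces. You instead prove the contrapositive (rank $\geq 3$ gives at least three Henselian branches, hence the point is neither regular nor dnc) and conclude from the partition of the log smooth central fiber into strata, which also makes the branch count more explicit than the paper does.
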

\begin{proof}
	For two locally closed subsets of $V$ to be equal, it is sufficient to show that they have the same closed points. For every closed point $p \in \itUpsilon$, we apply Theorem~\ref{thm:toroidal-characterization} and obtain an \'etale roof which compares a neighborhood of $p \in \itUpsilon$ with a neighborhood of $0 \in \mu_{\theta;r}^{-1}(0)$. Note that the number of irreducible components of $\mu_{\theta;r}^{-1}(0)$ equals the number of facets of $P$, and that each irreducible component of $\mu_{\theta;r}^{-1}(0)$ contains $0$.
	
	First, let $p \in \cS_0\itUpsilon$ be a closed point. By Proposition~\ref{prop:P-ell-classification}, $\theta\colon \NN \to P$ is an isomorphism, so $\mu_{\theta;r}$ is smooth, and $p \in \cS_0^\times \itUpsilon$.
	
	Conversely, let $p \in \itUpsilon \cap \cS_0^\times V$. Then, $\mu_{\theta;r}^{-1}(0)$ is smooth around $0$. In particular, it has only a single irreducible component, and hence $P$ has only one facet. The only sharp toric monoid with only one facet is $\NN$, so we have $\m{rk}(P) = 1$, and therefore $p \in \cS_0\itUpsilon$.
	
	Let $p \in \cS_1\itUpsilon$ be a closed point. By Proposition~\ref{prop:P-ell-classification}, $\theta\colon \NN \to P$ is of the form $\theta_\ell\colon \NN \to P_\ell$. One easily shows that $\mu_{\theta_\ell;r}^{-1}(0)$ is a dnc scheme and that $0 \in \cS_1^\times(\mu_{\theta_\ell;r}^{-1}(0))$, so we have $p \in \cS_1^\times V$.
	
	Finally, let $p \in \itUpsilon \cap \cS_1^\times V$ be a closed point. Then, we have $0 \in \cS_1^\times(\mu_{\theta;r}^{-1}(0))$. Since $\mu_{\theta;r}^{-1}(0)$ is the union of normal irreducible components, it has precisely two irreducible components which meet $0$. Thus, $P$ has precisely two facets $F_1$ and $F_2$. Now, every proper face is the intersection of facets, so the only proper face which is not a facet is $F_1 \cap F_2$. Thus, $F_1 \cap F_2 \subsetneq F_1 \subsetneq P$ is a maximal chain of faces. Since $P$ is sharp and toric, this implies $\m{rk}(P) = 2$ by \cite[Prop.~I.1.4.7]{LoAG2018}, and we have $p \in \cS_1\itUpsilon$.
\end{proof}

To enable computations, we also need a version of $\cU_1^\times V$ when our ground field is not algebraically closed.

\begin{sitn}\label{sitn:dnc-points-not-closed}\note{sitn:dnc-points-not-closed}
	Let $\kq$ be a field of characteristic $0$, and let $\kq \to \kk$ be a field extension to an algebraically closed field. Let $\scV/\kq$ be separated, of finite type, reduced, and pure of some dimension $d \geq 0$. Let $V = \scV \otimes_\kq \kk$ be the base change. \qeddef
\end{sitn}

In Situation~\ref{sitn:dnc-points-not-closed}, we say that $\scV$ is \emph{intrinsically dnc} if the analogs of the conditions listed in Definition~\ref{defn:dnc-locus} hold. In general, the \emph{dnc locus} $\cU_1^\times \scV \subseteq \scV$ is the union of all open subsets which are intrinsically dnc.

\begin{lem}\label{lem:U1x-base-change}\note{lem:U1x-base-change}
	In Situation~\ref{sitn:dnc-points-not-closed}, we have $\cU_1^\times V = (\cU_1^\times\scV) \otimes_\kq \kk$.
\end{lem}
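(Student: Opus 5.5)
The plan is to show that every ingredient in the definition of ``intrinsically dnc'' (Definition~\ref{defn:dnc-locus}) is compatible with the flat base change $\Spec\kk \to \Spec\kq$. Then an open $\scW \subseteq \scV$ is intrinsically dnc if and only if $W = \scW \otimes_\kq \kk$ is. This yields the inclusion $(\cU_1^\times\scV)\otimes_\kq\kk \subseteq \cU_1^\times V$. The reverse inclusion needs a descent argument, because $\cU_1^\times V$ is a union of opens of $V$ that need not be defined over $\kq$.

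\textbf{Compatibility of the construction.} Since $\m{char}(\kq) = 0$, the extension $\kq \to \kk$ is separable, so normality is preserved and $\widetilde V = \widetilde\scV \otimes_\kq \kk$ is the normalization of $V$ (e.g.\ \cite[Tag~07TD]{stacks} applied to the ind-smooth, hence flat and geometrically regular, map). Formation of $(\nu_*\cO_{\widetilde\scV})/\cO_{\scV}$ commutes with flat base change. Its annihilator also commutes with flat base change, because this module is coherent, so the annihilator is the kernel of $\cO \to \cE nd(-)$ and both $\cE nd$ and kernels commute with flat base change. Hence $C = \scC \otimes_\kq \kk$, $\widetilde C = \widetilde\scC\otimes_\kq\kk$, and $\mu_*\cO_{\widetilde C}$ is the pullback of $\mu_*\cO_{\widetilde\scC}$. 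The four conditions then descend and ascend along the faithfully flat map $\Spec\kk\to\Spec\kq$:
\begin{itemize}
\item smoothness by \cite[Tag~02VL]{stacks};
\item dimension by \cite[Cor.~4.2.8]{EGAIV-2};
\item unramifiedness, via $\Omega^1_{\widetilde V/V} = \Omega^1_{\widetilde\scV/\scV}\otimes_\kq\kk$ together with faithful flatness;
\item local freeness of rank $2$, via faithfully flat descent of finite locally free modules (or Fitting ideals, which commute with base change).
\end{itemize}
Restricting everything to an open $\scW$ proves the claim for $\scW$.

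\textbf{Descent of the locus.} I expect this to be the main obstacle. Rather than working with unions, I will describe $\cU_1^\times$ as the complement of a closed subset whose formation visibly commutes with base change. Concretely, there are four conditions:
\begin{itemize}
\item $\widetilde\scV$ smooth at points over $p$;
\item $\scC$ smooth of dimension $d-1$ at $p$, or $p \notin \scC$;
\item $\nu$ unramified over $p$;
\item $\mu_*\cO_{\widetilde\scC}$ free of rank $2$ at $p$.
\end{itemize}
Each of these is an open condition on $p \in \scV$. For the first and third, use the image under the finite, hence closed, map $\nu$ of the non-smooth locus, respectively of $\m{Supp}\,\Omega^1_{\widetilde\scV/\scV}$. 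For the second, use the Fitting ideal $\m{Fitt}_{d-1}(\Omega^1_{\scC})$ together with the closed union of components of the wrong dimension. For the fourth, use $V(\m{Fitt}_1)\cup \m{Supp}\,\m{Fitt}_2$.

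I then claim that $\cU_1^\times\scV$ is exactly the intersection $\scO$ of these four open sets. The inclusion $\scO \subseteq \cU_1^\times\scV$ holds because $\scO$ is itself intrinsically dnc: normalization and conductor localize. The reverse inclusion is clear. One subtlety is the condition ``$C \cap W$ smooth of dimension $d-1$'' when the conductor is embedded non-reducedly; it is handled by the same pointwise Fitting description. Each of the closed sets above is built from Fitting ideals, supports of coherent sheaves, and images under finite maps, and all of these commute with flat base change to $\kk$ (images because $\nu$ is finite and base change is flat and surjective). Therefore $\scO\otimes_\kq\kk$ is the corresponding locus for $V$, and $\cU_1^\times V = (\cU_1^\times\scV)\otimes_\kq\kk$.
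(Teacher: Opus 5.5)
Your proof is correct, and its second half takes a different route from the paper's.

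\textbf{The paper's route.} The paper handles conditions (a)--(c) by asserting that the loci $\scV^a,\scV^b,\scV^c$ where they hold commute with base change. For condition (d) and for the inclusion $\cU_1^\times V \subseteq (\cU_1^\times\scV)\otimes_\kq\kk$ it uses descent. Since $\Spec\kk\to\Spec\kq$ is universally open \cite[Tag~0383]{stacks}, the image $\scW'\subseteq\scV$ of an intrinsically dnc open $W\subseteq V$ is open. Local freeness of rank $2$ of $\mu_*\cO_{\widetilde\scC}$ on $\scW'\cap\scC$ then descends along the fpqc cover $W\cap C\to\scW'\cap\scC$.

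\textbf{Your route.} You avoid universal openness and descent on images. Instead you identify $\cU_1^\times\scV$ with an explicit intersection of four open loci, each the complement of a closed set built from images under the finite map $\nu$, supports of coherent sheaves, and Fitting ideals. Each of these constructions commutes with the base change, so both inclusions follow at once. This has three consequences:
\begin{itemize}
\item Your first paragraph's statement that $\scW$ is intrinsically dnc if and only if $\scW\otimes_\kq\kk$ is becomes redundant once the second part is done.
\item You make explicit something the paper uses tacitly: the loci $\scV^a,\dots,\scV^d$ are open, and $\cU_1^\times\scV$ is their intersection.
\item Your description matches the Fitting-ideal computations used in Lemma~\ref{lem:compute-C2} and Example~\ref{ex:112}.
\end{itemize}
The cost is more bookkeeping than the paper's short descent argument.

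\textbf{Two precision points.}
\begin{itemize}
\item The fact that the image under $\nu$ of a preimage equals the preimage of the image holds for any fiber product, by the description of points of fiber products \cite[Tag~01JT]{stacks}. It is not driven by flatness or surjectivity. Flatness is genuinely needed elsewhere: for the annihilator defining the conductor, and for supports of Fitting ideals.
\item In the fourth condition the Fitting ideals must be taken over $\cO_\scC$. Over $\cO_\scV$, the support of $\m{Fitt}_2$ would contain all of $\scV\setminus\scC$. So that condition should read ``$p\notin\scC$ or \ldots'', just like your second condition.
\end{itemize}
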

\begin{proof}
	For the normalizations, we have $\widetilde V = \widetilde\scV \otimes_\kq \kk$ by \cite[Tag~0C3N]{stacks} because $\kq \to \kk$ is separable. Since $\kq \to \kk$ is flat, we then have $C = \scC \otimes_\kq \kk$ for the conductor loci.
	
	Let $\scV^a, \scV^b, \scV^c, \scV^d \subseteq \scV$ be the loci where the respective conditions of Definition~\ref{defn:dnc-locus} hold. Since the formation of the smooth locus and of the unramified locus commutes with base change, we have $V^a = \scV^a \otimes_\kq \kk$, $V^b = \scV^b \otimes_\kq \kk$, and $V^c = \scV^c \otimes_\kq \kk$.
	
	Let $\scW \subseteq \scV^d$ so that $\mu\colon \widetilde \scC \to \scC$ is finite locally free of degree $2$ on $\scC \cap \scW$. Let $W = \scW \otimes_\kq \kk$. Then, $C \cap W \to \scC \cap \scW$ is an fpqc cover, and $\mu\colon \widetilde C \to C$ is finite locally free of degree $2$ because this property is fpqc local on the target.
	
	For the converse, note that $\Spec \kk \to \Spec \kq$ is universally open by \cite[Tag~0383]{stacks}. Now, if $W \subseteq V$ is an open subset where $V$ is intrinsically dnc, then its image $\scW'$ under $V \to \scV$ is open. Now, $W \cap C \to \scW' \cap \scC$ is an fpqc cover, and $\mu\colon \widetilde\scC \to \scC$ is finite locally free of degree $2$. 
\end{proof}

\begin{proof}[Proof of Lemma~\ref{lem:compute-C2}]
	Since $T = \scT \otimes_\kq \kk$ and $\cU_1^\times V = (\cU_1^\times\scV) \otimes_\kq \kk$, it suffices to show the inclusion and the equality over $\Spec \kk$. First, let $x \in \cU_1^\times V$. Then, there are \'etale morphisms $g\colon W \to \cU_1^\times V$ and $h\colon W \to \{xy = 0\} \subset \IA^{d + 1}$ such that $x$ is in the image of $g$. Since the formation of $T$ commutes with \'etale morphisms, and since the analog of $T$ on $\{xy = 0\}$ is empty, we have $g^{-1}(T) = \varnothing$ and thus $x \notin T$. In particular, $\cU_1^\times V \subseteq V \setminus T$. Conversely, under the conditions listed in the statement of the lemma, we have $\scV \setminus \scT \subseteq \cU_1^\times\scV$ by definition of $\cU_1^\times\scV$. 
\end{proof}

\subsection{A preliminary characterization of dtc degeneration points}\label{sect:dtd-point-criteria}\note{sect:dtd-point-criteria}

We reformulate the condition of being a dtc degeneration point in a way that is more suitable for checking the condition. 

\begin{lem}\label{lem:dtd-intermediate}\note{lem:dtd-intermediate}
	Suppose that we are in Situation~\ref{sitn:explicit-gls-candidate-kk-intro}. For a closed point $p \in D$ and $\ell \geq 1$, let us consider the following condition:
	\begin{itemize}[label=$(\spadesuit)$]
		\item There are elements $\tilde x,\tilde y, \tilde z_1,\ldots, \tilde z_{d - 1} \in \fm_{X,p}^\m{h}$  which induce an isomorphism 
		\[\big(\kk[t,x,y,z_1,\ldots,z_{d - 1}]_{(t,x,y,z_1,\ldots,z_{d - 1})}/(t^\ell,xy)\big)^\m{h} \to \cO_{X,p}^\m{h}/(f^\sharp(s)^\ell)\]
		of (strictly) Henselian local rings which is given by $t \mapsto f^\sharp(s)$, $x \mapsto \tilde x$, $y \mapsto \tilde y$, $z_i \mapsto \tilde z_i$.
	\end{itemize}
	Then, the following statements hold:
	\begin{enum:arabic}
		\item If Condition~$(\spadesuit)$ holds, there is a unique element $\tilde u \in \cO_{X,p}^\m{h}$ with $\tilde x\tilde y = f^\sharp(s)^\ell \tilde u$.
		\item If $p$ is a dtc degeneration point of kink $\ell$, then Condition~$(\spadesuit)$ holds, and $\tilde x$, $\tilde y$, $\tilde z_1$, \ldots, $\tilde z_{d - 1}$ can be chosen such that $\tilde u \in \cO_{X,p}^\m{h}$ is invertible.
		\item If Condition~$(\spadesuit)$ holds, we can find an element 
		\[u \in P \coloneq \kk[t,x,y,z_1,\ldots,z_{d - 1}]_{(t,x,y,z_1,\ldots,z_{d - 1})}^\m{h}\]
		and an isomorphism $\phi\colon P/(xy - t^\ell u) \to \cO_{X,p}^\m{h}$ of $\kk$-algebras with $\phi(t) = f^\sharp(s)$, $\phi(x) = \tilde x$, $\phi(y) = \tilde y$, $\phi(z_i) = \tilde z_i$, and $\phi(u) = \tilde u$.
		\item If Condition~$(\spadesuit)$ holds and $\tilde u \in \cO_{X,p}^\m{h}$ is invertible, then $p$ is a dtc degeneration point of kink $\ell$.
	\end{enum:arabic}
\end{lem}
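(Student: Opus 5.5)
We prove the four statements in the order (1), (2), (3), (4). Statement (4) is essentially a corollary of (3), so the substance lies in (1) and (3). Throughout, write $\tilde t \coloneq f^\sharp(s)$ and $O \coloneq \cO_{X,p}^\m{h}$.

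\textbf{Statement (1).} By $(\spadesuit)$, $xy \mapsto \tilde x\tilde y$ is zero in $O/(\tilde t^\ell)$, so $\tilde x\tilde y \in \tilde t^\ell O$. Since $f$ is flat, $\tilde t$ is a non-zero-divisor on $\cO_{X,p}$. Henselization is flat, so $\tilde t$ remains a non-zero-divisor on $O$. Hence $\tilde u$ exists and is unique.

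\textbf{Statement (2).} Suppose $p$ is a dtc degeneration point of kink $\ell$. Transport $x,y,z_i$ along the isomorphism $R_{\ell;d}^\m{h} \cong O$, which sends $t \mapsto \tilde t$. Reducing modulo $t^\ell$ gives $R_{\ell;d}/(t^\ell) = \kk[t,x,y,z]/(t^\ell,xy)$. Henselization commutes with quotients, which yields $(\spadesuit)$ with $\tilde u = 1$.

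\textbf{Statement (3).} This is the main step. Let $P$ denote the Henselian polynomial ring as in the statement. Lift $\tilde u$ to some $u \in P$: since $P \to O$ will be surjective, we can first construct the map and then pick a preimage. Define $\psi\colon P \to O$ by $t \mapsto \tilde t$, $x \mapsto \tilde x$, $y \mapsto \tilde y$, $z_i \mapsto \tilde z_i$; this is possible because $O$ is Henselian and the images lie in $\fm$. The map $\psi$ is surjective. Indeed, modulo $\tilde t^\ell$ it is surjective by $(\spadesuit)$, so $\psi$ is surjective on cotangent spaces (note $\ell \geq 1$). Hence it is surjective on completions, and then surjective by Lemma~\ref{lem:complete-Henselian}, applied to the finite-type models of which $P$ and $O$ are Henselizations.

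Choose $u \in P$ with $\psi(u) = \tilde u$. Then $xy - t^\ell u \in \ker\psi$, giving a surjection $\phi\colon P/(xy - t^\ell u) \to O$. To prove injectivity, compare flat families over $\kk[t]^\m{h}_{(t)}$:
\begin{itemize}
\item The element $xy - t^\ell u$ is a non-zero-divisor on $P/(t)$, because modulo $t$ it is $xy \neq 0$ in the domain $P/(t)$. Hence $Q \coloneq P/(xy - t^\ell u)$ is $t$-torsion free, hence flat over the DVR $\kk[t]^\m{h}_{(t)}$.
\item The ring $O$ is also $\tilde t$-flat.
\end{itemize}
Now $\phi \otimes \kk[t]/(t^\ell)$ is the isomorphism of $(\spadesuit)$, because $Q/(t^\ell) = P/(t^\ell, xy)$. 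Let $K = \ker\phi$. Since $O$ is flat, tensoring $0 \to K \to Q \to O \to 0$ with $\kk[t]/(t^\ell)$ stays exact, so $K/t^\ell K = 0$. As $K$ is a finitely generated module over the Noetherian local ring $Q$ and $t \in \fm_Q$, Nakayama's lemma gives $K = 0$. The expected obstacle is justifying Noetherianity and surjectivity in the Henselian setting; Lemma~\ref{lem:complete-Henselian} and the flatness of Henselization handle these.

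\textbf{Statement (4).} If $\tilde u$ is invertible, then $u$ is a unit in $P$, since $\psi$ is local. Substitute $y' = u^{-1}y$. This is an automorphism of $P$ over $\kk[t]$, so $P/(xy - t^\ell u) \cong P/(xy' - t^\ell) = R_{\ell;d}^\m{h}$, compatibly with $t \mapsto \tilde t$. Hence $p$ is a dtc degeneration point of kink $\ell$.
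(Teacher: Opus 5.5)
Your proposal is correct and follows the paper's own proof almost step for step: uniqueness of $\tilde u$ because $f^\sharp(s)$ is a non-zero-divisor; surjectivity of $\psi\colon P \to \cO_{X,p}^\m{h}$ via cotangent spaces, completions and Lemma~\ref{lem:complete-Henselian}; injectivity of $\phi$ via flatness of $\cO_{X,p}^\m{h}$ over the base together with Nakayama; and (4) by rescaling one coordinate by the unit $u$. The differences are cosmetic: you conclude from $K/t^\ell K = 0$ where the paper uses $K/tK = 0$, and the flatness of $Q = P/(xy - t^\ell u)$ that you prove is never used, since only the flatness of $\cO_{X,p}^\m{h}$ enters the exactness argument.
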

\begin{proof}
	Assume that Condition~$(\spadesuit)$ holds. Then, we have $\tilde x\tilde y \in (f^\sharp(s)^\ell)$ so that there is some $\tilde u \in \cO_{X,p}^\m{h}$ with $\tilde x\tilde y = f^\sharp(s)^\ell \tilde u$. Since $\cO_{X,p}^\m{h}$ is flat over the discrete valuation ring $\cO_{B,0}^\m{h}$, it is torsion-free. But then, $f^\sharp(s)$ is not a zero divisor, and $\tilde u$ is unique.
	
	Assume that $p$ is a dtc degeneration point. Let $\cp\colon R_{\ell;d}^\m{h} \to \cO_{X,p}^\m{h}$ be the isomorphism in Definition~\ref{defn:dtd-point}, and set $\tilde x = \phi(x)$, $\tilde y = \phi(y)$, $\tilde z_i = \phi(z_i)$. Then, Condition~$(\spadesuit)$ holds, and $u = 1$ is invertible.
	
	Condition~$(\spadesuit)$ implies that $[\tilde x]$, $[\tilde y]$, $[\tilde z_1]$, \ldots, $[\tilde z_{d - 1}]$ is a $\kk$-basis of $\fm_{X_0,p}^\m{h}/(\fm_{X_0,p}^\m{h})^2$. Now, the kernel of $\fm_{X,p}^\m{h}/(\fm_{X,p}^\m{h})^2 \to \fm_{X_0,p}^\m{h}/(\fm_{X_0,p}^\m{h})^2$ is generated by $[f^\sharp(s)]$ as a $\kk$-vector space so that $\fm_{X,p}^\m{h}/(\fm_{X,p}^\m{h})^2$ is generated by $[f^\sharp(s)]$, $[\tilde x]$, $[\tilde y]$, $[\tilde z_1]$, \ldots, $[\tilde z_{d - 1}]$.
	
	Define $\psi\colon P \to \cO_{X,p}^\m{h}$ by $t \mapsto f^\sharp(s)$, $x \mapsto \tilde x$, $y \mapsto \tilde y$, $z_i \mapsto \tilde z_i$. One may easily check that $\psi$ is local. Let $\fn \subset P$ be the maximal ideal. Then, the induced map $\fn/\fn^2 \to \fm_{X,p}^\m{h}/(\fm_{X,p}^\m{h})^2$ is surjective so that also $\widehat\psi\colon \widehat P \to \widehat\cO_{X,p}$ is surjective. Then, Lemma~\ref{lem:complete-Henselian} guarantees that $\psi$ is surjective.
	
	Now, let $u \in P$ be such that $\psi(u) = \tilde u$. Then, $\psi(xy - t^\ell u) = 0$, and we have an induced surjective local homomorphism $\phi\colon P/(xy - t^\ell u) \to \cO_{X,p}^\m{h}$. Let $K$ be its kernel. Since $\cO_{X,p}^\m{h}$ is flat over $\cO_{C,0}^\m{h}$, the embedding $K \to P/(xy - t^\ell u)$ is universally injective. In particular, $K/t \to P/(xy,t)$ is injective. Then, $K/t$ is the kernel of $\phi_0\colon P/(xy,t) \to \cO_{X,p}^\m{h}/(t)$, and hence $K/t = 0$ by Condition~$(\spadesuit)$. Now, $K$ is a finitely generated $P/(xy - t^\ell u)$-module, so Nakayama's lemma shows that $K = 0$. Thus, $\phi$ is an isomorphism.
	
	Finally, suppose that Condition~$(\spadesuit)$ holds, and that $\tilde u$ is invertible. Since $\psi$ is local, we have $u \in P^\times$. Then, we have an automorphism 
	\[\alpha\colon \enspace P \to P, \quad t \mapsto t, \enspace x \mapsto x/u, \enspace y \mapsto y, \enspace z_i \mapsto z_i,\]
	which satisfies $\alpha(xy - t^\ell) = xy/u - t^\ell$. Now, composing $\psi$ with $\alpha$ yields the desired isomorphism $\cp\colon R_{\ell;d}^\m{h} \to \cO_{X,p}^\m{h}$ in Definition~\ref{defn:dtd-point}.
\end{proof}

\subsection{The extension criterion for dtc degeneration points}\label{sect:ext-criterion-dtd-points}\note{sect:ext-criterion-dtd-points}

We simplify the criteria in Lemma~\ref{lem:dtd-intermediate} and restate them in terms of extension classes. Let us now assume that $f\colon X \to B$ is pre--gls. The primary object of our study is the short exact sequence 
\[
	\xymatrix{
		0 \ar[r] & f^*\Omega^1_B \ar[r] & \Omega^1_X \ar[r] & \Omega^1_{X/B} \ar[r] & 0 \\
		0 \ar[r] & \cO_X \ar[r] \ar[u]^{1 \mapsto f^*ds} & \Omega^1_X \ar[r] \ar@{=}[u] & \Omega^1_{X/B} \ar[r] \ar@{=}[u] & 0. \\
	}
\]
The left vertical arrow is an isomorphism by the assumptions in Situation~\ref{sitn:explicit-gls-candidate-kk-intro}. To see that the left horizontal maps are injective, note that they are injective on $\m{Reg}(X/B)$ so that injectivity on $X$ follows from Lemma~\ref{lem:reduced-fibers-purity}. We denote the class of this extension in $\Gamma(X,\cE xt^1(\Omega^1_{X/B},\cO_X))$ by $\eta(f)$. The same argument shows that the sequence remains exact after pullback along $c_k\colon X_k \to X$. We denote the class of the pullback in $\Gamma(X_k,\cE xt^1(\Omega^1_{X_k/B_k},\cO_{X_k}))$ by $\eta_k(f)$.

One easily sees that the preceding property holds for the pullback of all extensions of $\cO_X$, $\Omega^1_{X/B}$ along $c_k\colon X_k \to X$. Thus, this is an instance of the following condition on a morphism of schemes $f\colon X \to Y$ and two $\cO_Y$-modules $\cA$ and $\cB$:
\begin{itemize}
	\item[$(\bigstar)$] For every open subset $V \subseteq Y$ and every extension 
	\[e\colon \quad 0 \to \cA|_V \to \cE \to \cB|_V \to 0,\]
	the pullback $f^*(e)$ is a short exact sequence on $f^{-1}(V)$.
\end{itemize}
If this holds, we obtain a map of \emph{sets} $f^*\colon \m{Ext}^1_V(\cB|_V,\cA|_V) \to \m{Ext}^1_U((f^*\cB)|_U,(f^*\cA)|_U)$ for every open subset $V \subseteq Y$ and $U = f^{-1}(V)$. For lack of reference, we include a proof of the following expected result.

\begin{lem}\label{lem:extension-properties}\note{lem:extension-properties}
	Let $f\colon X \to Y$ be a morphism of schemes.
	\begin{enum:arabic}
		\item\label{item:ext-pushout} If $(\cB,\cA)$ and $(\cB,\cA')$ satisfy $(\bigstar)$, and if $\phi\colon \cA \to \cA'$ is a homomorphism, then we have a commutative diagram
		\[
		\xymatrix{
			\m{Ext}_V^1(\cB|_V,\cA|_V) \ar[r]^-{\phi_\bullet} \ar[d]_{f^*} & \m{Ext}_V^1(\cB|_V,\cA'|_V) \ar[d]^{f^*} \\
			\m{Ext}_U^1((f^*\cB)|_U,(f^*\cA)|_U) \ar[r]^-{(f^*\phi)_\bullet} & \m{Ext}_U^1((f^*\cB)|_U,(f^*\cA')|_U) \\
		}
		\]
		of sets for every open subset $V \subseteq Y$ and $U = f^{-1}(V)$.
		\item\label{item:ext-pullback} If $(\cB,\cA)$ and $(\cB',\cA)$ satisfy $(\bigstar)$, and if $\psi\colon\cB' \to \cB$ is a homomorphism, 
		then we have a commutative diagram
		\[
		\xymatrix{
			\m{Ext}_V^1(\cB|_V,\cA|_V) \ar[r]^{\psi^\bullet} \ar[d]_{f^*} & \m{Ext}_V^1(\cB'|_V,\cA|_V) \ar[d]^{f^*} \\
			\m{Ext}_U^1((f^*\cB)|_U,(f^*\cA)|_U) \ar[r]^-{(f^*\psi)^\bullet} & \m{Ext}_U^1((f^*\cB')|_U,(f^*\cA)|_U) \\
		}
		\]
		of sets for every open subset $V \subseteq Y$ and $U = f^{-1}(V)$.
		\item\label{item:ext-linear} If $(\cB,\cA)$ satisfies $(\bigstar)$, then, for two extensions $e_1$ and $e_2$, we have $f^*(e_1 + e_2) = f^*(e_1) + f^*(e_2)$ for the Baer sums in $\m{Ext}^1_U((f^*\cB)_U,(f^*\cA)_U)$. Furthermore, if $c \in \Gamma(V,\cO_Y)$, then $f^*(c \cdot e) = f^\sharp(c) \cdot f^*(e)$ in $\m{Ext}^1_U((f^*\cB)_U,(f^*\cA)|_U)$.
	\end{enum:arabic}
\end{lem}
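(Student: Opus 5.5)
The plan is to work with the Yoneda description of $\m{Ext}^1$ as isomorphism classes of extensions, and to reduce all three statements to the fact that pullback $f^*$ is a right exact functor commuting with pushouts and fibered products. By hypothesis $(\bigstar)$ it also preserves the exactness of the extensions involved. Since all constructions are local, we may fix $V \subseteq Y$, replace $Y$ by $V$ and $X$ by $U$, and suppress restrictions.

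For \ref{item:ext-pushout}, let $e\colon 0 \to \cA \to \cE \to \cB \to 0$ be an extension. Its pushout $\phi_\bullet e$ has middle term $\cE' = (\cA' \oplus \cE)/\{(\phi(a),-a)\}$, that is, the cokernel of $\cA \to \cA' \oplus \cE$. Since $f^*$ is right exact and commutes with finite direct sums, $f^*\cE' \cong \m{coker}(f^*\cA \to f^*\cA' \oplus f^*\cE)$. This is exactly the middle term of $(f^*\phi)_\bullet f^*(e)$. I will then check that the induced maps from $f^*\cA'$ and to $f^*\cB$ agree, so that the two extensions coincide as extensions. Exactness of $f^*(\phi_\bullet e)$ is guaranteed by $(\bigstar)$ for $(\cB,\cA')$, which is what makes the left vertical arrow meaningful.

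\ref{item:ext-pullback} is the delicate one, because the pullback extension uses the fiber product $\cE \times_\cB \cB'$, and $f^*$ does not commute with finite limits in general. Here I will use the exact sequence $0 \to \cE \times_\cB \cB' \to \cE \oplus \cB' \to \cB \to 0$, which is exact on the right because $\cE \to \cB$ is surjective. By right exactness, $f^*(\cE \times_\cB \cB')$ surjects onto $\ker(f^*\cE \oplus f^*\cB' \to f^*\cB) = f^*\cE \times_{f^*\cB} f^*\cB'$. This gives a morphism of sequences from $f^*(\psi^\bullet e)$ to $(f^*\psi)^\bullet f^*(e)$ which is the identity on $f^*\cA$ and on $f^*\cB'$. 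Both rows are exact: the first by $(\bigstar)$ for $(\cB',\cA)$, the second as a pullback of the exact sequence $f^*(e)$, which is exact by $(\bigstar)$ for $(\cB,\cA)$. The five lemma then makes the middle map an isomorphism. I expect this identification to be the main obstacle, since it is the only point where one must avoid assuming that $f^*$ is left exact; the key is to compare via a map of extensions rather than to compute the fiber product directly.

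For \ref{item:ext-linear}, recall that the Baer sum is $e_1 + e_2 = \nabla_\bullet \Delta^\bullet(e_1 \oplus e_2)$. Here $e_1 \oplus e_2$ is an extension of $\cB \oplus \cB$ by $\cA \oplus \cA$, and $f^*$ commutes with it trivially. To apply \ref{item:ext-pushout} and \ref{item:ext-pullback} I need $(\bigstar)$ for the pairs $(\cB \oplus \cB,\cA \oplus \cA)$, $(\cB,\cA \oplus \cA)$ and $(\cB,\cA)$. The last holds by hypothesis. For the other two, I will not verify $(\bigstar)$ for all extensions; instead I will rerun the arguments above for the specific extensions occurring. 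These are a direct sum of exact sequences, which stays exact, and its pullback along $\Delta$, which is exact by the five-lemma comparison just used. Scalar multiplication by $c$ is the pushout along the endomorphism $c\colon \cA \to \cA$, and $f^*(c) = f^\sharp(c)$, so the final claim follows from \ref{item:ext-pushout}.
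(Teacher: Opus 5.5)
Your proposal is correct and follows essentially the paper's proof: pushouts commute with $f^*$ as colimits; the pullback case is handled by the comparison map $f^*(\psi^\bullet e)\to(f^*\psi)^\bullet f^*(e)$ between extensions of the same two sheaves; and the third part follows by writing the Baer sum via the diagonal and the summation map, and scalar multiplication as a pushout along multiplication by $c$. Your extra care in the third part about $(\bigstar)$ for the auxiliary pairs $(\cB\oplus\cB,\cA\oplus\cA)$ and $(\cB,\cA\oplus\cA)$ fills in a point the paper treats as formal, with one correction: left exactness of $f^*(\Delta^\bullet(e_1\oplus e_2))$ does not come from the five lemma, since that lemma presupposes it. It comes from the fact that $f^*(\cA\oplus\cA)\to f^*(\cE_1\times_\cB\cE_2)$, followed by the map to $f^*\cE_1\oplus f^*\cE_2$, is injective by $(\bigstar)$ applied to $e_1$ and $e_2$.
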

\begin{proof}
	The formation of $\phi_\bullet$ and $(f^*\phi)_\bullet$ involves only a pushout, i.e., a colimit, so it is preserved under $f^*$, showing \ref{item:ext-pushout}. Since $c \cdot e = (\mu_c)_\bullet(e)$ for the multiplication map $\mu_c\colon \cA \to \cA$, this also shows $f^*(c \cdot e) = f^\sharp(c) \cdot f^*(e)$ in \ref{item:ext-linear}.
	
	We consider \ref{item:ext-pullback}. Let $e$ be an extension which gives a class in $\m{Ext}^1_V(\cB|_V,\cA|_V)$. By construction, we have a map of extensions $\psi^\bullet(e) \to e$. Since $(\bigstar)$ holds, we obtain a map of extensions $f^*(\psi^\bullet(e)) \to f^*(e)$ after pullback. However, this map factors as $f^*(\psi^\bullet(e)) \to (f^*\psi)^\bullet(f^*(e)) \to f^*(e)$. Now, the first map is an isomorphism because it is between extensions of the same two sheaves of modules.
	
	Additivity of $f^*$ in \ref{item:ext-linear} is now a formal consequence of \ref{item:ext-pushout} and \ref{item:ext-pullback} since $e_1 + e_2 = \Delta^\bullet\Sigma_\bullet(e_1 \oplus e_2)$ for the summation map $\Sigma\colon \cA \oplus \cA \to \cA$ and the diagonal $\Delta\colon \cB \to \cB \oplus \cB$.
\end{proof}

In particular, we have a restriction \emph{homomorphism}
\[\rho_{k}\colon \enspace\cE xt^1(\Omega^1_{X/B},\cO_X) \to (c_k)_*\cE xt^1(\Omega^1_{X_k/B_k},\cO_{X_k}).\]
The adjoint map on $X_k$ induces an isomorphism near dnc points. The analogous result holds for extension classes of $\Omega^1_X$ instead of $\Omega^1_{X/B}$.

\begin{lem}\label{lem:Omega-projdim-1}\note{lem:Omega-projdim-1}
	Assume that we are in Situation~\ref{sitn:explicit-gls-candidate-kk-intro}, and suppose that $f\colon X \to B$ is pre--gls. Let $p \in D$ be a closed dnc point. Then, both $\Omega^1_{X/B}$ and $\Omega^1_X$ have projective dimension $\leq 1$ in a neighborhood of $p$. Moreover, for both $\cA = \Omega^1_{X/B}$ and $\cA = \Omega^1_X$, the induced map 
	\[c_k^*\cE xt^1(\cA,\cO_X) \to \cE xt^1(c_k^*\cA,\cO_{X_k})\]
	is an isomorphism in a neighborhood of $p$.
\end{lem}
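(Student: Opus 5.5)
The plan is to produce explicit local presentations of $\Omega^1_X$ and $\Omega^1_{X/B}$ near $p$ with two-term free resolutions, and then to read off the base change property for $\cE xt^1$ from right exactness of the cokernel.

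\textbf{Local hypersurface structure.} Since $p$ is a closed dnc point of $V$, we have $\cO_{V,p}^\m{h} \cong R_{\times,d}^\m{h}$. In particular $\m{edim}(\cO_{V,p}) = d+1$, so $\m{edim}(\cO_{X,p}) \leq d+2$. Choose elements $s, x_1, \ldots, x_{d+1}$ generating $\fm_{X,p}$ modulo $\fm^2$. After shrinking, this gives a morphism $X \to \IA^{d+2}_B$ over $B$ which is unramified at $p$. Passing to an étale neighborhood (allowed, since projective dimension and the base change map can be checked étale locally), we may assume $X$ is a closed subscheme of a smooth $B$-scheme $P$ of relative dimension $d+1$. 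Its total dimension is $d+2$, while $X$ has dimension $d+1$ at $p$ (it is flat over $B$ with pure $d$-dimensional fibres). Since $\cO_{V,p}^\m{h}$ is a hypersurface, and $V = X \cap P_0$ with $P_0$ smooth of dimension $d+1$, the ideal of $V$ in $P_0$ is locally principal, generated by some $\bar g$. Lifting $\bar g$ to $g$ on $P$, the flatness of $X$ over $B$ together with Nakayama gives $I_X = (g)$ near $p$. Here we use that $I_X \otimes \cO_{P_0} \to I_V$ is an isomorphism, by the Tor vanishing coming from flatness of $X$.

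\textbf{Resolutions.} With $X = \{g = 0\} \subset P$ a Cartier divisor, the conormal sequences are
\[\cO_X \xrightarrow{dg} \Omega^1_P|_X \to \Omega^1_X \to 0, \qquad \cO_X \xrightarrow{d_{P/B}g} \Omega^1_{P/B}|_X \to \Omega^1_{X/B} \to 0,\]
and the middle terms are free of ranks $d+2$ and $d+1$. It remains to show that the left maps are injective. Since $X$ is reduced (fibres are reduced and $X$ is flat over a smooth curve), it suffices that $dg$, respectively $d_{P/B}g$, is nonzero at each generic point of $X$. This holds because $X^\star$ is generically smooth over $B^\star$, and $X$ is smooth at generic points of $V$ (these points are smooth points of $V$). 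Alternatively, injectivity of $\cO_X \to \Omega^1_{P/B}|_X$ on $\m{Reg}(X/B)$ extends by Lemma~\ref{lem:reduced-fibers-purity}. Hence both $\Omega^1_X$ and $\Omega^1_{X/B}$ have projective dimension $\leq 1$ near $p$.

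\textbf{Base change.} For $\cA$ with a resolution $0 \to \cO_X \xrightarrow{\alpha} F \to \cA \to 0$, we get $\cE xt^1(\cA,\cO_X) = \m{coker}(F^\vee \xrightarrow{\alpha^\vee} \cO_X)$. Applying $c_k^*$ is right exact, so $c_k^*\cE xt^1(\cA,\cO_X) = \m{coker}(c_k^*F^\vee \to \cO_{X_k})$. On the other side, $c_k^*$ of the resolution is again exact: for $\Omega^1_{X/B}$ this is the argument already given above via Lemma~\ref{lem:reduced-fibers-purity} on the pre--gls base change $X_k \to B_k$. For $\Omega^1_X$ we use the analogous statement that $c_k^*(dg)$ is injective. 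Thus $c_k^*F$ is a free resolution of $c_k^*\cA$, and $\cE xt^1(c_k^*\cA,\cO_{X_k})$ is the same cokernel. The natural map is then compatible with these identifications, hence an isomorphism.

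\textbf{Main obstacle.} The main obstacle is injectivity of $c_k^*(dg)\colon \cO_{X_k} \to \Omega^1_P|_{X_k}$, i.e.\ exactness of the pulled-back resolution for $\Omega^1_X$. I would deduce it by composing with the projection $\Omega^1_P \to \Omega^1_{P/B}$: this composite is $d_{P/B}g$, whose pullback is injective by Lemma~\ref{lem:reduced-fibers-purity}, so $c_k^*(dg)$ is injective as well.
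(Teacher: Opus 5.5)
Your proposal is correct and follows essentially the same route as the paper. You realize an étale neighborhood of $p$ as a Cartier divisor $\{g = 0\}$ in a smooth $B$-scheme and use the two conormal sequences as length-one free resolutions, with injectivity coming from reducedness and Lemma~\ref{lem:reduced-fibers-purity}. You then get the base change isomorphism from right exactness of $c_k^*$ together with exactness of the restricted resolutions.

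The paper obtains the hypersurface presentation by quoting Condition~$(\spadesuit)$ with $\ell = 1$ and Lemma~\ref{lem:dtd-intermediate}, whereas you rederive it via the embedding-dimension and flatness/Nakayama argument, which is essentially the proof of that lemma. Two minor slips: the ambient space should be $\IA^{d+1}_B$, not $\IA^{d+2}_B$, and $\bar g$ must be lifted to an element of $I_X$, not merely of $\cO_P$.
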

\begin{proof}
	When $g\colon Y \to T$ is a smooth morphism of schemes, and $i\colon D \to Y$ is a closed immersion defined by a sheaf of ideals $\cI \subset \cO_Y$, such that $g \circ i\colon D \to T$ is smooth, then 
	\[0 \to \cI/\cI^2 \to i^*\Omega^1_{Y/T} \to \Omega^1_{D/T} \to 0\]
	is exact and locally split. Now, if $\cI$ is a line bundle, and $g \circ i\colon D \to T$ is flat with reduced fibers, the sequence, although not necessarily locally split, is still exact. In particular, it gives a projective resolution of $\Omega^1_{D/T}$ of length $1$.
	
	In Situation~\ref{sitn:explicit-gls-candidate-kk-intro}, if $p \in D$ is a dnc point, condition $(\spadesuit)$ from Lemma~\ref{lem:dtd-intermediate} holds with $\ell = 1$, so we have an isomorphism $\cO_{X,p}^\m{h} \cong P/(xy - tu)$ for some $u \in P$. This allows us to write some \'etale neighborhood of $p$ in $X$ in the form considered in the previous paragraph. 
	
	Now, let either $\cA = \Omega^1_{X/B}$ or $\cA = \Omega^1_X$, and let $0 \to \cF_1 \to \cF_0 \to \cA \to 0$ be a coherent free resolution, locally defined in a neighborhood of $p$. Then, we obtain a resolution of $\cE xt^1(\cA,\cO_X)$. After pullback to $X_k$, we obtain
	\[\cH om(c_k^*\cF_0,\cO_{X_k}) \to \cH om(c_k^*\cF_1,\cO_{X_k}) \to c_k^*\cE xt^1(\cA,\cO_{X_k}) \to 0\]
	because $\cF_0$ and $\cF_1$ are finite locally free. However, since $\cA$ is locally free in relative codimension $0$, also $0 \to c_k^*\cF_1 \to c_k^*\cF_0 \to c_k^*\cA \to 0$ is exact. The long exact sequence for $\cE xt^1(-,\cO_{X_k})$ shows that the two comparison maps in the statement are isomorphisms.
\end{proof}

Let us now show that $\eta_{k - 1}(f)$ depends only on the finite-order deformation $f_k\colon X_k \to B_k$.

\begin{lem}\label{lem:eta-f-alternative}\note{lem:eta-f-alternative}
	Assume that we are in Situation~\ref{sitn:explicit-gls-candidate-kk-intro}, and suppose that $f\colon X \to B$ is pre--gls. For all $1 \leq k \leq n$, we have a commutative diagram
	\[
	\xymatrix{
		0 \ar[r] & (\cO_X \cdot ds)|_{X_{k - 1}} \ar[r] \ar[d]^\cong & \Omega^1_X|_{X_{k - 1}} \ar[d]^\cong \ar[r] & \Omega^1_{X/B}|_{X_{k - 1}} \ar[d]^\cong \ar[r] & 0 \\
		0\ar[r] & (f_n^*\Omega^1_{B_n})|_{X_{k - 1}} \ar[r] & \Omega^1_{X_n}|_{X_{k - 1}} \ar[r] & \Omega^1_{X_n/B_n}|_{X_{k - 1}} \ar[r] & 0 \\
	}
	\] 
	of coherent sheaves on $X_{k - 1}$. The two rows are exact. In particular, $\eta_{k - 1}(f)$ depends only on $f_k\colon X_k \to B_k$.
\end{lem}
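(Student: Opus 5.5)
The plan is to compare both rows with the conormal sequences of $X_{k-1}$ inside $X$ and inside $X_n$, using that $X_{k-1} \to X_n \to X$ are closed immersions cut out by powers of $f^\sharp(s)$. First, write down the canonical comparison maps. The morphism $X_n \to X$ induces $\Omega^1_X|_{X_n} \to \Omega^1_{X_n}$. By the standard conormal sequence for the closed immersion $X_n \subset X$ with ideal $(s^{n+1})$, this map is surjective with image-kernel generated by $d(s^{n+1}) = (n+1)s^n\,ds$. After restricting to $X_{k-1}$ with $k - 1 \le n - 1$, we have $s^n = 0$ on $X_{k-1}$. Hence $\Omega^1_X|_{X_{k-1}} \to \Omega^1_{X_n}|_{X_{k-1}}$ is an isomorphism, because the conormal term maps to zero. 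Concretely, the surjection $(s^{n+1})/(s^{2n+2}) \otimes \cO_{X_{k-1}} \to \Omega^1_X|_{X_{k-1}}$ has image spanned by $(n+1)s^n ds = 0$. This is where $k \le n$ is used.

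Similarly, $\Omega^1_{X/B}|_{X_n} \cong \Omega^1_{X_n/B_n}$ by base change of relative differentials, and this gives the right vertical isomorphism. Likewise $f^*\Omega^1_B|_{X_n} = f_n^*(\Omega^1_B|_{B_n})$. The map $\Omega^1_B|_{B_n} \to \Omega^1_{B_n}$ is an isomorphism after restricting to $B_{k-1}$, by the same argument on $B$: $\Omega^1_{B_n} = \cO_{B_n}ds/(s^n ds)$. This gives the left vertical isomorphism. Commutativity of the diagram is then naturality of the maps induced by $X_n \to X$ over $B_n \to B$.

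Exactness of the top row was already established above via Lemma~\ref{lem:reduced-fibers-purity}, since restricting the exact sequence to $X_{k-1}$ preserves exactness. Exactness of the bottom row then follows from the diagram: its right part is the right exact sequence of differentials for $X_n \to B_n$ restricted to $X_{k-1}$, and the vertical maps are isomorphisms, so injectivity on the left transfers from the top row. The one point I expect to need care is showing that the left map of the bottom row agrees with the transported one. This holds because both send the generator to $ds$.

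The final assertion follows because $\eta_{k-1}(f)$ is the class of the top row, which is isomorphic as an extension (after identifying $\cO_{X_{k-1}} \cong \cO\cdot ds$) to the bottom row. The bottom row is built only from $f_n$, so taking $n = k$ shows that $\eta_{k-1}(f)$ depends only on $f_k$.
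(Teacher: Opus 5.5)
Your proposal is correct and follows the paper's proof. The left and middle vertical maps are isomorphisms because, in the conormal sequences for $B_n \subset B$ and $X_n \subset X$, the generator $d(s^{n+1}) = (n+1)s^n\,ds$ vanishes after restriction to $X_{k-1}$ (as $k \leq n$). The right map is base change, and exactness of the bottom row is transported from the already known exactness of the top row.

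One wording slip: the map $(s^{n+1})/(s^{2n+2}) \otimes \cO_{X_{k-1}} \to \Omega^1_X|_{X_{k-1}}$ is not a surjection. What you actually use is that its image, which by right exactness is the kernel of $\Omega^1_X|_{X_{k-1}} \to \Omega^1_{X_n}|_{X_{k-1}}$, is spanned by $(n+1)s^n\,ds = 0$.
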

\begin{proof}
	The commutativity of the diagram is clear. We have already seen that the upper row is exact, so it is sufficient to show that the three vertical maps are isomorphisms.
	
	To see that the left vertical map is an isomorphism, consider 
	\[(s^{n + 1})/(s^{2n + 2}) \to \Omega^1_B|_{B_n} \to \Omega^1_{B_n} \to 0.\]
	Now, we have $d(s^{n + 1}) = (n + 1)s^nds$ so that the right-hand surjection becomes an isomorphism after restricting to $B_{k - 1}$. The same argument also shows that the map in the middle is an isomorphism. The isomorphy of the right-hand map is clear.
\end{proof}

More generally, when $f_n\colon X_n \to B_n$ is pre--gls, we can define 
\[\eta_{k - 1}(f_n) \in \Gamma(X_k,\cE xt^1(\Omega^1_{X_k/B_k},\cO_{X_k}))\]
for any $1 \leq k \leq n$ by the lower row in the commutative diagram in the lemma.

\begin{lem}\label{lem:trivial-defo-characterization}\note{lem:trivial-defo-characterization}
	Let $n \geq 1$, and let $f_n\colon X_n \to B_n$ be \emph{affine} and pre--gls. Let $1 \leq k \leq n$. Then, $f_k\colon X_k \to B_k$ is a trivial flat deformation, i.e., $X_k \cong X_0 \times_{\Spec \kk} B_k$ over $B_k$ and compatibly with the two embeddings of $X_0$, if and only if $\eta_{k - 1}(f_n) = 0$.
\end{lem}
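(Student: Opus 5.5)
We prove both directions by comparing $\eta_{k-1}(f_n)$ with the obstruction to splitting the conormal-type sequence. By Lemma~\ref{lem:eta-f-alternative}, $\eta_{k-1}(f_n)$ is the class of the extension
\[0 \to \cO_{X_{k-1}}\cdot ds \to \Omega^1_{X_k}|_{X_{k-1}} \to \Omega^1_{X_k/B_k}|_{X_{k-1}} \to 0,\]
obtained from the lower row with $n$ replaced by $k$. Since $X_k$ is affine, $\cE xt^1$ and $\m{Ext}^1$ agree on global sections up to the local-to-global spectral sequence: $H^1(X_{k-1},\cH om)=0$ and the $\m{Ext}^1$ class is determined by its image in $\Gamma(\cE xt^1)$. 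Thus $\eta_{k-1}(f_n)=0$ if and only if the sequence splits globally on $X_{k-1}$. In other words, there is an $\cO_{X_{k-1}}$-linear retraction $r\colon \Omega^1_{X_k}|_{X_{k-1}} \to \cO_{X_{k-1}}$ with $r(ds)=1$, or equivalently a derivation $\partial\colon \cO_{X_k}\to \cO_{X_{k-1}}$ with $\partial(s)=1$.

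\textbf{Trivial $\Rightarrow$ $\eta=0$.} If $X_k\cong X_0\times B_k$ over $B_k$, then $\Omega^1_{X_k} \cong p^*\Omega^1_{X_0}\oplus \cO_{X_k}ds$. The sequence splits on $X_k$, hence after restriction to $X_{k-1}$, so the class vanishes.

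\textbf{$\eta=0$ $\Rightarrow$ trivial.} Write $A_k=\Gamma(X_k,\cO)$, flat over $\kk[s]/(s^{k+1})$, and let $\partial\colon A_k\to A_{k-1}$ be a derivation with $\partial s=1$. The aim is to build a $\kk$-algebra section $\sigma\colon A_0\to A_k$ of the reduction map. Once we have it, the induced map $A_0\otimes_\kk \kk[s]/(s^{k+1})\to A_k$ is an isomorphism by flatness and the local criterion (Nakayama modulo $s$), compatibly with the embedding of $X_0$.

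To construct $\sigma$, use the characteristic-$0$ "flow" formula
\[\sigma(\bar a)=\sum_{j=0}^{k}\frac{(-s)^j}{j!}\,\widetilde{\partial}^{\,j}(a),\]
where $a$ is any lift of $\bar a$. This requires a derivation $\widetilde\partial\colon A_k\to A_k$ lifting $\partial$ with $\widetilde\partial s=1$. The expected main obstacle is obtaining such a lift, since $\partial$ only lands in $A_{k-1}$ and has lost the top order.

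To overcome this, induct on $k$. For $k=1$, $\partial\colon A_1\to A_0$ gives $\sigma(\bar a)=a-s\partial(a)$. This is well defined because $s$ kills the ambiguity modulo $s$, and it is a ring homomorphism because $s^2=0$. For the inductive step, assume $X_{k-1}$ is trivial, so $X_{k-1}\cong X_0\times B_{k-1}$. Then $X_k$ is a flat deformation of the trivial deformation $X_{k-1}$, and liftings to $X_0\times B_k$ form a torsor under $\m{Ext}^1(\Omega^1_{X_0},\cO_{X_0})=\Gamma(\cE xt^1)$ by affineness. The splitting $\partial$ on $X_{k-1}$ should identify this torsor element with $s^{k-1}$ times $\eta_{k-1}$ reduced to $X_0$, so it vanishes and $X_k\cong X_0\times B_k$.

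Two points need checking here. First, the torsor identification should use the functoriality of Lemma~\ref{lem:extension-properties} for pullbacks of extensions, together with the pre--gls hypothesis guaranteeing $(\bigstar)$. Second, the resulting isomorphism must restrict to the given one on $X_0$; this can be fixed by composing with an automorphism of the trivial deformation.
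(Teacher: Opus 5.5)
Your reformulation of $\eta_{k-1}(f_n)=0$ as a derivation $\partial\colon \cO_{X_k}\to\cO_{X_{k-1}}$ with $\partial(s)=1$ is fine, and so are the forward direction and the case $k=1$.

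\textbf{The gap: the inductive step.} You say the splitting ``should identify'' the torsor element of $X_k$ (relative to $X_0\times B_k$, over the given trivialization of $X_{k-1}$) with ``$s^{k-1}$ times $\eta_{k-1}$ reduced to $X_0$''. That claim is the entire content of the converse, and it is neither proved nor correctly stated.
\begin{itemize}
\item Taken literally, it would make every such lifting trivial. Once $X_{k-1}$ is trivial, $\eta_{k-1}$ reduced to $X_0$ is $\eta_0(f_n)=0$. This already fails for $xy=s^k$.
\item Nothing in your inductive step uses $\m{char}(\kk)=0$, yet the statement is false in characteristic $p$. Take $X_p=\Spec\kk[s,x,y]/(xy-s^p,s^{p+1})$. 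The relation $x\,dy+y\,dx=ps^{p-1}ds=0$ makes $ds$ split off on $X_{p-1}$, so $\eta_{p-1}=0$. But $X_p$ is not trivial: its quotient by $\m{Fitt}_1(\Omega^1_{X_p/B_p})=(x,y)$ is $\kk[s]/(s^p)$, not $\kk[s]/(s^{p+1})$.
\end{itemize}
The missing computation is the one the paper carries out. Write $X_k$ as the base change, along $\eps\mapsto s^k$, of a first-order deformation $X_e\to\Spec\kk[\eps]/(\eps^2)$ with class $e\in\m{Ext}^1(\Omega^1_{X_0},\cO_{X_0})$; this $e$ is your torsor element. Since $d\eps\mapsto k s^{k-1}ds$, one gets $\eta_{k-1}(f_n)=k\cdot\iota(e\otimes s^{k-1})$. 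Here $\iota$ is the injection of $\cE xt^1(\Omega^1_{X_0/B_0},\cO_{X_0})\otimes_\kk(s^{k-1})$ into $\cE xt^1(\Omega^1_{X_{k-1}/B_{k-1}},\cO_{X_{k-1}})$, as in Lemma~\ref{lem:ses-at-lt-point}. Then $k\neq 0$ forces $e=0$.

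\textbf{A direct fix.} The flow formula you abandoned already works without lifting $\partial$ to $\cO_{X_k}$. Write $A_m=\Gamma(X_m,\cO_{X_m})$.
\begin{itemize}
\item The identity $\partial(s^{m+1}b)=(m+1)s^mb+s^{m+1}\partial(b)$ shows that $\partial$ descends to derivations $A_m\to A_{m-1}$ for $1\le m\le k$. Hence the iterate $\partial^j(a)$ is well defined in $A_{k-j}$.
\item Multiplication by $s^j$ is a well-defined map $A_{k-j}=A_k/s^{k-j+1}A_k\to A_k$, simply because $s^{k+1}=0$.
\end{itemize}
So $\sigma(\bar a)=\sum_{j=0}^k\frac{(-1)^j}{j!}s^j\partial^j(a)$ makes sense. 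From $\partial^j(sb)=s\,\partial^j(b)+j\,\partial^{j-1}(b)$, the value $\sigma(\overline{sb})$ telescopes to $\frac{(-1)^k}{k!}s^{k+1}\partial^k(b)=0$, so $\sigma$ is independent of the lift. Multiplicativity follows from the Leibniz rule for the iterates. Combined with your flatness argument, this proves the converse without induction. Characteristic $0$ enters through the factors $1/j!$, which is consistent with the failure at $k=p$ above.

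Compared with the paper's deformation-theoretic induction, this route is more elementary and explicit. The paper's route explains conceptually where the factor $k$ comes from.
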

\begin{proof}
	First, assume that $f_k\colon X_k \to B_k$ is a trivial flat deformation. Then, the lower short exact sequence in Lemma~\ref{lem:eta-f-alternative} is (globally) split (for $n = k - 1$) so that we have $\eta_{k - 1}(f_n) = 0$.
	
	For the converse direction, we proceed by induction over $k$. The case $k = 1$ is well-known, see \cite[Thm.~2.4.1]{Sernesi2006}. For the induction step, suppose that we know the equivalence for some $k \geq 1$ with $k < n$, and assume that $\eta_k(f_n) = 0$. Then, we also have $\eta_{k - 1}(f_n) = 0$, and therefore an isomorphism $\cp_k\colon X_k \cong X_0 \times_{\Spec \kk} B_k$.
	
	Let us write $A_i = \Gamma(B_i,\cO_{B_i}) \cong \kk[s]/(s^{i + 1})$ and $R_i = \Gamma(X_i,\cO_{X_i})$ for brevity. We have an isomorphism 
	\[\chi\colon \enspace D_{k + 1} \coloneq A_{k + 1} \times_\kk \kk[\eps]/(\eps^2) \to A_{k + 1} \times_{A_{k}} A_{k + 1} \eqcolon E_{k + 1}\]
	of Artinian local rings given by $\chi(a,a_0 + \lambda\eps) = (a,a + \lambda s^{k + 1})$. Note that $D_0 = \kk[\eps]/(\eps^2)$. Let us write $\pi\colon E_{k + 1} \to A_{k + 1}$ for the first projection, and $\varpi\colon E_{k + 1} \to A_{k + 1}$ for the second projection.
	
	Let $e \in \m{Ext}^1(\Omega^1_{X_0/B_0},\cO_{X_0})$ be an extension class corresponding to a flat deformation $f_e\colon X_e \to \Spec(D_0)$ via \cite[Thm.~2.4.1]{Sernesi2006}. Then, we take the pushout with the trivial deformation over $B_{k + 1}$ and obtain a flat deformation 
	\[f'_e\colon \enspace X_e' = (\lvert X_0\rvert,(\cO_{X_0} \otimes_\kk A_{k + 1}) \times_{\cO_{X_0}} \cO_{X_e}) \to \Spec(D_{k + 1}).\]
	Now, we also have $D_0 = D_{k + 1} \times_{A_{k + 1}} \kk$ so that we have two Cartesian squares 
	\[
	\xymatrix{
		X_e \ar[r] \ar[d] & X_e' \ar[r] \ar[d] & X_e \ar[d] \\
		\Spec(D_0) \ar[r] & \Spec(D_{k + 1}) \ar[r] & \Spec(D_0) \\
	}
	\]
	in the category of schemes because the restriction of $X_e'$ to $\Spec(A_{k + 1})$ is trivial. We expand the right-hand square to a diagram
	\[
	\xymatrixcolsep{2.5em}
	\xymatrix{
		Y_e \ar@{=}[r] \ar[d]_{g_e} & X_e'' \times^\varpi_{\Spec(E_{k + 1})} B_{k + 1} \ar[d] \ar[r] & X_e'' \ar[d] \ar[r] & X_e' \ar[d]^{f_e'} \ar[r] & X_e \ar[d]^{f_e}  \\
		C_{k + 1} \ar@{=}[r] & \Spec(A_{k + 1}) \ar[r]^-{\Spec(\varpi)} & \Spec(E_{k + 1}) \ar[r]^-{\Spec(\chi)}_-\cong & \Spec(D_{k + 1}) \ar[r] & \Spec(D_0) \\
	}
	\]
	with all squares Cartesian. 
	
	Explicitly, the composition $D_0 \to A_{k + 1}$ in the lower row is given by $a_0 + \lambda\eps \mapsto a_0 + \lambda s^{k + 1}$. Thus, when we further compose with $A_{k + 1} \to A_k$, the composition factors as $D_0 \to \kk \to A_k$. In particular, $Y_e \times_{B_{k + 1}} B_k$ is a trivial deformation, which we can identify with $X_k$.
	
	Let $q\colon Y_e \to X_e$ be the outer morphism in the upper row. Then, we obtain a commutative diagram
	\[
	\xymatrix{
		(q^*f_e^*\Omega^1_{\Spec(D_0)})|_{X_k} \ar[r] \ar[d] & (q^*\Omega^1_{X_e})|_{X_k} \ar[r] \ar[d] & (q^*\Omega^1_{X_e/\Spec(D_0)})|_{X_k} \ar[r] \ar[d]^\cong & 0 \\
		(g_e^*\Omega^1_{B_{k + 1}})|_{X_k} \ar[r] & (\Omega^1_{Y_e})|_{X_k} \ar[r] & (\Omega^1_{Y_e/B_{k + 1}})|_{X_k} \ar[r] & 0 \\
	}
	\]
	of coherent sheaves on $X_k$. The rows are exact. On the smooth locus of $X_0$, the two maps on the left are injective and locally split. Furthermore, $\Omega^1_{B_{k + 1}}|_{B_k}$ is free of rank one with generator $(ds)|_{B_k}$ so that the lower left map is injective because $\Gamma(X_k,\cO_{X_k}) \to \Gamma(\m{Reg}(X_k/B_k),\cO_{X_k})$ is injective. Similarly, $\Omega^1_{\Spec(D_0)}|_{B_0}$ is a $\kk$-vector space of dimension one so that also the upper left map is injective.
	
	The map $X_k \to Y_e \to X_e$ factors as $X_k \xrightarrow{h} X_0 \to X_e$, where $h$ arises from the trivialization. Thus, the extension in the upper row is isomorphic to $h^*(e)$, the pullback of the original extension class defining $f_e\colon X_e \to \Spec(D_0)$.
	
	Since $D_0 \to A_{k + 1}$ is given by $a_0 + \lambda\eps \mapsto a_0 + \lambda s^{k + 1}$, we have $d\eps \mapsto (k + 1)s^kds$. Thus, the map $(q^*f_e^*\Omega^1_{\Spec(D_0)})|_{X_k} \to (g_e^*\Omega^1_{B_{k + 1}})|_{X_k}$ is given by $(d\eps)|_{X_k} \mapsto (k + 1)f^\sharp(s)^k(ds)|_{X_k}$. Under our chosen trivializations of the terms on the left, this shows $\eta_k(g_e) = (k + 1)f^\sharp(s)^kh^*(e)$.
	
	Because $f_k\colon X_k \to B_k$ is trivial, $\cE xt^1(\Omega^1_{X_k/B_k},\cO_{X_k})$ is flat over $B_k$, and we have a short exact sequence
	\[0 \to \cE xt^1(\Omega^1_{X_0/B_0},\cO_{X_0}) \otimes (s^k) \xrightarrow{\sigma} \cE xt^1(\Omega^1_{X_k/B_k},\cO_{X_k}) \to \cE xt^1(\Omega^1_{X_{k - 1}/B_{k - 1}},\cO_{X_{k - 1}}) \to 0.\]
	Under this sequence, $\eta_k(g_e) = (k + 1) \cdot \sigma(e \otimes s^k)$.
	
	We consider $f_{k + 1}\colon X_{k + 1} \to B_{k + 1}$. We obtain a deformation $f'_{k + 1}\colon X_{k + 1}' \to \Spec(E_{k + 1})$ as the pushout of 
	\[X_0 \times_{\Spec \kk} B_{k + 1} \leftarrow X_0 \times_{\Spec \kk} B_k \cong X_k \to X_{k + 1}.\]
	We take the trivial deformation as the first summand and $f_{k + 1}$ as the second summand (recall the different roles of the two projections $\pi$ and $\varpi$ above). Since 
	\[\m{Def}(X_0,E_{k + 1}) = \m{Def}(X_0,D_{k + 1}) = \m{Def}(X_0,A_{k + 1}) \times \m{Def}(X_0,D_0),\]
	and since the first component of the image of $f_{k + 1}'$ is the trivial deformation, there is some $e \in \m{Ext}^1(\Omega^1_{X_0/B_0},\cO_{X_0})$ with $f'_{k + 1} \cong f'_e$. Then, we have $0 = \eta_k(f_n) = \eta_k(g_e) = (k + 1) \cdot \sigma(e \otimes s^k)$. Since $\sigma$ is injective and $\m{char}(\kk) = 0$, this shows $e = 0$. Therefore, $f_e$ is the trivial deformation, and so is $g_e \cong f_{k + 1}$.
\end{proof}

\begin{cor}\label{cor:condition-a-criterion}\note{cor:condition-a-criterion}
	Assume that we are in Situation~\ref{sitn:explicit-gls-candidate-kk-intro}, and suppose that $f\colon X \to B$ is pre--gls. Consider a closed point $p \in D$. Assume that $p$ is a dnc point (see Definition~\ref{defn:dnc-point}). Fix $\ell \geq 1$. If $\ell = 1$, then $p$ satisfies Condition~$(\spadesuit)$ in Lemma~\ref{lem:dtd-intermediate}. If $\ell \geq 2$, then $p$ satisfies Condition~$(\spadesuit)$ if and only if $\eta_{\ell - 2}(f)|_p = 0$ in the stalk $\cE xt^1(\Omega^1_{X_{\ell - 2}/B_{\ell - 2}},\cO_{X_{\ell - 2}})_p$.
\end{cor}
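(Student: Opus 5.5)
The plan is to read Condition~$(\spadesuit)$ as a statement about the thickening $f_{\ell - 1}\colon X_{\ell - 1} \to B_{\ell - 1}$. Note that $\cO_{X,p}^\m{h}/(f^\sharp(s)^\ell) = \cO_{X_{\ell - 1},p}^\m{h}$, and that $\big(\kk[t,x,y,z_1,\ldots,z_{d - 1}]_{(\ldots)}/(t^\ell,xy)\big)^\m{h} \cong R_{\times,d}^\m{h}[t]/(t^\ell)$. The second identification holds because $R_{\times,d}^\m{h}[t]/(t^\ell)$ is finite over a Henselian local ring, hence Henselian, and is a filtered colimit of étale neighbourhoods. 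So $(\spadesuit)$ says exactly the following: Henselian-locally at $p$, the deformation $X_{\ell - 1}$ of $V$ over $B_{\ell - 1}$ is isomorphic to the trivial deformation $V \times B_{\ell - 1}$, via an isomorphism sending $t \mapsto f^\sharp(s)$ and compatible with the identification of the closed fibre. The key input is Lemma~\ref{lem:trivial-defo-characterization}, applied with $n = k = \ell - 1 \geq 1$, combined with Lemma~\ref{lem:eta-f-alternative}, which gives $\eta_{\ell - 2}(f) = \eta_{\ell - 2}(f_{\ell - 1})$.

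\textbf{Case $\ell = 1$.} Here $(\spadesuit)$ only asks for an isomorphism $R_{\times,d}^\m{h} \to \cO_{V,p}^\m{h}$ given by chosen elements. Since $p$ is a dnc point, such an isomorphism exists by Definition~\ref{defn:dnc-point}. Let $\bar x, \bar y, \bar z_i \in \fm_{V,p}^\m{h}$ be the images of $x, y, z_i$. The map $\cO_{X,p}^\m{h} \to \cO_{V,p}^\m{h}$ is surjective, so we may lift these to $\tilde x, \tilde y, \tilde z_i \in \fm_{X,p}^\m{h}$. The map $t \mapsto f^\sharp(s)$ is compatible because $t = 0$ in the source, and this gives $(\spadesuit)$.

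\textbf{Case $\ell \geq 2$, direction ``$\Leftarrow$''.} Suppose $\eta_{\ell - 2}(f)_p = 0$. The section $\eta_{\ell - 2}(f)$ of a coherent sheaf then vanishes on an affine open neighbourhood $W \ni p$ in $X$. Restricting $f_{\ell - 1}$ to $W_{\ell - 1}$ gives an affine pre--gls family, since pre--gls is local on the source. By Lemma~\ref{lem:eta-f-alternative}, $\eta_{\ell - 2}(f_{\ell - 1}|_W) = 0$, so Lemma~\ref{lem:trivial-defo-characterization} yields an isomorphism $W_{\ell - 1} \cong W_0 \times B_{\ell - 1}$ over $B_{\ell - 1}$, compatible with $W_0$. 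Passing to Henselian stalks at $p$ and composing with the dnc isomorphism $R_{\times,d}^\m{h} \cong \cO_{V,p}^\m{h}$ gives the isomorphism required in $(\spadesuit)$, with $t \mapsto f^\sharp(s)$. We then lift the resulting elements of $\cO_{X_{\ell - 1},p}^\m{h}$ to $\fm_{X,p}^\m{h}$.

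\textbf{Case $\ell \geq 2$, direction ``$\Rightarrow$''.} Suppose $(\spadesuit)$ holds. Then we have an isomorphism of $\cO_{B_{\ell - 1}}$-algebras $\cO_{X_{\ell - 1},p}^\m{h} \cong \cO_{V,p}^\m{h} \otimes_\kk \kk[t]/(t^\ell)$ that reduces to the identity modulo $t$. Both sides are Henselizations of finite-type algebras, so a standard limit argument descends this isomorphism to some affine étale neighbourhood $g\colon W \to X_{\ell - 1}$ of $p$. On $W$ the deformation is trivial, and $W \to B_{\ell - 1}$ is affine and pre--gls. Lemma~\ref{lem:trivial-defo-characterization}, with the easy direction (a split sequence), then gives $\eta_{\ell - 2}$ of $W$ equal to $0$. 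The formation of $\Omega^1$ and of $\cE xt^1$ of coherent sheaves commutes with flat (étale) pullback, so $g^*\eta_{\ell - 2}(f) = 0$ near the preimage of $p$. The map $\cO_{X_{\ell-1},p} \to \cO_{W,w}$ is faithfully flat, so the stalk satisfies $\eta_{\ell - 2}(f)|_p = 0$.

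The main obstacle I expect is the descent step from the Henselian isomorphism in $(\spadesuit)$ to an honest étale neighbourhood on which the deformation is trivial, while keeping compatibility with the given embedding of $V$. I would handle this by writing $\cO_{X_{\ell-1},p}^\m{h}$ as a colimit of étale neighbourhoods and using finite presentation of both algebras, as in the proof of Lemma~\ref{lem:complete-Henselian}. The compatibility with $X_0$ only requires checking that the reduction modulo $t$ is the identity, and this can be arranged after shrinking the neighbourhood.
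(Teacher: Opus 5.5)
Your proposal is correct and matches the paper's proof. The case $\ell = 1$ and the direction ``$\Leftarrow$'' (shrink to an affine neighbourhood on which $\eta_{\ell-2}(f)$ vanishes, apply Lemma~\ref{lem:trivial-defo-characterization}, compose the trivialization with the dnc isomorphism, lift the elements) are exactly as in the paper; for ``$\Rightarrow$'' the paper likewise produces an \'etale neighbourhood on which $X_{\ell-1}$ is a trivial deformation, so that the sequence of Lemma~\ref{lem:eta-f-alternative} splits, and it just makes your limit argument concrete by spreading out the map from the finite-type model $\kk[t,x,y,z_1,\ldots,z_{d-1}]/(t^\ell,xy)$ to an \'etale map $\phi$ and using the infinitesimal lifting property against the \'etale map $\phi'$ induced by $\phi$ modulo $t$.
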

\begin{proof}
	If $\ell = 1$, Condition~$(\spadesuit)$ says nothing but that $p$ is a dnc point, which holds by assumption.
	
	Let $\ell \geq 2$. First, assume that Condition~$(\spadesuit)$ is satisfied. Let $\pi\colon W \to X$ be an \'etale morphism with $W = \Spec R_W$ affine, and let $w \in W$ be a point with $\pi(w) = p$, such that $\tilde x$, $\tilde y$, $\tilde z_i$ are induced by functions $\check x$, $\check y$, $\check z_i$ on $W$. By shrinking $W$ in the \'etale topology, we may assume that we have a ring homomorphism 
	\[\phi\colon \enspace \kk[t,x,y,z_1,\ldots,z_{d - 1}]/(t^\ell,xy) \to R_W/(f^\sharp(s)^\ell)\]
	which induces the isomorphism in Condition~$(\spadesuit)$ on Henselian stalks. By shrinking $W$ in the Zariski topology, we may assume that $\Spec(\phi)$ is \'etale.
	
	We have another ring homomorphism 
	\[\phi'\colon \enspace \kk[t,x,y,z_1,\ldots,z_{d - 1}]/(t^\ell,xy) \to R_W/(f^\sharp(s)) \otimes_\kk \kk[t]/(t^\ell),\]
	which is induced by $\phi_0$ and hence \'etale as well. Then, the infinitesimal lifting property of the \'etale morphism $\Spec(\phi')$ yields an isomorphism 
	\[\psi\colon \enspace R_W/(f^\sharp(s)) \otimes_\kk \kk[t]/(t^\ell) \xrightarrow{\cong} R_W/(f^\sharp(s)^\ell)\]
	which is compatible with $\phi$ and $\phi'$ as well as with $t \mapsto f^\sharp(s)$, and which induces the identity modulo $t$ respectively $f^\sharp(s)$. Now, $W_{\ell - 1} \to B_{\ell - 1}$ is a trivial deformation so that the lower exact sequence in Lemma~\ref{lem:eta-f-alternative} splits. But then, we have $\eta_{\ell - 2}(f)|_p = 0$.
	
	Conversely, assume that $\eta_{\ell - 2}(f)|_p = 0$. After shrinking $X$ in the Zariski topology, we can assume that $\eta_{\ell - 2}(f) = 0$, and that $X$ is affine. Then, Lemma~\ref{lem:trivial-defo-characterization} yields that $f_{\ell - 1}\colon X_{\ell - 1} \to B_{\ell - 1}$ is a trivial flat deformation. The trivialization and the isomorphism which establishes that $p$ is a dnc point yield the desired isomorphism in Condition~$(\spadesuit)$, and the elements $\tilde x$, $\tilde y$, $\tilde z_1$, \ldots, $\tilde z_{d - 1} \in \fm_{X,p}^\m{h}$ are obtained as lifts along $\fm_{X,p}^\m{h} \to \fm_{X_{\ell - 1},p}^\m{h}$.
\end{proof}

We now turn to the question if $u \in P$ is a unit in Lemma~\ref{lem:dtd-intermediate} when Condition~$(\spadesuit)$ is satisfied. To this end, we turn our attention to $\eta_{\ell - 1}(f)|_p$. Its restriction to $X_{\ell - 2}$ is zero, so it lies in the kernel of that restriction map. This kernel has a simple description.

\begin{lem}\label{lem:ses-at-lt-point}\note{lem:ses-at-lt-point}
	Let $k \geq 2$, and let $f_{k - 1}\colon X_{k - 1} \to B_{k - 1}$ be pre--gls. Assume that $\eta_{k - 2}(f_{k - 1}) = 0$. Then, we have a short exact sequence 
	\begin{multline*}
		0 \to \cE xt^1(\Omega^1_{X_0/B_0},\cO_{X_0}) \otimes_\kk (s^{k - 1}) \\ \to \cE xt^1(\Omega^1_{X_{k - 1}/B_{k - 1}},\cO_{X_{k - 1}}) \xrightarrow{\rho_{k - 1;k - 2}} \cE xt^1(\Omega^1_{X_{k - 2}/B_{k - 2}},\cO_{X_{k - 2}}) \to 0,
	\end{multline*}
	where $\rho_{k - 1;k - 2}$ is the restriction map, and where $(s^{k - 1}) \subset \kk[s]/(s^k)$ is the ideal generated by $s^{k - 1}$.
\end{lem}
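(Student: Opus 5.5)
The plan is to use the triviality of $f_{k-1}$ and then apply $\cE xt$ to a base change of the trivial family. By Lemma~\ref{lem:trivial-defo-characterization}, applied locally on affine opens with $n = k-1$, the hypothesis $\eta_{k-2}(f_{k-1}) = 0$ implies that $f_{k-1}\colon X_{k-1} \to B_{k-1}$ is Zariski locally a trivial flat deformation. Since the asserted sequence is a sequence of sheaves, we may work locally and assume $X_{k-1} \cong X_0 \times_{\Spec\kk} B_{k-1}$. Then $X_{k-2} \cong X_0 \times B_{k-2}$ compatibly, and $\Omega^1_{X_{j}/B_{j}} \cong p^*\Omega^1_{X_0/\kk}$ for $j = k-1, k-2$, where $p$ denotes the projection to $X_0$.

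Next we compute the $\cE xt$ sheaves. Write $A_j = \kk[s]/(s^{j+1})$. Because $X_0 \times B_j \to X_0$ is flat, with $\cO_{X_j} = \cO_{X_0}\otimes_\kk A_j$, we can take a locally free resolution $\cF_\bullet \to \Omega^1_{X_0}$ on $X_0$ and pull it back. Since $\cH om_{\cO_{X_j}}(\cF_i \otimes A_j, \cO_{X_j}) = \cH om(\cF_i,\cO_{X_0}) \otimes_\kk A_j$ and $-\otimes_\kk A_j$ is exact, we get
\[\cE xt^1(\Omega^1_{X_j/B_j},\cO_{X_j}) \cong \cE xt^1(\Omega^1_{X_0},\cO_{X_0}) \otimes_\kk A_j.\]
Under this identification, the restriction map $\rho_{k-1;k-2}$ becomes $\mathrm{id}\otimes(A_{k-1} \to A_{k-2})$. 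I will check this compatibility via Lemma~\ref{lem:extension-properties}, or more directly via naturality of the resolution under the closed immersion $X_{k-2} \to X_{k-1}$. The required exactness is then just $0 \to (s^{k-1}) \to A_{k-1} \to A_{k-2} \to 0$ tensored over $\kk$ with $\cE xt^1(\Omega^1_{X_0},\cO_{X_0})$.

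The remaining issue, which I expect to be the main obstacle, is that the statement is global while the trivializations are only local and non-canonical. Two points need checking here.
\begin{enum:roman}
\item The maps in the sequence must not depend on the chosen trivialization. The map $\rho_{k-1;k-2}$ is canonical. The first map should be defined intrinsically as $e\otimes s^{k-1} \mapsto s^{k-1}\cdot\tilde e$, where $\tilde e$ is any local lift of $e$. This is well defined because $s^{k-1}$ kills $\ker(\cO_{X_{k-1}}\to\cO_{X_0})$, and the construction visibly glues.
\item Lemma~\ref{lem:trivial-defo-characterization} is stated for affine $f_n$, so I will apply it on an affine open cover. Exactness of a sequence of sheaves is local, so this suffices.
\end{enum:roman}
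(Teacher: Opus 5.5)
Your proposal is correct and follows essentially the same route as the paper. The paper also argues via local triviality of $f_{k-1}$ (obtained from Lemma~\ref{lem:trivial-defo-characterization} on affine opens) and defines the left-hand map intrinsically, by lifting $e$ and multiplying by $s^{k-1}$, which is how it gets independence of the trivialization. Your flat-base-change computation $\cE xt^1(\Omega^1_{X_j/B_j},\cO_{X_j}) \cong \cE xt^1(\Omega^1_{X_0/B_0},\cO_{X_0}) \otimes_\kk A_j$ supplies details the paper leaves implicit. One of these details is that the kernel of the restriction to $X_0$ is $s \cdot \cE xt^1(\Omega^1_{X_{k-1}/B_{k-1}},\cO_{X_{k-1}})$, and this, rather than your stated reason, is what well-definedness of the lift actually requires.
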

\begin{proof}
	This follows from local triviality of $f_{k - 1}\colon X_{k - 1} \to B_{k - 1}$. The image of $e \otimes s^{k - 1}$ under the injection on the left is given by choosing an arbitrary lift of $e \in \cE xt^1(\Omega^1_{X_0/B_0},\cO_{X_0})$ along the surjection from $\cE xt^1(\Omega^1_{X_{k - 1}/B_{k - 1}},\cO_{X_{k - 1}})$ and multiplying with $s^{k - 1}$. In particular, the sequence is independent of the trivialization.
\end{proof}

Note that on $\cU_1^\times V$, the sheaf $\cE xt^1(\Omega^1_{X_0/B_0},\cO_{X_0})$ is a line bundle on $\cS_1^\times V$, i.e., locally isomorphic to $\cO_{D}$ as a sheaf of $\cO_{V}$-modules.

\begin{prop}\label{prop:effective-dtd-criterion}\note{prop:effective-dtd-criterion}
	Assume that we are in Situation~\ref{sitn:explicit-gls-candidate-kk-intro}, and suppose that $f\colon X \to B$ is pre--gls. Let $p \in D$ be a closed dnc point. Then, $p$ is a dtc degeneration point of kink $\ell = 1$ if and only if $\eta_0(f)|_p$ is a generator of $\cE xt^1(\Omega^1_{V},\cO_{V})_p$. It is a dtc degeneration point of kink $\ell \geq 2$ if and only if the following conditions hold:
	\begin{enum:alph}
		\item $\eta_{\ell - 2}(f)|_p = 0$; in particular, for the stalks at $p$, we have a short exact sequence as in Lemma~\ref{lem:ses-at-lt-point} with $k = \ell$;
		
		\item $\eta_{\ell - 1}(f)|_p$ is a generator of the kernel in this exact sequence.
	\end{enum:alph}
\end{prop}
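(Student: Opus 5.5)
By Lemma~\ref{lem:dtd-intermediate}, $p$ is a dtc degeneration point of kink $\ell$ if and only if Condition~$(\spadesuit)$ holds and the element $\tilde u$ can be chosen invertible. By Corollary~\ref{cor:condition-a-criterion}, Condition~$(\spadesuit)$ is automatic for $\ell = 1$ and, for $\ell \geq 2$, is equivalent to condition (a). So the whole proof comes down to one claim. Assume $(\spadesuit)$, and write $\cO_{X,p}^\m{h} \cong P/(xy - t^\ell u)$ as in Lemma~\ref{lem:dtd-intermediate}(3). Then $u$ is a unit if and only if $\eta_{\ell - 1}(f)|_p$ generates the relevant module. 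For $\ell = 1$ this module is $\cE xt^1(\Omega^1_V,\cO_V)_p$. For $\ell \geq 2$ it is the kernel $\cE xt^1(\Omega^1_{X_0/B_0},\cO_{X_0})_p \otimes (s^{\ell - 1})$ from Lemma~\ref{lem:ses-at-lt-point}. One caveat: $u$ is not canonical, since it depends on the choice of $\tilde x,\tilde y,\tilde z_i$. However, the class of $u$ in $P/(x,y,t)$ (the coordinate ring of the double locus) is determined up to a unit factor by the extension class, so we work with that class.

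The heart of the argument is a local computation in the model $Y = \Spec P/(xy - t^\ell u)$ over $\kk[t]$. Since $\eta_{\ell-1}(f)$ depends only on $f_\ell$ (Lemma~\ref{lem:eta-f-alternative}) and its formation commutes with \'etale localization, we may compute on $Y$. Write $g = xy - t^\ell u$. Then $\Omega^1_{Y/B}$ is presented by the single relation
\[dg = y\,dx + x\,dy - t^\ell\,du,\]
where $du$ is expanded in $dx, dy, dz_i$. By Lemma~\ref{lem:Omega-projdim-1}, this gives the resolution $0 \to \cO_Y \xrightarrow{dg} \bigoplus \cO_Y\,dx_\bullet \to \Omega^1_{Y/B} \to 0$, and hence
\[\cE xt^1(\Omega^1_{Y/B},\cO_Y) = \cO_Y/(\partial g),\]
the quotient by the ideal of partial derivatives of $g$ in $x, y, z_i$. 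The class $\eta(f)$ is the image of $\partial g/\partial t = -\ell t^{\ell-1}u - t^\ell\,\partial_t u$: pulling back the basis element of $\cO_Y$ (dual to $dg$) through the extension $\Omega^1_Y$ gives the coefficient of $dt$ in $dg$. We then restrict to $X_{\ell-1}$. There, $t^\ell = 0$, so $(\partial g) = (x,y)$ and $\partial g/\partial t = -\ell t^{\ell-1}u$. Thus
\[\eta_{\ell-1}(f)|_p = -\ell\, t^{\ell-1} u \in \cO_{Y_{\ell-1}}/(x,y).\]
Under the identification of the kernel in Lemma~\ref{lem:ses-at-lt-point} with $\cO_D \otimes (s^{\ell-1})$, where $\cO_D = \kk[z]^\m{h}$ and the identification holds because $\cE xt^1(\Omega^1_{X_0},\cO_{X_0}) \cong \cO_V/(x,y)$, this element corresponds to $-\ell\,\bar u$. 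Since $\operatorname{char}\kk = 0$, the factor $\ell$ is invertible. Moreover $\bar u$ generates $\cO_D$ if and only if $u(0) \neq 0$, that is, if and only if $u$ is a unit in $P$, which holds if and only if $\tilde u$ is a unit. The case $\ell = 1$ is the same computation without any preceding kernel.

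The main obstacle is to do this identification carefully. The comparison $c_k^*\cE xt^1 \cong \cE xt^1(c_k^*-)$ of Lemma~\ref{lem:Omega-projdim-1} has to be matched with the injection of Lemma~\ref{lem:ses-at-lt-point}, and one must check that "generator" is independent of all the choices (the \'etale chart, the lifts $\tilde x,\tilde y$, and $u$). I would handle this by noting that changing the coordinates multiplies $\bar u$ by a unit of $\cO_D$, which is harmless. For the converse direction, note that the computation above used only $(\spadesuit)$, not that $u$ is a unit. Hence if $\eta_{\ell-1}(f)|_p$ is a generator, then $\bar u$ is a unit, so $u$ is a unit, and Lemma~\ref{lem:dtd-intermediate}(4) finishes the proof.
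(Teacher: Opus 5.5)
Your proposal is correct and follows essentially the same route as the paper. Both arguments reduce, via Lemma~\ref{lem:dtd-intermediate} and Corollary~\ref{cor:condition-a-criterion}, to the local model $P/(xy - t^\ell u)$. Both then show that $\eta_{\ell-1}(f)$ is, up to sign, $\ell t^{\ell-1}u$ times a generator $e_0$, that the kernel of $\rho_{\ell-1;\ell-2}$ is generated by $t^{\ell-1}e_0$, and conclude because $\ell$ is invertible and the computation holds for any choice of data in $(\spadesuit)$. The only difference is cosmetic: you compute with the Jacobian ideal directly in the Henselian model, whereas the paper passes to an \'etale chart and compares with the pulled-back resolution of the trivial deformation $M_{\ell-1}$, using $\check x\,d\check y + \check y\,d\check x = \ell t^{\ell-1}\check u\,dt + t^\ell d\check u$.
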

\begin{proof}
	We start with a preparatory computation of extension classes. Let
	\[\chi\colon \enspace P = \kk[t,x,y,z_1,\ldots,z_{d - 1}] \to R_A\] 
	be of finite type and \'etale, and write $\check x = \chi(x)$, $\check y = \chi(y)$, $\check z_i = \chi(z_i)$. Let $\check u \in R_A$, and let $R_X = R_A/(\check x\check y - t^\ell \check u)$. Let us assume that the induced map $X = \Spec R_X \to \IA^1_t$ is an instance of Situation~\ref{sitn:explicit-gls-candidate-kk-intro} which satisfies Condition~(P).
	
	Let $g_{\ell - 1}\colon M_{\ell - 1} = \Spec P/(t^\ell,xy) \to B_{\ell - 1}$ be the trivial deformation. Then, we have a short exact sequence
	\[0 \to \cO_{M_{\ell - 1}} \xrightarrow{\begin{pmatrix} y \\ x \end{pmatrix}} \cO_{M_{\ell - 1}}^{\oplus 2} \xrightarrow{\begin{pmatrix} dx & dy \end{pmatrix}} \Omega^1_{M_{\ell - 1}/B_{\ell - 1}} \to 0.\]
	Since $X_{\ell - 1} \to M_{\ell - 1}$ is \'etale, we can pull the sequence back to $X_{\ell - 1}$ and still have a short exact sequence. Furthermore, this new sequence fits into a commutative diagram
	\[
	\xymatrixcolsep{5em}
	\xymatrix{
		0 \ar[r] & \cO_{X_{\ell - 1}} \ar[r]^-{\begin{pmatrix} \check y \\ \check x \end{pmatrix}} \ar[d]_{1 \mapsto \check u\ell t^{\ell - 1}} & \cO_{X_{\ell - 1}}^{\oplus 2} \ar[r]^-{\begin{pmatrix} d\check x & d\check y \end{pmatrix}} \ar[d]^{\begin{pmatrix} d\check x & d\check y \end{pmatrix}} & \Omega^1_{X_{\ell - 1}/B_{\ell - 1}} \ar[r] \ar@{=}[d] & 0 \\
		0 \ar[r] & \cO_{X_{\ell - 1}} \ar[r]^-{1 \mapsto dt} & \Omega^1_X|_{X_{\ell - 1}} \ar[r] & \Omega^1_{X_{\ell - 1}/B_{\ell - 1}} \ar[r] & 0 \\
	}
	\]
	because $\check xd\check y + \check yd\check x = \check u\ell t^{\ell - 1}dt + t^\ell d\check u$.
	
	The upper row gives a presentation 
	\[\cH om(\cO_{X_{\ell - 1}}^{\oplus 2}, \cO_{X_{\ell - 1}}) \to \cH om(\cO_{X_{\ell - 1}}, \cO_{X_{\ell - 1}}) \to \cE xt^1(\Omega^1_{X_{\ell - 1}/B_{\ell - 1}},\cO_{X_{\ell - 1}}) \to 0\]
	such that the image $e_0$ of the identity in the middle is the extension class of the upper row. In particular, $e_0$ is a generator of $\cE xt^1(\Omega^1_{X_{\ell - 1}/B_{\ell - 1}},\cO_{X_{\ell - 1}})$, and we have $\eta_{\ell - 1}(f) = \check u\ell t^{\ell - 1}e_0$. 
	
	If $\ell = 1$, then $\eta_0(f) = \check ue_0$ is a generator of $\cE xt^1(\Omega^1_{X_0/B_0},\cO_{X_0})$ in a point $p \in D$ if and only if $(\check x,\check y,\check u) = \cO_{X_{\ell - 1},p}$, which is the case if and only if $\check u \in \cO_{A,p}$ is invertible.
	
	Let $\ell \geq 2$. Then, we also have the analogous presentation of $\cE xt^1(\Omega^1_{X_{\ell - 2}/B_{\ell - 2}},\cO_{X_{\ell - 2}})$. Let $e_0'$ be the image of the identity in the middle term. Then, we have $e_0 \mapsto e_0'$ under the restriction map of extension classes $\rho_{\ell - 1;\ell - 2}$. For $a \in \cO_{X_{\ell - 1}}$, we have $\rho_{\ell - 1;\ell - 2}(ae_0) = 0$ if and only if $a \in (\check x,\check y,t^{\ell - 1})$. Therefore, the kernel of $\rho_{\ell - 1;\ell - 2}$ is generated by $t^{\ell - 1}e_0$. But then, $\check u\ell t^{\ell - 1}e_0$ generates the kernel if and only if $\check u \in \cO_{A,p}$ is invertible.
	
	We proceed to the proof of the stated result. First, we consider the case $\ell = 1$. If $p$ is a dtc degeneration point of kink $\ell = 1$, then $X$ is \'etale locally around $p$ of the form considered in the preceding paragraphs with $\check u = 1$. Thus, $\eta_0(f)|_p$ is a generator of $\cE xt^1(\Omega^1_{X_0/B_0},\cO_{X_0})_p$. 
	
	Next, we consider $\ell \geq 2$. By Corollary~\ref{cor:condition-a-criterion}, we have $\eta_{\ell - 2}(f)|_p = 0$. By Lemma~\ref{lem:trivial-defo-characterization}, $f_{\ell - 1}\colon X_{\ell - 1} \to B_{\ell - 1}$ is a trivial deformation after shrinking $X$ to a suitable Zariski neighborhood of $p$. Thus, we have the exact sequence as claimed. Furthermore, we have a local model as in the preceding paragraphs with $\check u = 1$, and $\eta_{\ell - 1}(f)|_p$ is a generator of the kernel of $\rho_{\ell - 1;\ell - 2}$ at $p$.
	
	For the converse direction, we start with $\ell = 1$. Assume that $\eta_0(f)|_p$ is a generator of $\cE xt^1(\Omega^1_{X_0/B_0},\cO_{X_0})_p$. Condition~$(\spadesuit)$ in Lemma~\ref{lem:dtd-intermediate} holds so that we can write the (strictly) Henselian stalk as $\cO_{X,p}^\m{h} \cong P/(xy - tu)$. But then, we can write $f\colon X \to B$ \'etale locally around $p$ in the form considered at the beginning of this proof. Since $ue_0$ is a generator, we find that $u \in P^\times$, and $p$ is a dtc degeneration point of kink $\ell = 1$.
	
	For $\ell \geq 2$, assume the two conditions stated above. Again, Lemma~\ref{lem:trivial-defo-characterization} guarantees that we have the claimed exact sequence. Furthermore, by Corollary~\ref{cor:condition-a-criterion}, Condition~$(\spadesuit)$ is satisfied. Thus, we can write the (strictly) Henselian stalk as $\cO_{X,p} \cong P/(xy - t^\ell u)$, and again, we have the local model. Since $\eta_{\ell - 1}(f)|_p$ generates the kernel by assumption, we find that $u \in P$ is invertible. But then, Lemma~\ref{lem:dtd-intermediate} shows that $p$ is a dtc degeneration point.
\end{proof}

We reformulate Proposition~\ref{prop:effective-dtd-criterion} in a form that is very suitable for computations.

\begin{cor}\label{cor:dtc-computable-condition}\note{cor:dtc-computable-condition}
	Assume that we are in Situation~\ref{sitn:explicit-gls-candidate-kk-intro}, and assume that $f\colon X \to B$ is pre--gls. Consider the short exact sequence
	\[\eta(f)\colon \quad 0 \to \cO_X \xrightarrow{\iota} \Omega^1_X \xrightarrow{\pi} \Omega^1_{X/B} \to 0.\]
	Let $p \in D$ be a closed dnc point. Then, $p$ is a dtc degeneration point of kink $\ell = 1$ if and only if $\cE xt^1(\Omega^1_X|_V,\cO_{V})_p = 0$ for the stalk at $p$. It is a dtc degeneration point of kink $\ell \geq 2$ if and only if the following conditions hold:
	\begin{enum:alph}
		\item the map 
		\[\xi_{\ell - 2}\colon \enspace \cE xt^1(\Omega^1_{X_{\ell - 2}/B_{\ell - 2}},\cO_{X_{\ell - 2}}) \to \cE xt^1(\Omega^1_X|_{X_{\ell - 2}},\cO_{X_{\ell - 2}})\]
		is injective at $p$;
		\item the multiplication map 
		\[\sigma_{\ell - 1}\colon \enspace \cE xt^1(\Omega^1_X|_{X_{\ell - 1}},\cO_{X_{\ell - 1}}) \xrightarrow{(-) \cdot f^\sharp(s)^{\ell - 1}} \cE xt^1(\Omega^1_X|_{X_{\ell - 1}},\cO_{X_{\ell - 1}})\]
		is the zero homomorphism at $p$.
	\end{enum:alph}
\end{cor}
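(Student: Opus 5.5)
We deduce the corollary from Proposition~\ref{prop:effective-dtd-criterion} by translating each condition there into a statement about the long exact $\cE xt$-sequence of the restricted sequence $\eta(f)|_{X_k}$. By Lemma~\ref{lem:eta-f-alternative} and the discussion preceding Lemma~\ref{lem:extension-properties}, the sequence
\[0 \to \cO_{X_k} \to \Omega^1_X|_{X_k} \to \Omega^1_{X_k/B_k} \to 0\]
is exact with class $\eta_k(f)$. Applying $\cH om(-,\cO_{X_k})$ gives the exact sequence
\[\cH om(\Omega^1_X|_{X_k},\cO_{X_k}) \to \cO_{X_k} \xrightarrow{\partial} \cE xt^1(\Omega^1_{X_k/B_k},\cO_{X_k}) \xrightarrow{\xi_k} \cE xt^1(\Omega^1_X|_{X_k},\cO_{X_k}) \to \cE xt^1(\cO_{X_k},\cO_{X_k}) = 0.\]
The connecting map satisfies $\partial(1) = \eta_k(f)$, so $\partial(a) = a\,\eta_k(f)$. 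Hence $\ker(\xi_k)$ is the submodule generated by $\eta_k(f)$, and $\xi_k$ is surjective with
\[\cE xt^1(\Omega^1_X|_{X_k},\cO_{X_k}) \cong \cE xt^1(\Omega^1_{X_k/B_k},\cO_{X_k})/(\eta_k(f)).\]
This identification is the engine of the whole argument.

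\textbf{Case $\ell = 1$.} With $k = 0$, the stalk $\cE xt^1(\Omega^1_X|_V,\cO_V)_p$ vanishes if and only if $\eta_0(f)|_p$ generates $\cE xt^1(\Omega^1_V,\cO_V)_p$. By Proposition~\ref{prop:effective-dtd-criterion}, this is exactly the condition for $p$ to be a dtc degeneration point of kink $1$.

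\textbf{Case $\ell \geq 2$, condition (a).} Injectivity of $\xi_{\ell-2}$ at $p$ means $(\eta_{\ell-2}(f)|_p) = 0$, i.e.\ $\eta_{\ell-2}(f)|_p = 0$. This is condition (a) of Proposition~\ref{prop:effective-dtd-criterion}.

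\textbf{Case $\ell \geq 2$, condition (b), assuming (a).} By Lemma~\ref{lem:trivial-defo-characterization}, after Zariski shrinking, $f_{\ell-1}$ is a trivial deformation near $p$. Then Lemma~\ref{lem:ses-at-lt-point} applies, and its kernel $K$ is identified with $\cE xt^1(\Omega^1_{X_0/B_0},\cO_{X_0}) \otimes (s^{\ell-1})$. Write $E = \cE xt^1(\Omega^1_{X_{\ell-1}/B_{\ell-1}},\cO_{X_{\ell-1}})_p$. Using the local model from the proof of Proposition~\ref{prop:effective-dtd-criterion}, $E$ is cyclic generated by $e_0$, with annihilator $(\check x,\check y)$ modulo $t^\ell$. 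Concretely,
\[E \cong \cO_{D,p}[t]/(t^\ell), \qquad K = t^{\ell-1}E,\]
and $\eta_{\ell-1}(f)$ lies in $K$ by (a), say $\eta_{\ell-1}(f) = t^{\ell-1} a\, e_0$ for some $a \in \cO_{D,p}$.

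The cokernel of $\xi_{\ell-1}$ is $E/(\eta_{\ell-1}(f)) \cong \cO_{D,p}[t]/(t^\ell, t^{\ell-1}a)$, and multiplication by $t^{\ell-1} = f^\sharp(s)^{\ell-1}$ on it is zero exactly when $t^{\ell-1} \in (t^\ell, t^{\ell-1}a)$. That holds if and only if $a$ is a unit in the local ring $\cO_{D,p}$, which is exactly the statement that $\eta_{\ell-1}(f)|_p$ generates $K$. This is condition (b) of Proposition~\ref{prop:effective-dtd-criterion}.

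The main obstacle is this last step: establishing the cyclic structure $E \cong \cO_{D,p}[t]/(t^\ell)$ with $K = t^{\ell-1}E$ without circularly assuming $p$ is a dtc point. For this I would use only Condition~$(\spadesuit)$, available via Corollary~\ref{cor:condition-a-criterion}, together with the presentation of $\Omega^1_{X_{\ell-1}/B_{\ell-1}}$ pulled back from the trivial model $\Spec P/(t^\ell,xy)$. This gives $E \cong \cO_{X_{\ell-1},p}/(\check x,\check y)$, a free $\kk[t]/(t^\ell)$-module over $\cO_{D,p}$, so multiplication by $t^{\ell-1}$ has image $t^{\ell-1}E$ and kernel $tE$, as needed.
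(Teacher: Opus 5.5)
Your proposal is correct. For $\ell = 1$ and for condition (a) it coincides with the paper's proof: both read off $\ker\xi_k = \cO_{X_k}\cdot\eta_k(f)$ and the surjectivity of $\xi_k$ from the long exact $\cE xt$-sequence.

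For condition (b) you take a different route. The paper never computes $E$ explicitly. It compares the sequences on $X_{\ell-1}$ and $X_{\ell-2}$ via the restriction maps $\rho_{\ell-1;\ell-2}$ and $\tilde\rho_{\ell-1;\ell-2}$, with commutativity coming from Lemma~\ref{lem:extension-properties}. Since $\xi_{\ell-2}$ is an isomorphism under (a), $\eta_{\ell-1}(f)$ generates $\ker\rho_{\ell-1;\ell-2}$ if and only if $\tilde\rho_{\ell-1;\ell-2}$ is injective. The paper then uses Lemma~\ref{lem:Omega-projdim-1} (base change for $\cE xt^1(\Omega^1_X,\cO_X)$, since $\Omega^1_X$ has projective dimension $\leq 1$ near dnc points) to identify $\ker\tilde\rho_{\ell-1;\ell-2}$ with $f^\sharp(s)^{\ell-1}\cdot\cE xt^1(\Omega^1_X|_{X_{\ell-1}},\cO_{X_{\ell-1}})$.

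You instead use $(\spadesuit)$, available via Corollary~\ref{cor:condition-a-criterion}, to get the kink-$\ell$ étale local model. From it you compute $E = \cO_{X_{\ell-1},p}/(\check x,\check y)$, flat over $\kk[t]/(t^\ell)$, and read off $K = t^{\ell-1}E$. After that everything is elementary module theory. In fact the detour through the unit $a$ is unnecessary: since $(\eta_{\ell-1}(f)) \subseteq K = t^{\ell-1}E$, the vanishing of multiplication by $t^{\ell-1}$ on $E/(\eta_{\ell-1}(f))$ says exactly $t^{\ell-1}E \subseteq (\eta_{\ell-1}(f))$, i.e. $(\eta_{\ell-1}(f)) = K$.

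Your route is more concrete and avoids Lemma~\ref{lem:Omega-projdim-1}, at the price of needing the full kink-$\ell$ local model and its explicit restriction map. The paper's route is coordinate-free, and it produces the comparison map $\tilde\rho$ whose injectivity criterion is reused in Claim~\ref{clm:injectivity-stops}.

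One wording slip: $E/(\eta_{\ell-1}(f))$ is the target of $\xi_{\ell-1}$ (the cokernel of $\partial$), not the cokernel of $\xi_{\ell-1}$, which is zero.
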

\begin{proof}
	First, we consider the case $\ell = 1$. Then, we have an exact sequence 
	\[\cH om(\Omega^1_X|_{X_0},\cO_{X_0}) \to \cH om(\cO_{X_0},\cO_{X_0}) \to \cE xt^1(\Omega^1_{X_0/B_0},\cO_{X_0}) \to \cE xt^1(\Omega^1_X|_{X_0},\cO_{X_0}) \to 0.\]
	Thus, $\eta_0(f)|_p$ is a generator if and only if the last term vanishes at $p$.
	
	We proceed to $\ell \geq 2$. We have an analogous exact sequence for $\ell - 2$, which shows that $\eta_{\ell - 2}(f)|_p = 0$ if and only if the last map is injective. If this is the case, we consider the diagram
	\[
		\resizebox{138mm}{!}{
		\xymatrix{
			\cH om(\cO_{X_{\ell - 1}},\cO_{X_{\ell - 1}})_p \ar[r]^-{\eta_{\ell - 1}(f)} \ar[d] & \cE xt^1(\Omega^1_{X_{\ell - 1}/B_{\ell - 1}},\cO_{X_{\ell - 1}})_p \ar[r]^{\xi_{\ell - 1}} \ar[d]^{\rho_{\ell - 1;\ell - 2}} & \cE xt^1(\Omega^1_X|_{X_{\ell - 1}},\cO_{X_{\ell - 1}})_p \ar[r] \ar[d]^{\tilde\rho_{\ell- 1;\ell - 2}} & 0 \\
			\cH om(\cO_{X_{\ell - 2}},\cO_{X_{\ell - 2}})_p \ar[r]^-0 & \cE xt^1(\Omega^1_{X_{\ell - 2}/B_{\ell - 2}},\cO_{X_{\ell - 2}})_p \ar[r]^{\xi_{\ell - 2}}_\cong & \cE xt^1(\Omega^1_X|_{X_{\ell - 2}},\cO_{X_{\ell - 2}})_p \ar[r] & 0 \\
		}
		}
	\]
	with exact rows. The middle and right vertical map are the pullbacks as considered in Lemma~\ref{lem:extension-properties}, and this lemma also shows that the right square is commutative. The left square is commutative because $\rho_{\ell - 1;\ell - 2}(\eta_{\ell - 1}(f)) = \eta_{\ell - 2}(f) = 0$. 
	
	Now, $\eta_{\ell - 1}(f)$ is a generator of the kernel of $\rho_{\ell - 1;\ell - 2}$ if and only if $\m{ker}(\rho_{\ell - 1;\ell - 2}) = \m{ker}(\xi_{\ell - 1})$. Since $\xi_{\ell - 1}$ is surjective, this is equivalent to $\tilde\rho_{\ell - 1;\ell - 2}$ being injective.
	
	By Lemma~\ref{lem:Omega-projdim-1}, $\tilde\rho_{\ell - 1;\ell - 2}$ induces an isomorphism after restriction of the source to $X_{\ell - 2}$. Thus, $\tilde\rho_{\ell - 1;\ell - 2}$ is injective if and only if 
	\[\cE xt^1(\Omega^1_X|_{X_{\ell - 1}},\cO_{X_{\ell - 1}})_p \otimes \m{ker}(\cO_{X_{\ell - 1}} \to \cO_{X_{\ell - 2}})_p \to \cE xt^1(\Omega^1_X|_{X_{\ell - 1}},\cO_{X_{\ell - 1}})_p\]
	is the zero map. Since $\m{ker}(\cO_{X_{\ell - 1}} \to \cO_{X_{\ell - 2}})$ is generated by $f^\sharp(s)^{\ell - 1}$, this is equivalent to multiplication with $f^\sharp(s)^{\ell - 1}$ being the zero map at $p$.
\end{proof}

\subsection{The proof of Theorem~\ref{thm:explicit-gls-criterion-kk}}

In this section, we prove Theorem~\ref{thm:explicit-gls-criterion-kk}. We assume throughout that we are in Situation~\ref{sitn:explicit-gls-candidate-kk-intro}, and that $f\colon X \to B$ is pre--gls.

\begin{clm}\label{clm:injectivity-descends}\note{clm:injectivity-descends}
	Let $E \in [\cS_1^\times V]$ be an irreducible component of $\cS_1^\times V$, and let $k \geq 0$. If 
	\[\xi_i\colon \enspace \cE xt^1(\Omega^1_{X_i/B_i},\cO_{X_i}) \to \cE xt^1(\Omega^1_X|_{X_i},\cO_{X_i})\]
	is injective for $i = k$ at some $p \in E$, then it is injective for all $0 \leq i \leq k$ and at all $p \in E$.
\end{clm}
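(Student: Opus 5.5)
The plan is to translate injectivity of $\xi_i$ at $p$ into the vanishing $\eta_i(f)|_p = 0$, and then use two facts: the classes $\eta_i(f)$ are compatible under restriction, and $\cE xt^1(\Omega^1_{X_i/B_i},\cO_{X_i})$ has a simple structure along $E$. As in the proof of Corollary~\ref{cor:dtc-computable-condition}, we have for every $i$ the exact sequence
\[\cH om(\cO_{X_i},\cO_{X_i}) \xrightarrow{\eta_i(f)} \cE xt^1(\Omega^1_{X_i/B_i},\cO_{X_i}) \xrightarrow{\xi_i} \cE xt^1(\Omega^1_X|_{X_i},\cO_{X_i}) \to 0,\]
obtained from the pullback of $\eta(f)$ to $X_i$; this pullback is still exact, as shown in Section~\ref{sect:ext-criterion-dtd-points}. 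Hence $\xi_i$ is injective at $p$ if and only if $\eta_i(f)|_p = 0$ in the stalk.

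For the \emph{downward} direction in $i$, I would use that $\rho_{k;i}(\eta_k(f)) = \eta_i(f)$ for $i \leq k$. This holds because restricting the defining short exact sequence from $X_k$ to $X_i$ gives the defining sequence on $X_i$; formally this is Lemma~\ref{lem:extension-properties} applied to the pullback along $X_i \to X_k$, together with Lemma~\ref{lem:eta-f-alternative}. So $\eta_k(f)|_p = 0$ implies $\eta_i(f)|_p = 0$, and therefore $\xi_i$ is injective at $p$ for all $0 \le i \le k$.

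For the spreading along $E$, the key point is that $\eta_i(f)$ is a section of a sheaf that, near $E$ and under the right hypotheses, is a line bundle on $E$ (or a successive extension of such). Since $E$ is irreducible, a section of such a sheaf vanishing at one point vanishes on a dense open subset, but not automatically on all of $E$. So I would instead argue by induction on $i$, using that the vanishing locus is also closed in the relevant sense.

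The case $i = 0$ works as follows. On $\cU_1^\times V$, the sheaf $\cE xt^1(\Omega^1_{V},\cO_V)$ is a line bundle on $\cS_1^\times V$, and its restriction to $E$ is a line bundle on the smooth irreducible $E$. The section $\eta_0(f)|_E$ vanishes at $p$, hence vanishes on a neighbourhood of $p$ in $E$, since the stalk is zero and the sheaf is coherent. The zero locus of a section of a line bundle is closed, and here it is also open, so by connectedness $\eta_0(f)|_E = 0$.

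For the inductive step, suppose $\eta_{i-1}(f)$ vanishes near every point of $E$. Then Lemma~\ref{lem:trivial-defo-characterization} makes $f_i$ locally trivial near $E$, and Lemma~\ref{lem:ses-at-lt-point} places $\eta_i(f)$ in the kernel $\cE xt^1(\Omega^1_{V},\cO_V)\otimes(s^i)$, which is again a line bundle on $E$. The same open-and-closed argument then gives $\eta_i(f)=0$ along $E$ whenever it vanishes at a single point. Vanishing at $p$ for $i\le k$ comes from the downward step, and induction from $i=0$ up to $k$ gives vanishing on all of $E$.

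The main obstacle is justifying the openness of the vanishing locus in the step above. The zero locus of a section of a line bundle on an integral scheme is all of $E$ as soon as the stalk at $p$ vanishes, because the support is closed and the generic stalk must then be zero: $\cO_E$ is a domain, so the section is torsion-free, and a section that vanishes in one stalk is zero. I would make this precise by working on an affine neighbourhood of $E$ in $\cU_1^\times V$, where the kernel sheaf is an invertible $\cO_E$-module.
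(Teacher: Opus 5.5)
Your proposal is correct and follows the paper's proof essentially step by step. You translate injectivity of $\xi_i$ at $p$ into $\eta_i(f)|_p = 0$, descend in $i$ via restriction of extension classes, and then spread along $E$ by induction on $i$, using that $\eta_0(f)$, and for $i \geq 1$ (via Lemma~\ref{lem:ses-at-lt-point}) $\eta_i(f)$, is a section of a line bundle on the integral scheme $E$ near $E$.

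Your intermediate talk of an ``open and closed'' zero locus conflates fiberwise vanishing (a closed condition) with vanishing of the stalk (an open condition). The justification you settle on in your last paragraph is the right one: a zero stalk at $p$ forces a zero generic stalk because supports are closed, and torsion-freeness then kills the section. This is exactly what the paper's sentence ``since $E$ is smooth and irreducible, injectivity at some point implies injectivity everywhere'' relies on.
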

\begin{proof}
	We saw in the proof of Corollary~\ref{cor:dtc-computable-condition} that injectivity of $\xi_i$ at $p$ is equivalent to $\eta_i(f)|_p = 0$. In particular, if $\eta_k(f)|p = 0$, then we also have $\eta_i(f)|_p = 0$ for all $0 \leq i \leq k$.
	
	We show that $\xi_i$ is injective at all $q \in E$ by induction on $i$. For $i = 0$, its source $\cE xt^1(\Omega^1_{X_0/B_0},\cO_{X_0})$ is a line bundle on $\cS_1^\times V$, and in particular on $E$ in a neighborhood of $E$. Since $E$ is smooth and irreducible, injectivity at some point $p \in E$ implies injectivity everywhere.
	
	Now, suppose that we know injectivity of $\xi_{i - 1}$ everywhere. Then, we have $\eta_{i - 1}(f) = 0$ in a neighborhood of $E$, and therefore the exact sequence of Lemma~\ref{lem:ses-at-lt-point}. Since the kernel of $\xi_i$ is contained in the kernel of $\rho_{i;i - 1}$, it is contained in $\cE xt^1(\Omega^1_{X_0/B_0},\cO_{X_0}) \otimes_\kk (s^i)$. But then, the same argument as for $\xi_0$ shows that $\xi_i$ is injective.
\end{proof}

\begin{clm}\label{clm:injectivity-stops}\note{clm:injectivity-stops}
	Let $p \in \cS_1^\times V$, and assume that
	\[\sigma_{k}\colon \quad \cE xt^1(\Omega^1_X|_{X_{k}},\cO_{X_{k}}) \xrightarrow{(-)\cdot f^\sharp(s)^{k}}\cE xt^1(\Omega^1_X|_{X_{k}},\cO_{X_{k}})\]
	is the zero map at $p$. Also assume that $\xi_{k - 1}$ is injective at $p$. Then, $\xi_{k}$ is not injective at $p$.
\end{clm}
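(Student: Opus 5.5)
Set $\ell = k + 1$. The hypotheses say that $\xi_{\ell - 2}$ is injective at $p$ and $\sigma_{\ell - 1}$ is zero at $p$. By Corollary~\ref{cor:dtc-computable-condition} (for $\ell \geq 2$, i.e.\ $k \geq 1$), $p$ is then a dtc degeneration point of kink $\ell = k+1$. The case $k = 0$ needs a separate reading. For $k = 0$, $\sigma_0$ is the identity, so it is zero at $p$ exactly when $\cE xt^1(\Omega^1_X|_V,\cO_V)_p = 0$. The $\ell = 1$ part of Corollary~\ref{cor:dtc-computable-condition} then says $p$ is a dtc degeneration point of kink $1$. The condition on $\xi_{-1}$ is vacuous.

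So in all cases we may assume that, étale locally around $p$, $f$ is the model $\mu_{k+1;d}$. After shrinking, we have an étale chart with $R_X = R_A/(\check x\check y - t^{k+1})$, i.e.\ $\check u = 1$ in the preparatory computation of Proposition~\ref{prop:effective-dtd-criterion}.

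It remains to show $\eta_k(f)|_p \neq 0$, since injectivity of $\xi_k$ at $p$ is equivalent to $\eta_k(f)|_p = 0$. This equivalence comes from the exact sequence $\cH om(\cO_{X_k},\cO_{X_k}) \to \cE xt^1(\Omega^1_{X_k/B_k},\cO_{X_k}) \xrightarrow{\xi_k} \cE xt^1(\Omega^1_X|_{X_k},\cO_{X_k}) \to 0$ used in the proof of Corollary~\ref{cor:dtc-computable-condition}. By that preparatory computation with $\ell = k+1$ and $\check u = 1$,
\[\eta_k(f) = (k+1)\, t^{k}\, e_0,\]
where $e_0$ generates $\cE xt^1(\Omega^1_{X_k/B_k},\cO_{X_k}) \cong \cO_{X_k}/(\check x,\check y)$ locally. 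This module is $\cO_{D}\otimes \kk[t]/(t^{k+1})$ étale locally. In it, $t^k e_0 \neq 0$ at $p$ because $t^k \notin (\check x,\check y,t^{k+1})$. Since $\mathrm{char}\,\kk = 0$, the factor $k+1$ is a unit, so $\eta_k(f)|_p \neq 0$. Hence the image of $1$ is a nonzero element of $\ker(\xi_k)_p$, and $\xi_k$ is not injective at $p$.

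The main point to check is that the identification $\cE xt^1(\Omega^1_{X_k/B_k},\cO_{X_k})_p \cong (\cO_{X_k}/(\check x,\check y))_p$, coming from the resolution in the proof of Proposition~\ref{prop:effective-dtd-criterion}, really makes $e_0$ a free generator over $\cO_D\otimes\kk[t]/(t^{k+1})$, so that no power $t^j$ with $j \le k$ kills it. This holds because in the local model $\cO_{X_k}/(\check x,\check y) \cong \cO_{E}[t]/(t^{k+1})$ étale locally, where $\cO_E$ is the local ring of the smooth stratum. Alternatively, one can argue without the model: if $\eta_k(f)|_p = 0$, Lemma~\ref{lem:trivial-defo-characterization} would make $f_{k+1}$ trivial near $p$. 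But a dtc point of kink $k+1$ has $X_{k+1}$ with local equation $xy = t^{k+1}$, which is not a trivial deformation of $xy = 0$ over $\kk[t]/(t^{k+2})$.
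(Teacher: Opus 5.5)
Your argument is correct, but it takes a longer route than the paper.

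You use the reverse direction of Corollary~\ref{cor:dtc-computable-condition} to conclude that $p$ is a dtc degeneration point of kink $k+1$. You then pass to the \'etale local model and read off $\eta_k(f) = (k+1)t^k e_0 \neq 0$ at $p$, which is a nonzero element of $\ker(\xi_k)_p$.

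The paper never identifies the local model. Its proof is a short diagram chase using the square $\tilde\rho_{k;k-1}\circ\xi_k = \xi_{k-1}\circ\rho_{k;k-1}$ from the proof of Corollary~\ref{cor:dtc-computable-condition}. Vanishing of $\sigma_k$ at $p$ makes $\tilde\rho_{k;k-1}$ an isomorphism at $p$ (via Lemma~\ref{lem:Omega-projdim-1}). So injectivity of $\xi_k$ would force injectivity of $\rho_{k;k-1}$. But injectivity of $\xi_{k-1}$ gives $\eta_{k-1}(f)|_p = 0$, and then Lemma~\ref{lem:ses-at-lt-point} shows $\rho_{k;k-1}$ has the kernel $\cE xt^1(\Omega^1_{X_0/B_0},\cO_{X_0})\otimes(s^k)$, which is nonzero at $p$.

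The two routes buy different things:
\begin{itemize}
\item The paper's route uses only module-level facts. It bypasses the Henselian machinery (Lemma~\ref{lem:dtd-intermediate}, Corollary~\ref{cor:condition-a-criterion}, the chart with $\check u = 1$). It also avoids a result stated only for closed dnc points. Your appeal to Corollary~\ref{cor:dtc-computable-condition} tacitly takes $p$ closed; this is harmless, since that is how the claim is used later.
\item Your route gives more information: $p$ is a dtc point of kink exactly $k+1$, and $\ker(\xi_k)_p$ is generated by the explicit class $(k+1)t^ke_0$.
\end{itemize}

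Two small remarks. Once you know $p$ is a dtc point of kink $k+1$, Proposition~\ref{prop:effective-dtd-criterion}(b) together with Lemma~\ref{lem:ses-at-lt-point} already gives $\eta_k(f)|_p \neq 0$, so the explicit module computation is not needed. Your alternative argument, via non-triviality of $xy = t^{k+1}$ over $\kk[t]/(t^{k+2})$, would need that non-triviality justified separately.
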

\begin{proof}
	If $\sigma_k$ is the zero map at $p$, then the map $\tilde\rho_{k;k - 1}$ in the proof of Corollary~\ref{cor:dtc-computable-condition} is an isomorphism. Thus, if $\xi_k$ were injective, also $\rho_{k;k - 1}$ would be injective. However, since $\xi_{k - 1}$ is injective, Lemma~\ref{lem:ses-at-lt-point} shows that $\rho_{k;k - 1}$ is not injective.
\end{proof}

Together, these two claims show that $\ell(E)$ in the statement of Theorem~\ref{thm:explicit-gls-criterion-kk} is unique if it exists.

\begin{clm}\label{clm:conditions-are-satisfied}\note{clm:conditions-are-satisfied}
	Assume that there is some $Z_X \subset X$ such that $f\colon (X,X \setminus Z_X|V) \to (B|0)$ is gls. Then, the conditions listed in Theorem~\ref{thm:explicit-gls-criterion-kk} are satisfied.
\end{clm}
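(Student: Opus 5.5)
We verify the three conditions of Theorem~\ref{thm:explicit-gls-criterion-kk} in turn, using that $f\colon (X,U|V) \to (B|0)$ is log smooth and saturated on $U = X \setminus Z_X$ with $\m{codim}(Z_{X,b},X_b) \geq 2$ for all $b$.

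\emph{Condition (a).} Let $b \in B^\star$. Over $B^\star$ the log structure is trivial, so log smoothness on $U \cap X_b$ means that $X_b$ is smooth outside $Z_{X,b}$, which has codimension $\geq 2$. Thus $X_b$ satisfies $(R_1)$. Since $f$ is pre--gls, $X_b$ satisfies $(S_2)$, and Serre's criterion gives normality.

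\emph{Condition (b).} Put $\itUpsilon = U \cap V$ and use the stratification of Theorem~\ref{thm:toroidal-characterization}. By Proposition~\ref{prop:strata-dnc-rank}, $\itUpsilon \cap \cU_1^\times V = \cS_0\itUpsilon \sqcup \cS_1\itUpsilon$, i.e.\ $\itUpsilon \cap \cU_1^\times V = \cU_1\itUpsilon$. Hence $\cC_2^\times V \cap \itUpsilon = \bigsqcup_{k \geq 2}\cS_k\itUpsilon$, which has dimension $\leq d-2$ because $\cS_k\itUpsilon$ has codimension $k$ in $\itUpsilon$. The remaining part of $\cC_2^\times V$ lies in $Z_{X,0}$, which also has dimension $\leq d-2$. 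Therefore $\m{dim}(\cC_2^\times V) \leq d-2$.

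\emph{Condition (c).} Let $E \in [\cS_1^\times V]$. Since $E$ is irreducible of dimension $d-1$ and $\m{dim}(Z_{X,0}) \leq d-2$, the set $E \cap \itUpsilon$ is a dense open subset of $E$, contained in $\cS_1\itUpsilon$ by Proposition~\ref{prop:strata-dnc-rank}. At a closed point $p \in E \cap \itUpsilon$, Theorem~\ref{thm:toroidal-characterization} and Proposition~\ref{prop:P-ell-classification} give an étale local model $A_{P_{\ell_p}} \times \IA^{d-1} \to A_\NN$. Via the Henselian stalk $\cO_{X,p}^\m{h} \cong R^\m{h}_{\ell_p;d}$, compatible with $f^\sharp$, this makes $p$ a dtc degeneration point of kink $\ell_p$ in the sense of Definition~\ref{defn:dtd-point}.

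The main point is that $\ell_p$ is constant on $E \cap \itUpsilon$. I would use Claims~\ref{clm:injectivity-descends} and~\ref{clm:injectivity-stops}, which show that the kink at a point is intrinsically determined by the sheaf maps $\xi_k$ and $\sigma_k$. Corollary~\ref{cor:dtc-computable-condition} says $p$ has kink $\ell_p$ exactly when $\xi_k$ is injective at $p$ for $k \leq \ell_p - 2$ and $\sigma_{\ell_p-1}$ vanishes at $p$. Let $\ell = \ell_{p_0}$ for some $p_0 \in E \cap \itUpsilon$. By Claim~\ref{clm:injectivity-descends}, injectivity of $\xi_{\ell-2}$ at $p_0$ propagates to all of $E$, so $E \cap K^{(k)} = \varnothing$ for $0 \leq k \leq \ell-2$. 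At $p_0$, $\sigma_{\ell-1}$ vanishes and $\xi_{\ell-2}$ is injective, so Claim~\ref{clm:injectivity-stops} shows $\xi_{\ell-1}$ is not injective at $p_0$, and hence, by the contrapositive of Claim~\ref{clm:injectivity-descends}, not injective at any point of $E$. Consequently, for any other $p \in E \cap \itUpsilon$, the characterization forces $\ell_p \leq \ell$ (since $\xi_{\ell_p - 2}$ is injective at $p$) and $\ell_p \geq \ell$ (since $\xi_{\ell-2}$ is injective everywhere on $E$). So $\ell_p = \ell$, and $\sigma_{\ell-1}$ vanishes at every point of $E \cap \itUpsilon$. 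This gives $E \cap Z^{(\ell)} \subseteq E \setminus \itUpsilon \subseteq Z_{X,0}$, which has dimension $\leq d-2$. Thus $\ell$ satisfies Condition~\ref{item:ell-exists}.

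The delicate step is confirming that the vanishing of $\sigma_{\ell-1}$ at $p$ in Corollary~\ref{cor:dtc-computable-condition} refers to stalks. Because the support of the image of $\sigma_{\ell-1}$ is closed, this stalk-level vanishing is exactly what is needed to conclude $p \notin Z^{(\ell)}$.
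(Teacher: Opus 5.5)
Your proof is correct, and it follows a somewhat different route from the paper's in condition (c).

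For conditions (a) and (b) your argument coincides with the paper's. In (a) you use that the log structure is trivial over $B^\star$, so log smooth means smooth, and then apply Serre's criterion. In (b) you use $\cC_2^\times V \subset Z_V \cup \bigsqcup_{k\geq 2}\cS_k\itUpsilon$ via Proposition~\ref{prop:strata-dnc-rank}.

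For condition (c) the paper's argument is shorter:
\begin{enumerate}
\item It picks a single closed point $p \in E \setminus Z_V$ and sets $\ell = \ell(p)$.
\item It gets $E \cap K^{(k)} = \varnothing$ for $k \leq \ell - 2$ from Claim~\ref{clm:injectivity-descends}.
\item It then only needs $p \notin Z^{(\ell)}$. Since $Z^{(\ell)}$ is closed (it is the support of a coherent sheaf) and $E$ is irreducible of dimension $d-1$, $E \cap Z^{(\ell)}$ is a proper closed subset of $E$ and so has dimension $\leq d-2$.
\end{enumerate}
In particular, the paper never needs the kink to be constant along $E$.

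You instead use Claims~\ref{clm:injectivity-descends} and~\ref{clm:injectivity-stops} to show that every closed point of $E \cap \itUpsilon$ has the same kink. This gives the sharper containment $E \cap Z^{(\ell)} \subseteq Z_{X,0}$. That is essentially the argument the paper uses later for statement~(4) of Theorem~\ref{thm:explicit-gls-criterion-kk}. Your version therefore proves~(2) together with the relevant part of~(4) in one pass, at the cost of extra steps that~(2) alone does not need.

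Two small points in your write-up:
\begin{itemize}
\item Your justification of $\ell_p \geq \ell$ tacitly applies Claim~\ref{clm:injectivity-stops} at $p$, to get that $\xi_{\ell_p-1}$ is not injective at $p$. Injectivity of $\xi_{\ell-2}$ alone is not enough, so this use should be stated.
\item When $\ell = 1$, applying Claim~\ref{clm:injectivity-stops} with $k = 0$ amounts to the trivial observation that $\sigma_0$ is the identity. Its vanishing at $p$ therefore kills the target of $\xi_0$, while the source is nonzero at $p \in D$.
\end{itemize}
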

\begin{proof}
	The log structures on $B^\star$ and $X^\star$ are trivial so that $f\colon X^\star \to B^\star$ is log smooth around a point $x \in X^\star$ if and only if $f$ is smooth around the point. Thus, we have $\m{Sing}(X^\star/B^\star;d) \subset Z_X^\star$, and therefore $\m{dim}(\m{Sing}(X_b;d)) \leq d - 2$ for $b \not= 0$. Since $X_b$ is pure of dimension $d$ and satisfies Serre's condition $(S_2)$, it is normal.
	
	Let $U = X \setminus Z_X$. We apply Theorem~\ref{thm:toroidal-characterization} to $f\colon (U|\itUpsilon) \to (B|0)$ and obtain a stratification $\itUpsilon = \bigsqcup_{k \geq 0} \cS_k\itUpsilon$ of the central fiber. The closed points $x \in \cS_0\itUpsilon$ are smooth, and the closed points $x \in \cS_1\itUpsilon$ are dnc points. Furthermore, we have $\m{dim}(\itUpsilon \setminus (\cS_0\itUpsilon \cup \cS_1\itUpsilon)) \leq d - 2$. Since $\cC_2^\times V \subset Z_V \cup (\itUpsilon \setminus (\cS_0\itUpsilon \cup \cS_1\itUpsilon))$, we find $\m{dim}(\cC_2^\times V) \leq d - 2$.
	
	Let $p \in D \setminus (\cC_2^\times V \cup Z_V)$ be a closed point. Then, we have $p \in \itUpsilon \cap \cS_1^\times V$, and hence $p \in \cS_1\itUpsilon$ by Proposition~\ref{prop:strata-dnc-rank}. Thus, Theorem~\ref{thm:toroidal-characterization} shows that $p$ is a dtc degeneration point of some kink $\ell = \ell(p) \geq 1$. Now, Corollary~\ref{cor:dtc-computable-condition} shows that $\xi_{\ell - 2}$ is injective at $p$, and that $\sigma_{\ell - 1}$ is the zero map at $p$.
	
	Let $E \in [\cS_1^\times V]$. Since $\m{dim}(E) = d - 1$ and $\m{dim}(Z_V) \leq d - 2$, there is a closed point $p \in E \setminus Z_V$; we find that $\xi_k$ is injective for all $0 \leq k \leq \ell(p) - 2$ and at all $q \in E$, and that $\sigma_{\ell(p) - 1}$ is the zero map at $p$. In particular, $p$ is not contained in the support $Z^{(\ell(p))}$ of the image of $\sigma_{\ell(p) - 1}$, and neither is $E$.
\end{proof}

\begin{clm}
	Assume that the conditions in Theorem~\ref{thm:explicit-gls-criterion-kk} are satisfied, and let $Z_X \subset X$ be given by Formula~\eqref{eqn:clubsuit} in Theorem~\ref{thm:explicit-gls-criterion-kk}. Then, $f$ is gls.
\end{clm}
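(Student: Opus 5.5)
We check the four conditions of Definition~\ref{defn:gls} for $f\colon (X, X\setminus Z_X|V) \to (B|0)$, with $Z_X$ as in~\eqref{eqn:clubsuit}. Pre--gls holds by hypothesis, and the log structure on $(B|0)$ is fs and defined everywhere. Two things remain: the codimension bound on the fibres of $Z_X$, and that $U = X \setminus Z_X$ is open with fs log structure on which $f$ is log smooth and saturated.

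\textbf{Closedness.} $\m{Sing}(X^\star/B^\star)$ is closed in $X^\star$, and its closure in $X$ adds only points of $V$. I would therefore replace it by its closure and check that this closure meets $V$ only in $\cC_2^\times V \cup Z_V^\circ$ up to a set of dimension $\leq d-2$. Near a closed point $p \in \cS_1^\times V \setminus Z_V^\circ$, the dtc local model proved below is smooth away from $t=0$, so $p$ is not in this closure. Each $Z_E = E \cap Z^{(\ell(E))}$ is closed in $E$, since it is the support of a coherent image sheaf. Its closure in $V$ lies in $Z_E \cup \cC_2^\times V$, so $\cC_2^\times V \cup Z_V^\circ$ is closed in $V$.

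\textbf{Codimension.} For $b \ne 0$, condition (a) says $X_b$ is normal, hence $(R_1)$, so its singular locus has codimension $\geq 2$. On $V$: $\dim \cC_2^\times V \le d-2$ by (b), and $\dim Z_E \le d-2$ by (c).

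\textbf{Log smoothness on $U$.} On $X^\star \setminus \m{Sing}$ the log structure is trivial and $f$ is smooth, hence log smooth and saturated. On $\cU_0^\times V = \m{Reg}(V)$, the morphism $f$ is flat with smooth fibre near $p$, hence smooth there. Moreover $V$ is a smooth Cartier divisor, so locally $f$ is the model $\NN \to \NN$, which is log smooth and saturated.

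\textbf{Main step: closed points $p \in E \setminus Z_E$ with $E \in [\cS_1^\times V]$, $\ell = \ell(E)$.} Such $p$ is a dnc point by the dnc-locus claim. By (c), $E \cap K^{(k)} = \varnothing$ for $k \le \ell-2$, so $\xi_{\ell-2}$ is injective at $p$. Since $p \notin Z^{(\ell)}$, the map $\sigma_{\ell-1}$ vanishes at $p$. Corollary~\ref{cor:dtc-computable-condition} then makes $p$ a dtc degeneration point of kink $\ell$. For $\ell = 1$ I would check that vanishing of the image of $\sigma_0 = \m{id}$ means $\cE xt^1(\Omega^1_X|_V,\cO_V)_p = 0$, which is exactly the kink-$1$ criterion. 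By Definition~\ref{defn:dtd-point}, $\cO^\m{h}_{X,p} \cong R^\m{h}_{\ell;d}$ over $\cO_{B,0}^\m{h}$. I would spread this out to an étale roof $X \leftarrow W \to M_{\ell;d}$ over $B$, using Artin approximation or the standard fact that an isomorphism of Henselizations of finite-type algebras comes from a common étale neighbourhood.

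Both maps of the roof are strict for the divisorial log structures, since the divisor $t=0$ pulls back to the divisor $t=0$ under étale maps. Hence $f$ is étale locally $\mu_{\ell;d}$, which is log smooth and saturated by Theorem~\ref{thm:toroidal-characterization} with $P = P_\ell$. These loci are open, so $f$ is log smooth and saturated on an open neighbourhood of $V \setminus (\cC_2^\times V \cup Z_V^\circ)$. I expect the spreading-out step to be the main obstacle, together with the bookkeeping that $U$ is open, i.e.\ that the closure of $\m{Sing}(X^\star/B^\star)$ does not reach into this good locus. Both points are addressed by the étale local model.
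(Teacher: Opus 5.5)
Your proposal is correct and follows essentially the same route as the paper's proof. Both arguments establish dtc degeneration at the closed points of $E \setminus Z_E$ via Corollary~\ref{cor:dtc-computable-condition}, get closedness of $Z_X$ from the étale local model (the paper phrases this as $Z_X \subset \m{Sing}(X/B;d)$ with $V$ the only singular fiber near good points), take the codimension bounds from the hypotheses and normality of the fibers $X_b$, and verify log smoothness and saturatedness separately on $X^\star$, on $V \setminus D$ (where $f$ is smooth, hence strict), and near dtc points; you are just more explicit than the paper about closedness of $\cC_2^\times V \cup Z_V^\circ$ in $V$ and about spreading the Henselian isomorphism out to an étale roof.
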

\begin{proof}
	Let $E \in [\cS_1^\times V]$, and let $p \in E \setminus (\cC_2^\times V \cup Z_V^\circ)$. Let $\ell = \ell(E)$. Then, $\xi_{\ell - 2}$ is injective at $p$, and $\sigma_{\ell - 1}$ is the zero map at $p$ so that $p$ is a dtc degeneration point of kink $\ell$.
	
	We show that $Z_X \subset X$ is closed. Note that $Z_V^\circ,\cC_2^\times V \subset \m{Sing}(X/B;d)$ so that $Z_X \subset \m{Sing}(X/B;d)$. Let $p \notin Z_X$. Then, we have $p \in D \setminus (\cC_2^\times V \cup Z_V^\circ)$ so that $p$ has an \'etale neighborhood where $V$ is the only singular fiber.
	
	Let $E \in [\cS_1^\times V]$. Then, $Z_E \subset E$ is a proper closed subset, and hence $\m{dim}(Z_E) \leq d - 2$. Thus, $Z_V \subset V$ has codimension $\geq 2$. For $0 \not= b \in B$, the subset $Z_b \subset X_b$ has codimension $\geq 2$ because $X_b$ is normal.
	
	On $X \setminus (V \cup Z_X)$, the log structure is trivial, and $f\colon X \to B$ is smooth. Thus, the log structure is fs, and $f\colon (X|V) \to (B|0)$ is log smooth and saturated around such a point.
	
	Let $p \in V \setminus D$. Then, $f$ is smooth in a neighborhood of $p$, hence strict, and therefore log smooth and saturated. Again, the log structure is fs.
	
	Let $p \in D \setminus Z_X$, i.e., $p \in D \setminus (\cC_2^\times V \cup Z_V^\circ)$. Then, $p$ is a dtc degeneration point, and the \'etale local model shows that the log structure on $(X|V)$ is fs near $p$, and that $f$ is log smooth and saturated near $p$.
\end{proof}

\begin{clm}
	Let $p \in \cU_1^\times V$ be such that $f\colon (X|V) \to (B|0)$ is log smooth and saturated at $p$. Then, we have $p \notin Z_V^\circ$.
\end{clm}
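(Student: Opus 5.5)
We may assume that the conditions of Theorem~\ref{thm:explicit-gls-criterion-kk} hold, so that $Z_V^\circ$ is defined. Since $Z_V^\circ \subseteq \cS_1^\times V$, we may assume $p \in \cS_1^\times V$, say $p \in E$ for some $E \in [\cS_1^\times V]$, with $\ell = \ell(E)$. By assumption, $f\colon (X|V) \to (B|0)$ is log smooth and saturated on an open neighborhood $U$ of $p$, and we set $\itUpsilon = U \cap V$. The plan has three steps.
\begin{enum:arabic}
	\item By Proposition~\ref{prop:strata-dnc-rank}, we have $\itUpsilon \cap \cS_1^\times V = \cS_1\itUpsilon$, so $p \in \cS_1\itUpsilon$. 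In fact this applies to every point of $E \cap U$, and $E \cap U$ is open and dense in $E$.
	\item By Theorem~\ref{thm:toroidal-characterization} and Proposition~\ref{prop:P-ell-classification}, each closed point $q \in E \cap U$ is a dtc degeneration point of some kink $\ell(q)$. Corollary~\ref{cor:dtc-computable-condition} then tells us that $\xi_{\ell(q) - 2}$ is injective at $q$ and that $\sigma_{\ell(q) - 1}$ is zero at $q$.
	\item The main point is to show that $\ell(q) = \ell(E)$ for every such $q$. We run the argument of Claim~\ref{clm:conditions-are-satisfied} at the point $q$. Claim~\ref{clm:injectivity-descends} propagates the injectivity of $\xi_k$ for $k \leq \ell(q) - 2$ to all of $E$. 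Moreover, $q \notin Z^{(\ell(q))}$, and since $Z^{(\ell(q))}$ is closed, $E \cap Z^{(\ell(q))}$ is a proper closed subset of the irreducible $E$, hence has dimension $\leq d - 2$. So $\ell(q)$ satisfies Condition~\ref{item:ell-exists} for $E$, and by the uniqueness already established (Claims~\ref{clm:injectivity-descends} and \ref{clm:injectivity-stops}), $\ell(q) = \ell(E) = \ell$.
\end{enum:arabic}

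With this, $\sigma_{\ell - 1}$ vanishes at every closed point $q \in E \cap U$. In particular it vanishes at $p$, so $p \notin Z^{(\ell)}$ and hence $p \notin Z_E$. Since the union \eqref{eqn:clubsuit} defining $Z_V^\circ$ is disjoint and $p$ lies in no other component of $\cS_1^\times V$, we conclude $p \notin Z_V^\circ$. If $p$ is not a closed point, we instead apply the argument to a closed point in $\overline{\{p\}} \cap U$. If $p \in Z_E$, then such a point lies in $Z_E$ because $Z_E$ is closed in $E$, which gives a contradiction.

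The main obstacle is the uniqueness in step 3. One has to be careful that the uniqueness argument only needs injectivity along all of $E$ together with vanishing of $\sigma$ at a single point. Claim~\ref{clm:injectivity-stops} is pointwise: vanishing of $\sigma_k$ and injectivity of $\xi_{k-1}$ at one point force non-injectivity of $\xi_k$ there. Combined with Claim~\ref{clm:injectivity-descends}, this shows that two candidate kinks cannot coexist on $E$, which is what step 3 needs.
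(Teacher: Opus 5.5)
Your proof is correct and follows essentially the same route as the paper. You locate $p$ in $\cS_1\itUpsilon$ via Proposition~\ref{prop:strata-dnc-rank}, read off a local kink and the pointwise conditions from Corollary~\ref{cor:dtc-computable-condition}, match this local kink with $\ell(E)$ using Claims~\ref{clm:injectivity-descends} and~\ref{clm:injectivity-stops}, and conclude that $\sigma_{\ell - 1}$ vanishes at $p$. The only difference is that you get the matching by checking Condition~\ref{item:ell-exists} for the local kink and invoking uniqueness, whereas the paper argues by contradiction and compares the injectivity jump of the $\xi_k$ at $p$ directly with the jump at a kink-$\ell$ dtc point of $E \setminus Z_E$.
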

\begin{proof}
	Assume to the contrary that there is an irreducible component $E \in [\cS_1^\times V]$ and a point $p \in Z_E = E \cap Z^{(\ell)} \subset \cS_1^\times V$ such that $f\colon (X|V) \to (B|0)$ is log smooth and saturated at $p$. Here, $\ell = \ell(E)$ is the kink in the sense of Condition~\ref{item:ell-exists}. 
	
	Let $U \subseteq X$ be an open neighborhood of $p$ where the map is log smooth and saturated. By Theorem~\ref{thm:toroidal-characterization}, we have a rank stratification $\itUpsilon = \bigsqcup_{k \geq 0} \cS_k\itUpsilon$, and by Proposition~\ref{prop:strata-dnc-rank}, we have $p \in \cS_1\itUpsilon$ since $\cS_1^\times V \cap \itUpsilon = \cS_1\itUpsilon$ and $p \in \cS_1^\times V$. Thus, the local model guaranteed by Theorem~\ref{thm:toroidal-characterization} is $\mu_{\ell';d}\colon M_{\ell';d} \to \IA^1_t$ for some $\ell' \geq 1$.
	
	Corollary~\ref{cor:dtc-computable-condition} shows that $\xi_{\ell' - 2}$ is injective at $p$, and that $\sigma_{\ell' - 1}$ is the zero map at $p$. By Claim~\ref{clm:injectivity-descends}, $\xi_k$ is injective at all $q \in E$ for all $k \leq \ell' - 1$. Furthermore, Claim~\ref{clm:injectivity-stops} guarantees that $\xi_{\ell' - 1}$ is not injective at $p$. Now, let $q \in E \setminus Z_E$ be a dtc degeneration point of kink $\ell$. Then, $\xi_{\ell - 2}$ is injective at $q$, but $\xi_{\ell - 1}$ is not injective at $q$. Thus, we have $\ell = \ell'$.
	
	On the one hand, we have that $\sigma_{\ell - 1}$ is the zero map at $p$, and on the other hand, we have $p \in Z^{(\ell)}$ so that $\sigma_{\ell - 1}$ is not the zero map at $p$. Therefore, we have a contradiction.
\end{proof}

\subsection{The proof of Proposition~\ref{prop:infinitesimal-gls-criterion}}

In this section, we prove Proposition~\ref{prop:infinitesimal-gls-criterion}. We assume that we are in the setting of that proposition throughout. 

\begin{clm}
	There is an open subset $X' \subseteq X$ with $V \subset X'$ such that $X' \to B$ is pre--gls.
\end{clm}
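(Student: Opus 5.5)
The plan is to use openness results for the fibers of a flat morphism. We have $f\colon X \to B$ flat, separated, and of finite type, and $V = X_0$ Cohen--Macaulay, reduced, and pure of dimension $d$. We need an open $X' \supseteq V$ such that every fiber $X'_b$ is pure of dimension $d$, geometrically reduced, and $(S_2)$. We will in fact obtain Cohen--Macaulay fibers, which gives the Cohen--Macaulayness of $f'$ claimed in the proposition.

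First, by \cite[Tag~00RD]{stacks} (openness of the Cohen--Macaulay locus of a flat finitely presented morphism), the set $W_1 \subseteq X$ of points $x$ at which $f$ is flat with Cohen--Macaulay fiber $X_{f(x)}$ is open. It contains $V$ because $V$ is Cohen--Macaulay.

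Second, for purity of dimension, we use that on the Cohen--Macaulay locus of a flat morphism the local fiber dimension $x \mapsto \dim_x(X_{f(x)})$ is locally constant (\cite[Tag~0D4H]{stacks}, or upper semicontinuity \cite[Tag~02FZ]{stacks} combined with flatness). Hence the locus $W_2 \subseteq W_1$ where this dimension equals $d$ is open and closed in $W_1$, and it contains $V$ because $V$ is pure of dimension $d$. A Cohen--Macaulay scheme that is equidimensional at every point has no embedded components, so each fiber of $W_2 \to B$ is pure of dimension $d$. To be careful, we must check that no irreducible component of a fiber of $W_2$ has smaller dimension at some point. This is exactly what local constancy of $\dim_x$ together with the unmixedness of Cohen--Macaulay local rings gives.

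Third, and this is the main obstacle, is reducedness. By \cite[Tag~0C3K]{stacks}, the smooth locus of $W_2 \to B$ is the complement of the closed subset $S$ cut out by $\m{Fitt}_d(\Omega^1_{X/B})$. Since $V$ is reduced and $\kk$ is perfect, $S \cap V = \m{Sing}(V;d)$ has dimension $\leq d - 1$. We discard the irreducible components of $S$ that are not dominated by $S \cap V$ in the appropriate sense: let $S'$ be the union of those components of $S$ that do not meet $V$, and set $X' = W_2 \setminus S'$. This is open and contains $V$. Each remaining component $A$ of $S$ meets $V$, and we still need $\dim(A_b) \leq d - 1$ for all $b$. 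If $A \subseteq V$, then $A$ lies only over $0$. Otherwise $A \to B$ is dominant, so $A$ is flat over $B$ by \cite[Prop.~III.9.7]{Hartshorne1977}. Then $\dim(A_b)$ is at most $\dim(A) - 1 = \dim(A \cap V) \leq d - 1$; here we use flatness to get equidimensional fibers, as in the proof of the normality claim in Section~\ref{sect:affine-pregls-families}. Thus for every $b$, the fiber $X'_b$ is Cohen--Macaulay (so it satisfies $(S_1)$ and $(S_2)$) and is generically smooth (so it satisfies $(R_0)$), hence reduced. It is geometrically reduced because we are in characteristic $0$, exactly as in Claim~\ref{clm:reduced-fibers}. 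The delicate point is the fiber-dimension bound for components of $S$ meeting $V$, which may require shrinking $B$ around $0$ (or working over a neighborhood of $0$) so that finitely many vertical components over other points are removed. This is harmless because we only need $X' \supseteq V$.
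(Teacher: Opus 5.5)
Your proposal is correct and follows the paper's proof step by step: restrict to the open Cohen--Macaulay locus, then to the open and closed part of relative dimension $d$, discard the irreducible components of $\m{Sing}(X/B;d)$ that miss $V$, and deduce reducedness of the fibers from $(R_0)$ and $(S_1)$ and geometric reducedness from characteristic $0$. The only minor variation is how you bound the fibers of a component $A$ that meets $V$: you note that a dominant $A$ is flat over $B$, so all its fibers have dimension $\m{dim}(A) - 1 = \m{dim}(A \cap V) \leq d - 1$, whereas the paper first bounds $\m{dim}(A) \leq d$ by a local dimension inequality at a point of $A \cap V$ and then treats the generic fiber by semicontinuity of fiber dimension; your closing remark about possibly shrinking $B$ is unnecessary, since any component of the singular locus lying over a point $b \neq 0$ misses $V$ and was already removed, so the dichotomy ``$A \subseteq V$ or $A$ dominates $B$'' covers every remaining component.
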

\begin{proof}
	By \cite[Tag~045U]{stacks}, the locus where $f\colon X \to B$ is Cohen--Macaulay is open. By assumption, it contains $V$. In a first step, we shrink $X$ to its Cohen--Macaulay locus. By \cite[Tag~02NM]{stacks}, we can decompose $X = \bigsqcup_e X^{(e)}$ such that each $X^{(e)} \subseteq X$ is open and closed, and such that $X^{(e)} \to B$ has relative dimension $e$. Now, $V$ is pure of dimension $d$, so we have $V \subset X^{(d)} \subseteq X$, and after further shrinking $X$, we can assume that $f\colon X \to B$ has relative dimension $d$.
	
	Since $V$ is reduced, we have $\m{dim}(\m{Sing}(X/B;d) \cap V) \leq d - 1$. Let $A \subset \m{Sing}(X/B;d)$ be an irreducible component with $A \cap V \not= \varnothing$. Then, for a closed point $x \in A \cap V$, we have $\m{dim}(\cO_{A,x}) \leq 1 + \m{dim}(\cO_{V \cap A,x}) \leq d$ by \cite[Tag~00OM]{stacks}. But then, $\m{dim}(A) \leq d$. 
	
	Let $0 \not= b \in B$ be a closed point. Since $A \cap V \not=\varnothing$, we have $A \cap X_b \subsetneq A$ so that $\m{dim}(A \cap X_b) \leq d - 1$ because $A$ is irreducible. Now, the locus where $A \to B$ has fiber dimension $\leq d - 1$ is open in $A$ by \cite[Tag~02FZ]{stacks}, so we also have $\m{dim}(A \cap X_\eta) \leq d - 1$ in the generic fiber $X_\eta$.
	
	We obtain $X' \subseteq X$ by further removing all irreducible components of $\m{Sing}(X/B;d)$ with $A \cap V = \varnothing$. Then, for every $b \in B$, the regular (= smooth) locus $\m{Reg}(X'/B) \cap X'_b$ contains all points of codimension $0$, and $X'_b$ is reduced. Since we are working in characteristic $0$, this implies that all fibers are geometrically reduced.
\end{proof}

Now, Lemma~\ref{lem:eta-f-alternative} allows us to compute $\xi_k$ and $\sigma_{\ell - 1}$ by using $\pi_{k + 1}$ respectively $\Omega^1_{X_{\ell}}|_{X_{\ell - 1}}$. Therefore, the conditions of Proposition~\ref{prop:infinitesimal-gls-criterion} imply that Condition~\ref{item:ell-exists} in Theorem~\ref{thm:explicit-gls-criterion-kk} is satisfied. Thus, the following claim is sufficient to conclude the proof of Proposition~\ref{prop:infinitesimal-gls-criterion}.

\begin{clm}
	With the open subset $X' \subseteq X$ from above, $X'_b$ is normal for all $0 \not= b \in B$.
\end{clm}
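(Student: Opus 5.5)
We show that, after possibly shrinking $X'$ further but still keeping $V \subset X'$, the singular locus $\m{Sing}(X'/B;d)$ meets every fibre $X'_b$ with $b \neq 0$ in dimension $\leq d-2$. Since $X'_b$ is Cohen--Macaulay (we shrank to the Cohen--Macaulay locus of $f$), pure of dimension $d$ and reduced, this gives $(R_1)$ and $(S_2)$, hence normality by Serre's criterion.

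The input is the structure of $\m{Sing}(X'/B;d)$ near $V$. By assumption (b), $\cC_2^\times V$ has dimension $\leq d-2$. By (c) together with Corollary~\ref{cor:dtc-computable-condition} (whose conditions may be computed via $\Omega^1_{X_{k+1}}|_{X_k}$ by Lemma~\ref{lem:eta-f-alternative}), every closed point of $E \setminus Z_E$ is a dtc degeneration point for each $E \in [\cS_1^\times V]$, and $\m{dim}(Z_E)\leq d-2$. At a dtc degeneration point $p$, the Henselian local model $xy=t^\ell$ shows that $f$ is smooth on a punctured neighbourhood outside $V$: the relative singular locus of $M_{\ell;d}$ is $\{x=y=0\}$, which lies in $t=0$. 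Since $\m{Sing}(X'/B;d)$ is a closed subset whose formation is compatible with étale maps, every point of $V$ outside
\[Z \coloneq \cC_2^\times V \cup \bigcup_E Z_E\]
has a Zariski neighbourhood $N$ with $\m{Sing}(X'/B;d)\cap N \subseteq V$. Points of $V\setminus D$ are smooth points of $f$. Thus every irreducible component $A$ of $\m{Sing}(X'/B;d)$ that is not contained in $V$ satisfies $A\cap V \subseteq \overline{Z}$, which has dimension $\leq d-2$.

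Next, for such an $A$ meeting $V$, the fibre-dimension estimate from the previous claim applies: \cite[Tag~00OM]{stacks} gives $\m{dim}(\cO_{A,x}) \leq 1+\m{dim}(A\cap V)_x \leq d-1$ at closed points $x\in A\cap V$, so $\m{dim}(A)\leq d-1$. Since $A$ is irreducible and not contained in a closed fibre, $A\cap X_b \subsetneq A$ and so $\m{dim}(A\cap X_b)\leq d-2$ for closed $b\neq 0$. Openness of the fibre-dimension locus \cite[Tag~02FZ]{stacks} extends this to the generic fibre. Components $A$ with $A\cap V=\varnothing$ are removed from $X'$, exactly as in the previous claim; this keeps $V\subset X'$. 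Components contained in $V$ do not meet $X'_b$ for $b\neq 0$.

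The main obstacle is the local step: showing rigorously that near a dtc degeneration point the relative singular locus is contained in $V$. This requires transporting $\m{Fitt}_d(\Omega^1_{X/B})$ along the Henselian isomorphism $\cO^\m{h}_{X,p}\cong R^\m{h}_{\ell;d}$ of Definition~\ref{defn:dtd-point}. Fitting ideals commute with the flat map $\cO_{X,p}\to\cO^\m{h}_{X,p}$, and in the model the ideal is $(x,y)$, whose radical contains $t$. Hence $t$ lies in the radical of $\m{Fitt}_d(\Omega^1_{X/B})_p$, which gives the desired Zariski neighbourhood $N$.
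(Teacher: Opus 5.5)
Your proof is correct and is essentially the paper's argument, run directly instead of by contradiction. The paper assumes a non-normal closed fibre and obtains a $d$-dimensional component $A$ of $\m{Sing}(X'/B;d)$ with $\m{dim}(A\cap V)=d-1$. This forces some $E\in[\cS_1^\times V]$ with $E\subseteq A\cap V$, hence a dtc degeneration point on $A$, contradicting that the relative singular locus of $M_{\ell;d}$ has dimension $d-1$. You use the same local fact to get $A\cap V\subseteq \overline{Z}$ and hence $\m{dim}(A)\leq d-1$; your Fitting-ideal transport along the Henselian isomorphism spells out a step the paper leaves implicit.

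No further shrinking of $X'$ is needed: the components of $\m{Sing}(X/B;d)$ disjoint from $V$ were already removed when $X'$ was constructed.
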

\begin{proof}
	First, assume that there is a closed point $0 \not= b \in B$ such that $X'_b$ is not normal. Since $X_b'$ is $(S_2)$, the Serre's condition $(R_1)$ is violated. Then, there is an irreducible component $A \subset \m{Sing}(X'/B;d)$ with $\m{dim}(A \cap X_b') = d - 1$. Since $A \cap V \not= \varnothing$, we have $A \cap X_b' \subsetneq A$, and therefore $\m{dim}(A) > d - 1$. On the other hand, we already saw that $\m{dim}(A) \leq d$, so $\m{dim}(A) = d$. Then, for a closed point $p \in A \cap V$, we have $d = \m{dim}(\cO_{A,p}) \leq 1 + \m{dim}(\cO_{A \cap V,p}) \leq d$, so $\m{dim}(A \cap V) = d - 1$.
	
	Since $A \cap V \subset D$ and $\m{dim}(\cC_2^\times V) \leq d - 2$, there must be an irreducible component $E \in [\cS_1^\times V]$ with $E \subset A \cap V$. Let $\ell = \ell(E) \geq 1$ be its kink in the sense of Condition~\ref{item:ell-exists} in Theorem~\ref{thm:explicit-gls-criterion-kk}. Since $E$ contains a dtc degeneration point of kink $\ell$, we can find a dtc degeneration point $p \in A \cap V$. For the local model $M_{\ell;d} \to \IA^1_t$ of a dtc degeneration point, we have $\m{dim}(\m{Sing}(M_{\ell;d}/\IA^1_t;d)) = d - 1$, contradicting the presence of the $d$-dimensional component $A$ of $\m{Sing}(X'/B;d)$ near $p$.
	
	Now, \cite[Tag~02JS]{stacks} shows that $\m{dim}(\m{Sing}(X'^\star/B^\star;d)) \leq d - 1$ so that $\m{codim}(Z_\eta,X'_\eta) \geq 2$ for the generic point $\eta \in B$. Thus, $X_\eta$ satisfies Serre's condition $(R_1)$ and is therefore normal.
\end{proof}

\section{Relation with toroidal crossing schemes}

In this section, we discuss various results that pertain to toroidal crossing schemes.

\subsection{The map $\eta_V^{(\ell)}\colon \cL\cS_V \to \cE xt^1(\Omega^1_{V},\cO_{V})$}

We prove Theorem~\ref{thm:ell-th-classifying-map}. Recall the monoid homomorphism $\theta_\ell\colon \NN \to P_\ell$ from Definition~\ref{defn:dtd-point}.

\begin{defn}
	Let $(V,\cP,\bar\rho)$ be a toroidal crossing scheme. A closed point $v \in V$ is a \emph{double toroidal crossing (dtc) point of kink $\ell \geq 1$} if there is an isomorphism $\phi\colon \cP_{V,\bar v} \cong P_\ell$ such that $\phi(\bar\rho|_{\bar v}) = \bar t$. \qeddef
\end{defn}

The dtc points of kink $\ell \geq 1$ are precisely the closed points in $\cS_1^{(\ell)}V$. As in the introduction, we set $\cU_1^{(\ell)}V \coloneq \cU_0V \sqcup \cS_1^{(\ell)}V$. This is an open subset of $V$.

\begin{constr}\label{constr:eta-ell}\note{constr:eta-ell}
	Fix $\ell \geq 2$. We construct a map
	\[\eta_V^{(\ell)}\colon \enspace \cL\cS_V \to \cE xt^1(\Omega^1_V,\cO_V)\]
	on $\cU_1^{(\ell)}V$ as follows. Let $W \to \cU_1^{(\ell)}V$ be an affine \'etale open subset, and let $\lambda \in \Gamma(W,\cL\cS_V)$. Then, we obtain a log smooth morphism $f_0\colon (W,\cM_0) \to S_0$ and hence a log smooth deformation $f_\ell\colon (W_\ell,\cM_\ell) \to S_\ell$ which is unique up to non-unique isomorphism.
	
	The affine morphism $f_\ell\colon W_\ell \to S_\ell$ is pre--gls and induces a class 
	\[\eta_{\ell - 1}(f_\ell) \in \Gamma(W_{\ell - 1},\cE xt^1(\Omega^1_{W_{\ell - 1}/S_{\ell - 1}},\cO_{W_{\ell - 1}}))\] 
	as discussed before Lemma~\ref{lem:trivial-defo-characterization}. Since $f_\ell$ is \'etale locally isomorphic to \[\Spec\kk[t,x,y,z_1,\ldots,z_{d - 1}]/(xy - t^\ell,t^{\ell + 1}) \to \Spec \kk[t]/(t^{\ell + 1}),\]
	its base change $f_{\ell - 1}\colon W_{\ell - 1} \to S_{\ell - 1}$ is locally trivial as a flat deformation. Thus, we have $\eta_{\ell - 2}(f_\ell) = 0$ by Lemma~\ref{lem:trivial-defo-characterization}, and therefore $\rho_{\ell - 1;\ell - 2}(\eta_{\ell - 1}(f_\ell)) = 0$. Then, we obtain an element $\eta_V^{(\ell)}(f_\ell) \in \Gamma(W,\cE xt^1(\Omega^1_V,\cO_V))$ by means of the exact sequence in Lemma~\ref{lem:ses-at-lt-point}.\footnote{Note that the identification of $\cE xt^1(\Omega^1_V,\cO_V)$ with the kernel is canonical once a chart $\NN \to \cM_{S_\ell}$ and hence a generator of the kernel of $\cO_{S_{\ell - 1}} \to \cO_{S_{\ell - 2}}$ is chosen.}
	
	Let $f'_\ell\colon (W_\ell',\cM_\ell') \to S_\ell$ be another log smooth deformation. Any chosen isomorphism between them identifies $\eta_{\ell - 1}(f_\ell)$ with $\eta_{\ell - 1}(f'_\ell)$. The identification of the kernel of $\rho_{\ell - 1;\ell - 2}$ with $\cE xt^1(\Omega^1_W,\cO_W)$ is compatible with this isomorphism, so we have $\eta_V^{(\ell)}(f_\ell) = \eta_V^{(\ell)}(f_\ell')$, and hence a well-defined element $\eta_V^{(\ell)}(\lambda) \in \Gamma(W,\cE xt^1(\Omega^1_V,\cO_V))$. This construction is compatible with restrictions to smaller open subsets, so we have a sheaf map. \qedloz
\end{constr}
\begin{rem}
	Note that we cannot extend $\eta_V^{(\ell)}$ to the whole of $V$ since log smooth deformations to order $\ell - 1$ are not locally trivial outside $\cU_1^{(\ell)}V$. \qedloz
\end{rem}

We make the local model for a log smooth morphism arising from $\lambda \in \Gamma(W,\cL\cS_V)$ explicit. This is a variant and special case of Theorem~\ref{thm:toroidal-characterization}. 

\begin{lem}\label{lem:U1ell-local-model}\note{lem:U1ell-local-model}
	Let $W \to \cU_1^{(\ell)}V$ be \'etale, let $\bar w \to W$ be a $\kk$-valued point lying in $D^{(\ell)} = \m{Sing}(\cU_1^{(\ell)}V;d)$, and let $\lambda \in \Gamma(W,\cL\cS_V)$ be represented by $(\cM_0,\alpha,q,\rho)$. Let $f_0\colon (W,\cM_0) \to S_0$ be the associated log smooth morphism. Then, $\bar w$ admits an \'etale neighborhood $\bar w \to \widetilde W \to W$ and
	\begin{enum:alph}
		\item $\xi,v \in \Gamma(\widetilde W,\cM_0)$ with $\xi v = \rho^\ell$ and $\tilde x|_{\bar w},\tilde y|_{\bar w} \in \fm_{V,\bar w}$ for $\tilde x \coloneq \alpha(\xi)$ and $\tilde y \coloneq \alpha(v)$,
		\item $\tilde z_1,\ldots,\tilde z_{d - 1} \in \Gamma(\widetilde W,\cO_V)$ with $\tilde z_i|_{\bar w} \in \fm_{V,\bar w}$,
	\end{enum:alph}
	such that the morphism of log schemes 
	\[\phi\colon \enspace (\widetilde W,\cM_0) \to \Spec(P_\ell \to \kk[x,y,z_1,\ldots,z_{d - 1}]/(xy))\]
	given by 
	\[\phi^\sharp(x) = \tilde x, \enspace \phi^\sharp(y) = \tilde y, \enspace \phi^\sharp(z_i) = \tilde z_i, \quad \phi^\flat(\bar x) = \xi, \enspace \phi^\flat(\bar y) = v, \enspace \phi^\flat(\bar t) = \rho,\]
	is strict and \'etale.
\end{lem}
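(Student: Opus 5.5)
The plan is to combine the definition of a toroidal crossing scheme, which supplies local log smooth models, with the explicit structure of the monoid $P_\ell$. By definition of $\cS_1^{(\ell)}V$, the geometric point $\bar w$ is a dtc point of kink $\ell$, so we may fix an isomorphism $\cP_{\bar w} \cong P_\ell$ sending $\bar\rho|_{\bar w}$ to $\bar t$. Since $f_0\colon (W,\cM_0) \to S_0$ is log smooth, saturated and vertical, I would first apply the log-point analog of the toroidal characterization (Theorem~\ref{thm:toroidal-characterization}, or rather \cite[Thm.~IV.3.3.3]{LoAG2018} over $S_0$). This should produce an \'etale neighborhood $\widetilde W$ of $\bar w$ and a chart $\beta\colon P_\ell \to \Gamma(\widetilde W,\cM_0)$ lifting the identification of ghost stalks at $\bar w$. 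The chart should be compatible with $\NN \to \cM_{S_0}$ up to a unit, i.e.\ $\beta(\bar t) = c \cdot \rho$ with $c \in \Gamma(\widetilde W,\cO^\times)$. It should also give a strict \'etale map $\widetilde W \to \Spec(P_\ell \to \kk[P_\ell]) \times_{\IA^1} \{0\} \times \IA^{d-1}$, whose underlying scheme is $\kk[x,y,z_i]/(xy)$ because $\kk[P_\ell]/(\m{z}^{\bar t}) \cong \kk[x,y]/(xy)$, using $xy = t^\ell$.

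Second, I would normalize the chart so that $\bar t \mapsto \rho$ exactly. To do this, I replace $\beta$ by $\beta'$ with $\beta'(\bar t) = \rho$, $\beta'(\bar x) = \beta(\bar x)$ and $\beta'(\bar y) = c^{-\ell}\beta(\bar y)$. This is again a monoid homomorphism from $P_\ell$, since its only relation is $\bar x + \bar y = \ell\bar t$, and it is still a chart. Verticality of $f_0$, equivalently the fact that $f_0^\flat(\tau_0)$ has ghost $\bar\rho$, is what lets us absorb the unit. After shrinking $\widetilde W$ if needed, I would extract a unit $c$ with an $\ell$-th root or avoid roots altogether; the adjustment above avoids roots.

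Third, I would set $\xi = \beta'(\bar x)$ and $v = \beta'(\bar y)$, so that $\xi v = \beta'(\ell\bar t) = \rho^\ell$, and put $\tilde x = \alpha(\xi)$ and $\tilde y = \alpha(v)$. These vanish at $\bar w$ because $\bar x$ and $\bar y$ are non-units in $\cP_{\bar w}$, and the log structure is the one associated to a chart, so $\alpha$ sends non-invertible elements into $\fm$. For $\tilde z_i$, I would take the pullbacks of the $\IA^{d-1}$ coordinates shifted to vanish at $\bar w$. Strictness of $\phi$ is immediate because $\beta'$ is a chart.

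The main obstacle is the \'etaleness of $\phi$ itself, as opposed to the map to the toric model produced by the general theorem. The two differ because the rescaling of $\bar y$ changes $\tilde y$ by the unit $\alpha$-factor $c^{-\ell}$. I would handle this by noting that $\phi$ is obtained from the original \'etale model map by composing with the automorphism $y \mapsto c^{-\ell}y$ of the target over $\widetilde W$, which preserves $xy = 0$. More concretely, I would check that $\tilde x, \tilde y, \tilde z_i$ generate $\fm_{\bar w}/\fm_{\bar w}^2$ and that the map induces an isomorphism on completions, which is enough by Lemma~\ref{lem:complete-Henselian}.
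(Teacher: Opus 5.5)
Your proposal is correct and is essentially the paper's argument, which consists only of citing \cite[Thm.~IV.3.3.3]{LoAG2018}: a chart for the morphism $f_0$ that is neat at $\bar w$, reindexed via $\cP_{\bar w} \cong P_\ell$, gives $\xi, v, \rho$ with $\xi v = \rho^\ell$, and the resulting smooth map to $\Spec \kk[x,y]/(xy)$ is made \'etale by adjoining functions $\tilde z_1,\ldots,\tilde z_{d-1}$. One remark on your Step~2: because Ogus's theorem produces a chart for the morphism, you already have $\beta(\bar t) = \rho$ exactly, so $c = 1$ and the rescaling is unnecessary. If you keep the rescaling, your cotangent-space check at $\bar w$ (using $\tilde y(\bar w) = 0$) is the valid justification for \'etaleness. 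The ``automorphism $y \mapsto c^{-\ell}y$ of the target over $\widetilde W$'' argument is not: this is not an automorphism of the target, and it does not by itself preserve \'etaleness.
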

\begin{proof}
	This is a special case of \cite[Thm.~IV.3.3.3]{LoAG2018}.
\end{proof}

When one local model is chosen, we can choose a closely related local model for a second section of $\cL\cS_V$.

\begin{prop}\label{prop:second-section-local-model}\note{prop:second-section-local-model}
	In the situation of Lemma~\ref{lem:U1ell-local-model}, assume that the data in its conclusion have been chosen. Let $\lambda' \in \Gamma(\widetilde W,\cL\cS_V)$ be another section, represented by $(\cM_0',\alpha',q',\rho')$. Let $f_0'\colon (\widetilde W,\cM_0') \to S_0$ be the corresponding log smooth morphism. Then, $\bar w$ admits an \'etale neighborhood $\bar w \to \widehat W \to \widetilde W$ and
	\begin{enum:alph}
		\item two invertible functions $u_x, u_y \in \Gamma(\widehat W,\cO_V^\times)$,
		\item another invertible function $u_t \in \Gamma(\widehat W,\cO_V^\times)$ with $u_xu_y = u_t^\ell$,
		\item two sections $\xi',v' \in \Gamma(\widehat W,\cM_0')$ such that $\alpha'(\xi') = u_x \cdot \alpha(\xi)$ and $\alpha'(v') = u_y \cdot \alpha(v)$, as well as $\xi' v' = (\rho')^\ell$,
	\end{enum:alph}
	such that:
	\begin{enum:arabic}
		\item We have an isomorphism $\psi\colon (\widehat W,\cM_0) \to (\widehat W,\cM_0')$ of log schemes which is the identity on underlying schemes, and which satisfies $\psi^\flat(\xi') = u_x\xi$, $\psi^\flat(v') = u_y v$, and $\psi^\flat(\rho') = u_t\rho$. In general, we have $f_0' \circ \psi \not= f_0$.
		\item The morphism 
		\[\phi'\colon \enspace \widehat W \to \Spec(P_\ell \to \kk[x,y,z_1,\ldots,z_{d - 1}]/(xy))\]
		of log schemes over $S_0$ given by 
		\[\phi'^\sharp(x) = u_x \cdot \tilde x, \enspace \phi'^\sharp(y) = u_y \cdot \tilde y, \enspace \phi'^\sharp(z_i) = \tilde z_i, \quad \phi'^\flat(\bar x) = \xi', \enspace \phi'^\flat(\bar y) = v', \enspace \phi'^\flat(\bar t) = \rho',\]
		is strict and \'etale. 
	\end{enum:arabic} 
\end{prop}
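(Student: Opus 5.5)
Both log structures $\cM_0$ and $\cM_0'$ share the ghost sheaf $\cP$ and the section $\bar\rho$. I would first compare them on ghost sheaves and then lift sections, using the fact that $\cM_0 \to \cP$ and $\cM_0' \to \cP$ are $\cO_V^\times$-torsors over $\cP$.

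\textbf{Step 1: choose lifts.} After an étale shrinking of $\widetilde W$ around $\bar w$, I would lift the images $q(\xi),q(v)\in\Gamma(\widetilde W,\cP)$ to sections $\xi_1',v_1'\in\Gamma(\widetilde W,\cM_0')$. This is possible because $\cM_0'\to\cP$ is surjective with fibres $\cO_V^\times$-torsors. Since $q'(\xi_1'v_1') = q(\xi v) = \ell\bar\rho = q'((\rho')^\ell)$, there is a unit $c$ with $\xi_1'v_1' = c\,(\rho')^\ell$. Replacing $v_1'$ by $c^{-1}v_1'$, we may set $\xi' = \xi_1'$ and $v' = c^{-1}v_1'$, so that $\xi'v' = (\rho')^\ell$.

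\textbf{Step 2: relate the structure maps.} I need to show that $\alpha'(\xi')$ is a unit multiple of $\tilde x = \alpha(\xi)$, and similarly for $v$. The zero locus of $\alpha(m)$ depends only on the image of $m$ in $\cP$: it is the union of those components of $V$ along which the ghost lies outside the corresponding face. In the dnc local model, $\alpha(\xi)$ cuts out the branch $\{x=0\}$ reducedly. Since $V$ is reduced and the stalk is $R_{\times,d}^{\m{h}}$, I expect that $\alpha'(\xi')$ also generates the ideal of that branch. Concretely, I would apply Lemma~\ref{lem:U1ell-local-model} to $\lambda'$ as well: this shows that $\alpha'(\xi')$ vanishes exactly on $\{\tilde x=0\}$ to first order, so after shrinking, $\alpha'(\xi') = u_x\tilde x$ and $\alpha'(v') = u_y\tilde y$ with $u_x,u_y$ units. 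I expect this step to be the main obstacle. One must make sure that the ideal $(\tilde x)$ is determined by the ghost $q(\xi)$, which should hold since in $R_{\times,d}^{\m{h}}$ the ideals of the two branches are principal, generated by $x$ and $y$.

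\textbf{Step 3: construct $u_t$ and $\psi$.} Define $\psi^\flat$ on the chart $P_\ell$: send $\bar x\mapsto u_x\xi$, $\bar y\mapsto u_y v$, $\bar t\mapsto u_t\rho$. This is compatible with the relation $\bar x+\bar y = \ell\bar t$ precisely when $u_xu_y = u_t^\ell$. To get $u_t$, note that $\rho'$ and $\rho$ have the same ghost $\bar\rho$. After identifying $\cM_0$ with $\cM_0'$ via the chart-induced isomorphism sending $\xi\mapsto u_x^{-1}\xi'$ and $v\mapsto u_y^{-1}v'$, we get $\rho' = u_t\rho$ for a unique unit $u_t$. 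Comparing $\xi'v' = (\rho')^\ell$ with $\xi v = \rho^\ell$ then forces $u_xu_y = u_t^\ell$. The isomorphism should be taken in the direction $\cM_0'\to\cM_0$ and must be compatible with $\alpha$ and $\alpha'$; this uses the identities $\alpha'(\xi') = u_x\alpha(\xi)$ from Step~2 and $\alpha(\rho) = 0 = \alpha'(\rho')$. Strictness together with the chart from Lemma~\ref{lem:U1ell-local-model} shows that $\psi$ is a well-defined isomorphism of log structures.

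\textbf{Step 4: verify $\phi'$.} By construction, $\phi'$ agrees with $\phi\circ\psi^{-1}$ composed with the automorphism of the model given by rescaling $x\mapsto u_x x$ and $y\mapsto u_y y$. The only subtlety is that these units are functions, not constants. Instead, I would argue directly: $\phi'$ is strict because it is a chart, as in Step~3. It is étale on underlying schemes because on Zariski cotangent spaces at $\bar w$ the differentials $d(u_x\tilde x) = u_x(\bar w)\,d\tilde x$ and $d(u_y\tilde y) = u_y(\bar w)\,d\tilde y$ span the same space as $d\tilde x$ and $d\tilde y$ (since $\tilde x(\bar w) = \tilde y(\bar w) = 0$). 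Together with $xy\mapsto u_xu_y\tilde x\tilde y = 0$ and Lemma~\ref{lem:complete-Henselian}, this makes $\phi'$ an isomorphism on Henselian stalks, hence étale near $\bar w$.
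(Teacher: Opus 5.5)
\textbf{Gap in Step 3: how you obtain $u_t$.} Your derivation of $u_t$ is circular. The ``chart-induced isomorphism'' sending $\xi \mapsto u_x^{-1}\xi'$ and $v \mapsto u_y^{-1}v'$ is not determined by these two assignments. For $\ell \geq 2$, the element $\bar t$ does not lie in the subgroup of $P_\ell^{\m{gp}}$ generated by $\bar x, \bar y$; only $\ell\bar t = \bar x + \bar y$ does. So a homomorphism out of $P_\ell$ also needs a prescribed image $c\rho'$ of $\rho$, and compatibility with $\xi v = \rho^\ell$ forces $c^\ell = (u_xu_y)^{-1}$. The identification from which you read off $u_t$ therefore exists only once an $\ell$-th root of $u_xu_y$ is available. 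For the same reason $u_t$ is not unique; it is determined only up to $\mu_\ell(\kk)$.

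The missing ingredient is the one the paper uses. The element $u_xu_y$ is a unit of the strictly Henselian local ring $\cO_{V,\bar w}$, whose residue field $\kk$ has characteristic $0$. Hence $T^\ell - u_xu_y$ has a simple root modulo the maximal ideal, and Hensel's lemma lifts it. After a further \'etale shrinking you may therefore fix $u_t$ with $u_t^\ell = u_xu_y$. Only then is $\check\gamma\colon P_\ell \to \cM_0$, $\bar x \mapsto u_x\xi$, $\bar y \mapsto u_yv$, $\bar t \mapsto u_t\rho$ well defined. It is a chart because it differs from $\gamma$ by units, and $\alpha\circ\check\gamma = \alpha'\circ\gamma'$ then yields $\psi^\flat\colon \cM_0' \to \cM_0$ as you intend.

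\textbf{Smaller omission in Step 4.} The cotangent-space computation together with Lemma~\ref{lem:complete-Henselian} only shows that $\phi'^\sharp\colon R^{\m{h}}_{\times,d} \to \cO_{V,\bar w}$ is surjective; you still need injectivity. The paper's argument works: both rings are reduced of dimension $d$ with exactly two minimal primes, whose preimages are the two distinct minimal primes of the source, so the kernel is their intersection, which is $0$. Alternatively, composing with $(\phi^\sharp)^{-1}$ gives a surjective endomorphism of a Noetherian ring, which is injective.

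\textbf{Steps 1 and 2 are valid, by a different route.} The paper obtains $\xi', v'$ from Ogus's chart theorem (\cite[Thm.~III.1.2.7]{LoAG2018}). It gets $\alpha'(\xi') = u_x\alpha(\xi)$ from $\bar\alpha = \bar\alpha'\colon \cP \to \cO_V/\cO_V^\times$ (\cite[Lemma~9.23]{FeltenGLDT}). Your Step 2 instead applies Lemma~\ref{lem:U1ell-local-model} to $\lambda'$, swapping its two generators if necessary. The invertibility locus of $\alpha'(\xi')$ depends only on $q(\xi)$, so $\tilde x$ and $\alpha'(\xi')$ generate the same minimal prime of $\cO_{V,\bar w} \cong R^{\m{h}}_{\times,d}$. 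Two generators of the same nonzero principal ideal in a local ring differ by a unit. This avoids the citation at the cost of a short local computation. You should state explicitly that your lifted map $P_\ell \to \cM_0'$ is a chart. This holds because the chart property depends only on the composite $P_\ell \to \cP$, which agrees with that of $\gamma$.
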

\begin{proof}
	Let $\gamma\colon P_\ell \to \cM_0$ be the chart for the log structure given by $\gamma(\bar x) = \xi$, $\gamma(\bar y) = v$, and $\gamma(\bar t) = \rho$. This chart is neat at $\bar w$ and gives an identification $P_\ell \cong \cP_{\bar w}$.
	
	On the base $S_0$, we have a neat chart $\beta\colon \NN \to \cM_{S_0}$ given by $\beta(1) = \tau_0$. By \cite[Thm.~III.1.2.7]{LoAG2018}, after shrinking $\widetilde W$ around $\bar w$, we can find a monoid $P'$, a monoid homomorphism $\theta\colon \NN \to P'$, and a chart $\gamma'\colon P' \to \cM_0'$ such that $\rho' = (f_0')^\flat(\tau_0) = \gamma'(\theta(1))$, and such that $\gamma'$ induces an isomorphism $P' \cong \cP_{\bar w}$. Then, we can compose with the identification $\cP_{\bar w} \cong P_\ell$ induced by $\gamma$ and consider $\gamma'$ as a chart $\gamma'\colon P_\ell \to \cM_0'$. Under this identification, we have $\theta(1) = \bar t$. In particular, we have $\gamma'(\bar t) = \rho'$.
	
	Let $\xi' = \gamma'(\bar x)$ and $v' = \gamma'(\bar y)$. Since $\gamma'$ is a monoid homomorphism, we have $\xi'v' = (\rho')^\ell$.
	
	Because $\alpha\colon \cM_0 \to \cO_{\widetilde W}$ and $\alpha'\colon \cM_0' \to \cO_{\widetilde W}$ are log structures of the same type, the two induced maps $\bar\alpha,\bar\alpha'\colon \cP \rightrightarrows \cO_{\widetilde W}/\cO_{\widetilde W}^\times$ coincide by \cite[Lemma~9.23]{FeltenGLDT}. Thus, after shrinking $\widetilde W$ in the \'etale topology, we can assume that we have $u_x,u_y \in \Gamma(\widetilde W,\cO_{\widetilde W}^\times)$ with $\alpha'(\xi') = u_x \cdot \alpha(\xi)$ and $\alpha'(v') = u_y \cdot \alpha(v)$.
	
	In a strictly Henselian local ring whose residue field has characteristic $0$, we can take $\ell$-th roots of invertible elements. Thus, after further shrinking $\widetilde W$, we can assume that there is some $u_t \in \Gamma(\widetilde W,\cO_{\widetilde W}^\times)$ with $u_xu_y = u_t^\ell$.
	
	We define a new homomorphism $\check\gamma\colon P_\ell \to \cM_0$ by $\check\gamma(\bar x) = u_x\xi$, $\check\gamma(\bar y) = u_yv$, and $\check\gamma(\bar t) = u_t\rho$. Since $\xi v = \rho^\ell$ and $u_xu_y = u_t^\ell$, this is well-defined. 
	
	Let $\cN_0 \to \cO_{\widetilde W}$ be the log structure associated with $\check\gamma$. Then, we have a homomorphism of log structures $\chi^\flat\colon \cN_0 \to \cM_0$. For a point $\bar u \in \widetilde W$, we have $\check\gamma^{-1}(\cO_{\widetilde W,\bar u}) = \gamma^{-1}(\cO_{\widetilde W,\bar u})$ because $\gamma$ and $\check\gamma$ differ only by multiplication with a unit. Thus, $\chi^\splus\colon \overline\cN_0 \to \overline\cM_0$ is an isomorphism, and so is $\chi^\flat$ because both $\cN_0$ and $\cM_0$ are integral log structures.
	
	We have $\alpha(\check\gamma(\bar x)) = u_x\alpha(\xi) = \alpha'(\gamma'(\bar x))$, $\alpha(\check\gamma(\bar y)) = u_y\alpha(v) = \alpha'(\gamma'(\bar y))$, and $\alpha(\check\gamma(\bar t)) = u_t\alpha(\rho) = 0 = \alpha'(\gamma'(\bar t))$. Since $\gamma'$ is a chart for $\cM_0'$, this shows that we have an induced map of log structures $\psi^\flat\colon \cM_0' \to \cM_0$. Since $\check\gamma$ is a chart for $\cM_0$, and since $\alpha \circ \check\gamma = \alpha' \circ \gamma'$, $\psi^\flat$ is an isomorphism. This yields the claimed isomorphism of log schemes $\psi\colon (\widetilde W,\cM_0) \to (\widetilde W,\cM_0')$ whose underlying morphism of schemes is the identity.
	
	With the elements chosen above, the morphism 
	\[\phi'\colon \widetilde W \to \Spec(P_\ell \to \kk[x,y,z_1,\ldots,z_{d - 1}]/(xy))\]
	defined in the statement is strict. Furthermore, we have $\phi'(\bar w) = 0$.
	
	Recall the ring $R_{\times,d}^\m{h}$ from Definition~\ref{defn:dnc-point}. Since the morphism $\phi$ from Lemma~\ref{lem:U1ell-local-model} is \'etale and $\phi(\bar w) = 0$, we have an induced isomorphism $\phi^\sharp\colon R_{\times,d}^\m{h} \to \cO_{V,\bar w}$. In particular, $\fm_{V,\bar w}/\fm_{V,\bar w}^2$ has a basis given by $[\tilde x]$, $[\tilde y]$, $[\tilde z_1]$, \ldots, $[\tilde z_{d - 1}]$. But then, also $[u_x\tilde x]$, $[u_y\tilde y]$, $[\tilde z_1]$, \ldots, $[\tilde z_{d - 1}]$ is a basis, and $d\phi'^*\colon \fm_{\times,d}^\m{h}/(\fm_{\times,d}^\m{h})^2 \to \fm_{V,\bar w}/\fm_{V,\bar w}^2$ is an isomorphism. 
	
	After completion, we find that $(\phi')^\sharp\colon \widehat R_{\times,d}^\m{h} \to \widehat\cO_{V,\bar w}$ is surjective, so Lemma~\ref{lem:complete-Henselian} yields that $\phi'^\sharp\colon R_{\times,d}^\m{h} \to \cO_{V,\bar w}$ is surjective. Since $\cO_{V,\bar w} \cong R_{\times,d}^\m{h}$ by means of $\phi^\sharp$, both rings have precisely two minimal prime ideals. Let $\fp_1,\fp_2 \subset \cO_{V,\bar w}$ be the minimal primes. Since both rings have the same dimension $d$, the preimage $\fq_i$ of $\fp_i$ is a minimal prime in $R_{\times,d}^\m{h}$, and since $(\phi')^\sharp$ is surjective, we have $\fq_1 \not=\fq_2$. Then, we have $(\phi'^\sharp)^{-1}(0) = (\phi'^\sharp)^{-1}(\fp_1 \cap \fp_2) = \fq_1 \cap \fq_2 = (0)$ because both rings are reduced. But then, $\phi'^\sharp\colon R_{\times,d}^\m{h} \to \cO_{V,\bar w}$ is an isomorphism, and $\phi'$ becomes \'etale after further shrinking $\widetilde W$. We denote the final $\widetilde W$ by $\widehat W$.
\end{proof}

We establish the relation between $\eta_V^{(\ell)}(\lambda)$ and $\eta_V^{(\ell)}(\lambda')$. To this end, we need the following result.

\begin{lem}
	In Proposition~\ref{prop:second-section-local-model}, let $M_0 = \Spec\kk[x,y,z_1,\ldots,z_{d - 1}]/(xy)$, and let 
	\[e\colon \quad 0 \to \cO_{M_0} \to \cE_0 \to \Omega^1_{M_0} \to 0\]
	be a short exact sequence. Then, we have $\phi^*(e) = u_xu_y \cdot \phi'^*(e)$ in $\Gamma(\widehat W,\cE xt^1(\Omega^1_{\widehat W},\cO_{\widehat W}))$.
\end{lem}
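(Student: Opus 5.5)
The plan is to reduce to a single generating extension class on $M_0$ and then compare two free presentations of $\Omega^1_{\widehat W}$ by hand. Since $\phi$ and $\phi'$ are étale, both pullbacks identify $\phi^*\Omega^1_{M_0}$ and $\phi'^*\Omega^1_{M_0}$ with $\Omega^1_{\widehat W}$. Condition $(\bigstar)$ is therefore satisfied, and by Lemma~\ref{lem:extension-properties} both $\phi^*$ and $\phi'^*$ are $\cO$-semilinear maps
\[\cE xt^1(\Omega^1_{M_0},\cO_{M_0}) \to \cE xt^1(\Omega^1_{\widehat W},\cO_{\widehat W}),\]
via $\phi^\sharp$ and $\phi'^\sharp$ respectively. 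On $M_0$ we use the standard extension
\[e_0\colon\quad 0 \to \cO_{M_0} \xrightarrow{(y,x)} \cO_{M_0}^{\oplus 2} \oplus \bigoplus_i \cO_{M_0}\,dz_i \xrightarrow{(dx,dy,dz_i)} \Omega^1_{M_0} \to 0.\]
As in the proof of Proposition~\ref{prop:effective-dtd-criterion}, its class generates $\cE xt^1(\Omega^1_{M_0},\cO_{M_0})$. That sheaf is an $\cO_{M_0}/(x,y)$-module; indeed, in the presentation, the image of $\cH om(\cO^{\oplus 2+(d-1)},\cO)$ in $\cH om(\cO,\cO)$ is the ideal $(x,y)$.

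\emph{Reduction to $e = e_0$.} We write $e = a\cdot e_0$ locally, with $a \in \cO_{M_0}$. Then $\phi^*(e) = \phi^\sharp(a)\,\phi^*(e_0)$ and $\phi'^*(e) = \phi'^\sharp(a)\,\phi'^*(e_0)$. The target is annihilated by $(\tilde x,\tilde y) = (u_x\tilde x,u_y\tilde y)$. Moreover $\phi^\sharp(a) \equiv a(0,0,\tilde z) \equiv \phi'^\sharp(a)$ modulo this ideal, because $\phi$ and $\phi'$ agree on the $z_i$ and send $x,y$ into the same ideal. So it suffices to prove $\phi^*(e_0) = u_xu_y\,\phi'^*(e_0)$, and this also covers the case where $a$ is only local, since the claim is local.

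\emph{Comparison of presentations.} Write $F = \cO^{\oplus 2}\oplus\bigoplus\cO\,dz_i$ on $\widehat W$. Pulling back gives two resolutions of $\Omega^1_{\widehat W}$:
\begin{itemize}
\item $0\to\cO\xrightarrow{r}F\to\Omega^1_{\widehat W}\to 0$ with $r = (\tilde y,\tilde x,0)$, and generators mapping to $d\tilde x, d\tilde y, d\tilde z_i$;
\item $0\to\cO\xrightarrow{r'}F\to\Omega^1_{\widehat W}\to 0$ with $r' = (u_y\tilde y,u_x\tilde x,0)$, and generators mapping to $d(u_x\tilde x), d(u_y\tilde y), d\tilde z_i$.
\end{itemize}
The first of these is $\phi^*(e_0)$ and the second is $\phi'^*(e_0)$.

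We lift the identity of $\Omega^1_{\widehat W}$ to a chain map from the second resolution to the first. In degree $0$, we send $d(u_x\tilde x)\mapsto u_x\,d\tilde x + \tilde x\,(\text{a lift of } du_x)$, and similarly for $d(u_y\tilde y)$; this is possible since $F$ surjects onto $\Omega^1_{\widehat W}$. In degree $1$, the relation $r'$ is sent to
\[u_y\tilde y\,d(u_x\tilde x) + u_x\tilde x\,d(u_y\tilde y) = u_xu_y(\tilde y\,d\tilde x + \tilde x\,d\tilde y) + \tilde x\tilde y(\cdots) = u_xu_y\,r,\]
using $\tilde x\tilde y = 0$ in $\cO_V$. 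Hence the degree-$1$ component is multiplication by $u_xu_y$.

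Each extension class is the image of $\mathrm{id}\in\cH om(\cO,\cO)$ under the connecting map of its own resolution. Applying $\cH om(-,\cO)$ to the chain map, the identification of $\cE xt^1$ computed from the two resolutions is induced by $\mathrm{id}\mapsto u_xu_y\cdot\mathrm{id}$. This gives $[\phi^*(e_0)] = u_xu_y\,[\phi'^*(e_0)]$.

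The main point to check carefully is the direction and bookkeeping of the chain-map comparison: which resolution maps to which, and that the induced map on $\cE xt^1$ really sends the first class to $u_xu_y$ times the second, rather than the inverse. The ambiguity in choosing lifts of $du_x$ and $du_y$ does not matter, because chain maps lifting the identity are unique up to homotopy.
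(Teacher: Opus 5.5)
Your proof is correct and is essentially the paper's argument. The paper likewise first treats the standard extension $e_0$ via an explicit morphism from $\phi'^*(e_0)$ to $\phi^*(e_0)$ given by $\delta_x \mapsto u_x\delta_x + \tilde x\eps_x$ and $\delta_y \mapsto u_y\delta_y + \tilde y\eps_y$ (with $\eps_x,\eps_y$ lifting $du_x,du_y$), whose left component is multiplication by $u_xu_y$ by the same computation using $\tilde x\tilde y = 0$; it then passes to general $e = c\cdot e_0$ via Lemma~\ref{lem:extension-properties} and the fact that $\phi^\sharp(c) - \phi'^\sharp(c)$ lies in the ideal $(\tilde x,\tilde y)$, which annihilates the target.

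Your formulation via naturality of connecting maps is equivalent to the paper's pushout formulation, and your direction bookkeeping matches it. Including the $dz_i$ summands in $e_0$ is also a genuine improvement: the paper omits them, so its middle term does not surject onto $\Omega^1_{M_0}$ when $d \geq 2$.
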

\begin{proof}
	First, we consider the short exact sequence 
	\[e_0\colon \quad 0 \to \cO_{M_0} \xrightarrow{\iota} \cO_{M_0} \cdot \delta_x \oplus \cO_{M_0} \cdot \delta_y \xrightarrow{\pi} \Omega^1_{M_0} \to 0\]
	given by $\iota(1) = y\delta_x + x\delta_y$ and $\pi(\delta_x) = dx$, $\pi(\delta_y) = dy$. After pulling back to $\widehat W$, we obtain a solid diagram
	\[
		\xymatrix{
			0 \ar[r] & \phi^*\cO_{M_0} \ar[r] & \phi^*\cO_{M_0} \cdot \delta_x \oplus \phi^*\cO_{M_0} \cdot \delta_y \ar[r] & \phi^*\Omega^1_{M_0} \ar[r] \ar[d]^\cong & 0 \\
			& \cO_{\widehat W} \ar[u]^\omega & & \Omega^1_{\widehat W} & \\
			0 \ar[r] & \phi'^*\cO_{M_0} \ar[r] \ar[u]_\cong & \phi'^*\cO_{M_0} \cdot \delta_x \oplus \phi'^*\cO_{M_0} \cdot \delta_y \ar[r] \ar@{-->}[uu]_\xi & \phi'^*\Omega^1_{M_0} \ar[r] \ar[u]_\cong & 0 \\
		}
	\]
	of coherent sheaves on $\widehat W$. The map $\omega\colon \cO_{\widehat W} \to \phi^*\cO_{M_0}$ is given by $\omega(1) = u_xu_y\phi^*(1)$. We will see in a moment why we choose this map like this.
	
	Assume that $\widehat W$ is affine, and let $\eps_x,\eps_y \in \Gamma(\widehat W,\phi^*\cO_{M_0} \cdot \delta_x \oplus \phi^*\cO_{M_0} \cdot \delta_y)$ be preimages of $du_x,du_y \in \Gamma(\widehat W,\Omega^1_{\widehat W})$. Then, we define the dashed vertical homomorphism by $\xi(\delta_x) = u_x\delta_x + \tilde x \eps_x$ and $\xi(\delta_y) = u_y\delta_y + \tilde y\eps_y$. An easy computation shows that the right-hand square is commutative. However, due to our choice of $\omega$, also the left-hand square is commutative. Namely, we have $\xi(u_y\tilde y\delta_x + u_x\tilde x\delta_y) = u_y\tilde y(u_x\delta_x + \tilde x\eps_x) + u_x\tilde x(u_y\delta_y + \tilde y\eps_y) = u_xu_y(\tilde y\delta_x + \tilde x\delta_y)$. This isomorphism of short exact sequences shows that $\phi^*(e_0) = u_xu_y\cdot \phi'^*(e_0)$.
	
	Since $e_0$ is a generator of $\Gamma(M_0,\cE xt^1(\Omega^1_{M_0},\cO_{M_0}))$, we can find $c \in \Gamma(M_0,\cO_{M_0})$ with $e = c \cdot e_0$ in the general case. By Lemma~\ref{lem:extension-properties}, we then have, on the one hand, $\phi^*(e) = \phi^\sharp(c) \cdot \phi^*(e_0) = \phi^\sharp(c) \cdot u_xu_y \cdot \phi'^*(e_0)$, and on the other hand, $\phi'^*(e) = \phi'^*(c \cdot e_0) = \phi'^\sharp(c) \cdot \phi'^*(e_0)$. Since $\phi^\sharp(c) - \phi'^\sharp(c) \in (\tilde x,\tilde y)$ and $\tilde x\cdot \tilde e = \tilde y \cdot \tilde e = 0$ for every $\tilde e \in \Gamma(\widehat W,\cE xt^1(\Omega^1_{\widehat W},\cO_{\widehat W}))$, we find that $\phi^*(e) = u_xu_y \cdot \phi'^*(e)$.
\end{proof}

\begin{cor}\label{cor:eta-lambda-difference}\note{cor:eta-lambda-difference}
	In Proposition~\ref{prop:second-section-local-model}, we have $\eta_V^{(\ell)}(\lambda) = u_xu_y \cdot \eta_V^{(\ell)}(\lambda')$.
\end{cor}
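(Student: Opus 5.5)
The strategy is to compare the two log smooth deformations of order $\ell$ through their local models and then use the preceding lemma at the level of extension classes. We work on the étale neighborhood $\widehat W$ from Proposition~\ref{prop:second-section-local-model}. Both $\lambda$ and $\lambda'$ are pulled back along strict étale maps $\phi,\phi'\colon \widehat W \to M_0 \coloneq \Spec(P_\ell \to \kk[x,y,z_1,\ldots,z_{d-1}]/(xy))$ over $S_0$, and the log smooth deformation of $M_0$ over $S_\ell$ is $M_\ell = \Spec \kk[t,x,y,z_i]/(xy - t^\ell, t^{\ell+1})$. Since $\phi$ and $\phi'$ are strict and étale, they lift uniquely to étale maps $\phi_\ell,\phi'_\ell$ from log smooth deformations $W_\ell$, $W'_\ell$ of $(\widehat W,\cM_0)$ and $(\widehat W,\cM_0')$ to $M_\ell$. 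These lifts are obtained by deforming the étale map, and they realize $f_\ell$ and $f'_\ell$ as pullbacks of the model family $g_\ell\colon M_\ell \to S_\ell$.

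The main step is to show that $\eta_V^{(\ell)}$ is compatible with pullback along étale morphisms over $S_\ell$. Because $\phi_\ell$ is étale, we have $\phi_\ell^*\Omega^1_{M_\ell} \cong \Omega^1_{W_\ell}$, and the same holds for the relative differentials and for $\cO\cdot dt$. The defining sequence of $\eta_{\ell-1}$ therefore pulls back to the defining sequence, which gives $\eta_{\ell-1}(f_\ell) = \phi_\ell^*\eta_{\ell-1}(g_\ell)$ by Lemma~\ref{lem:extension-properties}. We then restrict the kernel of $\rho_{\ell-1;\ell-2}$ from Lemma~\ref{lem:ses-at-lt-point}, identified via multiplication by $s^{\ell-1}$ with $\cE xt^1(\Omega^1_{V},\cO_{V})$. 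This identification is made by choosing lifts and multiplying with $t^{\ell-1}$, which commutes with pullback along $\phi_\ell$ because $\phi_\ell^\sharp(t) = t$; note that both lifts send $t$ to $t$, since they are maps over $S_\ell$. Hence $\eta_V^{(\ell)}(\lambda) = \phi^*(e)$ and $\eta_V^{(\ell)}(\lambda') = \phi'^*(e)$ for the same class $e \coloneq \eta^{(\ell)}_{M_0}(g_\ell)$. Concretely, the computation in the proof of Proposition~\ref{prop:effective-dtd-criterion} gives $e = \ell e_0$ for the generator $e_0$.

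The subtle point is the compatibility of $\lambda'$ with the structure over $S_0$. The isomorphism $\psi$ does not commute with $f_0, f'_0$, since $\psi^\flat(\rho') = u_t\rho$. This causes no problem, however, because we never use $\psi$: each section is compared directly with the model over $S_0$. Here $\phi'$ is a morphism over $S_0$ with $\phi'^\flat(\bar t) = \rho'$, so it pulls $g_0$ back to $f'_0$.

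With both classes expressed as pullbacks of $e$, the preceding lemma gives $\phi^*(e) = u_xu_y\cdot\phi'^*(e)$ in $\Gamma(\widehat W,\cE xt^1(\Omega^1_{\widehat W},\cO_{\widehat W}))$, and hence $\eta_V^{(\ell)}(\lambda) = u_xu_y\cdot\eta_V^{(\ell)}(\lambda')$. The main obstacle is the bookkeeping in the pullback-compatibility step, namely checking that the identification of the kernel with $\cE xt^1(\Omega^1_V,\cO_V)$ is natural under étale base change over $S_\ell$. Lemma~\ref{lem:Omega-projdim-1} should make the $\cE xt$ base change maps isomorphisms near dtc points, so that naturality reduces to naturality of the connecting description.
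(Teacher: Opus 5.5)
Your proposal is correct and follows the paper's argument. Both $\lambda$ and $\lambda'$ are pulled back from the canonical structure $\lambda_0$ on $M_0$ along the strict étale maps $\phi$ and $\phi'$ over $S_0$, so étale functoriality of $\eta^{(\ell)}$ gives $\eta_V^{(\ell)}(\lambda)=\phi^*\eta^{(\ell)}_{M_0}(\lambda_0)$ and $\eta_V^{(\ell)}(\lambda')=\phi'^*\eta^{(\ell)}_{M_0}(\lambda_0)$, and the preceding lemma finishes. You only unpack the functoriality step that the paper takes for granted; for that step, flat base change of $\cE xt$ along the étale maps is what you need, and Lemma~\ref{lem:Omega-projdim-1} plays no role there.
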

\begin{proof}
	Let $\lambda_0 \in \Gamma(M_0,\cL\cS_{M_0})$ be the canonical log smooth structure. Since $\eta_V^{(\ell)}$ is a map of sheaves in the \'etale topology, we have $\eta_V^{(\ell)}(\lambda) = \phi^*(\eta_{M_0}^{(\ell)}(\lambda_0)) = u_xu_y \cdot \phi'^*(\eta_{M_0}^{(\ell)}(\lambda_0)) = u_xu_y \cdot \eta_V^{(\ell)}(\lambda')$.
\end{proof}

Next, we answer the question when we have $\lambda = \lambda'$ in terms of the data in Proposition~\ref{prop:second-section-local-model}. From this, we will see that $\eta_V^{(\ell)}$ is injective.

\begin{lem}\label{lem:lambda-difference}\note{lem:lambda-difference}
	In Proposition~\ref{prop:second-section-local-model}, let $\widehat D = \{\tilde x = \tilde y = 0\} \subset \widehat W$. Then, we have $\lambda = \lambda'$ if and only if $u_xu_y|_{\widehat D} = 1$ in $\cO_{\widehat D}$.
\end{lem}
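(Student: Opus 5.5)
The plan is to compare the two quadruples through the isomorphism $\psi$ of Proposition~\ref{prop:second-section-local-model}, and to reduce the question of whether $\lambda = \lambda'$ to whether the log structure $\cM_0$ has a suitable automorphism. Since $\cL\cS_V$ is a sheaf, the equality $\lambda = \lambda'$ may be checked after shrinking $\widehat W$ around $\bar w$ in the étale topology; I will also assume $\widehat W$ affine.

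Recall that $\lambda = \lambda'$ means there is an isomorphism $\cM_0' \to \cM_0$ of log structures which is compatible with $\alpha,\alpha'$ and with $q,q'$, and which sends $\rho'$ to $\rho$. The map $\psi^\flat$ already gives an isomorphism compatible with $\alpha,\alpha'$ and with the identifications of the ghost sheaves, because $\psi^\flat$ only multiplies chart generators by units. However, $\psi^\flat(\rho') = u_t\rho$ rather than $\rho$. Hence $\lambda = \lambda'$ holds if and only if there is an automorphism $\chi$ of $(\cM_0,\alpha)$ which induces the identity on $\cP$ and satisfies $\chi(u_t\rho) = \rho$.

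First I classify such automorphisms using the chart $\gamma\colon P_\ell \to \cM_0$, which is neat at $\bar w$. An automorphism inducing the identity on the ghost sheaf is given on generators by $\xi \mapsto a\xi$, $v \mapsto bv$, $\rho \mapsto c\rho$ with units $a,b,c$. Compatibility with the relation $\xi v = \rho^\ell$ forces $ab = c^\ell$, and compatibility with $\alpha$ forces $a\tilde x = \tilde x$ and $b\tilde y = \tilde y$; there is no condition on $c$, since $\alpha(\rho) = 0$. Conversely, any such triple of units defines an automorphism, by the same chart argument as in the proof of Proposition~\ref{prop:second-section-local-model}. Because $\widehat W$ is reduced with the two local branches $V_1 = \{\tilde y = 0\}$ and $V_2 = \{\tilde x = 0\}$, and $\tilde x$ is a nonzerodivisor on $V_1$, the condition $a\tilde x = \tilde x$ is equivalent to $a|_{V_1} = 1$. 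Similarly, $b\tilde y = \tilde y$ is equivalent to $b|_{V_2} = 1$. We need $c = u_t^{-1}$, and therefore $ab = u_t^{-\ell} = (u_xu_y)^{-1}$.

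Necessity is now immediate: restricting to $\widehat D = V_1 \cap V_2$ gives $(u_xu_y)^{-1}|_{\widehat D} = a|_{\widehat D}\, b|_{\widehat D} = 1$.

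For sufficiency, set $g = (u_xu_y)^{-1}$ and assume $g|_{\widehat D} = 1$. I will construct $a$ by gluing the function $1$ on $V_1$ with $g|_{V_2}$ on $V_2$; these agree on $\widehat D$. For the gluing, I will use that $\cO_{\widehat W} = \cO_{V_1} \times_{\cO_{\widehat D}} \cO_{V_2}$. This is the co-Cartesian conductor square of Claim~\ref{clm:conductor-square-co-Cartesian}, or, more directly, the identity $(\tilde x) \cap (\tilde y) = (0)$ in the dnc local model. The glued function $a$ is a unit, since it is a unit on both branches. Then $b \coloneq g/a$ is $1$ on $V_2$, so $(a,b,u_t^{-1})$ defines the required automorphism, and the composite of $\psi^\flat$ with this automorphism realizes $\lambda = \lambda'$.

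The main obstacle is the classification step. I must make sure that every automorphism of $\cM_0$ inducing the identity on $\cP$ really comes from units on the chart generators, and that the conditions coming from $\alpha$ are exactly $a|_{V_1} = 1$ and $b|_{V_2} = 1$. The first point follows from neatness of the chart together with the integrality argument already used for $\chi^\flat$ in the proof of Proposition~\ref{prop:second-section-local-model}. The second uses the reducedness of the dnc local model and the fact that $\tilde x$ and $\tilde y$ are nonzerodivisors on the opposite branches.
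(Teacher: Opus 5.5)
Your proof is correct and follows the paper's argument: for necessity the paper likewise extracts units $\tilde u_x,\tilde u_y$ from an isomorphism realizing $\lambda=\lambda'$, shows they agree with $u_x,u_y$ on the two branches and satisfy $\tilde u_x\tilde u_y=1$, and for sufficiency it glues exactly your $a$ and $b$ via $\cO_{\widehat W}\cong\cO_{\widehat C(x)}\times_{\cO_{\widehat D}}\cO_{\widehat C(y)}$ and composes the resulting automorphism (with $\rho\mapsto u_t^{-1}\rho$) with $\psi$. One small correction: your opening reduction should say the equality can be checked \'etale locally on $\widehat W$, not ``after shrinking around $\bar w$'' (which would only give equality near $\bar w$), though your argument works on all of $\widehat W$ without any shrinking.
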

\begin{proof}
	First, we assume that $\lambda = \lambda'$. Then, we have an isomorphism $\chi\colon (\widehat W,\cM_0) \to (\widehat W,\cM_0')$ which is compatible with $q,q'$, $\alpha,\alpha'$, and $\rho,\rho'$. There are $\tilde u_x,\tilde u_y \in \Gamma(\widehat W,\cO_V^\times)$ with $\chi^\flat(\xi') = \tilde u_x\xi$ and $\chi^\flat(v') = \tilde u_yv$. Then, we have $u_x \cdot \alpha(\xi) = \alpha'(\xi') = \alpha(\tilde u_x\xi) = \tilde u_x \cdot \alpha(\xi)$ and similarly $u_y \cdot \alpha(v) = \tilde u_y \cdot \alpha(v)$. 
	
	Now, $\alpha(\xi)$ is invertible on $(\phi')^{-1}(\{x \not= 0\})$, and $\alpha(v)$ is invertible on $(\phi')^{-1}(\{y \not= 0\})$. Thus, we have $u_x = \tilde u_x$ on $(\phi')^{-1}(\{x \not= 0\})$, and $u_y = \tilde u_y$ on $(\phi')^{-1}(\{y \not= 0\})$. The equalities extend to the closures of the open subsets, and we find $u_xu_y = \tilde u_x\tilde u_y$ after restriction to $D^{(\ell)}$.
	
	On the one hand, we have $\chi^\flat(\xi'v') = \chi^\flat((\rho')^\ell) = \rho^\ell = \xi v$. On the other hand, we have $\chi^\flat(\xi'v') = \tilde u_x\xi \cdot \tilde u_y v$. Thus, we have $\tilde u_x\tilde u_y = 1$, and therefore $u_xu_y|_{D^{(\ell)}} = 1$.
	
	For the converse, let $\widehat C(x) = \{\tilde y = 0\}$, $\widehat C(y) = \{\tilde x = 0\}$, and $\widehat D = \{\tilde x = \tilde y = 0\}$. Then, we have an isomorphism
	\[\cO_{\widehat W} \to \cO_{\widehat C(x)} \times_{\cO_{\widehat D}} \cO_{\widehat C(y)}.\] 
	
	Since $u_xu_y = u_t^\ell$, we find $(u_t^\ell)|_{\widehat D} = 1$. Then, we can find elements $\tilde u_x,\tilde u_y \in \cO_{\widehat W}^\times$ with $\tilde u_y|_{\widehat C(x)} = u_t^{-\ell}|_{\widehat C(x)}$, $\tilde u_y|_{\widehat C(y)} = 1$, $\tilde u_x|_{\widehat C(x)} = 1$, and $\tilde u_x|_{\widehat C(y)} = u_t^{-\ell}|_{\widehat C(y)}$. For these elements, we have $\tilde u_x\tilde u_y = u_t^{-\ell}$.
	
	We define an automorphism $\gamma\colon (\widehat W,\cM_0) \to (\widehat W,\cM_0)$ which is the identity on underlying schemes as follows. We set $\gamma^\flat(\xi) = \tilde u_x\xi$, $\gamma^\flat(v) = \tilde u_yv$, and $\gamma^\flat(\rho) = u_t^{-1}\rho$. Indeed, these specifications are compatible with $\alpha$ and $q$. 
	
	Just as $\psi$, we have $f_0 \circ \gamma \not= f_0$ in general. However, we do have $(\gamma \circ \psi)^\flat(\rho') = \gamma^\flat(u_t\rho) = u_tu_t^{-1}\rho = \rho$. Thus, $\gamma \circ \psi\colon (\widehat W,\cM_0') \to (\widehat W,\cM_0)$ is an isomorphism which establishes $\lambda = \lambda'$.
\end{proof}

\begin{cor}
	The map $\eta_V^{(\ell)}\colon \cL\cS_V \to \cE xt^1(\Omega^1_V,\cO_V)$ is injective.
\end{cor}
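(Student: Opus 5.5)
Since $\eta_V^{(\ell)}$ is a map of sheaves of sets in the \'etale topology, injectivity can be checked locally. Let $W \to \cU_1^{(\ell)}V$ be \'etale and let $\lambda,\lambda' \in \Gamma(W,\cL\cS_V)$ with $\eta_V^{(\ell)}(\lambda) = \eta_V^{(\ell)}(\lambda')$. It suffices to show that every geometric point $\bar w$ of $W$ has an \'etale neighborhood on which $\lambda$ and $\lambda'$ agree. Because $\cL\cS_V$ is a sheaf, these local equalities then glue to $\lambda = \lambda'$.

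First suppose $\bar w \in \cU_0V$. There $V$ is smooth, so $\cP \cong \NN$ with $\bar\rho$ the generator. A log smooth structure is then just $\cM_0 = \cO^\times \oplus \NN$ with $\rho$ mapping to $0$. It is unique up to unique isomorphism, so $\cL\cS_V$ is a singleton near $\bar w$, and there is nothing to prove.

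Now suppose $\bar w \in D^{(\ell)}$. Apply Lemma~\ref{lem:U1ell-local-model} to $\lambda$ to get the local model data. Then apply Proposition~\ref{prop:second-section-local-model} to $\lambda'$ to get an \'etale neighborhood $\widehat W$ and units $u_x,u_y,u_t$ with $u_xu_y = u_t^\ell$. By Corollary~\ref{cor:eta-lambda-difference},
\[\eta_V^{(\ell)}(\lambda) = u_xu_y \cdot \eta_V^{(\ell)}(\lambda').\]
Combined with the hypothesis, this gives $(u_xu_y - 1)\cdot \eta_V^{(\ell)}(\lambda') = 0$ in $\Gamma(\widehat W,\cE xt^1(\Omega^1_V,\cO_V))$.

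The key input is that $\eta_V^{(\ell)}(\lambda')$ is a local generator of $\cE xt^1(\Omega^1_V,\cO_V)$, and that this sheaf is, near $\bar w$, a line bundle on $\widehat D = \{\tilde x = \tilde y = 0\}$, i.e. isomorphic to $\cO_{\widehat D}$.
\begin{itemize}
\item The line bundle statement follows from the normal crossing local model. One can also read it off the presentation in the proof of Proposition~\ref{prop:effective-dtd-criterion}, which gives $\cE xt^1(\Omega^1_{M_0},\cO_{M_0}) \cong \cO_{M_0}/(x,y)$, pulled back along the \'etale map $\phi'$.
\item For the generator statement, it suffices to check the model $M_0$ with its canonical structure $\lambda_0$, because $\eta_V^{(\ell)}(\lambda') = \phi'^*\eta_{M_0}^{(\ell)}(\lambda_0)$. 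In the model, the log smooth deformation to order $\ell$ is $xy = t^\ell$. The computation in the proof of Proposition~\ref{prop:effective-dtd-criterion} with $\check u = 1$ gives $\eta_{\ell-1} = \ell t^{\ell-1} e_0$, and this generates the kernel of $\rho_{\ell-1;\ell-2}$ because $\ell$ is invertible in characteristic $0$.
\item Under the canonical identification from Lemma~\ref{lem:ses-at-lt-point}, this kernel is $\cE xt^1(\Omega^1_{M_0},\cO_{M_0})$, so $\eta_{M_0}^{(\ell)}(\lambda_0)$ corresponds to $\ell$ times the generator $e_0$. Hence it is a generator, and so is its pullback.
\end{itemize}

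Since a generator of a module isomorphic to $\cO_{\widehat D}$ has annihilator exactly the ideal of $\widehat D$, we obtain $(u_xu_y)|_{\widehat D} = 1$. Lemma~\ref{lem:lambda-difference} then gives $\lambda = \lambda'$ on $\widehat W$, which completes the local argument.

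The main obstacle is the generator claim, specifically checking that the canonical identification of the kernel in Lemma~\ref{lem:ses-at-lt-point} with $\cE xt^1(\Omega^1_V,\cO_V)$ sends $\ell t^{\ell-1} e_0$ to a generator. The rest is a formal combination of Corollary~\ref{cor:eta-lambda-difference} and Lemma~\ref{lem:lambda-difference}.
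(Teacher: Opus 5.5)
Your proposal is correct and takes the same route as the paper. Like the paper, you pass to the local data of Proposition~\ref{prop:second-section-local-model}, combine Corollary~\ref{cor:eta-lambda-difference} with the fact that $\eta_{M_0}^{(\ell)}(\lambda_0)$ trivializes the line bundle $\cE xt^1(\Omega^1_{M_0},\cO_{M_0})$ on $\{x = y = 0\}$ to get $u_xu_y|_{\widehat D} = 1$, and conclude with Lemma~\ref{lem:lambda-difference}; your separate treatment of points of $\cU_0V$ and your explicit check of the generator property (via $\eta_{\ell-1} = \ell t^{\ell-1}e_0$ and Lemma~\ref{lem:ses-at-lt-point}) only fill in details the paper leaves implicit.
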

\begin{proof}
	Assume to the contrary that we have two sections of the stalk $\cL\cS_{V,\bar v}$ which have the same image under $\eta_V^{(\ell)}$. After shrinking the locus of definition of their representatives, we can assume that we have data as in the conclusion of Proposition~\ref{prop:second-section-local-model}. 
	
	We use the notation from the proof of Corollary~\ref{cor:eta-lambda-difference}. Then, $\eta_V^{(\ell)}(\lambda) = \eta_V^{(\ell)}(\lambda')$ implies $u_xu_y \cdot \phi'^*(\eta_{M_0}^{(\ell)}(\lambda_0)) = \phi'^*(\eta_{M_0}^{(\ell)}(\lambda_0))$. Since $\cE xt^1(\Omega^1_{M_0},\cO_{M_0})$ is a line bundle on $\{x = y = 0\}$ which is trivialized by $\eta_{M_0}^{(\ell)}(\lambda_0)$ (cf.~Proposition~\ref{prop:effective-dtd-criterion}), this implies that $u_xu_y|_{\widehat D} = 1$ in the notation of Lemma~\ref{lem:lambda-difference}. But then, Lemma~\ref{lem:lambda-difference} shows that $\lambda = \lambda'$.
\end{proof}

\begin{cor}
	Let $W \to \cU_1^{(\ell)}V$ be an \'etale open subset, let $\lambda \in \Gamma(W,\cL\cS_V)$, and let $c \in \Gamma(W,\cO_V^\times)$. Then, we have $\eta_V^{(\ell)}(c \cdot \lambda) = c^\ell \cdot \eta_V^{(\ell)}(\lambda)$.
\end{cor}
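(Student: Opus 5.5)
The plan is to reduce the statement to Corollary~\ref{cor:eta-lambda-difference} by comparing $c\cdot\lambda$ with $\lambda$ through the local models of Proposition~\ref{prop:second-section-local-model}. Since $\eta_V^{(\ell)}$ is a map of étale sheaves and the target is a sheaf, it suffices to check the identity étale locally around each $\kk$-valued point $\bar w$. On $\cU_0V$ the target $\cE xt^1(\Omega^1_V,\cO_V)$ vanishes, so there is nothing to check. We may therefore assume $\bar w\in D^{(\ell)}$ and choose the data of Lemma~\ref{lem:U1ell-local-model} for $\lambda=(\cM_0,\alpha,q,\rho)$: sections $\xi,v$ with $\xi v=\rho^\ell$, together with $\tilde z_i$.

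The key observation is that $c\cdot\lambda=(\cM_0,\alpha,q,c^{-1}\rho)$ has the \emph{same} log structure, with only $\rho$ rescaled. So I would write down the data of Proposition~\ref{prop:second-section-local-model} for $\lambda'=c\cdot\lambda$ explicitly rather than invoking its existence part. After an étale localization, take $u_t\in\cO^\times$ with $u_t^{-1}=c^{-1}$, so $u_t=c$; the point is that $\rho'=c^{-1}\rho$. Then set $\xi'=c^{-1}\xi$ and $v'=c^{-\ell+1}v$, so that $\xi'v'=c^{-\ell}\rho^\ell=(\rho')^\ell$. This gives $\alpha'(\xi')=c^{-1}\alpha(\xi)$ and $\alpha'(v')=c^{1-\ell}\alpha(v)$, hence $u_x=c^{-1}$, $u_y=c^{1-\ell}$, and $u_xu_y=c^{-\ell}=(c^{-1})^\ell$, consistent with taking $u_t=c^{-1}$ in Proposition~\ref{prop:second-section-local-model}. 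The map $\phi'$ with $\phi'^\sharp(x)=u_x\tilde x$, $\phi'^\sharp(y)=u_y\tilde y$, $\phi'^\sharp(z_i)=\tilde z_i$ is strict and étale by the argument in the proof of Proposition~\ref{prop:second-section-local-model}. That argument uses only that these functions form a basis of the cotangent space, and it depends only on the data $u_x,u_y$, not on how they were produced. Hence all conclusions of the proposition hold for this explicit choice.

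Applying Corollary~\ref{cor:eta-lambda-difference} with $\lambda$ and $\lambda'=c\cdot\lambda$ gives
\[\eta_V^{(\ell)}(\lambda)=u_xu_y\cdot\eta_V^{(\ell)}(c\cdot\lambda)=c^{-\ell}\cdot\eta_V^{(\ell)}(c\cdot\lambda).\]
Multiplying by $c^\ell$ yields the claim. Because the local verifications are canonical, they glue.

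The main obstacle is justifying that Corollary~\ref{cor:eta-lambda-difference} applies to our hand-chosen data. Its proof goes through the lemma comparing $\phi^*(e)$ and $\phi'^*(e)$, together with $\eta_V^{(\ell)}(\lambda')=\phi'^*\eta^{(\ell)}_{M_0}(\lambda_0)$. The latter requires that $\phi'$ be strict étale over $S_0$ with $\phi'^\flat(\bar t)=\rho'$, which holds by construction. A secondary subtlety is the sign and exponent bookkeeping between $u_t$ and $c$. I would double-check it by noting that $u_xu_y|_{\widehat D}$ is determined modulo $(\tilde x,\tilde y)$, and that by Lemma~\ref{lem:lambda-difference} any other admissible choice differs only by factors restricting to $1$ on $\widehat D$, where the $\cE xt^1$ sheaf is supported.
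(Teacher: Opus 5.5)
Your proof is correct and follows the paper's route: you reduce to Corollary~\ref{cor:eta-lambda-difference} with $\lambda'=c\cdot\lambda$. The only difference is that you choose the comparison data explicitly ($\psi=\mathrm{id}$, $\xi'=c^{-1}\xi$, $v'=c^{1-\ell}v$), so $u_xu_y=c^{-\ell}$ holds on the nose; the paper instead takes the data from the existence statement of Proposition~\ref{prop:second-section-local-model}, introduces auxiliary units $\tilde u_x,\tilde u_y$ with $\psi^\flat(\xi)=\tilde u_x\xi$ and $\psi^\flat(v)=\tilde u_y v$, and shows $c^{-\ell}=(u_x/\tilde u_x)(u_y/\tilde u_y)$ together with $\tilde u_x|_{\widehat D}=\tilde u_y|_{\widehat D}=1$.

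One slip to fix: with $\psi=\mathrm{id}$ the relation $\psi^\flat(\rho')=u_t\rho$ forces $u_t=c^{-1}$, as you say later, not $u_t=c$, which would violate $u_xu_y=u_t^\ell$. No \'etale localization is needed for this, and in any case $u_t$ plays no role in Corollary~\ref{cor:eta-lambda-difference}.
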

\begin{proof}
	Again, we can assume that we have the data as in the conclusion of Proposition~\ref{prop:second-section-local-model}. In particular, we have elements $u_x,u_y,u_t \in \Gamma(\widehat W,\cO_V^\times)$ which define an isomorphism 
	\[\psi\colon \enspace (\widehat W,\cM_0) \to (\widehat W,\cM_0)\]
	with $\psi^\flat(c \cdot \rho) = u_t\rho$ as well as $\psi^\flat(\xi') = u_x \cdot \xi$ and $\psi^\flat(v') = u_y \cdot v$.
	
	Let $\tilde u_x,\tilde u_y$ be such that $\psi^\flat(\xi) = \tilde u_x \cdot \xi$ and $\psi^\flat(v) = \tilde u_y \cdot v$. This shows that $\xi' = u_x/\tilde u_x \cdot \xi$ and $v' = u_y/\tilde u_y \cdot v$. Then, we have
	\[c^{-\ell} \cdot \psi^\flat(\rho^\ell) = \psi^\flat(\rho'^\ell) = \psi^\flat(\xi'v') = \psi^\flat(u_x/\tilde u_x \cdot u_y/\tilde u_y \cdot \xi v) = u_x/\tilde u_x \cdot u_y/\tilde u_y \cdot \psi^\flat(\rho^\ell)\]
	so that $c^{-\ell} = u_x/\tilde u_x \cdot u_y/\tilde u_y$.
	
	Since $\psi^\flat$ is compatible with $\alpha$, we have $\tilde x = \alpha(\xi) = \alpha \circ \psi^\flat(\xi) = \tilde u_x \cdot \tilde x$ and $\tilde y = \tilde u_y \cdot \tilde y$. In particular, we have $\tilde u_x|_{\widehat D} = 1$ and $\tilde u_y|_{\widehat D} = 1$. But then, we have $\eta_V^{(\ell)}(\lambda) = u_xu_y \cdot \eta_V^{(\ell)}(\lambda') = u_x/\tilde u_x \cdot u_y/\tilde u_y \cdot \eta_V^{(\ell)}(c \cdot \lambda) = c^{-\ell} \cdot \eta_V^{(\ell)}(c \cdot \lambda)$.
\end{proof}

\begin{cor}
	The image of the map $\eta_V^{(\ell)}\colon \cL\cS_V \to \cE xt^1(\Omega^1_V,\cO_V)$ is given by those elements of $\cE xt^1(\Omega^1_V,\cO_V)$ which are local generators.
\end{cor}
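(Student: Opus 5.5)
The statement has two inclusions: the image lies among the local generators, and every local generator is attained. The plan is to work étale locally around a $\kk$-valued point $\bar w \in D^{(\ell)}$. Away from $D^{(\ell)}$, i.e.\ on $\cU_0V$, the sheaf $\cE xt^1(\Omega^1_V,\cO_V)$ vanishes and $\cL\cS_V$ is a singleton, so both sides are trivial there. Near $\bar w$ I would fix $\lambda$ and the local model $\phi$ from Lemma~\ref{lem:U1ell-local-model}, with $\lambda_0$ the canonical log smooth structure on $M_0$. Then $\eta_V^{(\ell)}(\lambda) = \phi^*(\eta_{M_0}^{(\ell)}(\lambda_0))$.

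For the first inclusion I compute $\eta_{M_0}^{(\ell)}(\lambda_0)$ using the preparatory computation in the proof of Proposition~\ref{prop:effective-dtd-criterion} with $\check u = 1$. There $\eta_{\ell - 1} = \ell t^{\ell - 1} e_0$, and the kernel of $\rho_{\ell - 1;\ell - 2}$ is generated by $t^{\ell - 1}e_0$. Hence $\eta_{M_0}^{(\ell)}(\lambda_0) = \ell\cdot e_0$, which is a generator of $\cE xt^1(\Omega^1_{M_0},\cO_{M_0})$ because $\m{char}(\kk) = 0$. Pulling back along the étale map $\phi$ preserves generators, since $\cE xt^1$ commutes with flat pullback. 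Therefore $\eta_V^{(\ell)}(\lambda)$ is a local generator.

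For surjectivity onto generators, let $e$ be a local generator near $\bar w$. Then $e = a\cdot \eta_V^{(\ell)}(\lambda)$ for some $a \in \cO_{V,\bar w}$. Because $\cE xt^1$ is a line bundle on $D$ and both classes are generators, the restriction $a|_D$ is a unit, and after shrinking I may take $a \in \cO_V^\times$. I then want $c$ with $\eta_V^{(\ell)}(c\cdot\lambda) = c^\ell\,\eta_V^{(\ell)}(\lambda) = a\,\eta_V^{(\ell)}(\lambda)$. The point is that $a$ only matters through its restriction to $D$. In the strictly Henselian local ring, with residue field of characteristic $0$, $\ell$-th roots of units exist. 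So after passing to a further étale neighborhood I choose $c$ with $c^\ell = a$, using the previous corollary on $\cO_V^\times$-equivariance. The germ $c\cdot\lambda$ then maps to $e$.

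The only subtle step is gluing. The preimage is defined étale locally, but injectivity of $\eta_V^{(\ell)}$ makes the local preimages agree on overlaps, and $\cL\cS_V$ is a sheaf, so they glue to a section over the original open set. I expect the main care to be needed in identifying $\eta_{M_0}^{(\ell)}(\lambda_0)$ with $\ell e_0$ under the canonical kernel identification from Lemma~\ref{lem:ses-at-lt-point}. If that normalization were off by a unit, the argument would be unaffected.
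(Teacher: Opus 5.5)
Your proposal is correct and follows the paper's proof essentially step for step. The first inclusion comes from the étale pullback $\eta_V^{(\ell)}(\lambda) = \phi^*(\eta_{M_0}^{(\ell)}(\lambda_0))$. Surjectivity comes from writing a generator as a unit multiple of $\eta_V^{(\ell)}(\lambda)$, extracting an $\ell$-th root, using $\eta_V^{(\ell)}(c\cdot\lambda) = c^\ell\cdot\eta_V^{(\ell)}(\lambda)$, and descending via injectivity. Your explicit check that $\eta_{M_0}^{(\ell)}(\lambda_0) = \ell\cdot e_0$ is a generator (using $\m{char}(\kk) = 0$) fills in a point the paper leaves implicit.
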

\begin{proof}
	Since $\eta_V^{(\ell)}(\lambda) = \phi^*\eta_{M_0}^{(\ell)}(\lambda_0)$, the image $\eta_V^{(\ell)}(\lambda)$ is a local generator. Now, if $e \in \Gamma(\widetilde W,\cE xt^1(\Omega^1_V,\cO_V))$ locally generates $\cE xt^1(\Omega^1_V,\cO_V)$ for some $\widetilde W \to V$ arising in Lemma~\ref{lem:U1ell-local-model}, then we can write $e = c' \cdot \eta_V^{(\ell)}(\lambda)$ for some $c' \in \Gamma(\widetilde W,\cO_V^\times)$. After passage to an appropriate \'etale cover $\widehat W \to \widetilde W$, we can find $c \in \Gamma(\widehat W,\cO_V^\times)$ with $c^\ell = c'|_{\widehat W}$. Then, we have $\eta_V^{(\ell)}(c \cdot \lambda|_{\widehat W}) = e|_{\widehat W}$. Since $\eta_V^{(\ell)}$ is injective, we can apply \'etale descent and obtain a preimage of $e$ in $\Gamma(\widetilde W,\cL\cS_V)$.
\end{proof}

\subsection{The central fiber as a section of $\cL\cS_V$}

In this section, we prove Lemma~\ref{lem:explicit-gls-as-section-of-LS}.

\begin{proof}[Proof of Lemma~\ref{lem:explicit-gls-as-section-of-LS}]
	By the discussion in \cite[\S9.10]{FeltenGLDT}, there is at most one isomorphism $(\cP,\bar\rho) \cong (\overline\cM_{V},f^\splus(\bar\tau_0))$ of sheaves of monoids with a distinguished section. Thus, it is sufficient to construct it \'etale locally. For $p \in V \setminus D$, both sheaves are constant with stalk $\NN$ and distinguished element $1$.
	
	Now, let $p \in D \setminus (\cC_2^\times V \cup Z_V^\circ)$. In particular, there is an irreducible component $E \in [\cS_1^\times V]$ with $p \in E \setminus Z_V^\circ$. Let $\ell = \ell(E) \geq 1$ be the kink. We consider a (possibly non-commutative) diagram 
	\[
	\xymatrix{
		V \ar@{=}[dd] & W \ar[l] \ar[dr] & & & \\
		& Y \ar[d] \ar[u] \ar@/_/[r]_\pi \ar@/^/[r]^\rho & L \ar[rr] \ar[dd]^\mu & & M_{\ell;d} \ar[dd]^{\mu_{\ell;d}} \\
		V \ar[d]_{f_0} & V' \ar[l] \ar[ur] & & & \\
		B_0 \ar@{=}[rr] & & B_0 \ar[r] & B \ar[r]^{\ms{b}} & \IA^1_t \\
	}
	\]
	in the category of schemes. Here, the right-hand square is commutative and Cartesian. The lower left pentagon involving $V'$ and $B_0$ is also commutative and arises from $p$ being a dtc degeneration point of kink $\ell$. In particular, $V' \to V$ is an \'etale neighborhood of $p \in V$ for a distinguished closed point $p' \in V'$ with $p' \mapsto p$, and $V' \to L$ is an \'etale neighborhood of $0 \in L$ with $p' \mapsto 0$.
	
	The left outer pentagon involving $W$ and $B_0$ is also commutative. It arises from the definition of a toroidal crossing scheme in the formulation of \cite[Defn.~9.36]{FeltenGLDT} after replacing the smooth morphism with an \'etale morphism to $V(\sigma) \times \IA^r = M_{\ell;d} \times_{\IA^1_t} B_0$. In particular, $W \to V$ is an \'etale neighborhood of $p \in V$ with a distinguished closed point $w \in W$ with $w \mapsto p$, and $W \to L$ is an \'etale neighborhood of $0 \in L$ with $w \mapsto 0$.
	
	We set $Y \coloneq V' \times_{V} W$ and obtain a distinguished point $y \in Y$ with $y \mapsto p'$ and $y \mapsto w$. In particular, both $Y \to V'$ and $Y \to W$ are \'etale, and the upper left rectangle is commutative. It follows from this construction that we do not necessarily have $\pi = \rho$. Nonetheless, we have $\pi(y) = \rho(y) = 0 \in L$.
	
	After shrinking $Y$ in the Zariski topology around $y$, we may assume that $Y$ has exactly two irreducible components $Y'$ and $Y''$, and they correspond to the two irreducible components $\{x = 0\}$ and $\{y = 0\}$ of $L$ under the maps $\pi,\rho\colon Y \rightrightarrows L$. However, at this point, it could happen that $Y' = \pi^{-1}(\{x = 0\})$ while $Y' = \rho^{-1}(\{y = 0\})$. If this is the case, we compose (at the beginning of our construction) the map $W \to L$ with the automorphism of $L$ switching $x$ and $y$. This does not affect the commutativity of the left outer pentagon. Thus, we may assume that $Y' = \pi^{-1}(\{x = 0\}) = \rho^{-1}(\{x = 0\})$ and $Y'' = \pi^{-1}(\{y = 0\}) = \rho^{-1}(\{y = 0\})$.
	
	Note that $(\cP,\bar\rho)|_{Y} \cong \rho^{-1}(\overline\cM_{L},\mu^\splus(\bar\tau_0))$ and $(\overline\cM_{V},f^\splus(\bar\tau_0))|_{Y} \cong \pi^{-1}(\overline\cM_{L},\mu^\splus(\bar\tau_0))$ by the very construction of the diagram. Now, we have a map $(\underline P_\ell,(0,1))_{L_0} \to (\overline\cM_{L},\mu^\splus(\bar\tau_0))$ from the constant sheaf that identifies the stalk of $\overline\cM_{L}$ at any point with a quotient of $P_\ell$ by some face. Since the pullback of $(\underline P_\ell,(0,1))_{L_0}$ remains constant, we obtain two maps $(\underline P_\ell,(0,1))_{Y} \to \pi^{-1}(\overline\cM_{L},\mu^\splus(\bar\tau_0))$ and $(\underline P_\ell,(0,1))_{Y} \to \rho^{-1}(\overline\cM_{L},\mu^\splus(\bar\tau_0))$. Now, our assumption on $Y'$ and $Y''$ shows that the identity on $(\underline P_\ell,(0,1))_{Y}$ descends to the desired isomorphism. Namely, on $\{x = 0,y\not= 0\}$, the quotient is always by the face $\langle(-1,\ell)\rangle$, on $\{y = 0,x \not= 0\}$, the quotient is always by the face $\langle (1,0)\rangle$, and on $\{x = y = 0\}$, the quotient is always by the face $\{(0,0)\}$. Here, we have chosen the isomorphism $\kk[P_\ell \oplus \NN^{d - 1}] \cong \kk[t,x,y,z_1,\ldots,z_{d - 1}]$ such that $\m{z}^{(1,0) \oplus 0} \mapsto x$ and $\m{z}^{(-1,\ell) \oplus 0} \mapsto y$.
	
	The isomorphism establishes $f_0\colon V \setminus (\cC_2^\times V \cup Z_V^\circ) \to B_0$ as a section of $\widehat{\cL\cS}_{V}$. It follows from the discussion in \cite[\S9.10]{FeltenGLDT} that this section lies in $\cL\cS_{V} \subseteq \widehat{\cL\cS}_{V}$.
\end{proof}

\subsection{The proof of Proposition~\ref{prop:comparing-sections-of-LS}}

\begin{proof}[Proof of Proposition~\ref{prop:comparing-sections-of-LS}]
	The maps $\xi_{k - 1}$ and $\sigma_{k - 1}$ in the statement of Theorem~\ref{thm:explicit-gls-criterion-kk} depend only on $f_k$. Since $n \geq \ell_f(E)$ and $n \geq \ell_g(E)$ for all $E \in [\cS_1^\times V]$, we find $\ell_f(E) = \ell_g(E)$ for every $E \in [\cS_1^\times V]$. Furthermore, we have $Z_f^{(k)} = Z_g^{(k)}$ as closed subschemes of $X_k \cong Y_k$ for all $k \leq n$. We have $Z_{E;f} = Z_{E;g}$ as closed subsets of $V$, and hence $Z_{V;f} = Z_{V;g} \eqcolon Z_V$.
	
	As we already explained prior to stating Proposition~\ref{prop:comparing-sections-of-LS}, we now have a unique isomorphism of sheaves of monoids with a distinguished section \[(\overline\cM_{X_0 \setminus Z_V},f_0^\splus(\bar\tau_0)) \cong (\overline\cM_{Y_0 \setminus Z_V},g_0^\splus(\bar\tau_0))\]
	from Lemma~\ref{lem:explicit-gls-as-section-of-LS}. Both $f_0$ and $g_0$ induce a section $\lambda_f$ respectively $\lambda_g$ of $\cL\cS_{V \setminus Z_V}$ for the resulting structure of a toroidal crossing scheme $(\cP,\bar\rho)$.
	
	To obtain the desired isomorphism between morphisms of partial log schemes $f_0 \cong g_0$, it is sufficient to show that $\lambda_f = \lambda_g$. At points $p \in V \setminus D$, this is clear because the stalk of $\cL\cS_V$ has only a single element at such points. Now, if $p \in D \setminus Z_V$, there is some $E \in [\cS_1^\times V]$ with $p \in E$. Then, let $\ell = \ell(E)$, so that we have an injective map $\eta_V^{(\ell)}\colon \cL\cS_V \to \cE xt^1(\Omega^1_V,\cO_V)$ in a neighborhood of $p$. The image of $\eta_V^{(\ell)}$ depends only on the flat deformation over $S_\ell$. Since $\ell \leq n$, we have $f_\ell \cong g_\ell$ as flat deformations, and therefore $\eta_V^{(\ell)}(\lambda_f) = \eta_V^{(\ell)}(\lambda_g)$. But then, $\lambda_f = \lambda_g$ near $p$.
\end{proof}

\section{Projective gls families}\label{sect:projective-gls-criterion}\note{sect:projective-gls-criterion}

\begin{proof}[Proof of Proposition~\ref{prop:projective-gls-criterion}]\label{pf:projective-gls-criterion}
	Since $\pi\colon X_\pluscirc \to X$ is smooth, the fiber $X_b$ is normal if and only if the fiber $X_{\pluscirc,b}$ is normal.
	
	By Claim~\ref{clm:intrinsic-dnc-smooth-local}, we have $\pi_0^{-1}(\cU_1^\times V) = \cU_1^\times V_{\pluscirc}$ so that we have $\m{dim}(T) \leq d - 2$ if and only if $\m{dim}(T_{\vartriangle}) \leq d - 1$, where $T_{\vartriangle} = V_{\vartriangle} \setminus \cU_1^\times V_{\vartriangle}$.
	
	We have $\pi_0^{-1}\cS_1^\times V = \cS_1^\times V_{\pluscirc}$. Since $\pi_0$ is smooth and surjective with connected fibers, $\pi_0^{-1}$ induces a bijection between $[\cS_1^\times V]$ and $[\cS_1^\times V_{\pluscirc}]$. Furthermore, $\iota_0^{-1}$ induces a bijection between $[\cS_1^\times V_{\pluscirc}]$ and $[\cS_1^\times V_{\vartriangle}]$. Namely, if $0 \in V_{\vartriangle}$ is not in $\cS_1^\times V_{\vartriangle}$, nothing is to do. If $0 \in \cS_1^\times V_{\vartriangle}$, then $\cS_1^\times V_{\vartriangle}$ is connected and hence irreducible. But then, also $\cS_1^\times V_{\pluscirc}$ is irreducible, and it is non-empty because $\m{dim}(\cS_1^\times V_{\vartriangle}) = d \geq 1$.
	
	Let $\varrho\colon \Omega^1_X \to \Omega^1_{X/B}$ and $\varpi\colon \Omega^1_{X_\pluscirc} \to \Omega^1_{X_\pluscirc/B}$ be the two canonical quotient maps, and consider the map 
	\[\xi_k = \cE xt^1(\varrho|_{X_k},\cO_{X_k})\colon \enspace \cE xt^1(\Omega^1_{X/B}|_{X_k},\cO_{X_k}) \to \cE xt^1(\Omega^1_X|_{X_k},\cO_{X_k}).\]
	Let $E \in [\cS_1^\times V]$. Then, $\xi_k$ is injective at every $p \in E$ if and only if $\pi_k^*\xi_k$ is injective at every $q \in \pi_0^{-1}(E)$ because $\pi_k$ is smooth and surjective. Furthermore, we have 
	\begin{multline*}
		\pi_k^*\xi_k = \cE xt^1((\pi^*\varrho)|_{X_{\pluscirc,k}},\cO_{X_{\pluscirc,k}})\colon \\ \enspace \cE xt^1((\pi^*\Omega^1_{X/B})|_{X_{\pluscirc,k}},\cO_{X_{\pluscirc,k}}) \to \cE xt^1((\pi^*\Omega^1_X)|_{X_{\pluscirc,k}},\cO_{X_{\pluscirc,k}}).
	\end{multline*}

	Since the two relevant short sequences of differential forms are exact and locally split, we have a commutative diagram
	\[
		\xymatrixcolsep{1em}
		\xymatrix{
			\cE xt^1((\pi^*\Omega^1_{X/B})|_{X_{\pluscirc,k}},\cO_{X_{\pluscirc,k}}) \ar@{^(->}[r] \ar[d]^{\pi_k^*\xi_k} & \cE xt^1(\Omega^1_{X_\pluscirc/B}|_{X_{\pluscirc,k}},\cO_{X_{\pluscirc,k}}) \ar@{->>}[r] \ar[d]^{\xi_k'} & \cE xt^1(\Omega^1_{X_\pluscirc/X}|_{X_{\pluscirc,k}},\cO_{X_{\pluscirc,k}}) \ar@{=}[d] \\
			\cE xt^1((\pi^*\Omega^1_X)|_{X_{\pluscirc,k}},\cO_{X_{\pluscirc,k}}) \ar@{^(->}[r] & \cE xt^1(\Omega^1_{X_\pluscirc}|_{X_{\pluscirc,k}},\cO_{X_{\pluscirc,k}}) \ar@{->>}[r] & \cE xt^1(\Omega^1_{X_\pluscirc/X}|_{X_{\pluscirc,k}},\cO_{X_{\pluscirc,k}}) \\
		}
	\]
	with exact and locally split rows. Furthermore, the two terms on the right are zero because $\Omega^1_{X_\pluscirc/X}$ is locally free. Thus, $\pi_k^*\xi_k$ is injective at every point $q \in \pi_0^{-1}(E)$ if and only if $\xi_k'$ is injective at every point $q \in \pi_0^{-1}(E)$.
	
	Let $E_\vartriangle \in [\cS_1^\times V_{\vartriangle}]$ be the connected component with $\iota_0^{-1}(E_\vartriangle) = \pi_0^{-1}(E)$. When the extension of $\xi_k'$ to $E_\vartriangle$ is injective at every point of $E_\vartriangle$, the same is true after restriction to $\iota_0^{-1}(E_\vartriangle)$. Conversely, if it is injective at some point $q \in E_\vartriangle$, then it is injective at every point by Claim~\ref{clm:injectivity-descends}.
	
	Next, we consider the multiplication map
	\[\sigma_{\ell - 1}\colon \quad \cE xt^1(\Omega^1_X|_{X_{\ell - 1}},\cO_{X_{\ell - 1}}) \xrightarrow{(-)\cdot f^\sharp(s)^{\ell - 1}}\cE xt^1(\Omega^1_X|_{X_{\ell - 1}},\cO_{X_{\ell - 1}}).\]
	Let $Z^{(\ell)}$ be the support of its image. Then, the support of the image of $\pi_{\ell - 1}^*\sigma_{\ell - 1}$ is $\pi_{\ell - 1}^{-1}(Z^{(\ell)})$. We have already seen that 
	\[\cE xt^1((\pi^*\Omega^1_X)|_{X_{\pluscirc,\ell - 1}},\cO_{X_{\pluscirc,\ell - 1}}) \xrightarrow{\cong} \cE xt^1(\Omega^1_{X_\pluscirc}|_{X_{\pluscirc,\ell - 1}},\cO_{X_{\pluscirc,\ell - 1}})\]
	so that $\pi_{\ell - 1}^{-1}(Z^{(\ell)})$ is also the support of the image of the multiplication map $\sigma_{\ell - 1}'$ of the family $X_\pluscirc \to B$. When $Z_{\vartriangle}^{(\ell)}$ is the corresponding support of $X_\vartriangle \to B$, then we have $\iota_{\ell - 1}^{-1}(Z_{\vartriangle}^{(\ell)}) = \pi_{\ell - 1}^{-1}(Z^{(\ell)})$. 
	
	If $E \not\subseteq Z^{(\ell)}$, then we also have $E_\vartriangle \not\subseteq Z_{\vartriangle}^{(\ell)}$. Conversely, assume that $E_\vartriangle \not\subseteq Z_{\vartriangle}^{(\ell)}$. If $E \subseteq Z^{(\ell)}$, then we have $\iota_0^{-1}(E_\vartriangle) \subseteq \iota_{\ell - 1}^{-1}(Z_{\vartriangle}^{(\ell)})$. However, $0 \in V_{\vartriangle}$ is contained in the closure of $\iota_0^{-1}(E_\vartriangle)$, so we have $E_\vartriangle \subseteq Z_{\vartriangle}^{(\ell)}$. Thus, $f\colon X \to B$ satisfies the conditions in Theorem~\ref{thm:explicit-gls-criterion-kk} if and only if $f_\vartriangle\colon X_\vartriangle \to B$ satisfies them.
	
	The equality $Z_{X,\vartriangle} \cap X_\pluscirc = \pi^{-1}(Z_X)$ is a straightforward consequence of what we have already established.
\end{proof}

\bibliography{explicit-gls-families.bib}\bibliographystyle{plain}

@book {Sernesi2006,
    AUTHOR = {Sernesi, Edoardo},
     TITLE = {Deformations of algebraic schemes},
    SERIES = {Grundlehren der mathematischen Wissenschaften [Fundamental
              Principles of Mathematical Sciences]},
    VOLUME = {334},
 PUBLISHER = {Springer-Verlag, Berlin},
      YEAR = {2006},
     PAGES = {xii+339},
      ISBN = {978-3-540-30608-5; 3-540-30608-0},
   MRCLASS = {14D15 (14B05 14B10 14B12 14B20 14B25 14D20)},
  MRNUMBER = {2247603},
MRREVIEWER = {Marko\ Roczen},
}

@book {Oda1988,
    AUTHOR = {Oda, Tadao},
     TITLE = {Convex bodies and algebraic geometry},
    SERIES = {Ergebnisse der Mathematik und ihrer Grenzgebiete (3) [Results
              in Mathematics and Related Areas (3)]},
    VOLUME = {15},
      NOTE = {An introduction to the theory of toric varieties,
              Translated from the Japanese},
 PUBLISHER = {Springer-Verlag, Berlin},
      YEAR = {1988},
     PAGES = {viii+212},
      ISBN = {3-540-17600-4},
   MRCLASS = {14L32 (14-02 52A25 52A43)},
  MRNUMBER = {922894},
MRREVIEWER = {I. Dolgachev},
}

@book {LoAG2018,
    AUTHOR = {Ogus, Arthur},
     TITLE = {Lectures on Logarithmic Algebraic Geometry},
    SERIES = {Cambridge Studies in Advanced Mathematics},
    VOLUME = {178},
 PUBLISHER = {Cambridge University Press, Cambridge},
      YEAR = {2018},
     PAGES = {xviii+539},
      ISBN = {978-1-107-18773-3},
   MRCLASS = {14D06 (14A20 14M25)},
  MRNUMBER = {3838359},
       DOI = {10.1017/9781316941614},
       URL = {https://doi.org/10.1017/9781316941614},
}

@Book{Hartshorne1977,
  title     = {Algebraic geometry},
  publisher = {Springer-Verlag, New York-Heidelberg},
  year      = {1977},
  author    = {Hartshorne, Robin},
  isbn      = {0-387-90244-9},
  note      = {Graduate Texts in Mathematics, No. 52},
  mrnumber  = {0463157},
  pages     = {xvi+496},
}

@InCollection{kkatoFI,
  author    = {Kato, Kazuya},
  title     = {Logarithmic structures of {F}ontaine-{I}llusie},
  booktitle = {Algebraic analysis, geometry, and number theory ({B}altimore, {MD}, 1988)},
  publisher = {Johns Hopkins Univ. Press, Baltimore, MD},
  year      = {1989},
  pages     = {191--224},
  mrnumber  = {1463703},
}

@Article{GrossSiebertI,
  author   = {Gross, Mark and Siebert, Bernd},
  title    = {Mirror symmetry via logarithmic degeneration data. {I}},
  journal  = {Journal of Differential Geometry},
  year     = {2006},
  volume   = {72},
  number   = {2},
  pages    = {169--338},
  issn     = {0022-040X},
  mrnumber = {2213573},
}

@Article{GrossSiebertII,
  author   = {Gross, Mark and Siebert, Bernd},
  title    = {Mirror symmetry via logarithmic degeneration data, {II}},
  journal  = {Journal of Algebraic Geometry},
  year     = {2010},
  volume   = {19},
  number   = {4},
  pages    = {679--780},
  issn     = {1056-3911},
  doi      = {10.1090/S1056-3911-2010-00555-3},
  mrnumber = {2669728},
}

@article {FFR2021,
    AUTHOR = {Felten, Simon and Filip, Matej and Ruddat, Helge},
     TITLE = {Smoothing toroidal crossing spaces},
   JOURNAL = {Forum Math. Pi},
  FJOURNAL = {Forum of Mathematics. Pi},
    VOLUME = {9},
      YEAR = {2021},
     PAGES = {Paper No. e7, 36},
   MRCLASS = {14D06 (14D05 14D15 14J32 14J45 32G07)},
  MRNUMBER = {4304077},
MRREVIEWER = {Donatella Iacono},
       DOI = {10.1017/fmp.2021.8},
       URL = {https://doi.org/10.1017/fmp.2021.8},
}

@book {Cox2011,
    AUTHOR = {Cox, David A. and Little, John B. and Schenck, Henry K.},
     TITLE = {Toric varieties},
    SERIES = {Graduate Studies in Mathematics},
    VOLUME = {124},
 PUBLISHER = {American Mathematical Society, Providence, RI},
      YEAR = {2011},
     PAGES = {xxiv+841},
      ISBN = {978-0-8218-4819-7},
   MRCLASS = {14M25 (05A15 05E45 52B12)},
  MRNUMBER = {2810322},
MRREVIEWER = {Ivan Arzhantsev},
       DOI = {10.1090/gsm/124},
       URL = {https://doi.org/10.1090/gsm/124},
}

@Misc{stacks,
  author = {The Stacks Project Authors},
  title  = {Stacks Project},
  note   = {\texttt{http://stacks.math.columbia.edu/}},
  year   = {2025}
}

@article {SchroerSiebert2006,
    AUTHOR = {Schr\"{o}er, Stefan and Siebert, Bernd},
     TITLE = {Toroidal crossings and logarithmic structures},
   JOURNAL = {Adv. Math.},
  FJOURNAL = {Advances in Mathematics},
    VOLUME = {202},
      YEAR = {2006},
    NUMBER = {1},
     PAGES = {189--231},
      ISSN = {0001-8708},
   MRCLASS = {14D06 (14J32)},
  MRNUMBER = {2218822},
MRREVIEWER = {Mark Gross},
       DOI = {10.1016/j.aim.2005.03.006},
       URL = {https://doi.org/10.1016/j.aim.2005.03.006},
}

@book {FeltenGLDT,
    AUTHOR = {Felten, Simon},
     TITLE = {Global logarithmic deformation theory},
    SERIES = {Lecture Notes in Mathematics},
    VOLUME = {2373},
 PUBLISHER = {Springer, Cham},
      YEAR = {[2025] \copyright 2025},
     PAGES = {xlviii+627},
      ISBN = {978-3-031-98750-2; 978-3-031-98751-9},
   MRCLASS = {99-06},
  MRNUMBER = {4999357},
       DOI = {10.1007/978-3-031-98751-9},
       URL = {https://doi.org/10.1007/978-3-031-98751-9},
}

@article {EGAII,
    AUTHOR = {Grothendieck, A.},
     TITLE = {\'El\'ements de g\'eom\'etrie alg\'ebrique. {II}. \'Etude
              globale \'el\'ementaire de quelques classes de morphismes.},
   JOURNAL = {Inst. Hautes \'Etudes Sci. Publ. Math.},
  FJOURNAL = {Institut des Hautes \'Etudes Scientifiques. Publications
              Math\'ematiques},
    NUMBER = {8},
      YEAR = {1961},
     PAGES = {222},
      ISSN = {0073-8301,1618-1913},
   MRCLASS = {14.55},
  MRNUMBER = {217084},
       URL = {http://www.numdam.org/item?id=PMIHES_1961__8__222_0},
}

@misc {FeltenVillegas2026,
  author = {Simon Felten and Andrés Villegas},
  title  = {Deformable log singularities on an affine toroidal crossing space},
  year   = {2026}
}

@misc {FeltenZach2026,
  author = {Simon Felten and Matthias Zach},
  title  = {Some log singular surfaces with unobstructed deformations},
  year   = {2026}
}

@article {EGAIV-2,
    AUTHOR = {Grothendieck, A.},
     TITLE = {\'{E}l\'{e}ments de g\'{e}om\'{e}trie alg\'{e}brique. {IV}. \'{E}tude locale des
              sch\'{e}mas et des morphismes de sch\'{e}mas. {II}},
   JOURNAL = {Inst. Hautes \'{E}tudes Sci. Publ. Math.},
  FJOURNAL = {Institut des Hautes \'{E}tudes Scientifiques. Publications
              Math\'{e}matiques},
    NUMBER = {24},
      YEAR = {1965},
     PAGES = {231},
      ISSN = {0073-8301},
   MRCLASS = {14.00},
  MRNUMBER = {199181},
MRREVIEWER = {H. Hironaka},
       URL = {http://www.numdam.org/item?id=PMIHES_1965__24__231_0},
}

@book{ACGS2025,
 author = {Abramovich, Dan and Chen, Qile and Gross, Mark and Siebert, Bernd},
 title = {Punctured logarithmic maps},
 fseries = {Memoirs of the European Mathematical Society},
 series = {Mem. Eur. Math. Soc.},
 issn = {2747-9080},
 volume = {15},
 isbn = {978-3-98547-086-0; 978-3-98547-586-5},
 year = {2025},
 publisher = {Berlin: European Mathematical Society (EMS)},
 language = {English},
 doi = {10.4171/MEMS/15},
 zbMATH = {7982387}
}

@article {ACGS2020,
    AUTHOR = {Abramovich, Dan and Chen, Qile and Gross, Mark and Siebert,
              Bernd},
     TITLE = {Decomposition of degenerate {G}romov-{W}itten invariants},
   JOURNAL = {Compos. Math.},
  FJOURNAL = {Compositio Mathematica},
    VOLUME = {156},
      YEAR = {2020},
    NUMBER = {10},
     PAGES = {2020--2075},
      ISSN = {0010-437X,1570-5846},
   MRCLASS = {14N35 (14D23 14T20)},
  MRNUMBER = {4177284},
MRREVIEWER = {William\ Liu},
       DOI = {10.1112/s0010437x20007393},
       URL = {https://doi.org/10.1112/s0010437x20007393},
}

@article{Gross2013,
 author = {Gross, Mark and Siebert, Bernd},
 title = {Logarithmic {Gromov}-{Witten} invariants},
 fjournal = {Journal of the American Mathematical Society},
 journal = {J. Am. Math. Soc.},
 issn = {0894-0347},
 volume = {26},
 number = {2},
 pages = {451--510},
 year = {2013},
 language = {English},
 doi = {10.1090/S0894-0347-2012-00757-7},
 zbMATH = {6168513},
 Zbl = {1281.14044}
}

@article {Friedman1983,
    AUTHOR = {Friedman, Robert},
     TITLE = {Global smoothings of varieties with normal crossings},
   JOURNAL = {Ann. of Math. (2)},
  FJOURNAL = {Annals of Mathematics. Second Series},
    VOLUME = {118},
      YEAR = {1983},
    NUMBER = {1},
     PAGES = {75--114},
      ISSN = {0003-486X,1939-8980},
   MRCLASS = {32G11 (14D20 14J28 32G13)},
  MRNUMBER = {707162},
MRREVIEWER = {David\ R.\ Morrison},
       DOI = {10.2307/2006955},
       URL = {https://doi.org/10.2307/2006955},
}

@book {Kollar2023,
    AUTHOR = {Koll\'ar, J\'anos},
     TITLE = {Families of varieties of general type},
    SERIES = {Cambridge Tracts in Mathematics},
    VOLUME = {231},
      NOTE = {With the collaboration of Klaus Altmann and S\'andor J.
              Kov\'acs},
 PUBLISHER = {Cambridge University Press, Cambridge},
      YEAR = {2023},
     PAGES = {xviii+471},
      ISBN = {978-1-009-34610-8},
   MRCLASS = {14J10 (14D20 14E30 14J29)},
  MRNUMBER = {4566297},
MRREVIEWER = {Chenyang\ Xu},
}

@article {Posva2023,
    AUTHOR = {Posva, Quentin},
     TITLE = {Abundance for slc surfaces over arbitrary fields},
   JOURNAL = {\'Epijournal G\'eom. Alg\'ebrique},
  FJOURNAL = {\'Epijournal de G\'eom\'etrie Alg\'ebrique. EPIGA},
    VOLUME = {7},
      YEAR = {2023},
     PAGES = {Art. 5, 23},
      ISSN = {2491-6765},
   MRCLASS = {14E30 (14G17)},
  MRNUMBER = {4582878},
MRREVIEWER = {James\ McKernan},
       DOI = {10.46298/epiga.2023.volume7.8803},
       URL = {https://doi.org/10.46298/epiga.2023.volume7.8803},
}

@misc{Berquist2014,
      title={Embeddings of Demi-Normal Varieties}, 
      author={Jeremy Berquist},
      year={2014},
      eprint={1411.2264},
      archivePrefix={arXiv},
      primaryClass={math.AG},
      url={https://arxiv.org/abs/1411.2264}, 
}

@article {FantechiFranciosiPardini2023,
    AUTHOR = {Fantechi, Barbara and Franciosi, Marco and Pardini, Rita},
     TITLE = {Deformations of semi-smooth varieties},
   JOURNAL = {Int. Math. Res. Not. IMRN},
  FJOURNAL = {International Mathematics Research Notices. IMRN},
      YEAR = {2023},
    NUMBER = {23},
     PAGES = {19827--19856},
      ISSN = {1073-7928,1687-0247},
   MRCLASS = {14D06},
  MRNUMBER = {4675060},
MRREVIEWER = {Luca\ Schaffler},
       DOI = {10.1093/imrn/rnac261},
       URL = {https://doi.org/10.1093/imrn/rnac261},
}

@article {KSB1988,
    AUTHOR = {Koll\'ar, J. and Shepherd-Barron, N. I.},
     TITLE = {Threefolds and deformations of surface singularities},
   JOURNAL = {Invent. Math.},
  FJOURNAL = {Inventiones Mathematicae},
    VOLUME = {91},
      YEAR = {1988},
    NUMBER = {2},
     PAGES = {299--338},
      ISSN = {0020-9910,1432-1297},
   MRCLASS = {14J10 (14D20 14J30 32G10 32G13)},
  MRNUMBER = {922803},
MRREVIEWER = {Yujiro\ Kawamata},
       DOI = {10.1007/BF01389370},
       URL = {https://doi.org/10.1007/BF01389370},
}

@misc{Felten2026-extending,
    AUTHOR = {Felten, Simon},
     TITLE = {Extending log structures across singularities},
      YEAR = {2026},
}

\end{document}